\documentclass[10pt]{amsart}

\usepackage{mathtools}
\usepackage{amsthm}
\usepackage{mathrsfs} 
\usepackage{amssymb}
\usepackage[all]{xy}
\usepackage[nobottomtitles*]{titlesec}
\usepackage{titletoc}
\usepackage{bbm}
\usepackage{upgreek}
\usepackage{ marvosym}
\usepackage{multicol}
\usepackage{lscape}
\usepackage{ifluatex}
\usepackage[pdfencoding=auto, psdextra]{hyperref}
\usepackage[noabbrev,nameinlink]{cleveref}
\usepackage{enumerate}
\usepackage{soul}
\usepackage{stmaryrd}

\usepackage{verbatim}
\usepackage{tikz-cd}
\makeatletter
\tikzcdset{
    cong/.style={"\cong" {sloped, description, yshift=0pt,#1}, phantom},
    simeq/.style={"\simeq" {sloped, description, yshift=0pt,#1}, phantom},
    snake/.style={
        out= east, in=west,
        to path={
            \pgfextra{
                \pgfextractx{\pgf@xa}{\pgfpointanchor{\tikztostart}{east}}
                \pgfextractx{\pgf@xb}{\pgfpointanchor{\tikztotarget}{west}}
                \pgfextracty{\pgf@ya}{\pgfpointanchor{\tikztostart}{center}}
                \pgfextracty{\pgf@yb}{\pgfpointanchor{\tikztotarget}{center}}
                \edef\tikzstartx{\the\pgf@xa}
                \edef\tikzendx{\the\pgf@xb}
                \edef\midy{\the\dimexpr0.5\dimexpr\pgf@ya\relax +0.5\dimexpr\pgf@yb\relax}
            }
            to[in=0,out=180,looseness=0.5] (\tikzstartx,\midy)
            -| ([xshift=-2ex]\tikztotarget.west)
            -- (\tikztotarget)}
    }
}
\makeatother
\usepackage{sseq}
\usepackage{tikzcdintertext}
\usepackage{etoolbox}
\usepackage{xpatch}

\newcommand{\Mor}{{\sf Mor}}
\newcommand{\hMor}{{\sf \widehat{M}or}}
\newcommand{\Morone}{1\text{-}{\bf mor}}

\crefname{diagram}{diagram}{diagrams}
\creflabelformat{diagram}{#2(#1)#3}
\crefname{sseq}{}{}
\crefname{equation}{}{}
\crefname{equation-b}{Equation}{Equations}
\creflabelformat{sseq}{#2(#1)#3}
\usepackage[normalem]{ulem}
\newcommand{\stkout}[1]{\ifmmode\text{\sout{\ensuremath{#1}}}\else\sout{#1}\fi}

\makeatletter

\pretocmd{\maketitle}{%
    \edef\@title{\unexpanded{\protect\Large}\unexpanded\expandafter{\@title}}%
    \edef\authors{\unexpanded{\protect\normalsize}\unexpanded\expandafter{\authors}}%
}{}{\error}

\def\addlabeltolink#1{\addlabeltolink@#1}
\def\addlabeltolink@#1#2#3#4{#1{#2}{#3}{\thecontentslabel. #4}}

\dottedcontents{section}[3.8em]{}{2.3em}{1pc}
\dottedcontents{subsection}[3.8em]{\tiny}{2em}{1pc}

\titleformat{\section}[block]{\centering\bfseries\Large}{\thetitle. }{0pt}{}

\def\@secnumpunct{. }
\xpatchcmd{\proof}{\topsep6\p@\@plus6\p@\relax}{\topsep0pt\relax}{}{\error}
\makeatother
\usepackage[parfill]{parskip}

\allowdisplaybreaks[1]

\newcommand{\Cat}{\mathcal{C}\mathrm{at}}

\newcommand{\mr}[1]{\mathrm{#1}}
\usepackage[nobottomtitles*]{titlesec}
\usepackage{titletoc}

\newcommand{\br}[1]{\overline{#1}}

\newcommand{\td}[1]{\widetilde{#1}}
\newcommand{\wh}[1]{\widehat{#1}}

\renewcommand{\psi}{\uppsi}

\newcommand\Ocal{\mathcal{O}}


\newcommand{\BU}{\mr{BU}}

\newcommand{\BO}{\mr{BO}}

\newcommand{\Func}{{\bf Fun}}

\newcommand{\sfu}{{\sf u}}

\newcommand{\Rep}{\Rcal\mr{ep}}

\newcommand{\K}{\mr{K}}

\newcommand*{\rom}[1]{\expandafter\@slowromancap\romannumeral #1@} \makeatother


\hypersetup{%
  bookmarksnumbered=true,%
  bookmarks=true,%
  colorlinks=true,%
  pdfnewwindow=true,%
  pdfstartview=FitBH%
}

\hypersetup{
    colorlinks,
    linkcolor={DarkBrown},
    citecolor={DarkBlue},
    urlcolor={blue!80!black}
}

\def\@url#1{{\tt\def~{\lower3.5pt\hbox{\char'176}}\def\_{\char'137}#1}}

\newtheorem{main}{Main Theorem}

\makeatletter
\let\c@lemma\c@theorem
\makeatother

\newtheorem{thm}[equation]{Theorem}
\newtheorem{cor}[equation]{Corollary}
\newtheorem{lem}[equation]{Lemma}

\newtheorem{prop}[equation]{Proposition}

\newtheorem{conj}[equation]{Conjecture}
\newtheorem{claim}[equation]{Claim}
\theoremstyle{definition}
\newtheorem{defn}[equation]{Definition}

\newtheorem{ex}[equation]{Example}

\newtheorem{rmk}[equation]{Remark}

\newtheorem{notn}[equation]{Notation}

\newtheorem*{thm*}{Theorem}
\newtheorem*{cor*}{Corollary}
\newtheorem*{lem*}{Lemma}
\newtheorem*{prop*}{Proposition}
\newtheorem*{not*}{Notation}
\newtheorem*{guess*}{Guess}

\newtheorem*{defn*}{Definition}
\newtheorem*{ex*}{Example}
\newtheorem*{exs*}{Examples}
\newtheorem*{rmk*}{Remark}
\newtheorem*{claim*}{Claim}
\newtheorem*{exer*}{Exercise}
\newtheorem*{conv}{Convention}
\numberwithin{equation}{section}
\numberwithin{figure}{section}

\makeatletter
\let\c@lem=\c@thm
\let\c@cor=\c@thm
\let\c@prop=\c@thm
\let\c@lem=\c@thm
\let\c@ex=\c@thm
\let\c@exs=\c@thm
\let\c@obs=\c@thm
\let\c@rmk=\c@thm
\let\c@perthm=\c@thm
\let\c@conjtel=\c@thm
\let\c@exmps=\c@thm
\let\c@rem=\c@thm
\let\c@question=\c@thm
\let\c@warn=\c@thm
\let\c@claim=\c@thm
\let\c@quest=\c@thm
\let\c@notation=\c@thm
\let\c@note=\c@thm
\let\c@conjtel=\c@thm
\let\c@gue=\c@thm
\let\c@goal=\c@thm
\makeatother


\usepackage[T1]{fontenc}
\makeatletter
\newif\ifkp@upRm
\DeclareSymbolFont{Letters}{OML}{jkp}{m}{n}
\DeclareMathSymbol{\partialup}{\mathord}{Letters}{128}

\DeclareMathOperator{\Hom}{Hom}

\DeclareMathOperator{\im}{im}

\DeclareMathOperator*{\holim}{{\bf holim}}
\DeclareMathOperator*{\hocolim}{{\bf hocolim}}

\DeclareMathOperator{\Sp}{\mathcal{S}p}

\DeclareMathOperator{\Top}{\mathcal{T}op}

\DeclareMathOperator{\Th}{{\sf Th}} 

\DeclareMathOperator{\res}{{\sf res}}

\def\makecommands#1#2#3{
    \bgroup
    \def\tempcmdname##1{#1}
    \def\tempcmdbody##1{#2}
    \def\\##1{\expandafter\xdef\csname\tempcmdname{##1}\endcsname{\unexpanded\expandafter{\tempcmdbody{##1}}}}
    #3
    \egroup
}
\def\upperalphabet{\\A\\B\\C\\D\\E\\F\\G\\H\\I\\J\\K\\L\\M\\N\\O\\P\\Q\\R\\S\\T\\U\\V\\W\\X\\Y\\Z}
\def\loweralphabet{\\a\\b\\c\\d\\e\\f\\g\\h\\i\\j\\k\\l\\m\\n\\o\\p\\q\\r\\s\\t\\u\\v\\w\\x\\y\\z}
\def\lowergreekalphabet{\\\alpha\\\beta\\\gamma\\\delta\\\epsilon\\\zeta\\\eta\\\theta\\\kappa\\\lambda\\\mu\\\nu
    \\\xi\\\pi\\\rho\\\sigma\\\tau\\\upsilon\\\psi\\\chi\\\phi\\\omega}

\makecommands{#1mr}{\mathrm{#1}}{\upperalphabet}
\makecommands{#1cal}{\mathcal{#1}}{\upperalphabet}
\makecommands{#1#1}{\mathbb{#1}}{\upperalphabet}
\makecommands{#1bar}{\overline{#1}}{\upperalphabet}
\makecommands{#1frak}{\mathfrak{#1}}{\upperalphabet \loweralphabet}
\makecommands{#1scr}{\mathscr{#1}}{\upperalphabet \loweralphabet}
\makecommands{#1twee}{\widetilde{#1}}{\upperalphabet\loweralphabet}
\makecommands{sf#1}{{\sf #1}}{\upperalphabet\loweralphabet}
\makeatletter
\makecommands{\expandafter\@gobble\string#1bar}{\overline{#1}}{\lowergreekalphabet}
\makecommands{\expandafter\@gobble\string#1twee}{\widetilde{#1}}{\lowergreekalphabet}
\makeatother

\newcommand{\sma}{\wedge}

\definecolor{limegreen}{rgb}{0.2, 0.8, 0.2}
\definecolor{darkmagenta}{rgb}{0.55, 0.0, 0.55}
\definecolor{lavenderrose}{rgb}{0.91, 0.33, 0.5}
\definecolor{goldenpoppy}{rgb}{0.99, 0.76, 0.0}
\definecolor{seagreen}{rgb}{0.1, 0.4, 0.1}
\definecolor{maroon}{RGB}{128,0,0}
\definecolor{darkviolet}{RGB}{148,0,211}
\definecolor{darkbrown}{rgb}{.55,.3,.2}
\definecolor{snowblue}{rgb}{0.3, 0.4, 0.8}
\definecolor{DarkBlue}{rgb}{.1, 0.35, 0.6} 
\definecolor{DarkBrown}{rgb}{.5, 0.2, 0.2}


\makeatletter

\def\tikzcdequalsignoffset{0.1em}

\def\findedgesourcetarget#1#2{
    \let\sourcecoordinate\pgfutil@empty
    \ifx\tikzcd@startanchor\pgfutil@empty 
        \def\tempa{\pgfpointanchor{#1}{center}}
    \else
        \edef\tempa{\noexpand\pgfpointanchor{#1}{\expandafter\@gobble\tikzcd@startanchor}} 
        \let\sourcecoordinate\tempa
    \fi
    \ifx\tikzcd@endanchor\pgfutil@empty 
        \def\tempb{\pgfpointshapeborder{#2}{\tempa}}
    \else
        \edef\tempb{\noexpand\pgfpointanchor{#2}{\expandafter\@gobble\tikzcd@endanchor}}
    \fi
    \let\targetcoordinate\tempb
    \ifx\sourcecoordinate\pgfutil@empty%
        \def\sourcecoordinate{\pgfpointshapeborder{#1}{\tempb}}%
    \fi
}

\tikzset{/tikz/commutative diagrams/equal/.style=equals,
    /tikz/commutative diagrams/equals/.style = {
    -,
    to path={\pgfextra{
        \findedgesourcetarget{\tikzcd@ar@start}{\tikzcd@ar@target} 
        \ifx\tikzcd@startanchor\pgfutil@empty
            \def\tikzcd@startanchor{.center}
        \fi
        \ifx\tikzcd@endanchor\pgfutil@empty
            \def\tikzcd@endanchor{.center}
        \fi
        \pgfmathanglebetweenpoints{\pgfpointanchor{\tikzcd@ar@start}{\expandafter\@gobble\tikzcd@startanchor}}{\pgfpointanchor{\tikzcd@ar@target}{\expandafter\@gobble\tikzcd@endanchor}}
        \pgftransformrotate{\pgfmathresult}
        \pgfpathmoveto{\pgfpointadd{\sourcecoordinate}{\pgfpoint{0}{\tikzcdequalsignoffset}}}
        \pgfpathlineto{\pgfpointadd{\targetcoordinate}{\pgfpoint{0}{\tikzcdequalsignoffset}}}
        \pgfpathmoveto{\pgfpointadd{\sourcecoordinate}{\pgfpoint{0}{-\tikzcdequalsignoffset}}}
        \pgfpathlineto{\pgfpointadd{\targetcoordinate}{\pgfpoint{0}{-\tikzcdequalsignoffset}}}
        \pgfusepath{draw}
}}}}

\makeatother

\title{Equivariant Weiss Calculus}
\author{Prasit~Bhattacharya}\address{New Mexico State University}\email{prasit@nmsu.edu}
\author{Yang Hu}\address{New Mexico State University}\email{yanghu@nmsu.edu}

\thanks{}

\setcounter{tocdepth}{2}

\begin{document}

\begin{abstract} In this paper, we introduce an equivariant analog of Weiss calculus of functors for all finite group $\mathrm{G}$. In our theory,  Taylor approximations and derivatives are index by finite dimensional $\mathrm{G}$-representations, and homogeneous layers are classified by  orthogonal $\mathrm{G}$-spectra. Further, our framework permits a notion of restriction as well as a notion of fixed-point at the level of Weiss functors. We  establish various results  comparing Taylor approximations and derivatives of  fixed-point (resp. restrictions) functors to that of the fixed-point (resp. restrictions) of Taylor approximations and derivatives.  \end{abstract}

\maketitle	
\tableofcontents

\section{Introduction} \label{Sec:intro}
The orthogonal calculus of functors, also known as the Weiss calculus, was introduced in 1995 by Michael Weiss \cite{WeissCalc}  which constructs a  tower of functors \
\begin{equation} \label{Taylor1}
\begin{tikzcd}
\dots \rar  & \Tmr_n \Emr \rar & \dots \rar & \Tmr_1 \Emr \rar \rar & \Tmr_0 \Emr
\end{tikzcd}
\end{equation}
associated to a functor $\Emr: \Jcal^\RR \longrightarrow \Top_*$, where $\Jcal^\RR$ is the category of finite dimensional real vector spaces equipped with positive definite inner products.  A fundamental feature of the Taylor approximation is that the fiber $ \Lmr_n\Emr := \mr{Fiber}(\Tmr_n \Emr \to \Tmr_{n-1} \Emr)$,  which is also called the $n$-th layer of $\Emr$, is a functor 
\begin{equation} \label{eqn:layer}
\begin{tikzcd}
 \Lmr_n\Emr : \Jcal^\RR \rar & \Top_*
 \end{tikzcd}
 \end{equation}
which sends each vector space $\Vmr$ to an infinite loop space. Thus, Weiss calculus facilitates the usage of  tools from stable homotopy theory
to study unstable problems.

 Historically, there are many interesting and important problems in geometry which can be reformulated as a problem in unstable homotopy theory.  In the last three decades, functors extremely important in geometry, such as 
 \begin{equation} \label{eq:exF}
  \text{$\BO(-)$, $\BU(-)$, $\mr{BDiff}(-)$, $\mr{BTop}(-)$},
  \end{equation}
   among others,  have been studied extensively leading to many remarkable applications \cite{WW88, WW89, Arone98, Arone02, Arone09, WW14, RW16, Weiss21, RandalDisc, GR23, Yang23, KRW, CHO}. 
Notice that these functors readily extends to the category of orthogonal representations for any finite group $\Gmr$ (see \Cref{ex:eqWeissF}) which are of crucial in the study of equivariant geometry. Thus, it is desirable to have an equivariant analog of Weiss calculus that can  facilitate the study of such functors through equivariant stable homotopy theory.

 Despite the demand, the development of equivariant Weiss calculus eluded the literature until the  thesis work of Tynan \cite{Ty17}. In 2016, Tynan introduced a version of $\Cmr_2$-equivariant unitary Weiss calculus  using Galois action on $\CC$ which  was further studied and developed by  N. Taggart  in \cite{Taggart22}. They considered a Weiss theory associated to $\Cmr_2$-equivariant functors from $\Jcal^{\CC/\RR}$ (the category of complex vector spaces with Galois action of $\Cmr_2$) to $\Cmr_2\Top$, and  showed that the homogeneous layers are genuine $\Cmr_2$-equivariant infinite loop spaces. In 2024, Emel Yuvuz \cite{Y24} introduced a $\Cmr_2$-equivariant orthogonal calculus  which is drastically different than \cite{Ty17, Taggart22}. A distinct feature of the Weiss theory in \cite{Y24} is that the Taylor approximations and derivatives \cite{Y24} are indexed by finite dimensional $\Cmr_2$-representations.

In  this paper,  we introduce an equivariant analog of  orthogonal Weiss calculus for all finite groups $\Gmr$. In our theory, the  Taylor approximations and derivatives are also index by finite dimensional $\mathrm{G}$-representations, and homogeneous layers are classified by  orthogonal $\mathrm{G}$-spectra\footnote{In this sense,  our  work maybe regarding as an extension of  \cite{Y24} to all finite groups.}. Our theory can be adjusted to  obtain the unitary equivariant Weiss calculus (see \Cref{Weissunitary}),  as well as  an `Atiyah Real analog'  comparable to  \cite{Ty17, Taggart22}  (see \Cref{WeissAR}).   In addition, our theory allows for a notion of  fixed-point and restriction at the level of Weiss functors. We  establish various comparison theorems  comparing Taylor approximations and derivatives  of  fixed-point (resp. restrictions) functors to that of the fixed-point (resp. restrictions) of Taylor approximations and derivatives. 
 
\bigskip
 \begin{notn} \label{notn:set1} Throughout this paper: 
 \begin{itemize}
 \item Let $\Gmr$  be a  finite group, $e \in \Gmr$ be the  identity element, and $\Ocal_{\Gmr}$ be  the orbit category of $\Gmr$. 
  \item For a subgroup $\Hmr$ of $\Gmr$, let $\Nmr(\Hmr) \subset \Gmr$ denote the normalizer group  and  \[ \Wmr(\Hmr) := \Nmr(\Hmr)/\Hmr\] denote the  Weyl group,  of $\Hmr$ in $\Gmr$.   
 \item Let $\Cat$ be the category of small categories.
  \item $\Ucal_\Gmr$ be a complete $\Gmr$ universe, $\Ucal_{\Gmr}^{(n)}$ be the subuniverse of $\Ucal_\Gmr$ generated by $n$-dimensional $\Gmr$-representations, and in general $\Ucal^{(\sfR)}_\Gmr$ be the subuniverse generated by the set of $\Gmr$-representations $\sfR$.
 \item Let $\Rcal\mr{ep}_{\Gmr}$ be the category of finite dimensional orthogonal $\Gmr$-representation  in $\Ucal_\Gmr$ in  which the morphisms space  $\Mor_{\Gmr}(\Wmr, \Vmr)$, is the $\Gmr$-space of linear isometric embeddings where $\Gmr$ acts by  conjugation.
 \item $\Gmr\Top$ be the closed symmetric monoidal category of pointed $\Gmr$-spaces, and let $\Gmr\Top_*$ denote the corresponding pointed category. 
 \item For a $\Wmr \in \Rep_\Gmr$, let  $|\Wmr|$ denote the real  dimension of $\Wmr$.  
 \item $\Sp^\Gmr$ be the category of genuine orthogonal $\Gmr$-spectra,  $\Sp_{(n)}^\Gmr$ be  the orthogonal $\Gmr$-spectra in the $\Gmr$-universe $\Ucal^{(n)}_\Gmr$, and  $\Sp_{(\sfR)}^\Gmr$ be  the orthogonal $\Gmr$-spectra in the $\Gmr$-universe $\Ucal^{(\sfR)}_\Gmr$ (as in \cite{MayMandell}). 
 \item We let 
 \[ 
 \begin{tikzcd}
 \Sigma^{\infty}_{\sfR}:  \Gmr\Top_*  \rar[shift right] & \lar[shift right]\Sp^\Gmr_{(\sfR)} : \Omega^{\infty}_{\sfR}
 \end{tikzcd}
 \]
 denote the usual loop-suspension adjunction restricted the universe $\Ucal^{(\sfR)}_\Gmr$, where we drop the subscript when the universe is complete. 
 
  \item We will use  $\upiota_{\Hmr}(-)$ to denote the $\Hmr$-fixed points functor  of $\Rep_\Gmr$, $\Gmr\Top_*$,  $\Sp^\Gmr$,  $\Sp_{(n)}^\Gmr$,  and $\Sp_{(\sfR)}^\Gmr$ for any subgroup $\Hmr \subset \Gmr$ without distinction. 
 
 \item $(- )^{\Hmr}: \Gmr\Top_* \longrightarrow \Wmr(\Hmr)\Top_*$ denote the $\Hmr$-fixed point functor. We use the same notation to denote the $\Hmr$-fixed point functor of $\Rep_\Gmr$ as well as of all of the above categories of $\Gmr$-spectra. 
 \end{itemize}
 \end{notn}

The main idea in this paper is to replace  $\Jcal^\RR$  by the  presheaf
 $
 \Jfrak_{\Gmr}:  \Ocal_{\Gmr}^{\mr{op}} \longrightarrow \Cat
 $
 which sends $\Gmr/\Hmr$ to   $\Rcal\mr{ep}_{\Hmr}$,  $\Top_*$ by the presheaf 
 $
 \Tfrak_{\Gmr}:  \Ocal_{\Gmr}^{\mr{op}} \longrightarrow \Cat
 $
 which sends $\Gmr/\Hmr$ to $\Gmr\Top_{ \ast}$, and  consider a natural transformation 
 \begin{equation} \label{natE}
  \begin{tikzcd}
  \Emr: \Jfrak_{\Gmr} \rar&  \Tfrak_{\Gmr} 
  \end{tikzcd}
  \end{equation}
  which we call a  {\bf $\Gmr$-equivariant Weiss functor}\footnote{We think of $\Emr$ in \eqref{natE} as  $1$-morphisms in the functor category ${\bf Fun}(\Ocal^{\mr{op}}_{\Gmr}, \Cat)$.}.

 For every $\Gmr$-equivariant Weiss functor, we build an equivariant Taylor approximation (see  \Cref{defn:Taylor})  index by a pair  of $\Gmr$-representations $(\upalpha, \upepsilon)$,  where $\upepsilon$ is a $1$-dimensional which plays the role of a 
  {\bf direction}\footnote{It is reasonable to  extend the analogy between classical Weiss theory  and one-variable calculus  to equivariant Weiss theory by comparing it  with multivariate calculus. } (see \Cref{defn:dir}). 
  \begin{main} \label{main:tower}Let $\upalpha$ be a $\Gmr$-representation and $\upepsilon$ denote a $1$-dimensional $\Gmr$-representation. Then corresponding to every  $1$-morphism  
  \[ 
 \begin{tikzcd}
 \Emr: \Jfrak_{\Gmr} \rar & \Tfrak_{\Gmr}
 \end{tikzcd}
\]
 in ${\bf Fun}( \Ocal_{\Gmr}^{\mr{op}}, \Cat)$, there exists a morphism 
\[ 
\begin{tikzcd}
{\bf T}_{(\upalpha, \upepsilon)} \Emr: \Jfrak_{\Gmr} \rar & \Tfrak_{\Gmr},
 \end{tikzcd}
\]
called the $\upalpha$-th Taylor approximation of $\Emr$ in the direction of $\upepsilon$, 
together with 2-morphisms
\begin{enumerate}[$(i)$]
\item $
\begin{tikzcd}
\upeta_{\alpha, \upepsilon}^{\Emr}:  \Emr \rar & {\bf T}_{(\upalpha, \upepsilon)}\Emr,
\end{tikzcd}
$
\item 
$ 
\begin{tikzcd}
\upeta^{\Emr}_{\upalpha, \upalpha', \upepsilon}: {\bf T}_{(\alpha, \upepsilon)} \Emr \rar & {\bf T}_{(\alpha', \upepsilon)}\Emr
\end{tikzcd}
$
for a sub-repesentation $\upalpha' \subset \upalpha$,  
\end{enumerate}
such that 
\[  \upeta^{\Emr}_{\upalpha', \upepsilon} = \upeta^{\Emr}_{\upalpha, \upalpha', \upepsilon} \circ \upeta^{\Emr}_{\upalpha, \upepsilon}\]
\[ \upeta^{\Emr}_{\upalpha'', \upalpha', \upepsilon} \circ  \upeta^{\Emr}_{\upalpha, \upalpha', \upepsilon}  = \upeta^{\Emr}_{\upalpha, \upalpha'',\upepsilon} \]
whenever $\upalpha'' \subset \upalpha' \subset \upalpha$ as $\Gmr$-representations. 
 \end{main}
 \begin{notn} For an equivariant Weiss functor $\Emr$ (as in \eqref{natE}), we let 
 \[ 
\begin{tikzcd}
\Emr_{\Gmr/\Hmr}: \Rep_{\Hmr} \rar & \Hmr\Top_*, 
\end{tikzcd}
\]
denote the functor at  $\Gmr/\Hmr$. 
 \end{notn}
 \begin{ex} \label{ex:eqWeissF} Some of standard examples of equivariant Weiss functors include 
 \begin{itemize}
 \item ${\bf S}_\Gmr: \Jfrak_\Gmr \longrightarrow \Tfrak_\Gmr$, where  $({\bf S}_\Gmr)_{\Gmr/\Hmr}$ sends an $\Hmr$-representation $\Wmr$ to its one-point compactification $\Smr^{\Wmr}$.
 \item  ${\bf B}_\Gmr{\bf O}: \Jfrak_\Gmr \longrightarrow \Tfrak_\Gmr $, where $({\bf B}_\Gmr{\bf O})_{\Gmr/\Hmr}$ sends an $\Hmr$-representation $\Wmr$ to $\Bmr_\Hmr\Omr(\Wmr)$ (see \Cref{bpBGO}).
  \item   ${\bf B}_\Gmr{\bf TOP}: \Jfrak_\Gmr \longrightarrow \Tfrak_\Gmr $, which sends an  $\Hmr$-representation $\Wmr$ to the classifying space of origin preserving homeomorphism (nonequivariant) of $\Wmr$ on which $\Hmr$ acts by conjugation.
 \item   ${\bf B}_\Gmr{\bf Diff}: \Jfrak_\Gmr \longrightarrow \Tfrak_\Gmr $, which sends an  $\Hmr$-representation $\Wmr$ to the  classifying space of origin preserving diffeomorphisms (nonequivariant) of $\Wmr$ on which $\Hmr$ acts by conjugation.
 \item ${\bf Emb}^{\Gmr}_{\Mmr, \Nmr} : \Jfrak_\Gmr \longrightarrow \Tfrak_\Gmr$ for $\Gmr$-manifolds $\Mmr$ and $\Nmr$, which sends an $\Hmr$-representations $\Wmr$ to the classifying  space of embeddings (nonequivariant) of $\Mmr$ into $\Nmr \times \Vmr$ (with a disjoint base point).  
 \end{itemize}
 \end{ex}
 \begin{rmk} \label{bpBGO} For a $k$-dimensional orthogonal $\Hmr$-representation $\Wmr$, let $\Omr(\Wmr)$ denote the orthogonal group $\Omr(k)$ with the conjugation action of $\Hmr$ induced from $\Wmr$.  
  The bar construction on the set of functions from $\Hmr$ to $\Omr(\Wmr)$ is an $\Hmr$-space 
  \[ \Bmr_{\Hmr}\Omr(\Wmr):= \Bmr{\bf Func}(\Hmr, \Omr(\Wmr)), \]
whose homotopy type does not depend on $\Wmr$ as long as $|\Wmr| = k$. However, the natural base point of $\Bmr_{\Hmr}\Omr(\Wmr)$ depends on $\Wmr$.  According to \cite[Theorem 10]{LM} (also see \cite{FolingClassify} for details), the path components of $\Hmr$-fixed points of $\Bmr_{\Hmr}\Omr(\Wmr)$
  \[ 
 \Bmr_{\Hmr}\Omr(\Wmr)^{\Hmr}  \cong \underset{ \left\lbrace \substack{ [\Wmr'] \in {\sf Obj}(\mr{Rep}_\Hmr)/\cong \\ \dim_\RR(\Wmr') =k } \right\rbrace } \coprod \Bmr \Omr(k)
 \]
 are in one-to-one correspondence with the isomorphic classes of $k$-dimensional orthogonal $\Hmr$-representations, and the  base point of $\Bmr_{\Hmr}\Omr(\Wmr)$ naturally lies  in the component index by  $[\Wmr]$. 
 \end{rmk}
 \begin{defn} \label{defn:pwequiv} We say a $2$-morphism $\upeta: \Emr \longrightarrow \Fmr$ between two $1$-morphisms
 \[ 
 \begin{tikzcd}
 \Emr, \Fmr: \Jfrak_{\Gmr} \rar & \Tfrak_{\Gmr}
 \end{tikzcd}
\]
 in ${\bf Fun}( \Ocal_{\Gmr}^{\mr{op}}, \Cat)$,
 a {\bf pointwise equivalence} if  
\[ 
\begin{tikzcd}
\upeta_{\Wmr}: \Emr_{\Gmr/\Hmr}(\Wmr) \rar & \Fmr_{\Gmr/\Hmr}(\Wmr)
\end{tikzcd}
\]
is a weak equivalence in $\Hmr\Top_*$ for all $\Wmr \in \Rep_\Hmr$ and subgroup $\Hmr$ of $\Gmr$. 
\end{defn}
In the nonequivariant case, the $n$-th Taylor approximation $\Tmr_n\Emr$ are examples of $n$-polynomial functors (see \cite[Definition 5.1]{WeissCalc}). We generalize this notion in  \Cref{defn:polynomial} and show that  ${\bf T}_{(\alpha, \upepsilon)}\Emr$ is an example of  {\bf $\alpha$-polynomial functor in the direction of $\upepsilon$}. Further:
\begin{main} \label{main:poly} If $ \begin{tikzcd}
 \Emr: \Jfrak_{\Gmr} \rar & \Tfrak_{\Gmr}, 
 \end{tikzcd}
$ is $\alpha$-polynomial  in the direction of $\upepsilon$ then for any $\Hmr \subset \Gmr$
\[ 
\begin{tikzcd}
\upeta_{\upalpha \oplus \upalpha', \upepsilon}^\Emr:  \Emr \rar & {\bf T}_{(\upalpha \oplus \upalpha', \upepsilon )}\Emr
\end{tikzcd}
\] 
is a pointwise equivalence for all $\upalpha' \in \Rep_\Gmr$ which are isomorphic to a finite sum of one dimensional $\Gmr$-representations. 
\end{main}
In the nonequivariant case, Weiss constructed the category $\Jcal_n^\RR$, for each natural number $n$, whose objects are that of $\Jcal^\RR$, i.e. finite dimensional real vector spaces with inner products, but morphism spaces are Thom spaces of certain tautological bundles so that we have a sequence of faithful functors 
\[ 
\begin{tikzcd}
\Jcal^\RR  \rar[hook] & \cdots \rar[hook] & \Jcal_n^\RR \rar[hook] & \Jcal_{n+1}^\RR \rar[hook] & \cdots .
\end{tikzcd}
\]
Then the $n$-th derivative of a functor $\mr{E} : \Jcal \longrightarrow \Top_*$ is defined  as the functor 
\[ 
\begin{tikzcd}
\Emr^{(n)}: \Jcal_n^\RR \rar & \Top_*
\end{tikzcd}
\]
obtained by right Kan extending $\Emr$ to $\Jcal_n^\RR$ (see \cite[$\mathsection$2]{WeissCalc}).  Associated to the $n$-th derivative functor $\Emr^{(n)}$, Weiss defined a spectrum $\uptheta\Emr^{(n)}$, called the {\bf $n$-th derivative of $\Emr$ at $\infty$}, that  admits a n\"aive action of $\mr{O}(n)$. One of the fundamental results of Weiss calculus claims
\[ 
\Lmr_n\Emr (\Vmr) \simeq \Omega^{\infty}\left( \Smr^{n \Vmr} \sma \uptheta\Emr^{(n)}\right)_{\sfh \Omr(n)}
\]
for all $\Vmr \in \Jcal^\RR$, i.e.,  calculation of Weiss layers is a stable homotopy theoretic problem and can be understood in terms of the derivatives at $\infty$.   

 In this paper we construct, for each $\upalpha \in \Rcal\mr{ep}_{\Gmr}$,  a functor 
\[
\begin{tikzcd}
\Jfrak^{\alpha}_\Gmr: \Ocal^{\mr{op}}_{\Gmr} \rar & \Cat
\end{tikzcd}
\]
  such that there is natural transformation 
 \[
 \begin{tikzcd}
  \iota_{\upalpha, \upalpha'}: \Jfrak^{\alpha}_\Gmr \rar & \Jfrak^{\upalpha'}_\Gmr 
  \end{tikzcd}
  \]
  whenever $\alpha$ is a sub $\Gmr$-representation of $\alpha'$, satisfying 
  \[ \iota_{\upalpha'', \upalpha'} \circ  \iota_{\upalpha, \upalpha'}  = \iota_{\upalpha, \upalpha''}\]
  for any triplet $\upalpha \subset \upalpha' \subset \upalpha''$. The category $\Jfrak^{\upalpha}_{\Gmr}(\Gmr/\Hmr)$ should be thought of as a `thickening of $ \Jfrak_\Gmr(\Gmr/\Hmr):= \Rcal\mr{ep}_{\Hmr}$ using $\upalpha$'. Then we use right Kan extensions to define a natural transformation 
  \[ 
  \begin{tikzcd}
  \Emr^{(\upalpha)}: \Jfrak^{\upalpha}_\Gmr \rar& \Tfrak_{\Gmr},
  \end{tikzcd}
  \]
 which, following \cite{WeissCalc}, will be called  the {\bf $\upalpha$-th derivative of $\Emr$} (\Cref{defn:derivative}).

 Associated to $\Emr^{(\upalpha)}$ we construct an  $\upalpha$-{\bf system}  of $\Gmr$-spectra  ${\bf \Theta}\Emr^{(\upalpha)}$ (as in \Cref{defn:alphasystem})  which  we  call {the \bf $\upalpha$-th derivative of $\Emr$ at $\infty$} (\Cref{defn:der-at-infty}). The system  ${\bf \Theta}\Emr^{(\upalpha)}$ is equipped with a n\"aive action of $\Omr(\upalpha)$.   Further, we  define the  {\bf $\upalpha$-th layer} of $\Emr$ (see \Cref{defn:layer}) as a functor 
 \[ 
 \begin{tikzcd}
 {\bf L}_{\upalpha} \Emr : \Jfrak_\Gmr \rar[] & \Tfrak_\Gmr
 \end{tikzcd}
 \]
and show that it is  {\bf $\upalpha$-homogeneous} in the sense of \Cref{defn:vhomo}. Further, we show that the layers are pointwise equivariant infinite loop spaces determined by $\hat{\bf \Theta}\Emr^{(\upalpha)}:= {\bf \Theta} {\bf L}_{\upalpha}\Emr^{(\upalpha)}$ whenever  $\upalpha$   is a  finite  sum of one dimensional $\Gmr$-representations: 
 \begin{main} \label{main3} Suppose $\upalpha$ is a $\Gmr$-representation isomorphic to a finite sum of  one dimensional $\Gmr$-representations and $\hat{\bf \Theta}\Emr^{(\upalpha)}$  is uniformly bounded below in the sense of \Cref{defn:bbelow}. Then for any finite subspace $\Umr$ of $\Ucal_\Hmr$ 
 \[ 
({\bf L}_{\upalpha}\Emr)_{\Gmr/\Hmr}(\Umr) \simeq  \Omega^{\infty}_{\upiota_\Hmr(1)} \left( (\Smr^{\upiota_\Hmr(\upalpha) \otimes \Umr_{\upalpha}} \sma \hat{\bf \Theta}\Emr^{(\upalpha)}(\Umr_{\upalpha}^{\perp}))_{\sfh \Omr(\upalpha)} \right),
 \]
 where $\Umr_{\upalpha} = \Umr \cap \Ucal^{\upiota_\Hmr(1)}_\Hmr$ and $\Umr_{\upalpha}^{\perp}$ refers to its orthogonal complement in $\Umr$. 
 \end{main}
  \begin{rmk}
  All $\Gmr$-spectra in the system ${\bf \Theta}\Emr^{(\upalpha)}$ belongs to $\Sp^{\Gmr}_{(1)}$ and admit a  natural map to a genuine $\Gmr$-spectrum $\wh{\Theta\Emr}^{(\upalpha)}$ up to a suspension as described in  \eqref{map-to-genuine}. In some sense, $\wh{\Theta\Emr}^{(\upalpha)}$ is the cocompletion of the system ${\bf \Theta}\Emr^{(\upalpha)}$. 
When   $\Ucal^{(1)}_\Gmr$ is the complete universe\footnote{This is the case when $\Gmr = \Cmr_{2}^{\times n}$,  as well as for all finite Abelian groups in the unitary analog of our Weiss theory.} then the system is determined by just one genuine $\Gmr$-spectrum, namely $\wh{\Theta\Emr}^{(\upalpha)}$. 
\end{rmk}
\begin{rmk} Whether $\hat{\bf \Theta}\Emr^{(\upalpha)}$ and ${\bf \Theta}\Emr^{(\upalpha)}$ are equivalent (or not) is the subject of  \Cref{conj:symsys}. 
 \end{rmk}
 \begin{ex} \label{ex:1derequiv}When $ \Ucal_{\Gmr}^{(1)}$ is the complete universe $\Ucal_{\Gmr}$ we notice using the same arguments as in \cite{WeissCalc} that 
   \[ {\bf \Theta}{\bf B}_\Gmr{\bf O}^{(\upepsilon)} \cong \SS_\Gmr  \]
  the genuine $\Gmr$-equivariant sphere spectrum  for all $1$-dimensional $\Gmr$-representation $\upepsilon$. When $\Ucal_\Gmr^{(1)}$ is not the complete universe then 
  \[ {\bf \Theta}{\bf B}_\Gmr{\bf O}^{(\upepsilon)} = \bigcup_{\Hmr \subset \Gmr}\left\lbrace \Sigma^{ \upiota_\Hmr(\upepsilon)\otimes \Umr}\SS_{\upiota_\Hmr(1)} : \Umr \underset{\text{finite}}{\subset} \Ucal_{\Hmr} \right\rbrace  \]
 is the system, where  $\SS_{\upiota_\Hmr(1)}$ is the sphere spectrum in the $\Ucal^{\upiota_\Hmr(1)}_{\Gmr}$ universe. 
 \end{ex}
For any subgroup $\Kmr \subset \Gmr$, there is an obvious functor 
\[ 
\begin{tikzcd}
{\sf q}_{\Kmr}:\Ocal^{\mr{op}}_{\Kmr } \rar & \Ocal^{\mr{op}}_{\Gmr }
\end{tikzcd}
\]
 sending $\Kmr/\Hmr$ to $\Gmr/\Hmr$. It is easy to see that $\Jfrak_\Gmr \circ \sfq_{\Kmr} = \Jfrak_{\Kmr}$, $\Tfrak_\Gmr \circ \sfq_{\Kmr} = \Tfrak_{\Kmr}$, and the natural transformation $\Emr$ of  \eqref{natE} restricts to a natural transformation
  \begin{equation} \label{natresE}
  \begin{tikzcd}
  \upiota_{\Kmr}\Emr: \Jfrak_{\Kmr} \rar[]&  \Tfrak_{\Kmr} 
  \end{tikzcd}
  \end{equation}
given by  $(\upiota_{\Kmr}\Emr_{\Kmr/\Hmr})(\Vmr) :=  \Emr_{\Gmr/\Hmr}(\Vmr)$, where $\Hmr \subset \Kmr$ and $\Vmr \in \Rcal\mr{ep}_{\Hmr}$.
\begin{ex} We notice that the  restriction of ${\bf B}_\Gmr{\bf O}$ to a subgroup $\Kmr \subset \Gmr$ is simply ${\bf B}_\Kmr{\bf O}$. 
\end{ex}

 We notice equivariant Weiss calculus behaves well with restriction functors (see \Cref{lem:restower} and \Cref{lem:resderinf}):

\begin{main} \label{main:res} Suppose   $\upalpha$ and $\upepsilon$ are $\Gmr$-representations such that  $|\upepsilon| = 1$. Then, for any $1$-morphism  $
 \begin{tikzcd}
 \Emr: \Jfrak_{\Gmr} \rar & \Tfrak_{\Gmr}
 \end{tikzcd}
$ 
\begin{enumerate}[(1)]
\item $\upiota_\Kmr ({\bf T}_{(\upalpha, \upepsilon)} \Emr) = {\bf T}_{(\upiota_{\Kmr}\upalpha, \upiota_{\Kmr}\upepsilon)}\upiota_{\Kmr}( \Emr)$, 
\item $\upiota_\Kmr(\Emr^{(\upalpha)}) = \upiota_\Kmr\Emr^{(\upiota_\K\upalpha)} $, 
\item there is a natural map \[ 
\sfi^\Kmr_*\upiota_\Kmr {\bf \Theta}\Emr^{(\upalpha)}(\Umr)  \longrightarrow  {\bf \Theta} \upiota_\Kmr (\Emr^{(\upalpha)})(\Umr)
,\] where $\sfi^\Kmr_*$ is the change of universe functor of \eqref{C-of-U-1},  which is an isomorphism when  $\upiota_\Kmr \Ucal^{(1)}_\Gmr \cong \Ucal^{(1)}_\Kmr $, 
\end{enumerate}
 for any subgroup $\Kmr \subset \Gmr$. 
\end{main}

A $\Gmr$-equivariant Weiss functor $\Emr$  admits a $\Kmr$-fixed point functor
\begin{equation} \label{natfixE}
  \begin{tikzcd}
  \Emr^\Kmr: \Jfrak_{\Wmr(\Kmr)} \rar&  \Tfrak_{\Wmr(\Kmr)} 
  \end{tikzcd}
\end{equation}
for every subgroup $\Kmr \subset \Gmr$. Given $\Hmr/\Kmr \in \Wmr(\Kmr)$ and an $\Hmr/\Kmr$-representation $\Vmr$, let  $\uppi^*_{\Hmr}\Vmr$ denote the 
$\Hmr$-representation obtained by pulling back $\Vmr$ along the quotient map $\uppi_{\Hmr}: \Hmr \twoheadrightarrow \Hmr/\Kmr$. Then define $\Emr^\Kmr$ using the formula 
\[ \Emr^\Kmr_{\Hmr/\Kmr} (\Vmr) := \left(\upiota_{\Nmr(\Hmr)}\Emr_{\Nmr(\Hmr)/\Hmr} (\uppi^*\Vmr) \right)^{\Hmr},\]
whenever $\Hmr$ is the subgroup of the normalizer $\Nmr(\Hmr)$ of $\Hmr$ in $\Gmr$, and $\Vmr$ is an $\Hmr/\Kmr$-representation. 

The equivariant Weiss calculus is not as well behaved with respect to fixed-point functors as it is with restriction functors.  Combining \Cref{lem:towerfix}, \Cref{lem:derfix}, and \Cref{cor:derinffix}, we obtain the following result.  
\begin{main} \label{main:fix} Suppose  $\Kmr$ is a subgroup of $\Gmr$, and  $\upalpha, \upepsilon$ are $\Wmr(\Hmr)$-representations such that  $\dim_\RR(\upepsilon)= 1$.  Then,  for every $1$-morphism  
 $ 
 \begin{tikzcd}
 \Emr: \Jfrak_{\Gmr} \rar & \Tfrak_{\Gmr}
 \end{tikzcd}
$ there exists 
\begin{enumerate}[(1)]
\item  a $2$-morphism  $
{\bf T}_{(\upalpha, \upepsilon)}\Emr^{\Kmr}  \longrightarrow\left({\bf T}_{(\uppi^*\upalpha, \uppi^*\upepsilon)} \upiota_{\Nmr(\Hmr)}\Emr \right)^{\Kmr} $
 in  ${\bf Fun}( \Ocal_{\Wmr(\Hmr)}^{\mr{op}}, \Cat)$,
\item  a  $2$-morphism 
$
\begin{tikzcd}
 (\Emr^{\Kmr})^{(\upalpha)}  \rar &  (\Emr^{(\uppi^*\upalpha)})^\Kmr
 \end{tikzcd}
 $
 in  ${\bf Fun}( \Ocal_{\Wmr(\Hmr)}^{\mr{op}}, \Cat)$,
\item and a zig-zag of  maps in $\Sp^{\Wmr(\Kmr)}_{\upiota_{\br{\Hmr}}(1:\upalpha^\Kmr)}$
\[ 
\begin{tikzcd}
&& ({\sfi}^{\Nmr(\Kmr)}_*{\bf \Theta}\Emr^{(\upalpha)}(\uppi^*\Umr))^{\Kmr} \dar[]  \\
{\bf \Theta} ((\Emr^{\Kmr})^{(\upalpha^\Kmr)})(\Umr) \ar[rr] && \left( ({\bf \Theta}(\upiota_{\Nmr(\Kmr)}\Emr^{(\upalpha)})(\uppi^*\Umr)\right)^\Kmr, 
\end{tikzcd}
\]
\end{enumerate}
 where $\uppi: \Nmr(\Kmr) \twoheadrightarrow \Wmr(\Hmr)$ is the  quotient map and    $\sfi^{\Nmr(\Kmr)}_*$ 
 is the change of universe map \eqref{C-of-U-1}. 
\end{main} 
\begin{ex} Here we discuss the fixed point functors of some of the $\Gmr$-equivariant Weiss functors discussed in \Cref{ex:eqWeissF}. 
\begin{itemize}
\item The fixed-point functor $ ({\bf S}_\Gmr)^{\Hmr}$ is simply  ${\bf S}_{\Wmr(\Hmr)}$. In particular when $\Hmr = \Gmr$ we get the nonequivariant functor  that facilitates the comparison between the Weiss Calculus and the   Goodwillie calculus (see \cite[pg 1004]{GDK}). 
\item The fixed-point functor ${\bf B}_\Gmr{\bf O}^\Hmr$ is not  ${\bf B}_{\Wmr(\Hmr)}{\bf O}$ due to \cite[Theorem 10]{LM} (also see \cite[Theorem 2.13]{BZ}) but contains ${\bf B}_{\Wmr(\Hmr)}{\bf O}$ as a sub-functor. For instance when $\Hmr= \Gmr$, ${\bf B}_\Gmr{\bf O}^\Gmr$ is the nonequivariant Weiss functor sending a vector space $\RR^n$ to disjoint union of $\BO(n)$, one for each isomorphism class of $n$-dimensional $\Gmr$-representation (see \Cref{bpBGO}). A similar conclusion holds for ${\bf B}_\Gmr{\bf TOP}$ and ${\bf B}_\Gmr{\bf Diff}$.
\item The $\Gmr$-fixed point functor $({\bf Emb}^{\Gmr}_{\Mmr, \Nmr})^{\Gmr}$ sends $\RR^n$ to the space of $\Gmr$-equivariant embedding of $\Mmr$ into $\Nmr \times \RR^n$ with a disjoint basepoint. If  $\Gmr$  acts trivially on $\Mmr$ and $\Nmr$ then $({\bf Emb}^{\Gmr}_{\Mmr, \Nmr})^{\Gmr}$ is the nonequivariant embedding functor ${\bf Emb}_{\Mmr, \Nmr}$. 
\end{itemize}
\end{ex}

\begin{ex} Although ${\bf B}_\Gmr{\bf O}^\Gmr$ is not quite the nonequivariant Weiss functor ${\bf B}{\bf O}$ (see \Cref{bpBGO}), the difference lies only in the zeroth Taylor approximation as 
\[ 
{\bf T}_{{\bf 0}}({\bf B}_\Gmr{\bf O}^\Gmr)(\Vmr) = \coprod_{{\sf Irr}(\Gmr)} \BO, 
\] 
where ${\sf Irr}(\Gmr)$ is the set of irreducible $\Gmr$-representations. After considering basepoint, we see 
\[ {\bf Fiber}\left({\bf B}_\Gmr{\bf O}^\Gmr(\Vmr) \to {\bf B}_\Gmr{\bf O}^\Gmr(\Vmr \oplus \RR)\right) \simeq  {\bf Fiber}\left({\bf BO}(\Vmr) \to  {\bf BO}(\Vmr \oplus \RR) \right)\]
is the sphere $\Smr^{\Vmr}$. This  combined with the arguments of Weiss \cite{WeissCalc}, shows that the first derivative of ${\bf B}_\Gmr{\bf O}^\Gmr$ at $\infty$ is the nonequivariant sphere spectrum $\SS$. This is different from the $\Gmr$-fixed points of the  first derivative of ${\bf B}_\Gmr{\bf O}$ at $\infty$ (see \Cref{ex:1derequiv}) even when $\Gmr= \Cmr_2$ due to tom Dieck splitting results \cite{tDsplitting}. 
\end{ex}
\begin{rmk}[Unitary equivariant Weiss calculus] \label{Weissunitary} When we  work with complex $\Gmr$-representations  and set  morphisms as $\CC$-linear isometries, we get a presheaf 
\[ 
\begin{tikzcd}
\Jfrak_{\Gmr}^{ \CC}: \Ocal^{\mr{op}}_\Gmr \rar & \Cat,
\end{tikzcd}
 \]
 which is the complex analog of $\Jfrak_{\Gmr}$.  Then, our theory builds Taylor approximations and derivatives for any $1$-morphism $\Emr: \Jfrak_{\Gmr}^{\CC} \longrightarrow \Tfrak_{\Gmr}$ and the analogous results hold after replacing orthogonal groups with unitary groups. 
\end{rmk}
\begin{rmk}[Atiyah Real Weiss calculus] \label{WeissAR} When $\Gmr = \Cmr_2$,  one may utilize the Galois action of $\Cmr_2$ on $\CC$ to define 
\[ 
\begin{tikzcd}
\Jfrak_{\CC/\RR}:\Ocal^{\mr{op}}_{\Cmr_2} \rar & \Cat
\end{tikzcd}
\]
as follows. We let  $\Jfrak_{\CC/\RR}(\Cmr_2/\Cmr_2)$ be  the  $\Cmr_2$-category with objects of the form $\Vmr \otimes_{\RR} \CC$, where $\Vmr$ is a  finite dimensional real vector space, and morphisms are $\CC$-linear isometries, and $\Jfrak_{\CC/\RR}(\Cmr_2/e)$ is 
simply the category of finite dimensional complex vector spaces  with $\CC$-linear isometries as morphisms, where $\Cmr_2$ acts trivially. Note that the fixed-point functor  $\Jfrak_{\CC/\RR}^{\Cmr_2}$ sends $e/e$ to $\Jcal^\RR$. 
\end{rmk}
\begin{ex} \label{ex:AR} For a  finite dimensional $\RR$-vector space $\Wmr$, let $\BU_\RR(\Wmr)$ denote the complex linear distance preserving automorphisms of $\Wmr \otimes \CC$. Now, consider the $1$-morphism 
\[
\begin{tikzcd}
{\bf BU}_\RR: \Jfrak_{\CC/\RR} \rar & \Tfrak_{\Cmr_2} 
\end{tikzcd} 
\]
 that sends $\Wmr$ to $\Bmr\Umr_\RR(\Wmr)$ if $\Wmr \in \Jfrak_{\Cmr_2/\Cmr_2}$, and to $\BU(\Wmr)$ if $\Wmr \in \Jfrak_{\Cmr_2/e}$. Then its restriction to $e$ is simply  $\BU(-): \Jcal^\CC \longrightarrow \Top_*$, and its $\Cmr_2$-fixed point functor is $\BO(-): \Jcal^\RR \longrightarrow \Top_*$. 
\end{ex}
\subsection*{Future directions. }  \

In \Cref{main:poly} and \Cref{main3}, we see restrictions to finite sums of $1$-dimensional representations. A part of these  restrictions are due to technical reasons as our key results such as  \Cref{thm:SC1} and \Cref{lem:polyplus1} do not seem to generalize if $\upepsilon$ has dimension bigger than $1$. These restrictions are also imposed due to the philosophy that the role of `direction'  in Weiss calculus is played by $1$-dimensional subspaces. It will be  interesting to see if it is possible to overcome these technical and philosophical barriers  so that the $\upalpha$-homogeneous layer is a genuine $\Gmr$-equivariant infinite loop space  classified using a genuine $\Gmr$-spectrum.

In this paper, we restrict our attention to finite groups to make sure that the regular representation is finite dimensional. However, it will be of great interest to see if the equivariant Weiss calculus presented in this paper can be extended to compact Lie groups, and more generally, in the framework of global  homotopy theory \cite{Global}. In the nonequivariant case, one can compare Weiss calculus with  Goodwillie calculus (see \cite{GDK}).  Thus, one may also ask if our equivariant Weiss calculus can be related to equivariant Goodwillie calculus of \cite{Dotto}.

A significant portion of the literature on Weiss Calculus is dedicated to identifying derivatives at $\infty$ of important Weiss functors (as in \eqref{eq:exF}) \cite{WW88, WW89,  Arone02, Arone09, RandalDisc, KRW}.  We are cautiously optimistic that the work of Arone \cite{Arone02} can be generalized to identify the derivatives of equivariant Weiss functors such as ${\bf B_\Gmr U}, {\bf B_{\Gmr} O}$, and ${\bf BU}_\RR$. Such results will lead to enumeration of unstable equivariant vector bundles  extending the recent results of \cite{Yang23, CHO} to equivariant homotopy theory.  

 An important result in the nonequivariant Weiss theory is the calculation of the first derivative of $\Bmr\mr{Top}(-)$. The classical work of Waldhausen \cite{Wald} shows that the first derivative of $\Bmr\mr{Top}(-)$ is the  Waldhausen $\Amr$-theory of a point. In 2019, Malkiewich and Merling \cite{MM} constructed a genuine equivariant  Wadhausen $\Amr$-theory functor 
\[ 
\begin{tikzcd}
{\bf A}_\Gmr(-): \Gmr\Top \rar & \Sp^{\Gmr},
\end{tikzcd}
 \]
 and therefore,  it is conceivable that there exists an incomplete version of equivariant $\Amr$-theory functor, say ${\bf A}_\Gmr'$, whose codomain is $\Sp^{\Gmr}_{(1)}$ (instead of $\Sp^{\Gmr}$). If so,  we expect: 
\begin{conj} ${\bf \Theta} {\bf B}_{\Gmr}{\bf Top}^{(1)}({\bf 0})$  is weakly  equivalent to ${\bf A}_{\Gmr}'(\ast)$. 
\end{conj}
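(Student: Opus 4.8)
The plan is to reduce the conjecture to a comparison of genuine incomplete $\Gmr$-spectra, and then to supply the geometric content through an equivariant refinement of Waldhausen's stable parametrized $h$-cobordism theorem. As a preliminary one fixes a model for ${\bf A}_{\Gmr}'$, say the composite of the genuine equivariant $\mathrm{A}$-theory functor ${\bf A}_\Gmr\colon\Gmr\Top\to\Sp^\Gmr$ of Malkiewich and Merling \cite{MM} with the change-of-universe functor $\Sp^\Gmr\to\Sp^\Gmr_{(1)}$, and records the genuine fixed points ${\bf A}_{\Gmr}'(\ast)^\Kmr$ for every $\Kmr\subset\Gmr$ by means of an equivariant tom Dieck-type splitting of equivariant $\mathrm{A}$-theory; these fixed points are exactly the data that \Cref{main:fix} forces one to match.

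The heart of the argument is the identification of the first derivative at $\infty$ through its layers. By \Cref{main3} with $\upalpha=1$, the layer ${\bf L}_1{\bf B}_{\Gmr}{\bf Top}$ is a pointwise equivariant infinite loop space determined by $\hat{\bf\Theta}{\bf B}_{\Gmr}{\bf Top}^{(1)}$, and by \Cref{main:res} and \Cref{main:fix} the system ${\bf\Theta}{\bf B}_{\Gmr}{\bf Top}^{(1)}$ is determined by its restrictions $\upiota_\Kmr$ and its genuine fixed points over all $\Kmr\subset\Gmr$. It therefore suffices, for each such $\Kmr$ and each $\Hmr\subset\Kmr$, to identify the stabilization in $\Vmr$ of the homotopy fibre of
\[ ({\bf B}_{\Gmr}{\bf Top})_{\Gmr/\Hmr}(\Vmr)\longrightarrow({\bf B}_{\Gmr}{\bf Top})_{\Gmr/\Hmr}(\Vmr\oplus\upepsilon), \]
where the source is the classifying space of \emph{nonequivariant} origin-preserving homeomorphisms of $\Vmr$ with the conjugation $\Hmr$-action. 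Taking $\Hmr$-fixed points as in \Cref{main:fix} turns this into a statement about spaces of $\Hmr$-equivariant homeomorphisms of representations, and there one runs the equivariant analogue of Weiss's analysis of $B\mathrm{Top}$ via equivariant smoothing theory and handle decompositions: the stabilized relative homeomorphism space is, up to the expected shift, the $\Hmr$-equivariant stable parametrized $h$-cobordism space of a point, whose iterated deloopings assemble into ${\bf A}_{\Gmr}'(\ast)$ through an incomplete-universe version of the equivariant manifold model for $\mathrm{A}$-theory. Throughout, one uses that the discrepancy between ${\bf B}_{\Gmr}{\bf Top}^\Gmr$ and ${\bf B}_{\Wmr(\Gmr)}{\bf Top}$ is a matter of path components only --- hence concentrated in ${\bf T}_{{\bf 0}}$ and invisible to ${\bf L}_1$ and to ${\bf\Theta}(-)^{(1)}$ --- exactly as in the case of ${\bf B}_\Gmr{\bf O}$. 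Having matched $\upiota_\Kmr{\bf\Theta}{\bf B}_{\Gmr}{\bf Top}^{(1)}({\bf 0})$ with $\upiota_\Kmr{\bf A}_{\Gmr}'(\ast)$ compatibly in $\Kmr$ and compatibly with the residual $\mathrm{O}(1)$-action (which corresponds to the duality involution on $\mathrm{A}$-theory), one concludes the asserted equivalence of systems, hence of the underlying objects of $\Sp^\Gmr_{(1)}$.

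The main obstacle is the geometric input just invoked: there is at present no equivariant parametrized $h$-cobordism theorem, and even formulating the correct statement for homeomorphism spaces of a representation carrying a conjugation action --- whose fixed-point spaces mix manifolds of varying isotropy --- requires a Weiss-tower-aware, $\Gmr$-genuine refinement of the work of Waldhausen \cite{Wald} and of Waldhausen--Jahren--Rognes. A secondary and logically prior difficulty is the very existence of an incomplete equivariant $\mathrm{A}$-theory functor ${\bf A}_{\Gmr}'$ with codomain $\Sp^\Gmr_{(1)}$ and the expected fixed points; establishing that the first derivative at $\infty$ of ${\bf B}_{\Gmr}{\bf Top}$ genuinely lands in $\Sp^\Gmr_{(1)}$ and no finer, and agrees with this invariant rather than with a genuine refinement of it, is itself part of the content of the conjecture.
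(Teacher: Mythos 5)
This statement is left as a conjecture in the paper, under a ``Future directions'' heading; the paper offers no proof, and in fact flags that even the existence of the target ${\bf A}_\Gmr'$ is hypothetical (``it is conceivable that there exists an incomplete version of equivariant $\Amr$-theory functor, say ${\bf A}_\Gmr'$ \dots\ If so, we expect''). There is therefore no proof in the paper against which to compare your attempt.

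Your proposal is a roadmap, not a proof, and you say so yourself: the two obstacles you name --- the absence of an equivariant stable parametrized $h$-cobordism theorem, and the absence of a construction of ${\bf A}_\Gmr'$ with the requisite fixed points --- are exactly the missing content, and neither is available in the literature. The outline therefore relocates the conjecture rather than resolving it. Within the formal part of your reduction there is an additional subtlety you gloss over. By \Cref{main:res}(3), the comparison between $\sfi^\Kmr_*\upiota_\Kmr{\bf\Theta}\Emr^{(\upalpha)}$ and ${\bf\Theta}\upiota_\Kmr(\Emr^{(\upalpha)})$ is an isomorphism only under the hypothesis $\upiota_\Kmr\Ucal_\Gmr^{(1)}\cong\Ucal_\Kmr^{(1)}$, which can fail (the paper's own footnote gives $\Cmr_2\subset\Cmr_4$); and \Cref{main:fix}(3) yields only a zig-zag of maps, not an equivalence. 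So the claim that ${\bf\Theta}{\bf B}_\Gmr{\bf Top}^{(1)}$ ``is determined by its restrictions and genuine fixed points'' is not a consequence of the paper's comparison theorems; you would need to establish that the relevant maps are equivalences for ${\bf B}_\Gmr{\bf Top}$ specifically, which is itself nontrivial. The one part of your outline that is unproblematic is the observation that the discrepancy between ${\bf B}_\Gmr{\bf Top}^\Gmr$ and the nonequivariant $\mathbf{BTop}$ is concentrated in ${\bf T}_{\bf 0}$ and hence invisible to the first derivative at $\infty$; this parallels the paper's treatment of ${\bf B}_\Gmr{\bf O}^\Gmr$ and is a reasonable reduction. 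But the geometric heart --- equivariant smoothing theory, handle decompositions, and the identification of the stabilized relative homeomorphism space with an equivariant $h$-cobordism space feeding into an incomplete-universe model of equivariant $\mathrm{A}$-theory --- is wholly speculative, and until those inputs exist the conjecture remains open.
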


\subsection*{Acknowledgements.} The authors are grateful to  S\o{}ren Galatias, Gijs Heuts, Ishan Levy, Connor Malin, Mike Mandell,  Niall Taggart, and Foling Zou for helpful discussions, and indebted to {\it Algebraic structures in Topology conference} at San Jaun (2024)  and {\it International Workshop on Algebraic Topology}  at  Shanghai (2024) which facilitated some of these interactions. 

The authors would  like to acknowledge that they have greatly benefitted from the thesis work of Emel Yavuz \cite{Y24}. It contains several breakthrough ideas for $\Gmr=\Cmr_2$ that have been adopted for the general case in this paper. 

Last but not least, the authors are grateful to Michael Weiss for introducing this beautiful subject of Weiss Calculus.

This research is supported by NSF grant DMS-2305016. 
\subsection*{Organization of the paper.} In \Cref{sec:TaylowTower},  we first introduce Taylor approximations  of equivariant Weiss functors  and prove \Cref{main:tower}. Then we compare the Taylor approximations across restrictions and fixed-points.

In \Cref{sec:Weissder}, we introduce derivatives of equivariant Weiss functors and discuss the effect of restriction and fixed-point functors, thereby completing the proof of \Cref{main:res} and \Cref{main:fix}. 

In \Cref{sec:SC}, we discuss equivariant Stiefel combinatorics which is the technical core of this paper. In \Cref{sec:poly}, we introduce the notion of  polynomial functors and  prove \Cref{main:poly}. 

In \Cref{sec:homo}, we define layers of equivariant Weiss functors, introduce the notion of homogeneity,  and prove equivariant analogs of two important results, namely,  the classification of stable symmetric objects (\Cref{thm:stablesymmetric}) and the classification of homogeneous functors (\Cref{thm:classification}), which leads us to the proof of \Cref{main3}. 

\section{Equivariant  Taylor approximations} \label{sec:TaylowTower}
A morphism $\Emr: \Jfrak_{\Gmr} \longrightarrow \Tfrak_\Gmr$ 
in ${\bf Fun}( \Ocal_{\Gmr}^{\mr{op}}, \Cat)$ consists of, for each subgroup $\Hmr \subset \Gmr$, a functor
\[
\begin{tikzcd}
\Emr_{\Gmr/\Hmr}: \Rcal\mr{ep}_\Hmr \rar & \Hmr\Top, 
\end{tikzcd} 
\]
 which are compatible with restrictions, i.e., the diagram 
\[ 
\begin{tikzcd}
\Rcal\mr{ep}_\Hmr \rar["\Emr_{\Gmr/\Hmr}"] \dar["\upiota_{\Kmr}"] & \Hmr\Top \dar["\upiota_{\Kmr}"]\\
\Rcal\mr{ep}_\Kmr  \rar["\Emr_{\Gmr/\Hmr}"] & \Kmr\Top
\end{tikzcd}
\]
commutes whenever $\Kmr$ is a subgroup of  $ \Hmr$  up to a conjugation (within $\Gmr$).

\begin{notn} \label{notn:V} For a subgroup $\Hmr \subset \Gmr$ and  $\Wmr \in \Rcal\mr{ep}_\Hmr$, we let
\begin{itemize}
\item  $\Vscr(\Wmr)$ denote the $\Hmr$-category whose objects are  sub-vector spaces  of  $\Wmr$ and morphisms are  linear isometries. 
\item  $\Vscr^{\circ}(\Wmr)$ denote the full subcategory of nonzero vector spaces in  $\Vcal(\Wmr)$. 
\end{itemize}
Note that the objects of $\Vscr(\Wmr)$ fixed by $\Hmr$ are precisely the sub $\Hmr$-representations and the undercategory $\Wmr/_{\Rcal\mr{ep}_{\Hmr}}$ is a full subcategory of $\Vscr(\Wmr)$.
\end{notn}

Let $\widehat{\Emr}_{\Wmr}$ denote the  right Kan extension of  $\Emr_{\Gmr/\Hmr}$  to the undercategory $\Wmr/_{\Rcal\mr{ep}_{\Hmr}}$ 
\[ 
\begin{tikzcd}
\Wmr/_{\Rcal\mr{ep}_{\Hmr}} \dar[hook'] \ar[rr, "\Emr_{\Gmr/\Hmr}"] && \Hmr\Top   \\
\Vscr(\Wmr) \ar[urr, dashed, "\widehat{\Emr}_{\Wmr}"']
\end{tikzcd}
\]
 to $\Vscr(\Wmr)$. When $\Wmr$ is a sub $\Hmr$-representation of $\Wmr'$, $\Vscr(\Wmr)$ embeds fully and faithfully in  $\Vscr(\Wmr')$, and therefore, 
\begin{equation}
\wh{\Emr}_{\Wmr'}(\Umr) = \wh{\Emr}_{\Wmr}(\Umr)
\end{equation}
for all $\Umr \in \Vscr(\Wmr)$. Thus, $\wh{\Emr}_{\Wmr}(\Umr)$ is independent of the choice of $\Wmr \supset \Umr$, and therefore, one may define
\[ \wh{\Emr}_{\Gmr/\Hmr}(\Umr) := \wh{\Emr}_{\Wmr}(\Umr)   \]
which is independent of the choice of  $\Wmr \supset \Umr$.  
\begin{rmk} Alternatively, one may consider the $\Hmr$-category $\Vscr_{\Hmr}$ of finite dimensional sub-vector spaces of the complete $\Hmr$-universe $\Ucal_{\Hmr}$ (in which morphisms are linear isometries), and define $\wh{\Emr}_{\Gmr/\Hmr}$ as the right Kan extension
\begin{equation} \label{Ehat}
\begin{tikzcd}
\Rcal\mr{ep}_{\Hmr} \dar[hook', "{\sf e}_\Hmr"'] \ar[rr, "\Emr_{\Gmr/\Hmr}"] && \Hmr\Top   \\
\Vscr_{\Hmr} \ar[urr, dashed, "\widehat{\Emr}_{\Gmr/\Hmr}"']
\end{tikzcd}
\end{equation}
for all subgroup $\Hmr$ of $\Gmr$. 
\end{rmk}
\begin{rmk} Let $\wh{\Jfrak}_{\Gmr}: \Ocal^{\mr{op}}_{\Gmr} \longrightarrow \Cat$ denote the functor that sends $\Hmr$ to $\Vscr_{\Hmr}$. Then 
$\{ \wh{\Emr}_{\Gmr/\Hmr} : \Hmr \subset \Gmr\}$  assembles to form a $1$-morphism 
\[ 
\wh{\Emr}: \wh{\Jfrak}_{\Gmr} \longrightarrow \Tfrak_\Gmr
\]
in ${\bf Fun}( \Ocal_{\Gmr}^{\mr{op}}, \Cat)$. 
\end{rmk}

\begin{defn} \label{defn:tau} Suppose  $\upalpha \in \Rep_\Gmr$   and $ \Emr: \Jfrak_{\Gmr} \longrightarrow \Tfrak_\Gmr$ is an object in ${\bf Fun}( \Ocal_{\Gmr}^{\mr{op}}, \Cat)$. Then define  
\[ 
\begin{tikzcd}
\uptau_{\upalpha} \Emr: \Jfrak_{\Gmr} \rar & \Tfrak_\Gmr
\end{tikzcd}
\]
using the formula 
\begin{equation} \label{eqn:taudefn}
\begin{tikzcd}
(\uptau_{\upalpha}\Emr)_{\Gmr/\Hmr}(\Wmr):= \underset{\Umr \in \Vscr^{\circ}_{\Hmr}(\upiota_{\Hmr}(\upalpha))} \holim \wh{\Emr}_{\Gmr/\Hmr}(\Wmr \oplus \Umr),
\end{tikzcd}
\end{equation}
where $\Wmr \in \Rep_{\Hmr}$ and the action of $\Hmr$ is determined using  \Cref{rmk:actionholim}. 
\end{defn}

\begin{rmk} \label{rmk:actionholim} Suppose $\Ccal$ is a small $\Gmr$-category then the  homotopy limit of 
\[
\begin{tikzcd}
 \Fmr: \Ccal \rar &  \Gmr \Top
 \end{tikzcd}
  \] is define as the totalization of the cosimplical object that sends ${\bf n} \in \Delta$ to 
\begin{equation*}
\begin{tikzcd}
\underset{c_0 \to \dots \to c_n}{\prod}\Fmr(c_n) 
\end{tikzcd}
\end{equation*}
on which the action of ${\sf g} \in \Gmr$  is given by the formula
\[ 
\begin{tikzcd}
{\sf g}^{-1} \cdot \left( \underset{c_0 \to \dots \to c_n}{\prod}\Fmr(c_n) \right) = \underset{\sfg \cdot c_0 \to \dots \to \sfg \cdot c_n}{\prod}\sfg^{-1}  \cdot \Fmr(\sfg \cdot c_n). 
\end{tikzcd}
\]
In particular, we note that the action on the homotopy limit of $\Fmr$ depends on the action of $\Gmr$ on $\Ccal$. 
\end{rmk}

 It follows from \eqref{eqn:taudefn} that there exist natural maps 
\begin{equation} \label{etapointwise}
\begin{tikzcd} 
\eta^{\Emr}_{\upalpha, \Wmr}: \Emr_{\Gmr/\Hmr}(\Wmr) \rar & (\uptau_{\upalpha }\Emr)_{\Gmr/\Hmr}(\Wmr)
\end{tikzcd}
 \end{equation}
which assemble to form a  $2$-morphism 
\begin{equation} \label{eta}
\begin{tikzcd}
\eta^{\Emr}_{\upalpha}: \Emr \rar[] &  \uptau_{\upalpha}\Emr 
\end{tikzcd}
\end{equation}
in ${\bf Fun}( \Ocal_{\Gmr}^{\mr{op}}, \Cat)$.

We define $ \uptau_{\upalpha }^{(n)}\Emr $ inductively using the formula 
\[  \uptau_{\upalpha }^{(n)}\Emr := \uptau_{\upalpha }( \uptau_{\upalpha }^{(n-1)}\Emr)  \]
 and apply  $\uptau_{\upalpha}^{(n)}(-)$ to \eqref{eta} to make the following definition. 
\begin{defn} \label{defn:Taylor} Suppose $\upalpha$ and $\upepsilon$ are $\Gmr$-representations  such that  $|\upepsilon| = 1$.  Then define the {\bf $\upalpha$-th Taylor approximation} of $\Emr: \Jfrak_\Gmr \longrightarrow \Tfrak_\Gmr$ {\bf in the direction of $\upepsilon$} as the $1$-morphism 
\[ 
\begin{tikzcd}
{\bf T}_{(\upalpha, \upepsilon)} \Emr : \Jfrak_\Gmr \rar & \Tfrak_\Gmr
\end{tikzcd}
\]
given by  the formula
\[ 
({\bf T}_{(\upalpha, \upepsilon)} \Emr)_{\Gmr/\Hmr}(\Wmr) := \hocolim_{n \in \NN}    \uptau_{\upalpha \oplus \upepsilon}^{(n)}\Emr_{\Gmr/\Hmr}(\Wmr),
\]
where $\Wmr \in \Rep_\Hmr$. 
\end{defn}

\bigskip
\begin{proof}[{\bf Proof of \Cref{main:tower}}] It is clear from the construction of  ${\bf T}_{(\upalpha, \upepsilon)}$ in  \Cref{defn:Taylor} that there exist $2$-morphisms
\[ \begin{tikzcd}
\upeta_{\alpha, \upepsilon}^{\Emr}:  \Emr \rar & {\bf T}_{(\upalpha, \upepsilon)}\Emr
\end{tikzcd} \]
induced by  \Cref{eta}. When $\upalpha'$ is a sub $\Gmr$-representation of $\upalpha$, then  $\Vscr^{\circ}(\upiota_\Hmr\upalpha')$ is a subcategory of  $\Vscr^{\circ}(\upiota_\Hmr\upalpha)$, consequently there is a natural map 
\[ 
\begin{tikzcd}
\eta_{\alpha \oplus \epsilon, \alpha' \oplus \epsilon}: (\uptau_{\upalpha \oplus \upepsilon}\Emr)_{\Gmr/\Hmr}(\Wmr) \rar & (\uptau_{\upalpha \oplus \upepsilon}\Emr)_{\Gmr/\Hmr}(\Wmr)
\end{tikzcd}
\]
compatible with the restriction map. This leads to a $2$-morphism
\[ 
\begin{tikzcd}
\upeta^{\Emr}_{\upalpha, \upalpha', \upepsilon}: {\bf T}_{(\alpha, \upepsilon)} \Emr \rar & {\bf T}_{(\alpha', \upepsilon)}\Emr, 
\end{tikzcd}
\]
and the equations
\[  \upeta^{\Emr}_{\upalpha', \upepsilon} = \upeta^{\Emr}_{\upalpha, \upalpha', \upepsilon} \circ \upeta^{\Emr}_{\upalpha, \upepsilon}\]
\[ \upeta^{\Emr}_{\upalpha'', \upalpha', \upepsilon} \circ  \upeta^{\Emr}_{\upalpha, \upalpha', \upepsilon}  = \upeta^{\Emr}_{\upalpha, \upalpha'',\upepsilon} \]
can be readily verified. 
\end{proof}

\bigskip

\subsection{Restrictions of equivariant Taylor approximations} \label{subsec:restower}\

For a subgroup $\Kmr \subset \Gmr$,  there is an obvious functor 
\begin{equation} \label{resfunK}
\begin{tikzcd}
{\upiota}_{\Kmr}:\Ocal^{\mr{op}}_{\Kmr } \rar & \Ocal^{\mr{op}}_{\Gmr }
\end{tikzcd}
\end{equation}
 sending $\Kmr/\Hmr$ to $\Gmr/\Hmr$. It is easy to see that 
 \begin{itemize}
 \item $\Jfrak_\Gmr \circ \upiota_{\Kmr} = \Jfrak_{\Kmr}$,
 \item  $\Tfrak_\Gmr \circ \upiota_{\Kmr} = \Tfrak_{\Kmr}$, and
 \item  a $1$-morphism $\Emr:  \begin{tikzcd}
\Jfrak_{\Gmr}  \rar[]&  \Tfrak_{\Gmr} 
  \end{tikzcd}
$
  restricts to a $1$-morphism
$
  \begin{tikzcd}
  \upiota_{\Kmr}\Emr: \Jfrak_{\Kmr}  \rar[]&  \Tfrak_{\Kmr} 
  \end{tikzcd}
$
given by  
\begin{equation} \label{resformula}
(\upiota_{\Kmr}\Emr_{\Kmr/\Hmr})(\Wmr) :=  \Emr_{\Gmr/\Hmr}(\Wmr)
\end{equation}
 for $\Wmr \in \Rcal\mr{ep}_{\Hmr}$ and  $\Hmr \subset \Kmr$.
\end{itemize}
\begin{lem} \label{lem:restower} Suppose $\upalpha$ and $\upepsilon$ are $\Gmr$-representations  such that  $|\upepsilon| =1$. Then the  diagram 
\[ 
\begin{tikzcd} 
 \Morone(\Jfrak_\Gmr, \Tfrak_{\Gmr})\ar[rr, "{\bf T}_{(\upalpha, \upepsilon)}"] \dar[" \upiota_{\Kmr}"'] && \Morone(\Jfrak_\Gmr, \Tfrak_{\Gmr})\dar[" \upiota_{\Kmr}"] \\
\Morone(\Jfrak_\Kmr, \Tfrak_{\Kmr}) \ar[rr, "{\bf T}_{(\upiota_{\Kmr}\upalpha, \upiota_{\Kmr}\upepsilon)}"'] && \Morone(\Jfrak_\Kmr, \Tfrak_{\Kmr})
\end{tikzcd}
\]
commutes, i.e., $ \upiota_{\Kmr}{\bf T}_{(\upalpha, \upepsilon)} \Emr = {\bf T}_{(\upiota_{\Kmr}\upalpha, \upiota_{\Kmr}\upepsilon)} \upiota_{\Kmr}\Emr $ 
for any $1$-morphism $\Emr: \Jfrak_{\Gmr} \longrightarrow \Tfrak_{\Gmr}$.
\end{lem}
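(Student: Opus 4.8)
The plan is to trace through the construction of \({\bf T}_{(\upalpha, \upepsilon)}\) step by step and check that each ingredient is preserved by \(\upiota_{\Kmr}\). The key structural observation — already recorded in the excerpt — is that \(\Jfrak_\Gmr \circ \upiota_{\Kmr} = \Jfrak_{\Kmr}\) and \(\Tfrak_\Gmr \circ \upiota_{\Kmr} = \Tfrak_{\Kmr}\), and that restriction along \eqref{resfunK} is literally precomposition of presheaves on orbit categories. Since \({\bf T}_{(\upalpha, \upepsilon)}\Emr\) is built out of \(\uptau_{\upalpha \oplus \upepsilon}^{(n)}\) and a homotopy colimit over \(n\), it suffices to prove the single equation \(\upiota_{\Kmr}(\uptau_{\upbeta}\Emr) = \uptau_{\upiota_{\Kmr}\upbeta}(\upiota_{\Kmr}\Emr)\) for an arbitrary \(\Gmr\)-representation \(\upbeta\) (we will apply it with \(\upbeta = \upalpha \oplus \upepsilon\)); the compatibility with the inductive iteration \(\uptau^{(n)}_\upbeta\) and with \(\hocolim_{n}\) is then formal, as \(\upiota_{\Kmr}\) is evaluation-wise an identity on the relevant functor values and commutes with the sequential homotopy colimit computed pointwise in \(\Hmr\Top_*\).

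The first step is to unwind definitions at an orbit \(\Kmr/\Hmr\) with \(\Hmr \subset \Kmr\). By \eqref{resformula}, \((\upiota_{\Kmr}\Emr)_{\Kmr/\Hmr} = \Emr_{\Gmr/\Hmr}\) as functors \(\Rcal\mr{ep}_\Hmr \to \Hmr\Top_*\). The second step is to observe that the right Kan extension \(\wh{(-)}\) of \eqref{Ehat} depends only on the functor \(\Emr_{\Gmr/\Hmr}\) out of \(\Rcal\mr{ep}_\Hmr\) together with the ambient \(\Hmr\)-category \(\Vscr_\Hmr\) of finite-dimensional subspaces of \(\Ucal_\Hmr\); neither of these sees whether \(\Hmr\) was regarded as a subgroup of \(\Kmr\) or of \(\Gmr\). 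Hence \(\wh{(\upiota_{\Kmr}\Emr)}_{\Kmr/\Hmr} = \wh{\Emr}_{\Gmr/\Hmr}\). The third step is the defining formula \eqref{eqn:taudefn}: both sides of the claimed identity evaluate at \(\Wmr \in \Rcal\mr{ep}_\Hmr\) to
\[
\underset{\Umr \in \Vscr^{\circ}_{\Hmr}(\upiota_\Hmr \upbeta)}{\holim}\ \wh{\Emr}_{\Gmr/\Hmr}(\Wmr \oplus \Umr),
\]
since \(\upiota_\Hmr(\upiota_{\Kmr}\upbeta) = \upiota_\Hmr(\upbeta)\) — restricting \(\upbeta\) first to \(\Kmr\) and then to \(\Hmr\) is the same as restricting directly to \(\Hmr\) — so the indexing \(\Hmr\)-category \(\Vscr^\circ_\Hmr(\upiota_\Hmr\upbeta)\) is identical, with the identical \(\Hmr\)-action (Remark on the action on homotopy limits depends only on the \(\Hmr\)-action on the indexing category, which is unchanged), and the diagram being taken \(\holim\) of is identical by Step 2. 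The \(\Hmr\)-action on the result coincides on both sides for the same reason. The fourth step is to check that these identifications are natural in \(\Wmr\) and compatible with the restriction-to-a-further-subgroup structure maps of the presheaves, so that the identity holds at the level of \(1\)-morphisms in \({\bf Fun}(\Ocal^{\mr{op}}_{\Kmr}, \Cat)\), not merely orbit-by-orbit; this is immediate because all maps in sight are the structural maps of the original \(\Emr\), merely re-indexed along \(\upiota_{\Kmr}\).

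I do not expect a genuine obstacle here: the statement is essentially a bookkeeping lemma expressing that the entire construction of the Taylor approximation is natural in the ambient group in the strong (strict) sense. The only point requiring a little care — and the closest thing to a "hard part" — is the third step, namely confirming that the \emph{\(\Hmr\)-equivariant} structure on the homotopy limit \eqref{eqn:taudefn}, as spelled out in \Cref{rmk:actionholim}, is genuinely intrinsic to the \(\Hmr\)-category \(\Vscr^\circ_\Hmr(\upiota_\Hmr\upbeta)\) and the \(\Hmr\)-diagram \(\wh{\Emr}_{\Gmr/\Hmr}(\Wmr \oplus -)\), and does not secretly depend on an enveloping \(\Gmr\)- or \(\Kmr\)-structure. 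Once one notes that \(\upiota_\Hmr \upbeta\) and hence the \(\Hmr\)-action on \(\Vscr^\circ_\Hmr(\upiota_\Hmr\upbeta)\) is unchanged under first restricting \(\upbeta\) along \(\upiota_{\Kmr}\), this concern evaporates and the equality of \(1\)-morphisms \(\upiota_{\Kmr}{\bf T}_{(\upalpha, \upepsilon)}\Emr = {\bf T}_{(\upiota_{\Kmr}\upalpha, \upiota_{\Kmr}\upepsilon)}\upiota_{\Kmr}\Emr\) follows, which is exactly the commutativity of the displayed square.
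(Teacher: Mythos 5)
Your proposal is correct and follows essentially the same line of reasoning as the paper's own proof: identify $\wh{(\upiota_{\Kmr}\Emr)}_{\Kmr/\Hmr}$ with $\wh{\Emr}_{\Gmr/\Hmr}$ via the Kan-extension description over $\Vscr_\Hmr$, observe that $\upiota_\Hmr(\upiota_\Kmr\upbeta)=\upiota_\Hmr\upbeta$ so the indexing category and $\Hmr$-equivariant holim in \eqref{eqn:taudefn} literally coincide, and conclude by noting restriction commutes with the iterated $\uptau$'s and the sequential hocolim. The only slight difference is that the paper also explicitly records the agreement of the comparison maps $\eta^{\upiota_\Kmr\Emr}_{(\upiota_\Hmr\upalpha,\upiota_\Hmr\upepsilon)}=\upiota_\Kmr\eta^{\Emr}_{(\upalpha,\upepsilon)}$ before invoking the hocolim, which you fold into your "all maps in sight are the structural maps of $\Emr$" remark.
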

\begin{proof} Since $\wh{\Jfrak}_{\Kmr}(\Kmr/\Hmr) = \Vscr_{\Hmr} = \wh{\Jfrak}_{\Gmr}(\Gmr/\Hmr)$, it follows from  \eqref{Ehat} and \eqref{resformula} that 
\[ \upiota_{\Kmr} \wh{\Emr}_{\Kmr/\Hmr}(\Wmr) = \wh{\Emr}_{\Gmr/\Hmr}(\Wmr) \]
for all $\Wmr \in \Vscr_{\Hmr}$. Consequently, 
\begin{eqnarray*}
(\uptau_{(\upiota_\Hmr \upalpha, \upiota_\Hmr \upepsilon)} \upiota_\Kmr \Emr)_{\Kmr/\Hmr}(\Wmr) &=&  \underset{\Umr \in \Vscr^{\circ}_{\Hmr}(\upiota_{\Kmr}(\upalpha \oplus \upepsilon))} \holim (\upiota_{\Kmr}\wh{\Emr})_{\Kmr/\Hmr}(\Wmr \oplus \Umr) \\
&=& \underset{\Umr \in \Vscr^{\circ}_{\Hmr}(\upiota_{\Hmr}(\upalpha \oplus \upepsilon))} \holim \wh{\Emr}_{\Gmr/\Hmr}(\Wmr \oplus \Umr) \\
&=& \uptau_{(\upalpha, \upepsilon)} \Emr_{\Gmr/\Hmr} (\Wmr) \\
&=& \upiota_{\Kmr}(\uptau_{(\upalpha, \upepsilon)} \Emr)_{\Gmr/\Hmr} (\Wmr)
\end{eqnarray*}
and  
$ \eta^{\upiota_{\Kmr}\Emr}_{(\upiota_\Hmr\upalpha, \upiota_\Hmr\upepsilon)}(\Wmr) = \upiota_{\Kmr}(\eta^{\Emr}_{(\upalpha, \upepsilon)}(\Wmr)) $. Since restrictions commute with homotopy colimits, the result follows. 
\end{proof}
\subsection{Fixed-points of equivariant  Taylor approximations} \label{subsec:fixtower} \

For a subgroup $\Kmr \subset \Gmr$, the $\Kmr$-fixed points are constructed by first restricting to the normalizer subgroup $\Nmr(\Kmr) \subset \Gmr$, and then taking the fixed points. Therefore, without loss of generality we assume $\Kmr$ to  be a normal subgroup of $\Gmr$, so that  the quotient $\overline{\Gmr} := \Gmr/\Kmr$ is the Weyl group $\Wmr(\Kmr)$. 

Let 
$
\uppi: \Gmr \twoheadrightarrow \br{\Gmr} 
$
denote the quotient map. Then there is a functor between the orbit categories 
\[ 
\begin{tikzcd}
\upphi: \Ocal^{\mr{op}}_{\br{\Gmr}} \rar & \Ocal^{\mr{op}}_{\Gmr} 
\end{tikzcd}
\]
sending $\br{\Gmr}/\br{\Hmr} \mapsto \Gmr/ \Hmr $, where $\Hmr = \uppi^{-1}(\br{\Hmr})$.

 Given an $\Hmr$-space $\Xmr$ for $\Hmr \supset \Kmr$,  its  $ \Kmr$-fixed points $\Xmr^{\Kmr}$ is an $\br{\Hmr}$-space, and for  an $\overline{\Hmr}$-space  $\Ymr$, $\uppi^{\ast}\Ymr$ is an $\Hmr$-space with trivial action of $\Kmr$. Thus,  in ${\bf Fun}( \Ocal_{\br{\Gmr}}^{\mr{op}}, \Cat)$, there exist $1$-morphisms 
\[ \begin{tikzcd}
\uppi^*: \Jfrak_{\br{\Gmr}} \rar[shift right] &\lar[shift right] \upphi^*\Jfrak_{\Gmr} : (-)^{\Kmr}
\end{tikzcd} \text{ and }
\begin{tikzcd}
\Emr^{\Kmr}:\uppi^*: \Tfrak_{\br{\Gmr}} \rar[shift right] &\lar[shift right] \upphi^*\Tfrak_{\Gmr} : (-)^{\Kmr}
\end{tikzcd}
 \] 
such that $(\uppi^{*}(-))^{\Kmr}$ is the identity  in both cases. 
\begin{defn} \label{defn:Efix} Define the {\bf $\Kmr$-fixed point of the $\Gmr$-equivariant Weiss functor} $\Emr: \Jfrak_{\Gmr} \longrightarrow \Tfrak_{\Gmr}$  as the composite 
\[ 
\begin{tikzcd}
\Emr^{\Kmr}: \Jfrak_{\br{\Gmr}} \rar["\uppi^*"] &  \upphi^*\Jfrak_{\Gmr} \rar["\upphi^*\Emr"]  & \upphi^*\Tfrak_{\Gmr} \rar["(-)^\Kmr"] & \Tfrak_{\br{\Gmr}}
\end{tikzcd}
\]
in ${\bf Fun}( \Ocal_{\br{\Gmr}}^{\mr{op}}, \Cat)$. 
\end{defn}
\begin{rmk} An $\overline{\Hmr}$-representation $\Wmr$, for  $\br{\Hmr} \subset \br{\Gmr}$,  can be viewed as an $ \Hmr$-representation, where 
$\Hmr = \uppi^{-1}(\br{\Hmr})$. It is nothing but the pullback of $\Wmr$ along the quotient map $\Hmr \twoheadrightarrow \br{\Hmr}$ induced by $\uppi$. Therefore, we will denote it by $\uppi^*(\Wmr)$.  Then,  by \Cref{defn:Efix} of $\Kmr$-fixed point functor of 
 $\Emr$  is given by 
\[ \Emr^{\Kmr}_{\br{\Gmr}/\br{\Hmr}}(\Wmr) = \Emr_{\Gmr/\Hmr}\left(\uppi^{*}(\Wmr)\right)^\Kmr \]
 for any $\Wmr \in \Rep_\Hmr$. 
\end{rmk}
\begin{lem} \label{lem:towerfix} Suppose $\upalpha$ and $\upepsilon$ are $\br{\Gmr}$-representations  such that  $\dim_\RR(\upepsilon) = 1$, then there exists  a $2$-morphism 
\begin{equation} \label{map:uplambda}
\begin{tikzcd}
\uplambda_{\upalpha, \upepsilon}^\Emr:{\bf T}_{(\upalpha, \upepsilon)}\Emr^{\Kmr}  \rar & \left({\bf T}_{(\uppi^*\upalpha, \uppi^*\upepsilon)} \Emr \right)^{\Kmr}
\end{tikzcd}
\end{equation}
for any $1$-morphism $\Emr: \Jfrak_{\Gmr} \longrightarrow \Tfrak_{\Gmr}$.
\end{lem}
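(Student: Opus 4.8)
The plan is to show that the $\Kmr$-fixed-point functor, which is a right adjoint, commutes with the homotopy limits defining $\uptau$ only laxly, and to extract $\uplambda_{\upalpha,\upepsilon}^\Emr$ from the resulting comparison map, then pass to the homotopy colimit over $n$.

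First I would unwind the two sides of \eqref{map:uplambda} at an object $\br\Gmr/\br\Hmr$ of $\Ocal^{\mr{op}}_{\br\Gmr}$, with $\Hmr=\uppi^{-1}(\br\Hmr)$ and $\Wmr\in\Rep_{\br\Hmr}$. By \Cref{defn:Taylor} and \Cref{defn:Efix}, the left side is $\hocolim_{n} \uptau_{\upalpha\oplus\upepsilon}^{(n)}\Emr^\Kmr_{\br\Gmr/\br\Hmr}(\Wmr)$, while the right side is $\bigl(\hocolim_{n}\uptau_{\uppi^*\upalpha\oplus\uppi^*\upepsilon}^{(n)}\Emr_{\Gmr/\Hmr}(\uppi^*\Wmr)\bigr)^\Kmr$. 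Since $(-)^\Kmr$ preserves filtered homotopy colimits (as $\Kmr$ is finite, this is the usual fact that fixed points commute with filtered colimits of spaces), it suffices to produce, for each $n$ compatibly, a natural map from $\uptau_{\upalpha\oplus\upepsilon}^{(n)}\Emr^\Kmr$ to $\bigl(\uptau_{\uppi^*(\upalpha\oplus\upepsilon)}^{(n)}\Emr\bigr)^\Kmr$. By the inductive definition $\uptau^{(n)}=\uptau\circ\uptau^{(n-1)}$, it is enough to treat $n=1$ and then iterate, using naturality of the $n=1$ comparison applied to the $2$-morphism at level $n-1$.

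The key step is therefore the $n=1$ case: I claim there is a natural transformation $\uptau_{\upalpha\oplus\upepsilon}(\Emr^\Kmr)\Rightarrow\bigl(\uptau_{\uppi^*(\upalpha\oplus\upepsilon)}\Emr\bigr)^\Kmr$. Write $\upbeta=\upalpha\oplus\upepsilon$. Unwinding, the target at $\br\Gmr/\br\Hmr$ applied to $\Wmr$ is
\[
\Bigl(\holim_{\Umr\in\Vscr^\circ_{\Hmr}(\upiota_\Hmr\uppi^*\upbeta)}\wh\Emr_{\Gmr/\Hmr}(\uppi^*\Wmr\oplus\Umr)\Bigr)^\Kmr,
\]
and since $(-)^\Kmr\colon\Hmr\Top_*\to\br\Hmr\Top_*$ is a right adjoint it commutes with $\holim$, so this equals $\holim_{\Umr\in\Vscr^\circ_{\Hmr}(\upiota_\Hmr\uppi^*\upbeta)}\bigl(\wh\Emr_{\Gmr/\Hmr}(\uppi^*\Wmr\oplus\Umr)\bigr)^\Kmr$. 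Meanwhile the source is $\holim_{\Umr'\in\Vscr^\circ_{\br\Hmr}(\upiota_{\br\Hmr}\upbeta)}\wh{\Emr^\Kmr}_{\br\Gmr/\br\Hmr}(\Wmr\oplus\Umr')$. Now there is a functor $\Vscr^\circ_{\br\Hmr}(\upiota_{\br\Hmr}\upbeta)\to\Vscr^\circ_{\Hmr}(\upiota_\Hmr\uppi^*\upbeta)$ given by $\Umr'\mapsto\uppi^*\Umr'$, landing in the $\Kmr$-fixed subcategory (since $\Kmr$ acts trivially on pullbacks), and the transition maps match; restricting the target $\holim$ along this functor and using that right Kan extension commutes with $\Kmr$-fixed points up to a natural comparison map (fixed points being a right adjoint, applied to the $\holim$ formula for the Kan extension in \eqref{Ehat}) produces the desired map. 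The compatibility with $\eta^\Emr_{(\upalpha,\upepsilon)}$ and with restrictions across $\Ocal^{\mr{op}}_{\br\Gmr}$ is then a formal naturality check, as is the assembly into a $2$-morphism in ${\bf Fun}(\Ocal^{\mr{op}}_{\br\Gmr},\Cat)$.

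**The main obstacle** I anticipate is being careful that the comparison map $\bigl(\operatorname{Ran}\Emr\bigr)^\Kmr\to\operatorname{Ran}(\Emr^\Kmr)$ between $\Kmr$-fixed points of a right Kan extension and the Kan extension of the $\Kmr$-fixed points goes in the direction claimed and is natural in all the relevant variables; because $(-)^\Kmr$ is a right adjoint it commutes with the $\holim$ computing the pointwise right Kan extension, but the indexing categories differ ($\Vscr_\Hmr$ versus $\Vscr_{\br\Hmr}$), so the map genuinely arises from restricting along $\uppi^*$ on indexing categories and there is no reason to expect an equivalence — only a map, which is exactly what the lemma asserts. A secondary point to watch is that $\Vscr^\circ_{\br\Hmr}(\upiota_{\br\Hmr}\upbeta)\to\Vscr^\circ_{\Hmr}(\upiota_\Hmr\uppi^*\upbeta)$ is not cofinal (the target has many more objects, e.g.\ subspaces not of the form $\uppi^*\Umr'$), which is precisely why we only get a lax comparison and not an equality, in contrast to the restriction case in \Cref{lem:restower}; I would flag this explicitly so the reader sees why the fixed-point statement is weaker.
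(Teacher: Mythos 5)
Your overall strategy --- reducing to each $\uptau^{(n)}$, then to $n=1$, commuting $(-)^\Kmr$ with the $\holim$ defining $\uptau$, and locating the laxity in a Kan-extension comparison --- does track the paper's approach. But there is a genuine error in your ``secondary point.'' You claim that $\uppi^*\colon\Vscr^\circ_{\br\Hmr}(\upiota_{\br\Hmr}(\upalpha\oplus\upepsilon))\to\Vscr^\circ_\Hmr(\upiota_\Hmr\uppi^*(\upalpha\oplus\upepsilon))$ is a non-cofinal inclusion into a strictly larger category, and you attribute the laxity of \eqref{map:uplambda} to this. In fact this functor is an \emph{isomorphism} of $\br\Hmr$-categories. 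By \Cref{notn:V}, the objects of $\Vscr^\circ(\Wmr)$ are \emph{all} nonzero sub-vector-spaces of the underlying vector space of $\Wmr$, not merely the sub-representations; the group action lives on the category, not as a restriction on which subspaces count. Since $\upiota_{\br\Hmr}(\upalpha\oplus\upepsilon)$ and $\upiota_\Hmr\uppi^*(\upalpha\oplus\upepsilon)$ have the same underlying vector space, and $\Kmr$ acts trivially on the latter so the $\Hmr$-action descends to $\br\Hmr$, the functor $\uppi^*$ is a bijection on objects and morphisms compatible with the $\br\Hmr$-actions. The paper uses this isomorphism explicitly (with a footnote) to identify the two $\holim$'s after taking $\Kmr$-fixed points --- that step is an isomorphism, not the lax step.

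The laxity of \eqref{map:uplambda} lives entirely in the Kan-extension step, which your ``main obstacle'' paragraph does flag: $\wh{\Emr^\Kmr}$ is a right Kan extension along $\sfe_{\br\Hmr}\colon\Rep_{\br\Hmr}\hookrightarrow\Vscr_{\br\Hmr}$, computed pointwise as a limit over comma categories $\Umr\downarrow\Rep_{\br\Hmr}$, whereas $\wh\Emr$ is extended along $\sfe_\Hmr$ and so uses the strictly larger $\uppi^*\Umr\downarrow\Rep_\Hmr$ (not every $\Hmr$-representation is pulled back along $\uppi$). The paper packages this as a single $2$-morphism $\lambda$ between $\wh{\Emr^\Kmr}$ and $\wh\Emr^\Kmr$, obtained from the identity $\wh\Emr^\Kmr\circ\sfe_{\br\Hmr}=\Emr^\Kmr$ together with the universal property of the right Kan extension, and then applies $\uptau_{(\upalpha,\upepsilon)}$ to $\lambda$ and passes to the sequential $\hocolim$. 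Finally, you assert that your Kan-extension comparison ``goes in the direction claimed'' without checking; given how easy it is to flip a variance here, you should unwind the universal property and confirm that the assembled $2$-morphism really points from ${\bf T}_{(\upalpha,\upepsilon)}\Emr^\Kmr$ to $({\bf T}_{(\uppi^*\upalpha,\uppi^*\upepsilon)}\Emr)^\Kmr$ rather than the reverse.
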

\begin{proof} Since $(\wh{\Emr^\Kmr})_{\br{\Gmr}/\br{\Hmr}}$ is the right Kan extension of  $\Emr_{\br{\Gmr}/\br{\Hmr} }^\Kmr$ along ${\sf e}_{\br{\Hmr}}:\Rep_{\br{\Hmr}}  \hookrightarrow  \Vscr_{\br{\Hmr}} $ (see  \eqref{Ehat}), and the fact that  
\[ 
\begin{tikzcd}
\Rep_{\br{\Hmr}} \dar["\sfe_{\br{\Hmr}}"', hook]  \rar["\uppi^*"] & \Rep_{\Hmr}  \rar["\Emr_{\Gmr/\Hmr}"] \dar["\sfe_{\Hmr}"', hook]  & \Hmr\Top \rar["(-)^{\Kmr}"] & \br{\Hmr}\Top \\
\Vscr_{\br{\Hmr}} \rar["\uppi^{*}"'] &\Vscr_{\Hmr} \ar[ur, "\wh{\Emr}_{\Gmr/\Hmr}"']
\end{tikzcd}
\]
\begin{eqnarray*}
 \wh{\Emr}_{\Gmr/\Hmr}^{\Kmr} ( {\sf e}_{\br{\Hmr}}(-)) &= & \left(  \wh{\Emr}_{\Gmr/\Hmr}( \uppi^*( {\sf e}_{\br{\Hmr}}(-))) \right)^\Kmr  \\
 &=& \left(  \wh{\Emr}_{\Gmr/\Hmr}(  {\sf e}_{\Hmr}(\uppi^*(-))) \right)^\Kmr \\
 &=& \left(  \Emr_{\Gmr/\Hmr}(  \uppi^*(-)) \right)^\Kmr \\
 &=& \Emr_{\br{\Gmr}/\br{\Hmr} }^\Kmr(-), 
\end{eqnarray*}
there exists a $2$-morphism 
\[ 
\begin{tikzcd}
\lambda: \wh{\Emr^{\Kmr}} \rar & \wh{\Emr}^{\Kmr}.
\end{tikzcd}
\]
This leads to us to the map $(\uptau_{(\upalpha, \upepsilon)} \lambda)_{\br{\Gmr}/\br{\Hmr}}(\Wmr)  $ which is the composite 
\[
\begin{tikzcd}
\left( \uptau_{(\upalpha, \upepsilon)}\Emr^\Kmr\right)_{\br{\Gmr}/\br{\Hmr}}(\Wmr)  \hspace{-2pt} :=  \underset{U \in \Vscr^{\circ}_{\br{\Hmr}}(\upiota_{\br{\Hmr}}(\upalpha \oplus \upepsilon)) }{\holim} (\wh{\Emr^\Kmr})_{\br{\Gmr}/\br{\Hmr}}(\Wmr \oplus \Umr) \dar \hspace{100pt}\\
\underset{\Umr \in \Vscr^{\circ}_{\br{\Hmr}}(\upiota_{\br{\Hmr}}(\upalpha \oplus \upepsilon)) }{\holim} \wh{\Emr}_{\Gmr/\Hmr}^{\Kmr} (\Wmr \oplus \Umr) \dar[equal]  \\
\underset{\Umr \in \Vscr^{\circ}_{\br{\Hmr}}(\upiota_{\br{\Hmr}}(\upalpha \oplus \upepsilon)) }{\holim} \left(\wh{\Emr}_{\Gmr/\Hmr} (\uppi^*\Wmr \oplus \uppi^*\Umr)\right)^{\Kmr} \dar[cong]  \\
\hspace{100pt}  \left(\underset{\Umr \in \Vscr^{\circ}_{\Hmr}(\upiota_\Hmr(\uppi^*\upalpha \oplus \uppi^*\upepsilon)) }{\holim} \wh{\Emr}_{\Gmr/\Hmr}(\uppi^*\Wmr \oplus \Umr) \right)^\Kmr =:  \left( \uptau_{(\uppi^*\upalpha, \uppi^*\upepsilon)}\Emr\right)_{\br{\Gmr}/\br{\Hmr}}^{\Kmr}(\Wmr), 
\end{tikzcd} 
 \] 
where the last isomorphism is a consequence of  the fact that \[ \uppi^*: \Vscr^{\circ}_{\br{\Hmr}}(\upiota_{\br{\Hmr}}(\upalpha \oplus \upepsilon) )\longrightarrow \Vscr^{\circ}_{\Hmr}(\upiota_\Hmr(\uppi^*\upalpha \oplus \uppi^*\upepsilon)) \] 
is an isomorphism of $\br{\Hmr}$-categories\footnote{Since  $\Kmr$ acts trivially on $\Vscr^{\circ}_{\Hmr}(\upiota_\Hmr(\uppi^*\upalpha \oplus \uppi^*\upepsilon)$,  it admits an action of $\br{\Hmr}$.}. 
Further, we have a commutative diagram 
\[ 
\begin{tikzcd}
\Emr^\Kmr \rar["\eta^{\Emr^\Kmr}_{(\upalpha, \upepsilon)}"] \ar[dr, "(\eta^{\Emr}_{(\upalpha, \upepsilon)})^{\Kmr}"'] & \uptau_{\upalpha \oplus \upepsilon}\Emr^\Kmr \dar["\uptau_{\upalpha \oplus \upepsilon} \lambda"] \\
& \left( \uptau_{\uppi^*\upalpha \oplus \uppi^*\upepsilon}\Emr\right)_{\br{\Gmr}/\br{\Hmr}}^{\Kmr}
\end{tikzcd}\] 
of $2$-morphisms, which leads to  \eqref{map:uplambda} following \Cref{defn:tau}. 
\end{proof}
\section{ Equivariant Weiss derivatives} \label{sec:Weissder}

Notice that any two finite dimensional $\Hmr$-representations $\Umr$ and $\Umr'$,  the space of  morphisms $\Mor_{\Hmr}(\Umr, \Umr')$ is  the  Stiefel manifold of   $|\Umr|$-frames in $\RR^{|\Umr'|}$  on which $\Hmr$-acts by conjugation. Given $\upalpha \in \Rep_\Gmr$,, and $\Umr, \Umr'  \in \Rep_\Hmr$,  we consider the equivariant bundle 
\begin{equation} \label{bundlealpha}
 \upgamma_{\Hmr, \upalpha}(\Umr, \Umr') := 
\begin{tikzcd}
\{ 
({\sf f}, {\sf v}):  {\sf f} \in \Mor_{\Hmr}(\Umr, \Umr') \text{ and } {\sf v} \in \upiota_\Hmr (\upalpha) \otimes {\sf img}({\sf f})^{\perp}
\} \dar \\ 
\Mor_{\Hmr}(\Umr, \Umr')
\end{tikzcd}  
\end{equation}
to construct a functor 
\[ 
\begin{tikzcd}
\Jfrak^{\upalpha}_\Gmr: \Ocal^{\mr{op}}_{\Gmr} \rar & \Cat,
\end{tikzcd}
\]
as follows.

The $\Hmr$-category $\Jfrak^{\alpha}_{\Gmr}(\Gmr/\Hmr)$ has the same objects as $\Rep_\Hmr$ but the space of morphisms is the  Thom space
 \[ \Mor_{\Hmr, \upalpha}(\Umr, \Umr') :=  \Th(\upgamma_{\Hmr, \upalpha}(\Umr, \Umr')) \]
 for  any two  $\Hmr$-representation $\Umr$ and $\Umr'$. The composition of morphisms is induced by the map of  $\Hmr$-equivariant vector bundles
\begin{equation} \label{eqn:compJ}
 \upgamma_{\Hmr, \upalpha}(\Umr', \Umr'') \times \upgamma_{\Hmr, \upalpha}(\Umr, \Umr') \to \upgamma_{\Hmr, \upalpha}(\Umr, \Umr'')
\end{equation}
which sends $\left((\sff, \sfv), (\sfg, \sfw) \right)  \mapsto (\sff \circ \sfg, \sfv + \sff_*(\sfw))$. One may think of $\Jfrak^{\upalpha}_\Gmr$ as an `enlargement of $\Jfrak_\Gmr$ by $\upalpha$'. Using the zero sections of Thom spaces  we get a $1$-morphism 
\begin{equation} \label{eqn:zerosec}
\begin{tikzcd}
\upzeta^{\upalpha} : \Jfrak_\Gmr \rar & \Jfrak^{\upalpha}_{\Gmr}
\end{tikzcd}
\end{equation}
for all $\upalpha \in \Rep_\Gmr$ and the inclusion of bundles $\alpha \subset \alpha'$ leads to  
\begin{equation} \label{eqn:relzerosec}
\begin{tikzcd}
\upzeta^{\upalpha,  \upalpha'} : \Jfrak_\Gmr^{\upalpha} \rar & \Jfrak^{\upalpha'}_{\Gmr},
\end{tikzcd}
\end{equation}
 and together they satisfy 
\[  \upzeta^{\upalpha, \upalpha'}\circ  \upzeta^{\upalpha} = \upzeta^{\upalpha'}\]
\begin{equation} \label{compzerosec}
\upzeta^{\upalpha', \upalpha''} \circ \upzeta^{\upalpha, \upalpha'} = \upzeta^{\upalpha'', \upalpha}
\end{equation} 
whenever  $\upalpha \subset \upalpha' \subset \upalpha''$. 

\begin{rmk} When $\upalpha = {\bf 0}$, then $\Jfrak^{\bf 0}_{\Gmr}$ is essentially $\Jfrak_{\Gmr}$ except that each morphism space has an additional disjoint base point, i.e.,  
\[ 
\Mor_{ \Hmr, {\bf 0}}(\Umr, \Umr') = \Mor_{\Hmr}(\Umr, \Umr')_+
\]
for  any pair of  $\Umr, \Umr' \in \Rep_\Hmr$. 
\end{rmk}

\begin{defn} \label{defn:derivative} For an $\upalpha \in \Rep_\Gmr$,  the {\bf $\upalpha$-th derivative} of  $\Emr: \Jfrak_\Gmr \longrightarrow \Tfrak_\Gmr$ is the  $1$-morphism 
\[ 
\begin{tikzcd}
\Emr^{(\alpha)}: \Jfrak^{\upalpha}_\Gmr \rar & \Tfrak_\Gmr
\end{tikzcd}
\]
where $\Emr^{(\upalpha)}_{\Gmr/\Hmr}$ is the Kan extension of $\Emr$
\[ 
\begin{tikzcd}
\Rep_{\Hmr} \ar[rr, "\Emr_{\Gmr/\Hmr}"] \dar["\upzeta^{\alpha}_{\Gmr/\Hmr}"']  && \Hmr\Top \\
\Jfrak^{\upalpha}_\Gmr \ar[rru, dashed, "\Emr^{(\upalpha)}_{\Gmr/\Hmr}"']
\end{tikzcd}
\]
along $\upzeta^{\alpha}_{\Gmr/\Hmr}$. 
\end{defn} 

\subsection{Derivatives at infinity.} \  

Let $\Hmr$ be a sub group of $\Gmr$. Then for any  $1$-morphism $\Fmr: \Jfrak^{\upalpha}_\Gmr \longrightarrow \Tfrak_{\Gmr}$, we get a continuous pairing 
\[
\begin{tikzcd}
\Mor_{\Hmr, \upalpha}(\Umr, \Umr \oplus \Vmr) \sma \Fmr_{\Gmr/\Hmr}(\Umr) \rar & \Fmr_{\Gmr/\Hmr}(\Umr \oplus \Vmr), 
\end{tikzcd}
\]
where $\Umr$ and $\Vmr$ are  orthogonal $\Hmr$-representations. If we restrict this pairing 
 to the canonical inclusion map $\iota: \Umr \hookrightarrow \Umr \oplus \Vmr $, we get a system of maps
 \begin{equation} \label{map:UV}
\begin{tikzcd} 
\upupsilon_{\Vmr, \Umr}^\Fmr:\Smr^{\upalpha \otimes \Vmr} \sma \Fmr_{\Gmr/\Hmr}(\Umr) \rar & \Fmr_{\Gmr/\Hmr}(\Umr \oplus \Vmr)
\end{tikzcd}
\end{equation}
which is equivalent to a genuine $\Gmr$-equivariant orthogonal prespectrum  $\Theta\Fmr$ (whose $(\upalpha \otimes \Umr)$-th space is $\Fmr_{\Gmr/\Gmr}(\Umr)$).  
\begin{notn}  Let  $\wh{\Theta \Fmr}$ denote the genuine $\Gmr$-spectrum  obtained by 
spectrifying $\Theta\Fmr$ (see \cite{MayMandell}). 
\end{notn}
 Since $1$-dimensional $\Gmr$-representations play the  role of directions, we introduce the following notation.  
\begin{notn} Let $\Ucal^{(1:\upalpha)}_\Gmr$ denote the sub-universe of the complete $\Gmr$-universe generated by 
\[ (1: \upalpha) :=\{ \upalpha \otimes \upepsilon_1 \otimes \dots \otimes \upepsilon_k: |\upepsilon_i|  =1 \} \]
and let $\Sp^\Gmr_{(1:\upalpha)}$ denote category of orthogonal $\Gmr$-spectra in the universe $\Ucal^{(1:\upalpha)}_\Gmr$. More generally, we write $\Ucal^{(1:\upalpha)_\sfR}_{(\Gmr)}$ for the sub-universe of  $\Ucal_{\Gmr}^{(\sfR)}$ generated by 
\[ (1: \upalpha)_{\Rmr} :=\{ \upalpha \otimes \upepsilon_1 \otimes \dots \otimes \upepsilon_k: |\upepsilon_i|  =1 \text{ and } \upepsilon_i \subset \Ucal^{(\sfR)}_\Gmr \} \]
for any $\upalpha \subset \Ucal_{\Gmr}^{(\sfR)}$.  
\end{notn}

Given  \eqref{map:UV}, we can construct for each $\Umr \in \Ucal_{\Hmr}$,  a spectrum \[ {\bf \Theta} \Fmr(\Umr) \in \Sp^{\Gmr}_{\upiota_\Hmr(1:\upalpha)}\] by spectrifying the prespectrum whose $ (\upalpha \otimes \Wmr)$-th space,  for a finite $\Wmr \in \Ucal_{\Hmr}^{\upiota_\Hmr(1:\upalpha)}$ is $\Fmr_{\Gmr/\Hmr}(\Umr \oplus \Wmr)$. Then the collection
\begin{equation} \label{eqn:system}
 {\bf \Theta}(\Fmr):= \bigcup_{\Hmr \subset \Gmr } \lbrace {\bf \Theta} \Fmr(\Umr)  \in \Sp^{\upiota_\Hmr(1:\upalpha)}: \Umr \underset{\mr{finite}}{\subset} \Ucal_\Hmr   \rbrace
 \end{equation}
 form an $\upalpha$-system of spectra in the sense of the following definition.
\begin{defn} \label{defn:alphasystem} Fix $\upalpha \in \Ucal^{(\sfR)}_\Gmr$. Then an {\bf  $\upalpha$-system of spectra} in the universe  $\Ucal^{(\sfR)}_{\Gmr}$ is a collection   \[ \upchi := \bigcup_{\Hmr \subset \Gmr} \{ \upchi({\Umr}) \in  \Sp^{\Gmr}_{\upiota_\Hmr(1: \upalpha)_\sfR}: \Umr \underset{\mr{finite}}\subset \upiota_\Hmr(\Ucal_{\Hmr}^{(\sfR)})   \} \] 
with maps
\[ 
\begin{tikzcd}
f_{\Umr, \Vmr}:\Smr^{\upiota_\Hmr(\upalpha) \otimes \Umr^{\perp}} \sma  \upchi(\Umr) \ \rar & \upchi(\Vmr)
\end{tikzcd}
\]
whenever $\Umr \subset  \Vmr \in \upiota_\Hmr(\Ucal_{\Hmr}^{(\sfR)}) $ such that
\begin{itemize}
\item
 $f_{ \Vmr, \Wmr} \circ f_{\Umr, \Vmr} = f_{ \Umr, \Wmr}$, 
 \item $f_{\Umr, \Vmr}$ is an equivalence whenever $\Umr^{\perp} \subset \Ucal_\Gmr^{\upiota_\Hmr(1:\upalpha)_{\sfR}}$,  
 \item $\upiota_\Kmr\upchi(\Umr) = \upchi(\upiota_\Kmr\Umr) $ and $\upiota_\Kmr f_{\Umr, \Vmr} = f_{\upiota_\Kmr \Umr, \upiota_\Kmr \Vmr}$ whenever  $\Kmr \subset \Hmr \subset \Gmr$. 
 \end{itemize}
\end{defn}

The inclusion map $\iota: \Ucal^{\upiota_\Hmr(1:\upalpha)}_\Hmr \hookrightarrow \Ucal_\Hmr$ induces an adjuction 
\begin{equation} \label{eq:CofU}
\begin{tikzcd}
\iota_* : \Sp^{\Hmr}_{(1:\upalpha)} \rar[shift left] & \lar[shift left] \Sp^{\Hmr} : \iota^*,
\end{tikzcd} 
\end{equation}
and by construction, there is a natural map 
\begin{equation} \label{map-to-genuine}
\begin{tikzcd}
\wh{\bf r}_{\Vmr}:{\bf \Theta}(\Fmr)(\Umr) \rar &\iota^*(\Sigma^{\upiota_\Hmr(\upalpha) \otimes \Umr}  \upiota_\Hmr(\wh{\Theta \Fmr})) 
\end{tikzcd}
\end{equation}
for any $\Umr \in \Rep_\Hmr$,   which may not be an equivalence  in general.  
\begin{conv}  We will follow \cite{MayMandell} and adopt the convention that $\Gmr$-spectra indexed by the zero universe  are simply $\Gmr$-spaces.
\end{conv}

\begin{rmk}
On one extreme when  $\upalpha = {\bf 0}$ in \Cref{defn:alphasystem}, an $\upalpha$-system is simply a prespectrum in $\Ucal^{(\sfR)}_\Gmr$. On the other extreme when  $\Ucal^{(1:\upalpha)_\sfR}_\Gmr = \Ucal^{(\sfR)}_\Gmr$,  the $\upalpha$-system ${\bf \Theta}$ determined by  $\Theta({\bf 0}) \in \Sp^{\Gmr}_\sfR$ as 
\[ 
{\bf \Theta}(\Umr) \simeq \Sigma^{\upiota_\Hmr(\upalpha) \otimes \Umr} \upiota_\Hmr{\bf \Theta}({\bf 0})
\]
for any  $\Umr \underset{\mr{finite}} \subset \Ucal_\Gmr^{(\sfR)}$. 
\end{rmk}

\begin{defn} \label{defn:der-at-infty} Given $\upalpha \in \Rep_\Gmr$ and a $1$-morphism $\Emr: \Jfrak_\Gmr \longrightarrow \Tfrak_\Gmr$,  we call  the $\upalpha$-system
 $ {\bf \Theta}(\Emr^{(\upalpha)})$
  the {\bf $\upalpha$-th derivative of $\Emr$ at $\infty$}.
\end{defn}

For a $\Gmr$-representation $\upalpha$, one may view  $\Omr(\upalpha)$ as a functor
\[ 
\begin{tikzcd}
\mr{O}(\upalpha): \Ocal^{\mr{op}}_{\Gmr} \rar & \Cat,
\end{tikzcd}
\]
where $\Omr(\upalpha)(\Gmr/\Hmr)$ is the one object $\Hmr$-category with $\Omr(\upiota_{\Hmr}\upalpha)$ as the space of morphisms.  
For all $\Umr, \Umr' \in \Rep_{\Hmr}$,  $\Omr(\upiota_{\Hmr}(\upalpha))$ acts on $\upgamma_{\Hmr, \upiota_{\Hmr}(\upalpha)}(\Umr, \Umr')$ such that the action map
\[ 
\begin{tikzcd}
\Omr(\upiota_{\Hmr}(\upalpha)) \times \upgamma_{\Hmr, \upiota_\Hmr(\upalpha)}(\Umr, \Umr') \rar & \upgamma_{\Hmr, \upiota_\Hmr(\upalpha)}(\Umr, \Umr'), 
\end{tikzcd}
\] 
is an $\Hmr$-equivariant map. This results in a $1$-morphism
\[ 
\begin{tikzcd}
\Omr(\upalpha) \times \Jfrak^{\upalpha}_\Gmr \rar & \Jfrak^{\upalpha}_\Gmr, 
\end{tikzcd}
\]
which maybe regarded as the action of $\Omr(\upalpha)$ on $\Jfrak^{\upalpha}_\Gmr$. Thus, for every $\sfg \in \Omr(\upalpha)$, there is an invertible $1$-morphism $\sfg : \Jfrak^{\upalpha}_\Gmr \longrightarrow \Jfrak^{\upalpha}_\Gmr$.

\begin{rmk} \label{rmk:DerStabSym} Since the $\alpha$-th derivative of 
$ \Emr^{(\upalpha)}: \Jfrak_{\Gmr}^{\upalpha} \longrightarrow \Tfrak_{\Gmr}$  is defined as a right Kan extension and $\Jfrak_{\Gmr}$ has a trivial action of $\Omr(\upalpha)$, it follows that the composition 
\[ 
\begin{tikzcd}
 \Jfrak_{\Gmr}^{\upalpha}  \rar["\sfg^{-1}"] & \Jfrak_{\Gmr}^{\upalpha}  \rar["\Emr^{(\upalpha)}"] & \Tfrak_{\Gmr} 
\end{tikzcd}
\]
 equals  $\Emr^{(\upalpha)}$ for all $\sfg \in \Omr(\upalpha)$. Consequently, each spectrum in a system  ${\bf \Theta}( \Emr^{(\upalpha)})$ (as well as $\wh{\Theta\Emr}^{(\upalpha)}$) admits a n\"aive action of $\Omr(\upalpha)$ and the structure maps of the system is compatible with this action (compare \cite[$\mathsection$3]{WeissCalc}). 
\end{rmk}
\subsection{Restrictions and Weiss derivatives}  \label{subsec:resder} \ 

Let $\Kmr$ be a subgroup of $\Gmr$. It is easy to see that the restriction of $\Jfrak_\Gmr^{\alpha}$ to $\Kmr$, i.e., the composite functor 
\[ 
\begin{tikzcd}
\upiota_\Kmr \Jfrak_\Gmr^{\upalpha}: \Ocal^{\mr{op}}_\Kmr \rar["\upiota_\Kmr"] & \Ocal^{\mr{op}}_\Gmr \rar["\Jfrak_\Gmr"] & \Cat, 
\end{tikzcd}
\]
where $\upiota_\Kmr$ as in \eqref{resfunK}, is the functor $\Jfrak_{\Kmr}^{\upiota_{\Kmr}(\upalpha)}$. Consequently,   \[ \upiota_\Kmr (\Emr^{(\upalpha)}) = (\upiota_\Kmr\Emr)^{(\upiota_\Kmr(\upalpha))} \] for any  $\Emr: \Jfrak_\Gmr \longrightarrow \Tfrak_\Gmr$. 

It is important to note that restriction of $\Ucal_\Gmr^{(1:\upalpha)}$ to the subgroup $\Kmr$ is  subuniverse of  $\Ucal_{\Kmr}^{(1:\upiota_\Kmr(\upalpha))}$
\[ 
\begin{tikzcd}
\sfi^\Kmr: \upiota_\Kmr(\Ucal_\Gmr^{(1:\upalpha)}) = \Ucal_{\Kmr}^{\upiota_\Kmr(1:\upalpha)} \rar[hook] & \Ucal_{\Kmr}^{(1:\upiota_\Kmr(\upalpha))}, 
\end{tikzcd}
\]
 which  may not be an equality in genera. Such a situation may arise if there exists a $1$-dimensional $\Kmr$-representation that is not a restriction  of a $\Gmr$-representation\footnote{The sign representation of $\Cmr_2$ is not a restriction of a $\Cmr_4$-representation.}.  Let 
 \begin{equation} \label{C-of-U-1}
\begin{tikzcd}
\sfi_*^{\Kmr} : \Sp^{\Hmr}_{\upiota_\Hmr(1:\upalpha)} \rar[shift left] & \lar[shift left] \Sp^{\Hmr}_{\upiota_\Hmr(1:\upiota_\Kmr(\upalpha))} : \sfi^{\Kmr*},
\end{tikzcd} 
\end{equation}
denote the adjunction induced by $\sfi^\Kmr$ for any subgroup $\Hmr \subset \Kmr$. 

Given a $1$-morphism $\Fmr: \Jfrak^{\upalpha}_\Gmr \longrightarrow \Tfrak_\Gmr$ and  an $\Hmr$-representation $\Umr$ for $\Hmr \subset \Kmr \subset \Gmr$,    the restriction  $\upiota_\Hmr{\bf \Theta}\Fmr(\Umr)$  may not equal $\upiota_\Hmr{\bf \Theta} \upiota_\Kmr\Fmr(\Umr)$ as they are spectra in different universes. Instead, there is a map 
\begin{equation} \label{SpRes} 
\begin{tikzcd}
\sfr_\Kmr(\Umr):\sfi^\Kmr_* {\bf \Theta}\Fmr(\Umr)  \rar &  {\bf \Theta} \upiota_\Kmr (\Fmr)(\Umr),
\end{tikzcd}
\end{equation}
in $\Sp^\Hmr_{\upiota_\Hmr(1: \upiota_\Kmr(\upalpha))}$ which is an equivalence when $\upiota_\Kmr(\Ucal_\Gmr^{(1: \upalpha)}) = \Ucal_{\Kmr}^{(1:\upiota_\Kmr(\upalpha))}$. This leads us to the following result. 
\begin{lem} \label{lem:resderinf} Suppose  $\Emr: \Jfrak_\Gmr \longrightarrow \Tfrak_\Gmr$ is a $1$-morphism in ${\bf Fun}(\Ocal^{\mr{op}}_{\Gmr}, \Cat)$ and $\upalpha \in \Rep_\Gmr$, then there exists a $\Omr(\upiota_\Hmr(\upalpha))$-equivariant map 
\[ 
\begin{tikzcd}
\sfr_\Kmr(\Umr):\sfi^\Kmr_* {\bf \Theta}\Emr^{(\upalpha)}(\Umr)  \rar &  {\bf \Theta} \upiota_\Kmr (\Emr^{(\upalpha)})(\Umr),
\end{tikzcd}
\]
in $\Sp^{\Hmr}_{\upiota_\Hmr(1: \upiota_\Kmr(\upalpha))}$ for all subgroup $\Kmr$ of $\Gmr$, which is an equivalence if $\upiota_\Kmr(\Ucal_\Gmr^{(1)}) = \Ucal_{\Kmr}^{(1)}$. 
\end{lem}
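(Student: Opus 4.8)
The plan is to obtain the map $\sfr_\Kmr(\Umr)$ as the special case of the general comparison map \eqref{SpRes} attached to the $1$-morphism $\Fmr = \Emr^{(\upalpha)}\colon \Jfrak^{\upalpha}_\Gmr \to \Tfrak_\Gmr$ of \Cref{defn:derivative}. The first ingredient is the identity $\upiota_\Kmr(\Emr^{(\upalpha)}) = (\upiota_\Kmr\Emr)^{(\upiota_\Kmr(\upalpha))}$ recorded in \Cref{subsec:resder}: it holds because the restriction of $\Jfrak^{\upalpha}_\Gmr$ to $\Kmr$ is $\Jfrak^{\upiota_\Kmr(\upalpha)}_\Kmr$ and right Kan extension along $\upzeta^{\upalpha}$ commutes with restriction. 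This guarantees that the target ${\bf \Theta}\upiota_\Kmr(\Emr^{(\upalpha)})(\Umr)$ of the asserted map lives in $\Sp^\Hmr_{\upiota_\Hmr(1:\upiota_\Kmr(\upalpha))}$ for every $\Hmr \subset \Kmr$ and every finite subspace $\Umr$ of $\Ucal_\Hmr$, matching the source $\sfi^\Kmr_*{\bf \Theta}\Emr^{(\upalpha)}(\Umr)$ after the change-of-universe functor \eqref{C-of-U-1}.

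Next I would unwind \eqref{SpRes} at the level of prespectra. By \Cref{defn:der-at-infty} and \eqref{eqn:system}, ${\bf \Theta}\Emr^{(\upalpha)}(\Umr)$ is the spectrification of the prespectrum indexed on $\Ucal_\Hmr^{\upiota_\Hmr(1:\upalpha)}$ whose $(\upalpha\otimes\Wmr)$-th space is $\Emr^{(\upalpha)}_{\Gmr/\Hmr}(\Umr\oplus\Wmr)$. Restricting to $\Hmr$ and applying $\sfi^\Kmr_*$ reindexes this prespectrum onto the a priori larger universe $\Ucal_\Hmr^{\upiota_\Hmr(1:\upiota_\Kmr(\upalpha))}$, and along the inclusion $\sfi^\Kmr$ of indexing universes it maps to the prespectrum defining ${\bf \Theta}\upiota_\Kmr(\Emr^{(\upalpha)})(\Umr)$, whose $(\upalpha\otimes\Wmr')$-th space is again $\Emr^{(\upalpha)}_{\Gmr/\Hmr}(\Umr\oplus\Wmr')$ for $\Wmr'$ ranging over the larger index category. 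Spectrifying this map of prespectra produces $\sfr_\Kmr(\Umr)$, and the compatibilities required of a map of $\upalpha$-systems in the sense of \Cref{defn:alphasystem} --- commuting with the structure maps $f_{\Umr,\Vmr}$ and with further restriction --- follow from naturality of spectrification. For the equivariance, I would invoke \Cref{rmk:DerStabSym}: each ${\bf \Theta}\Emr^{(\upalpha)}(\Umr)$ carries a na\"ive $\Omr(\upalpha)$-action, hence after restriction a na\"ive $\Omr(\upiota_\Hmr(\upalpha))$-action, arising from the $\Omr(\upalpha)$-action on $\Jfrak^{\upalpha}_\Gmr$ through automorphisms of the bundles $\upgamma_{\Hmr,\upalpha}$; since this action is supported on the $\upalpha$-directions and is left untouched by both restriction and the reindexing functor $\sfi^\Kmr_*$, the prespectrum-level comparison above is $\Omr(\upiota_\Hmr(\upalpha))$-equivariant, and therefore so is $\sfr_\Kmr(\Umr)$.

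Finally, for the equivalence claim, the task is to deduce the hypothesis of \eqref{SpRes}, namely $\upiota_\Kmr(\Ucal_\Gmr^{(1:\upalpha)}) = \Ucal_\Kmr^{(1:\upiota_\Kmr(\upalpha))}$, from the stated hypothesis $\upiota_\Kmr(\Ucal_\Gmr^{(1)}) = \Ucal_\Kmr^{(1)}$. This is a direct check on generators: a generator $\upiota_\Kmr(\upalpha)\otimes\updelta_1\otimes\cdots\otimes\updelta_k$ of the right-hand universe has each $1$-dimensional $\updelta_i$ a summand of the restriction of a finite sum of $1$-dimensional $\Gmr$-representations, hence by dimension isomorphic to the restriction $\upiota_\Kmr(\upepsilon_i)$ of a single $1$-dimensional $\Gmr$-representation $\upepsilon_i$; thus the generator is the restriction of $\upalpha\otimes\upepsilon_1\otimes\cdots\otimes\upepsilon_k$, while the reverse containment is immediate. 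Under this equality $\sfi^\Kmr$ is an isomorphism, the two indexing categories coincide, the prespectrum-level comparison is an isomorphism, and $\sfr_\Kmr(\Umr)$ is an equivalence. The one genuinely delicate point is the middle paragraph: restriction does not commute with spectrification on the nose because the ambient universes differ, so the comparison must be built before spectrifying, with $\sfi^\Kmr_*$ inserted exactly where the universes fail to match; everything else is formal naturality.
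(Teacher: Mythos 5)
Your proposal is correct and follows the same route the paper intends: the paper states the lemma as an immediate consequence of the discussion in \S3.2 (the identity $\upiota_\Kmr(\Emr^{(\upalpha)}) = (\upiota_\Kmr\Emr)^{(\upiota_\Kmr\upalpha)}$, the change-of-universe adjunction \eqref{C-of-U-1}, and the comparison map \eqref{SpRes}), without writing a separate proof. You have reproduced that chain of reasoning, and your one genuine addition --- the check that the lemma's hypothesis $\upiota_\Kmr(\Ucal_\Gmr^{(1)}) = \Ucal_\Kmr^{(1)}$ implies the condition $\upiota_\Kmr(\Ucal_\Gmr^{(1:\upalpha)}) = \Ucal_\Kmr^{(1:\upiota_\Kmr\upalpha)}$ actually used in \eqref{SpRes}, via the observation that every $1$-dimensional $\Kmr$-representation must then be the restriction of a $1$-dimensional $\Gmr$-representation --- closes a small gap the paper leaves implicit.
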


\subsection{Fixed points and Weiss derivatives} \ 

We first consider the case when  $\Kmr$ is a normal subgroup of $\Gmr$. Let  $\br{\Gmr}$ denote the quotient group $\Gmr/\Kmr$ and $\uppi: \Gmr \twoheadrightarrow \overline{\Gmr}$ denote  the quotient map.  
Given $\Emr: \Jfrak_{\Gmr} \longrightarrow \Tfrak_{\Gmr} $
and $\upalpha \in \Rep_{\br{\Gmr}}$, the $\upalpha$-th derivative of the fixed-point functor $\Emr^\Kmr$ can be compared to the $\Kmr$-fixed point functor of  $\Emr^{(\uppi^*\upalpha)}$, where the later is constructed as follows.
 
For  $\br{\Hmr} \subset \br{\Gmr}$, we let $\Hmr$ be the subgroup  $\uppi^{-1} (\br{\Hmr}) \subset \Gmr$. Let $\Umr, \Umr'$ be $\br{\Hmr}$-representations. Notice that the action of   $\Kmr$ on  $\upgamma_{\Hmr, \uppi^*\upalpha}(\uppi^*\Umr, \uppi^*\Umr')$ is trivial. Therefore, it admits an action of $\br{\Hmr}$, and as an $\br{\Hmr}$-equivariant vector bundle, it is isomorphic to $\upgamma_{\br{\Hmr}, \upalpha}(\Umr, \Umr')$. Consequently, there are $1$-morphisms 
\[ \begin{tikzcd}
\uppi^*: \Jfrak_{\br{\Gmr}}^{\upalpha^\Kmr} \rar[shift right] &\lar[shift right] \upphi^*\Jfrak_{\Gmr}^{\upalpha} : (-)^{\Kmr}
\end{tikzcd} 
 \] 
such that $(\uppi^{*}(-))^{\Kmr}$ is the identity. 
\begin{defn} \label{defn:Efix} For a $1$-morphism $\Fmr: \Jfrak_{\Gmr}^{\upalpha} \longrightarrow \Tfrak_{\Gmr}$ in ${\bf Fun}( \Ocal_{\Gmr}^{\mr{op}}, \Cat)$, define its $\Kmr$-fixed points $\Fmr^\Kmr$ as the composite $1$-morphism 
\[ 
\begin{tikzcd}
\Jfrak_{\br{\Gmr}}^{\upalpha^\Kmr} \rar["\uppi^*"] &  \upphi^*\Jfrak_{\Gmr}^{\upalpha} \rar["\upphi^*\Fmr"]  & \upphi^*\Tfrak_{\Gmr} \rar["(-)^\Kmr"] & \Tfrak_{\br{\Gmr}}
\end{tikzcd}
\]
in ${\bf Fun}( \Ocal_{\br{\Gmr}}^{\mr{op}}, \Cat)$. 
\end{defn}
\begin{lem} \label{lem:derfix} For any $\upalpha \in \Rep_{\br{\Gmr}}$ and  $\Emr: \Jfrak_\Gmr \longrightarrow \Tfrak_\Gmr$, there is a $2$-morphism 
\[ 
 (\Emr^{\Kmr})^{(\upalpha)}  \longrightarrow (\Emr^{(\uppi^*\upalpha )})^\Kmr
 \] 
in  ${\bf Fun}(\Ocal^{\mr{op}}_{\br{\Gmr}}, \Cat)$. 
\end{lem}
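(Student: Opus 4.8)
The plan is to produce the comparison $2$-morphism by the universal property of the right Kan extension that defines $(\Emr^{\Kmr})^{(\upalpha)}$, exactly as in the proof of \Cref{lem:towerfix}. First I would recall that, by \Cref{defn:derivative}, for each $\br{\Hmr} \subset \br{\Gmr}$ the functor $(\Emr^{\Kmr})^{(\upalpha)}_{\br{\Gmr}/\br{\Hmr}} : \Jfrak^{\upalpha}_{\br{\Gmr}}(\br{\Gmr}/\br{\Hmr}) \longrightarrow \br{\Hmr}\Top$ is the left (resp.\ right, following the convention of the excerpt) Kan extension of $\Emr^{\Kmr}_{\br{\Gmr}/\br{\Hmr}} = (\Emr_{\Gmr/\Hmr}(\uppi^*(-)))^{\Kmr}$ along the zero-section $\upzeta^{\upalpha}_{\br{\Gmr}/\br{\Hmr}} : \Rep_{\br{\Hmr}} \hookrightarrow \Jfrak^{\upalpha}_{\br{\Gmr}}(\br{\Gmr}/\br{\Hmr})$. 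Meanwhile $(\Emr^{(\uppi^*\upalpha)})^{\Kmr}_{\br{\Gmr}/\br{\Hmr}}$ is, by \Cref{defn:Efix} in this subsection, the composite $\Jfrak^{\upalpha}_{\br{\Gmr}}(\br{\Gmr}/\br{\Hmr}) \xrightarrow{\uppi^*} \Jfrak^{\uppi^*\upalpha}_{\Gmr}(\Gmr/\Hmr) \xrightarrow{\Emr^{(\uppi^*\upalpha)}_{\Gmr/\Hmr}} \Hmr\Top \xrightarrow{(-)^{\Kmr}} \br{\Hmr}\Top$, where $\Hmr = \uppi^{-1}(\br{\Hmr})$ and I use the isomorphism of $\br{\Hmr}$-equivariant bundles $\upgamma_{\Hmr,\uppi^*\upalpha}(\uppi^*\Umr,\uppi^*\Umr') \cong \upgamma_{\br{\Hmr},\upalpha}(\Umr,\Umr')$ recorded just before \Cref{defn:Efix}, which makes $\uppi^* : \Jfrak^{\upalpha}_{\br{\Gmr}}(\br{\Gmr}/\br{\Hmr}) \to \Jfrak^{\uppi^*\upalpha}_{\Gmr}(\Gmr/\Hmr)$ a fully faithful embedding onto the $\Kmr$-fixed part.

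The key step is then to check that $(\Emr^{(\uppi^*\upalpha)})^{\Kmr}_{\br{\Gmr}/\br{\Hmr}}$ restricts along $\upzeta^{\upalpha}_{\br{\Gmr}/\br{\Hmr}}$ to (something receiving a natural transformation from, in the Kan-extension direction) $\Emr^{\Kmr}_{\br{\Gmr}/\br{\Hmr}}$. Precomposing with the zero-section and chasing the commuting square
\[
\begin{tikzcd}
\Rep_{\br{\Hmr}} \rar["\uppi^*"] \dar["\upzeta^{\upalpha}_{\br{\Gmr}/\br{\Hmr}}"'] & \Rep_{\Hmr} \dar["\upzeta^{\uppi^*\upalpha}_{\Gmr/\Hmr}"] \\
\Jfrak^{\upalpha}_{\br{\Gmr}}(\br{\Gmr}/\br{\Hmr}) \rar["\uppi^*"'] & \Jfrak^{\uppi^*\upalpha}_{\Gmr}(\Gmr/\Hmr)
\end{tikzcd}
\]
together with $\Emr^{(\uppi^*\upalpha)}_{\Gmr/\Hmr}\circ\upzeta^{\uppi^*\upalpha}_{\Gmr/\Hmr} \Rightarrow \Emr_{\Gmr/\Hmr}$ (the unit of that Kan extension) and then applying $(-)^{\Kmr}$, one obtains a natural transformation $\Emr^{\Kmr}_{\br{\Gmr}/\br{\Hmr}} = (\Emr_{\Gmr/\Hmr}\uppi^*(-))^{\Kmr} \Rightarrow (\Emr^{(\uppi^*\upalpha)})^{\Kmr}_{\br{\Gmr}/\br{\Hmr}} \circ \upzeta^{\upalpha}_{\br{\Gmr}/\br{\Hmr}}$. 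The universal property of the Kan extension defining $(\Emr^{\Kmr})^{(\upalpha)}_{\br{\Gmr}/\br{\Hmr}}$ then yields a canonical natural transformation $(\Emr^{\Kmr})^{(\upalpha)}_{\br{\Gmr}/\br{\Hmr}} \Rightarrow (\Emr^{(\uppi^*\upalpha)})^{\Kmr}_{\br{\Gmr}/\br{\Hmr}}$ for each $\br{\Hmr}$. I would then check that these are compatible with the restriction maps along morphisms $\br{\Gmr}/\br{\Hmr}' \to \br{\Gmr}/\br{\Hmr}$ in $\Ocal^{\mr{op}}_{\br{\Gmr}}$ — which follows because every functor in sight ($\uppi^*$, $\upzeta^{\upalpha}$, $(-)^{\Kmr}$, and the Kan extensions, the last by functoriality of Kan extension in the base) is part of a morphism in ${\bf Fun}(\Ocal^{\mr{op}}, \Cat)$ — so the pointwise $2$-morphisms assemble into a $2$-morphism $(\Emr^{\Kmr})^{(\upalpha)} \longrightarrow (\Emr^{(\uppi^*\upalpha)})^{\Kmr}$ in ${\bf Fun}(\Ocal^{\mr{op}}_{\br{\Gmr}}, \Cat)$, as required. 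Finally, for a general (non-normal) $\Kmr \subset \Gmr$ one reduces to the normal case by first restricting to $\Nmr(\Kmr)$ via \Cref{lem:resderinf}, inside which $\Kmr$ is normal with quotient $\Wmr(\Kmr)$.

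The main obstacle I anticipate is \emph{not} the construction of the pointwise transformation — that is a formal consequence of the Kan-extension universal property, granted the bundle identification $\upgamma_{\Hmr,\uppi^*\upalpha}(\uppi^*\Umr,\uppi^*\Umr')^{\Kmr} \cong \upgamma_{\br{\Hmr},\upalpha}(\Umr,\Umr')$ already asserted in the text. The delicate point is verifying that the fixed-point functor $(-)^{\Kmr} : \Hmr\Top \to \br{\Hmr}\Top$ genuinely commutes (up to the natural map, not necessarily an equivalence — which is why the conclusion is only a $2$-morphism and not an isomorphism) with the formation of the Kan extension along $\upzeta$, i.e.\ that $(-)^{\Kmr}$ applied to the right Kan extension of $\Emr_{\Gmr/\Hmr}$ receives/maps to the right Kan extension of $(\Emr_{\Gmr/\Hmr}\uppi^*(-))^{\Kmr}$; since the Kan extension here is computed by a (homotopy) limit/colimit over the relevant comma category of morphism Thom spaces and $(-)^{\Kmr}$ does not commute with homotopy colimits in general, one only gets a comparison map, and one must be careful about the direction (which matches the direction already forced in \Cref{lem:towerfix} and \Cref{main:fix}(2)). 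Tracking the variance of the zero-section Kan extension in \Cref{defn:derivative} against the $\holim$ in \Cref{defn:tau} is the one place where a sign-of-arrow error could creep in, so I would pin that down explicitly before assembling the $\Ocal^{\mr{op}}_{\br{\Gmr}}$-naturality.
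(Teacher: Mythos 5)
Your approach is the same as the paper's: identify $(\Emr^\Kmr)^{(\upalpha)}$ as the Kan extension of $\Emr^\Kmr$ along $\upzeta^{\upalpha}$, draw the commuting square relating $\upzeta^{\upalpha}$, $\uppi^*$ and $(-)^{\Kmr}$, compare via the (co)unit of the Kan extension defining $\Emr^{(\uppi^*\upalpha)}$, and invoke the universal property; the paper's proof is exactly this one-liner. You also correctly identify that the comparison will only be a $2$-morphism rather than an isomorphism, and that $\Ocal_{\br{\Gmr}}^{\mr{op}}$-naturality is formal.

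Two points deserve sharpening, one of which you flag yourself. First, you worry that $(-)^{\Kmr}$ does not commute with homotopy colimits; but $\Emr^{(\upalpha)}$ is a \emph{right} Kan extension (per \Cref{defn:derivative} and the introduction), computed pointwise by an end $\Ncal_{\bf 0}(\Mor_{\upalpha}(\Umr,-),\Emr)$, i.e.\ a limit, and $(-)^{\Kmr}$, being a right adjoint, commutes with limits. The genuine source of non-invertibility is not colimits but a change of indexing category: $(\Emr^{(\uppi^*\upalpha)})^\Kmr(\Umr)$ is an end over $\Rep_\Hmr$ (with $\Hmr = \uppi^{-1}(\br{\Hmr})$), whereas $(\Emr^\Kmr)^{(\upalpha)}(\Umr)$ is an end over the strictly smaller category $\Rep_{\br{\Hmr}}$, and the comparison is restriction along the embedding $\uppi^*:\Rep_{\br{\Hmr}}\hookrightarrow\Rep_\Hmr$. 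Second, you should indeed ``pin down the sign of the arrow'' as you suspect: with a \emph{right} Kan extension the universal property gives maps \emph{into} $(\Emr^\Kmr)^{(\upalpha)}=\mathrm{Ran}_{\upzeta^{\upalpha}}\Emr^\Kmr$ from any $G$ equipped with $G\circ\upzeta^{\upalpha}\Rightarrow\Emr^\Kmr$; chasing the diagram produces exactly such a $2$-cell for $G=(\Emr^{(\uppi^*\upalpha)})^\Kmr$, hence a map $(\Emr^{(\uppi^*\upalpha)})^\Kmr\to(\Emr^\Kmr)^{(\upalpha)}$, which is \emph{opposite} to the direction stated in the lemma (and matches the restriction-of-ends description above). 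Your write-up, which borrows the stated direction while hedging ``left (resp.\ right)'', implicitly treats the derivative as a left Kan extension; you should either confirm the variance of \Cref{defn:derivative} and correct the arrow or note the discrepancy explicitly, since the paper's own proof has the same gap.
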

\begin{proof} Since $(\Emr^\Kmr)^\upalpha$ is the right Kan extension of $\Emr^\Kmr$ along $\upzeta^{\upalpha}:\Jfrak_{\br{\Gmr}} \hookrightarrow \Jfrak_{\br{\Gmr}}^{\upalpha}$, and the fact that we have a commutative diagram 
\[ 
\begin{tikzcd}
\Jfrak_{\br{\Gmr}} \dar["\upzeta^{\upalpha}"'] \rar["\uppi^*"] &  \upphi^*\Jfrak_{\Gmr} \dar \rar["\upphi^*\Emr"]   & \upphi^*\Tfrak_{\Gmr} \rar["(-)^\Kmr"] & \Tfrak_{\br{\Gmr}} \\
\Jfrak_{\br{\Gmr}}^{\upalpha} \rar["\uppi^*"] & \upphi^*\Jfrak_{\Gmr}^{\uppi^*\upalpha}  \ar[ru, "\upphi^*\Emr^{(\uppi^*\upalpha)}"']
\end{tikzcd}
\]
implies the existence of a $1$-morphism $  (\Emr^{\Kmr})^{(\upalpha)} \longrightarrow (\Emr^{(\uppi^*\upalpha)})^\Kmr$. 
\end{proof}

\begin{claim} \label{clm:derfixinf} Suppose $\Kmr$ is a normal subgroup of $\Gmr$, then for any $1$-morphism $\Fmr: \Jfrak_\Gmr^{\upalpha} \longrightarrow \Tfrak_\Gmr$, there is a natural map 
\[ 
\begin{tikzcd}
\sff_\Kmr(\Umr): {\bf \Theta}\Fmr^\Kmr(\Umr)\rar & ({\bf \Theta}\Fmr(\uppi^*\Umr))^\Kmr
\end{tikzcd}
\]

in $\Sp^{\br{\Gmr}}_{(1:\upalpha)}$.
\end{claim}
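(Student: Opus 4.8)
The plan is to construct $\sff_\Kmr(\Umr)$ spacewise from the prespectra defining the two sides, and to check that the structure maps intertwine appropriately. Recall that for $\Umr \in \Rep_{\br{\Hmr}}$, the spectrum ${\bf \Theta}\Fmr^\Kmr(\Umr) \in \Sp^{\br{\Gmr}}_{\upiota_{\br{\Hmr}}(1:\upalpha)}$ is the spectrification of the prespectrum whose $(\upalpha \otimes \Wmr)$-th space is $\Fmr^\Kmr_{\br{\Gmr}/\br{\Hmr}}(\Umr \oplus \Wmr)$ for finite $\Wmr \subset \Ucal_{\br{\Hmr}}^{\upiota_{\br{\Hmr}}(1:\upalpha)}$, while ${\bf \Theta}\Fmr(\uppi^*\Umr) \in \Sp^{\Gmr}_{\upiota_{\Hmr}(1:\uppi^*\upalpha)}$, where $\Hmr = \uppi^{-1}(\br{\Hmr})$, is the spectrification of the prespectrum whose $(\uppi^*\upalpha \otimes \Wmr')$-th space is $\Fmr_{\Gmr/\Hmr}(\uppi^*\Umr \oplus \Wmr')$. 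First I would apply $(-)^\Kmr$ to this latter prespectrum: since $\Kmr$ acts trivially on $\uppi^*\upalpha$ (it is pulled back along $\uppi$), the one-point compactification $\Smr^{\uppi^*\upalpha \otimes \Wmr'}$ has a $\Kmr$-fixed subspace $\Smr^{(\uppi^*\upalpha \otimes \Wmr')^\Kmr}$, and $(-)^\Kmr$ being lax monoidal on based $\Gmr$-spaces converts the structure maps $\upupsilon^{\Fmr}$ of \eqref{map:UV} into structure maps for a $\br{\Gmr}$-prespectrum indexed on the $\Kmr$-fixed universe $\upiota_{\br{\Hmr}}(\uppi^*\Ucal)^\Kmr$.

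Next, by \Cref{defn:Efix} (the second one, for $1$-morphisms into $\Tfrak_\Gmr$), we have $\Fmr^\Kmr_{\br{\Gmr}/\br{\Hmr}}(\Umr \oplus \Wmr) = (\Fmr_{\Gmr/\Hmr}(\uppi^*(\Umr \oplus \Wmr)))^\Kmr = (\Fmr_{\Gmr/\Hmr}(\uppi^*\Umr \oplus \uppi^*\Wmr))^\Kmr$, so the $(\upalpha \otimes \Wmr)$-th space of the source prespectrum is literally the $(\uppi^*\upalpha \otimes \uppi^*\Wmr)$-th space of the $\Kmr$-fixed target prespectrum. The inclusion $\uppi^* \Ucal_{\br{\Hmr}}^{\upiota_{\br{\Hmr}}(1:\upalpha)} \hookrightarrow (\uppi^*\Ucal_{\Hmr})^\Kmr$ of indexing universes (both living inside the $\Kmr$-fixed universe, with $\Kmr$ acting trivially throughout) induces a map of prespectra, hence on spectrifications a map ${\bf \Theta}\Fmr^\Kmr(\Umr) \to \sfi^*\big(({\bf \Theta}\Fmr(\uppi^*\Umr))^\Kmr\big)$ in $\Sp^{\br{\Gmr}}_{(1:\upalpha)}$; dropping the restriction-of-universe $\sfi^*$ from the notation (as the statement does), this is the desired $\sff_\Kmr(\Umr)$. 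Naturality in $\Umr$ follows because both the identification above and the universe inclusion are natural in the based-space variable; naturality in $\Fmr$ is immediate from the construction. One also checks that under restriction along $\Kmr' \subset \Hmr$ (or along subgroups of $\br{\Hmr}$) the maps $\sff_\Kmr(\Umr)$ are compatible, so that they assemble into a map of $\upalpha$-systems in the sense of \Cref{defn:alphasystem}.

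The main obstacle I expect is bookkeeping the universes correctly: the $\Kmr$-fixed-point functor on genuine $\Gmr$-spectra interacts subtly with change of universe, and one must verify that applying $(-)^\Kmr$ to the naive structure maps of \eqref{map:UV} — which involve suspension by representations $\uppi^*\upalpha \otimes \Wmr'$ for $\Wmr'$ ranging over the \emph{full} $\Hmr$-universe — and then restricting attention to $\Wmr'$ in the pulled-back sub-universe $\uppi^*\Ucal_{\br{\Hmr}}$ really does recover the prespectrum defining ${\bf \Theta}\Fmr^\Kmr$ on the nose, rather than merely up to an equivalence coming from cofinality. Since $\Kmr$ acts trivially on everything in sight after taking fixed points, no genuine equivariant subtlety (e.g.\ tom Dieck splitting) enters at the prespectrum level; the content is entirely the identification $\Fmr^\Kmr_{\br{\Gmr}/\br{\Hmr}}(-) = (\Fmr_{\Gmr/\Hmr}(\uppi^*-))^\Kmr$ together with the compatibility of the universe inclusions across the subgroups $\br{\Hmr} \subset \br{\Gmr}$. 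I do not expect $\sff_\Kmr(\Umr)$ to be an equivalence in general — indeed it should fail already for $\Gmr = \Cmr_2$, $\Kmr = \Cmr_2$ and $\Fmr = {\bf B}_\Gmr{\bf O}^{(\upepsilon)}$, matching the discussion following \Cref{ex:1derequiv}.
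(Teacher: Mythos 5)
Your proposal follows essentially the same approach as the paper's proof: both construct the map at the prespectrum level by observing that the defining identity $\Fmr^\Kmr_{\br{\Gmr}/\br{\Hmr}}(\Umr\oplus\Wmr) = \bigl(\Fmr_{\Gmr/\Hmr}(\uppi^*\Umr\oplus\uppi^*\Wmr)\bigr)^\Kmr$ identifies the spaces, then check compatibility with the structure maps $\upupsilon^{\Fmr}$ of \eqref{map:UV} via a commutative square (the paper's $\upkappa_\Umr$ plays the role of your spacewise identification), and spectrify. You are somewhat more explicit than the paper about the indexing-universe bookkeeping — the paper simply asserts that $\bigl(\Ucal_\Gmr^{(1:\upalpha)}\bigr)^\Kmr \cong \Ucal_{\br{\Gmr}}^{(1:\upalpha^\Kmr)}$ while you flag the potential universe inclusion and the implicit restriction $\sfi^*$ — which is reasonable caution since this identification is delicate when $\Kmr$ acts nontrivially on $\upalpha$; otherwise the two arguments coincide.
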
 
\begin{proof} Let $\br{\Hmr} \subset \br{\Kmr}$. Then for any  $\Umr \in \Rep_{\br{\Hmr}}$ for  $(\uppi^*\Umr)^\Kmr = \Umr$ and  the $\Kmr$-fixed points of the universe $\Ucal_\Gmr^{(1:\upalpha)}$  is isomorphic to $\Ucal_{\br{\Gmr}}^{(1:\upalpha^\Kmr)}$. Further, we have a natural map \[ 
\begin{tikzcd}
\upkappa_\Umr:\Fmr^\Kmr (\Umr^\Kmr)  \rar &  \Fmr (\Umr)^\Kmr,
\end{tikzcd}
 \]
and a commutative diagram 
\[ 
\begin{tikzcd}
\Smr^{\upalpha^{\Kmr} \otimes \Vmr} \sma  \Fmr^\Kmr_{\br{\Gmr}/\br{\Hmr}} (\Umr) \ar["\upupsilon^{\Fmr^\Kmr}_{\Umr, \Vmr}",rrr] \dar["\upkappa_\Umr"'] &&& \Fmr^\Kmr (\Umr \oplus \Vmr) \dar["\upkappa_{\Umr \oplus \Vmr}"] \\
\Smr^{\upalpha^{\Kmr} \otimes \Vmr} \sma  \Fmr (\uppi^*\Umr)^\Kmr \ar[rrr, "(\upupsilon_{\uppi^*\Umr, \uppi^*\Vmr}^\Fmr)^\Kmr"'] &&& \Fmr (\uppi^*\Umr \oplus \uppi^*\Vmr)^\Kmr
\end{tikzcd}
\]
for any $\Umr, \Vmr \in \Rep_{\br{\Hmr}}$, where $\upupsilon^{(-)}_{(-), (-)}$  is the natural map in \eqref{map:UV}. This  
 leads us to the result. 
\end{proof}

\begin{cor} \label{cor:derinffix} For any $1$-morphism $\Emr: \Jfrak_\Gmr \longrightarrow \Tfrak_\Gmr$ and $\upalpha \in \Rep_\Gmr$, there is a natural map 
\[ 
\begin{tikzcd}
\sff_\Kmr(\Umr): {\bf \Theta}((\Emr^\Kmr)^{(\upalpha^\Kmr)})(\Umr)\rar & ({\bf \Theta}\Emr^{(\upalpha)}(\uppi^*\Umr))^\Kmr
\end{tikzcd}
\]
in $\Sp^{\br{\Gmr}}$ for all $\Umr \in \Rep_{\br{\Hmr}}$ for all subgroup $\br{\Hmr} \subset \br{\Gmr}$.
\end{cor}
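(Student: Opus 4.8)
The plan is to read off \Cref{cor:derinffix} from \Cref{lem:derfix} and \Cref{clm:derfixinf}: the former compares the derivative of a fixed-point functor with the fixed points of a derivative functor at the level of $1$-morphisms, and the latter performs the analogous comparison after passing to derivatives at $\infty$. As throughout this subsection I take $\Kmr\trianglelefteq\Gmr$, write $\br\Gmr=\Gmr/\Kmr$ and $\uppi\colon\Gmr\twoheadrightarrow\br\Gmr$ for the quotient (the case of a non-normal $\Kmr$ reduces to this one by first restricting $\Emr$ along $\sfq_{\Nmr(\Kmr)}$ and invoking the restriction-compatibility of ${\bf \Theta}(-)$ recorded in \eqref{SpRes}). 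Fix $\br\Hmr\subset\br\Gmr$, put $\Hmr=\uppi^{-1}(\br\Hmr)$, and take $\Umr\in\Rep_{\br\Hmr}$.

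The engine is \Cref{clm:derfixinf} applied to the $1$-morphism $\Fmr:=\Emr^{(\upalpha)}\colon\Jfrak^{\upalpha}_\Gmr\to\Tfrak_\Gmr$. Since by \Cref{defn:Efix} the functor $(\Emr^{(\upalpha)})^\Kmr$ is the composite $\Jfrak^{\upalpha^\Kmr}_{\br\Gmr}\xrightarrow{\uppi^*}\upphi^*\Jfrak^{\upalpha}_\Gmr\xrightarrow{\upphi^*\Emr^{(\upalpha)}}\upphi^*\Tfrak_\Gmr\xrightarrow{(-)^\Kmr}\Tfrak_{\br\Gmr}$, the claim supplies a natural map
\[
{\bf \Theta}\bigl((\Emr^{(\upalpha)})^\Kmr\bigr)(\Umr)\;\longrightarrow\;\bigl({\bf \Theta}\Emr^{(\upalpha)}(\uppi^*\Umr)\bigr)^\Kmr
\]
of $\upalpha^\Kmr$-systems of $\br\Gmr$-spectra, where on the right one uses the identification of the $\Kmr$-fixed points of $\Ucal^{(1:\upalpha)}_\Gmr$ with $\Ucal^{(1:\upalpha^\Kmr)}_{\br\Gmr}$ noted in the proof of that claim. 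It thus remains to build a $2$-morphism $(\Emr^\Kmr)^{(\upalpha^\Kmr)}\to(\Emr^{(\upalpha)})^\Kmr$ in ${\bf Fun}(\Ocal^{\mr{op}}_{\br\Gmr},\Cat)$ and apply ${\bf \Theta}(-)$; the composite of the resulting map with the one displayed above is $\sff_\Kmr(\Umr)$. For this $2$-morphism I would use that the $\Kmr$-fixed subspace $\upalpha^\Kmr\subseteq\upalpha$ is a $\Gmr$-subrepresentation with trivial $\Kmr$-action, so $\uppi^*\upalpha^\Kmr\subseteq\upalpha$ and the zero-section $1$-morphisms \eqref{eqn:zerosec}--\eqref{eqn:relzerosec}, together with the composition law for right Kan extensions, yield a canonical comparison $2$-morphism relating $\Emr^{(\uppi^*\upalpha^\Kmr)}$ with the restriction of $\Emr^{(\upalpha)}$ along $\upzeta^{\uppi^*\upalpha^\Kmr,\upalpha}$. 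Taking $\Kmr$-fixed points of this comparison --- and using that the $\Kmr$-fixed subcategory of $\Jfrak^{\uppi^*\upalpha^\Kmr}_\Gmr$ agrees with that of $\Jfrak^{\upalpha}_\Gmr$, namely $\Jfrak^{\upalpha^\Kmr}_{\br\Gmr}$ --- and assembling it with the $2$-morphism $(\Emr^\Kmr)^{(\upalpha^\Kmr)}\to(\Emr^{(\uppi^*\upalpha^\Kmr)})^\Kmr$ of \Cref{lem:derfix} (invoked with the $\br\Gmr$-representation $\upalpha^\Kmr$ in the role of $\upalpha$ there) gives the desired $(\Emr^\Kmr)^{(\upalpha^\Kmr)}\to(\Emr^{(\upalpha)})^\Kmr$. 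Naturality of $\sff_\Kmr$ in $\Umr$ and its compatibility with the system structure maps are inherited from the corresponding properties in \Cref{lem:derfix} and \Cref{clm:derfixinf}.

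The step I expect to be the main obstacle is organizational rather than conceptual: one must keep three sub-$\Gmr$-universes in play --- $\Ucal^{(1:\upalpha)}_\Gmr$, $\Ucal^{(1:\uppi^*\upalpha^\Kmr)}_\Gmr$, and their $\Kmr$-fixed points --- and check that at each stage the $\Kmr$-fixed-point functor carries the pairings \eqref{map:UV} defining one prespectrum to those defining the next, so that every arrow above is genuinely a map of $\upalpha^\Kmr$-systems in the sense of \Cref{defn:alphasystem} and not merely a levelwise map. The essential case of this compatibility is precisely the commutative square in the proof of \Cref{clm:derfixinf}; the only genuinely new point is that the comparison between the derivative functors $\Emr^{(\uppi^*\upalpha^\Kmr)}$ and $\Emr^{(\upalpha)}$ stays compatible with \eqref{map:UV} after applying ${\bf \Theta}(-)$, which holds because the pairings defining \eqref{map:UV} are natural in the enlargement parameter. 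Everything else is formal bookkeeping with right Kan extensions and change-of-universe functors.
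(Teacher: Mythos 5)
The overall strategy is the paper's: feed $\Fmr = \Emr^{(\upalpha)}$ into \Cref{clm:derfixinf} to produce ${\bf \Theta}((\Emr^{(\upalpha)})^\Kmr)(\Umr)\to({\bf \Theta}\Emr^{(\upalpha)}(\uppi^*\Umr))^\Kmr$, and precompose with ${\bf \Theta}$ applied to a $2$-morphism $(\Emr^\Kmr)^{(\upalpha^\Kmr)}\to(\Emr^{(\upalpha)})^\Kmr$. What the paper (implicitly) does for that $2$-morphism is run the Kan-extension argument of \Cref{lem:derfix} \emph{directly} with the $\Gmr$-representation $\upalpha$: the square $\uppi^*\circ\upzeta^{\upalpha^\Kmr}=\upzeta^{\upalpha}\circ\uppi^*$ (on $\Gmr/\Hmr$-pieces) shows that $(\Emr^{(\upalpha)})^\Kmr\circ\upzeta^{\upalpha^\Kmr}=\Emr^\Kmr$, so the universal property of the right Kan extension $(\Emr^\Kmr)^{(\upalpha^\Kmr)}$ produces the $2$-morphism in a single step, exactly as in the proof of \Cref{lem:derfix} but with $\Emr^{(\uppi^*\upalpha)}$ replaced by $\Emr^{(\upalpha)}$.

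Your detour through $\Emr^{(\uppi^*\upalpha^\Kmr)}$ is both unnecessary and, as described, has a direction problem. You want to compose the $2$-morphism $(\Emr^\Kmr)^{(\upalpha^\Kmr)}\to(\Emr^{(\uppi^*\upalpha^\Kmr)})^\Kmr$ of \Cref{lem:derfix} with the $\Kmr$-fixed points of a ``canonical comparison'' between $\Emr^{(\uppi^*\upalpha^\Kmr)}$ and ${\bf R}^{\upalpha}_{\uppi^*\upalpha^\Kmr}\Emr^{(\upalpha)}$, which for the composition to produce $(\Emr^\Kmr)^{(\upalpha^\Kmr)}\to(\Emr^{(\upalpha)})^\Kmr$ must be a $2$-morphism $\Emr^{(\uppi^*\upalpha^\Kmr)}\to{\bf R}^{\upalpha}_{\uppi^*\upalpha^\Kmr}\Emr^{(\upalpha)}$. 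But the composition law of \Cref{rmk:IRproperties} gives $\Emr^{(\upalpha)}={\bf I}^{\upalpha}_{\uppi^*\upalpha^\Kmr}\Emr^{(\uppi^*\upalpha^\Kmr)}$, and the only canonical comparison from the ${\bf R}\dashv{\bf I}$ adjunction is the counit ${\bf R}^{\upalpha}_{\uppi^*\upalpha^\Kmr}\Emr^{(\upalpha)}\to\Emr^{(\uppi^*\upalpha^\Kmr)}$, whose $\Kmr$-fixed points is a map $(\Emr^{(\upalpha)})^\Kmr\to(\Emr^{(\uppi^*\upalpha^\Kmr)})^\Kmr$ --- pointing the opposite way. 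So your intermediate step, as stated, does not produce a morphism you can compose with the output of \Cref{lem:derfix}. Replacing the two-step detour with the one-step universal-property argument above removes the gap and lands exactly on the paper's route.

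The rest of your proposal --- the reduction of non-normal $\Kmr$ to the normalizer via \eqref{SpRes}, the identification of the $\Kmr$-fixed subcategory of $\Jfrak^{\upalpha}_\Gmr$ with $\Jfrak^{\upalpha^\Kmr}_{\br\Gmr}$, and the compatibility bookkeeping with the structure maps \eqref{map:UV} --- is correct and matches the paper.
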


When $\Kmr$ is not a normal subgroup of $\Gmr$ then we first restrict to the normalizer $\Nmr(\Kmr)$ and then apply the discussion above. As a consequence of  \Cref{cor:derinffix} and \Cref{lem:resderinf}, we have:
\begin{lem} \label{lem:derinffix} Suppose $\Kmr$ is a subgroup of $\Gmr$, $\upalpha \in \Rep_{\Gmr}$ and $\uppi:\Nmr(\Kmr) \twoheadrightarrow \Wmr(\Kmr)$ denote the quotient map. Let $\br{\Hmr} \subset \Wmr(\Kmr)$ and $\Hmr = \uppi^{-1}(\br{\Hmr})$.   Then for any $1$-morphism $\Emr: \Jfrak_\Gmr \longrightarrow \Tfrak_\Gmr$, there is a natural zig-zag 
\[ 
\begin{tikzcd}
&& ({\sfi}^{\Nmr(\Kmr)}_*{\bf \Theta}\Emr^{(\upalpha)}(\uppi^*\Umr))^{\Kmr} \dar["\sfr_{\Nmr(\Kmr)}(\uppi^*\Umr)^\Kmr"]  \\
{\bf \Theta} ((\Emr^{\Kmr})^{(\upalpha^\Kmr)})(\Umr) \ar[rr, "\sff_{\Kmr}(\Umr)"'] && \left( ({\bf \Theta}(\upiota_{\Nmr(\Kmr)}\Emr^{(\upalpha)})(\uppi^*\Umr)\right)^\Kmr
\end{tikzcd}
\]
in $\Sp^{\Wmr(\Kmr)}_{\upiota_{\br{\Hmr}}(1:\upalpha^\Kmr)}$ for any $\Umr \in \Rep_{\br{\Hmr}}$, where $\sfi^{\Nmr(\Kmr)}_*$ is as defined in \eqref{C-of-U-1}.
\end{lem}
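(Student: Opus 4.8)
The plan is to assemble the zig-zag directly from the two comparison results already in hand, after reducing to the normal case. Since $\Kmr$ is normal in its normalizer with $\Nmr(\Kmr)/\Kmr = \Wmr(\Kmr)$, and since by construction $\Emr^\Kmr = (\upiota_{\Nmr(\Kmr)}\Emr)^\Kmr$, I would first restrict $\Emr$ to $\Nmr(\Kmr)$ and thereafter work with the normal subgroup $\Kmr \subset \Nmr(\Kmr)$, the quotient map $\uppi : \Nmr(\Kmr) \twoheadrightarrow \Wmr(\Kmr)$, and the $\Nmr(\Kmr)$-representation $\upiota_{\Nmr(\Kmr)}\upalpha$. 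Two bookkeeping identities make every object of the target diagram available: $(\upiota_{\Nmr(\Kmr)}\upalpha)^\Kmr = \upalpha^\Kmr$ as $\Wmr(\Kmr)$-representations, and $\upiota_{\Nmr(\Kmr)}(\Emr^{(\upalpha)}) = (\upiota_{\Nmr(\Kmr)}\Emr)^{(\upiota_{\Nmr(\Kmr)}\upalpha)}$ (compatibility of derivatives with restriction, see the discussion preceding \Cref{lem:resderinf}).

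For the bottom horizontal arrow, I would apply \Cref{cor:derinffix} to the $1$-morphism $\upiota_{\Nmr(\Kmr)}\Emr : \Jfrak_{\Nmr(\Kmr)} \longrightarrow \Tfrak_{\Nmr(\Kmr)}$, the normal subgroup $\Kmr \subset \Nmr(\Kmr)$, and the representation $\upiota_{\Nmr(\Kmr)}\upalpha$, evaluated at $\Umr \in \Rep_{\br{\Hmr}}$; rewriting the source and target via the identities above produces the natural map
\[
\sff_\Kmr(\Umr)\colon {\bf \Theta}\bigl((\Emr^\Kmr)^{(\upalpha^\Kmr)}\bigr)(\Umr) \longrightarrow \bigl({\bf \Theta}\,\upiota_{\Nmr(\Kmr)}(\Emr^{(\upalpha)})(\uppi^*\Umr)\bigr)^\Kmr .
\]
For the right vertical arrow, I would apply \Cref{lem:resderinf} to $\Gmr$ and its subgroup $\Nmr(\Kmr)$, evaluate the resulting $\Omr(\upiota_\Hmr(\upalpha))$-equivariant comparison $\sfr_{\Nmr(\Kmr)}$ at the $\Hmr$-representation $\uppi^*\Umr$, where $\Hmr := \uppi^{-1}(\br{\Hmr}) \supseteq \Kmr$ (so $\Kmr \trianglelefteq \Hmr$), and then apply the $\Kmr$-fixed-point functor, obtaining
\[
\sfr_{\Nmr(\Kmr)}(\uppi^*\Umr)^\Kmr\colon \bigl(\sfi^{\Nmr(\Kmr)}_*\,{\bf \Theta}\Emr^{(\upalpha)}(\uppi^*\Umr)\bigr)^\Kmr \longrightarrow \bigl({\bf \Theta}\,\upiota_{\Nmr(\Kmr)}(\Emr^{(\upalpha)})(\uppi^*\Umr)\bigr)^\Kmr .
\]
Since these two maps have a common target, together they constitute the asserted zig-zag, and its naturality in $\Umr$ (and in $\Emr$) is inherited from the naturality statements in \Cref{cor:derinffix} and \Cref{lem:resderinf}.

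The step I expect to be the \emph{main obstacle} is the universe bookkeeping: one must check that the whole zig-zag genuinely lives in $\Sp^{\Wmr(\Kmr)}_{\upiota_{\br{\Hmr}}(1:\upalpha^\Kmr)}$. This reuses the universe computation from the proof of \Cref{clm:derfixinf}: the $\Kmr$-fixed points of $\Ucal^{(1:\upiota_{\Nmr(\Kmr)}\upalpha)}_{\Nmr(\Kmr)}$ are canonically $\Ucal^{(1:\upalpha^\Kmr)}_{\Wmr(\Kmr)}$, compatibly with restriction along $\br{\Hmr} \subset \Wmr(\Kmr)$, so that $(-)^\Kmr$ carries $\Sp^{\Hmr}_{\upiota_\Hmr(1:\upiota_{\Nmr(\Kmr)}\upalpha)}$ into $\Sp^{\br{\Hmr}}_{\upiota_{\br{\Hmr}}(1:\upalpha^\Kmr)}$; one also needs that the change-of-universe functor $\sfi^{\Nmr(\Kmr)}_*$ of \eqref{C-of-U-1} is compatible with $(-)^\Kmr$, which is what makes the source $\bigl(\sfi^{\Nmr(\Kmr)}_*\,{\bf \Theta}\Emr^{(\upalpha)}(\uppi^*\Umr)\bigr)^\Kmr$ of the vertical map unambiguous. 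No homotopy-theoretic input is needed beyond \Cref{cor:derinffix} and \Cref{lem:resderinf}; the content lies entirely in correctly threading the universes through the $\Kmr$-fixed-point functor and in checking that the normalizer-reduction is harmless.
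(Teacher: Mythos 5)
Your proposal is correct and follows exactly the route the paper takes: the paper gives no formal proof, stating only that the lemma is ``a consequence of \Cref{cor:derinffix} and \Cref{lem:resderinf}'' after restriction to the normalizer, which is precisely the normalizer reduction, application of \Cref{cor:derinffix} for the horizontal arrow, and application of \Cref{lem:resderinf} followed by $(-)^{\Kmr}$ for the vertical arrow that you spell out. Your universe bookkeeping remarks and the identities $(\upiota_{\Nmr(\Kmr)}\Emr)^\Kmr = \Emr^\Kmr$, $\upiota_{\Nmr(\Kmr)}(\Emr^{(\upalpha)}) = (\upiota_{\Nmr(\Kmr)}\Emr)^{(\upiota_{\Nmr(\Kmr)}\upalpha)}$ are exactly what the paper implicitly relies upon.
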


\section{Equivariant Stiefel combinatorics} \label{sec:SC}

\bigskip
 If we set $\Umr' = \Umr \oplus \upiota_\Hmr(\upepsilon)$ in \eqref{eqn:compJ} for some $\upepsilon \in \Rep_\Gmr$ with $|\upepsilon| = 1$, and restrict the induced map on Thom spaces to the inclusion map 
$
\begin{tikzcd}
\iota : \Umr \rar[hook] & \Umr \oplus \upiota_\Hmr(\upepsilon)
\end{tikzcd}, $
 we get a map
\[ 
\begin{tikzcd}
\ofrak_{\iota}:  \Smr^{\upiota_{\Hmr}(\upalpha \otimes \upepsilon)} \sma \Mor_{\Hmr, \upalpha}(\Umr\oplus \upiota_{\Hmr} (\upepsilon), \Umr'')  \rar & \Mor_{\Hmr, \upalpha}(\Umr , \Umr'')
\end{tikzcd}
\]
whose cofiber is determined by  the following theorem. 
\begin{thm}[{\bf Equivariant Stiefel combinatorics I}] \label{thm:SC1} Suppose  $\upalpha$ and $\upepsilon$ are orthogonal $\Gmr$-representation such that $|\upepsilon| =1$. Then we have a cofiber sequence 
\[ 
\begin{tikzcd}
\Mor_{\Hmr, \upalpha}(\Umr\oplus \upiota_{\Hmr} (\upepsilon), \Umr'') \sma \Smr^{\upiota_{\Hmr}(\upalpha \otimes \upepsilon)} \rar["\ofrak_{\iota}"] & \Mor_{\Hmr, \upalpha}(\Umr , \Umr'') \rar & \Mor_{\Hmr, \upalpha \oplus \upepsilon}(\Umr, \Umr'')
\end{tikzcd}
\]
which is natural in the variables $\Umr, \Umr'' \in \Rep_\Hmr$.
\end{thm}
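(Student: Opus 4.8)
The plan is to realize all four spaces in the statement as equivariant Thom spaces of vector bundles over the Stiefel manifold $\Mor_\Hmr(\Umr,\Umr'')$ and then to invoke the standard cofiber sequence attached to a Whitney sum. Write $B := \Mor_\Hmr(\Umr,\Umr'')$, let $E\to B$ be the \emph{complement bundle} whose fiber over $\sff$ is $\mathrm{img}(\sff)^{\perp}\subset\Umr''$ with $\Hmr$ acting by conjugation, and set $L := \upiota_\Hmr(\upepsilon)$, a one-dimensional real $\Hmr$-representation. Unwinding \eqref{bundlealpha} one has $\Mor_{\Hmr,\upalpha}(\Umr,\Umr'') = \Th_B(\upiota_\Hmr(\upalpha)\otimes E)$, and since $\upiota_\Hmr(\upalpha\oplus\upepsilon)\otimes E \cong (\upiota_\Hmr(\upalpha)\otimes E)\oplus(L\otimes E)$ also $\Mor_{\Hmr,\upalpha\oplus\upepsilon}(\Umr,\Umr'') = \Th_B\big((\upiota_\Hmr(\upalpha)\otimes E)\oplus(L\otimes E)\big)$. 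Abbreviating $A := \upiota_\Hmr(\upalpha)\otimes E$, the middle and right terms of the desired sequence are thus $\Th_B(A)$ and $\Th_B(A\oplus(L\otimes E))$.

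Next I would identify the source of $\ofrak_\iota$ with $\Th_{S(L\otimes E)}(q^*A)$, where $q\colon S(L\otimes E)\to B$ is the unit sphere bundle of $L\otimes E$. Restriction along $\iota\colon\Umr\hookrightarrow\Umr\oplus\upepsilon$ identifies $\Mor_\Hmr(\Umr\oplus\upepsilon,\Umr'')$ with $S(L\otimes E)$: an extension of $\sff\in B$ to $\Umr\oplus\upepsilon$ amounts to a unit vector in $\mathrm{img}(\sff)^{\perp}$, and since $\upepsilon$ is one-dimensional the conjugation action twists this sphere by the character of $\upepsilon$, which is precisely the action on $S(L\otimes E)$. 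Over $S(L\otimes E)$ the pullback $q^*E$ splits orthogonally as $\uplambda\oplus\uplambda^{\perp}$, where $\uplambda$ is the tautological line through the chosen unit vector and $\uplambda^{\perp}$ has fiber $\mathrm{img}(\sfg)^{\perp}$ over an extension $\sfg$; hence $\upgamma_{\Hmr,\upalpha}(\Umr\oplus\upepsilon,\Umr'') = \upiota_\Hmr(\upalpha)\otimes\uplambda^{\perp}$ and $q^*A = \upgamma_{\Hmr,\upalpha}(\Umr\oplus\upepsilon,\Umr'')\oplus(\upiota_\Hmr(\upalpha)\otimes\uplambda)$. The key small computation is that $L\otimes\uplambda$ is the tautological subline of $q^*(L\otimes E)$ over its own sphere bundle, so it carries the tautological $\Hmr$-equivariant section and is equivariantly trivial; therefore $\uplambda\cong\underline{L}$ (the constant bundle on $L$) and $\upiota_\Hmr(\upalpha)\otimes\uplambda\cong\underline{\upiota_\Hmr(\upalpha\otimes\upepsilon)}$. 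Consequently
\[
\Th_{S(L\otimes E)}(q^*A) \;\cong\; \Mor_{\Hmr,\upalpha}(\Umr\oplus\upepsilon,\Umr'')\sma\Smr^{\upiota_\Hmr(\upalpha\otimes\upepsilon)},
\]
and unravelling the definition of $\ofrak_\iota$ from \eqref{eqn:compJ} (put $\Umr' = \Umr\oplus\upiota_\Hmr(\upepsilon)$ and $\sfg=\iota$, so that $\sfv + \sff_*\sfw$ lands in $\upiota_\Hmr(\upalpha)\otimes\mathrm{img}(\sff|_\Umr)^{\perp}$ with the two summands exactly $\uplambda^{\perp}$ and $\uplambda$) shows that $\ofrak_\iota$ is the map of Thom spaces induced by the bundle map $q^*A\to A$ covering $q$.

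Finally I would invoke the cofiber sequence attached to the Whitney sum: for the disk--sphere pair $S(L\otimes E)\hookrightarrow D(L\otimes E)\to\Th_B(L\otimes E)$ and the bundle $A$ pulled back along $p\colon D(L\otimes E)\to B$, one has a cofiber sequence of pointed $\Hmr$-spaces
\[
\Th_{S(L\otimes E)}(q^*A)\longrightarrow\Th_{D(L\otimes E)}(p^*A)\longrightarrow\Th_B\big(A\oplus(L\otimes E)\big).
\]
Because $D(L\otimes E)$ $\Hmr$-equivariantly deformation retracts onto its zero section $B$, the middle term is equivalent to $\Th_B(A) = \Mor_{\Hmr,\upalpha}(\Umr,\Umr'')$ and under this equivalence the first map becomes the one induced by $q$, which by the previous paragraph is $\ofrak_\iota$; the right-hand term is $\Th_B(A\oplus(L\otimes E)) = \Mor_{\Hmr,\upalpha\oplus\upepsilon}(\Umr,\Umr'')$. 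This is exactly the asserted cofiber sequence, and naturality in $\Umr,\Umr''\in\Rep_\Hmr$ is inherited from the naturality of the complement bundle, of Thom spaces, and of the disk--sphere sequence.

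The step I expect to be the main obstacle is the bookkeeping around the one-dimensional representation $\upepsilon$: verifying that the tautological line of $S(L\otimes E)$ is \emph{equivariantly} trivial and correctly tracking the factors of $L$ and $\upiota_\Hmr(\upalpha)\otimes L$, and then checking that the abstractly-produced connecting map of the Whitney-sum cofiber sequence agrees on the nose with $\ofrak_\iota$ as defined through the composition pairing \eqref{eqn:compJ}. The remaining points --- compactness of the Stiefel manifolds so that the disk--sphere sequence and the Thom-space identifications apply, the degenerate cases where $\Mor_\Hmr(\Umr\oplus\upepsilon,\Umr'')$ or $B$ is empty, and the equivariance of the retraction $D(L\otimes E)\simeq B$ --- are routine.
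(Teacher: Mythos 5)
Your argument is correct, and it takes a genuinely different route from the paper's. The paper's proof follows Weiss's \cite[Proposition 1.2]{WeissCalc} directly: it writes down an explicit point-set formula for a homeomorphism between the reduced mapping cone of $\ofrak_\iota$ and $\Mor_{\Hmr,\upalpha\oplus\upepsilon}(\Umr,\Umr'')$ and then verifies by hand that the formula is $\Hmr$-equivariant. You instead exhibit all four terms as equivariant Thom spaces over $B=\Mor_\Hmr(\Umr,\Umr'')$, split $\upiota_\Hmr(\upalpha\oplus\upepsilon)\otimes E$ as the Whitney sum $A\oplus(L\otimes E)$, and read the statement off the disk--sphere cofibration for $L\otimes E$. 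The structural insight that makes this work is the $\Hmr$-equivariant trivialization of $L\otimes\uplambda$ by the tautological section over $S(L\otimes E)$: this identifies $\uplambda$ with the constant bundle on $L$, produces the suspension factor $\Smr^{\upiota_\Hmr(\upalpha\otimes\upepsilon)}$ as a trivial summand of $q^*A$, and reduces the equivariance of the whole identification to naturality of Thom spaces and Whitney sums rather than to checking a formula. The trade-off is that the paper obtains an on-the-nose homeomorphism of the mapping cone, whereas your sequence is a cofiber sequence only up to the $\Hmr$-homotopy equivalence between the Thom space over $D(L\otimes E)$ and $\Mor_{\Hmr,\upalpha}(\Umr,\Umr'')$; this is immaterial for the downstream uses such as \Cref{cor:sc1cor}. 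Your unwinding of \eqref{eqn:compJ} to identify the Thom-space map over $q$ with $\ofrak_\iota$ is the one genuinely delicate step, and it checks out: both send $(\sfw,(\sff,\sfv))$ to $(\sff|_\Umr,\sfv+\sff_*\sfw)$.
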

\begin{proof} The proof is essentially an equivariant analog of the argument in the nonequivariant case laid out in the proof of  \cite[Proposition 1.2]{WeissCalc}. 

First note that the reduced cone of $\ofrak_\iota$ is a quotient of 
\[  [0, \infty] \times \Mor_{\Hmr, \upalpha}(\Umr\oplus \upiota_\Hmr(\upepsilon), \Umr') \times \Smr^{\upiota_{\Hmr}(\upalpha \otimes \upepsilon)}, \]
and any element of this quotient other than the base point can be represented by a tuple $(t, ({\sf f}, {\sf w}), {\sf v})$,  where 
\begin{itemize}
\item[-] $t \in [0, \infty]$, 
\item[-]  ${\sf f}$ is a linear isometric embedding $\Umr\oplus \upiota_\Hmr(\upepsilon)\rightarrow \Umr'$, 
\item[-] ${\sf w}$ is a vector in $\upiota_\Hmr (\upalpha) \otimes {\sf img}({\sf f})^{\perp}$, and 
\item[-] ${\sf v}$ is a vector in $\upiota_\Hmr( \upalpha \otimes \upepsilon)$.
\end{itemize}
To such a tuple $(t, ({\sf f}, {\sf w}), {\sf v})$, we associate  
\[  (\sff  \circ \iota, \sfu )\in \Mor_{\Hmr, \upiota_\Hmr(\upalpha \oplus \upepsilon)}(\Umr, \Umr'), \]
where $\sfu \in \upiota_\Hmr (\upalpha \oplus \upepsilon)\otimes \Umr' $ is the element  
\[ {\sf w} + ({\sf f}|_{\upiota_\Hmr(\upepsilon)})_*({\sf v}) +  \varphi(\sff|_{\upiota_\Hmr(\upepsilon)})  \]
in which 
\begin{itemize}
\item[-] $({\sf f}|_{\upepsilon})_*$ is the composition 
\[ \upiota_{\Hmr}(\upalpha) \otimes \upepsilon \longrightarrow \upiota_{\Hmr}(\upalpha) \otimes \Umr' \longrightarrow \upiota_\Hmr(\upalpha \oplus \upepsilon) \otimes \Umr',\] 
\item[-]  $\varphi$ denotes the isomorphism $ \hom(\upiota_\Hmr(\upepsilon), \Umr')  \overset{\cong}\longrightarrow  \upiota_\Hmr(\upepsilon)\otimes \Umr' $.
\end{itemize}
It is now easy to check that this gives an $\Hmr$-equivariant homeomorphism between the reduced cone of $\ofrak_\iota$ and $\Mor_{\Hmr, \upiota_\Hmr(\upalpha \oplus \upepsilon)}(\Umr, \Umr')$, as desired. 
\end{proof}

To state the next  result, we introduce the following notation. 
\begin{notn}
We write $\hMor_{\Hmr}(\Umr, \Umr')$ for the morphisms in $\wh{\Jfrak}_{\Gmr/\Hmr}$, and $\hMor_{\Hmr, \upalpha}(\Umr, \Umr')$ for those in $\wh{\Jfrak}^{\upalpha}_{\Gmr/\Hmr}$.
\end{notn}
\begin{thm}[{\bf Equivariant Stiefel Combinatorics II}] \label{thm:SC2}  Given a  $\Gmr$-representation $\upalpha$, the (unreduced) cofiber of
\[ 
\begin{tikzcd}
\underset {\Vmr \in \Vscr^{\circ}(\upiota_\Hmr (\upalpha))}{\hocolim} \hMor_{\Hmr}(\Umr \oplus \Vmr, \Umr') \rar["\wh{\ofrak}_{\upalpha}"] & \Mor_{\Hmr}(\Umr, \Umr') 
\end{tikzcd}
\]
admits a homeomorphism  to $ \Mor_{\Hmr, \upalpha}(\Umr, \Umr')$ as an $\Hmr$-spaces which is natural in the variables $\Umr, \Umr' \in \Rep_{\Hmr}$. 
\end{thm}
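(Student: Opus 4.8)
The plan is to reduce \Cref{thm:SC2} to an explicit $\Hmr$-equivariant homeomorphism between the homotopy colimit and a sphere bundle over the Stiefel manifold $\Mor_{\Hmr}(\Umr,\Umr')$, and then to invoke the elementary fact that a Thom space is the unreduced mapping cone of its sphere-bundle projection.

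\emph{Reformulation over the Stiefel manifold.} Write $B := \Mor_{\Hmr}(\Umr, \Umr')$, let $\nu \to B$ be the $\Hmr$-equivariant ``complement bundle'' whose fibre over $\sff$ is ${\sf img}(\sff)^{\perp} \subseteq \Umr'$, and set $W := \upiota_{\Hmr}(\upalpha)$. Over $B$ one has, as $\Hmr$-bundles, $\upgamma_{\Hmr, \upalpha}(\Umr, \Umr') = W \otimes \nu$; moreover $\hMor_{\Hmr}(\Umr \oplus \Vmr, \Umr')$ is the total space of the Stiefel bundle over $B$ whose fibre over $\sff$ is the space of linear isometric embeddings $\Vmr \to {\sf img}(\sff)^{\perp}$, and $\wh{\ofrak}_{\upalpha}$ — assembled from the restriction-to-$\Umr$ maps $\hMor_{\Hmr}(\Umr \oplus \Vmr, \Umr') \to \Mor_{\Hmr}(\Umr, \Umr')$, which are compatible with the inclusions $\Vmr \hookrightarrow \Vmr'$ in $\Vscr^{\circ}(W)$ — is exactly the structure map of the homotopy colimit down to $B$. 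For any $\Hmr$-equivariant Euclidean bundle $E \to B$, fibrewise radial coordinates identify the disc bundle $D(E)$ with the mapping cylinder of the sphere-bundle projection $S(E) \to B$, and collapsing $S(E)$ yields a canonical — hence $\Hmr$-equivariant and natural — homeomorphism $\Th(E) \cong \mr{Cone}(S(E) \to B)$. So it is enough to produce an $\Hmr$-equivariant homeomorphism over $B$
\[
\Phi \colon\ \hocolim_{\Vmr \in \Vscr^{\circ}(W)} \hMor_{\Hmr}(\Umr \oplus \Vmr, \Umr')\ \xrightarrow{\ \cong\ }\ S\!\left(W \otimes \nu\right)
\]
commuting with the projections to $B$; taking mapping cones then gives $\mr{Cone}(\wh{\ofrak}_{\upalpha}) \cong \Th(\upgamma_{\Hmr,\upalpha}(\Umr,\Umr')) = \Mor_{\Hmr, \upalpha}(\Umr, \Umr')$, $\Hmr$-equivariantly and naturally in $\Umr, \Umr'$.

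\emph{The case $\lvert\upalpha\rvert = 1$, as a warm-up.} Here $\Vscr^{\circ}(W) = \{W\}$ carries only the identity, the homotopy colimit is $\hMor_{\Hmr}(\Umr \oplus W, \Umr') = S(W \otimes \nu)$ (for one-dimensional $W$ the unit vectors of $W \otimes {\sf img}(\sff)^{\perp}$ are precisely the isometric embeddings $W \hookrightarrow {\sf img}(\sff)^{\perp}$), and $\Phi$ is the identity; and \Cref{thm:SC1} with subscript representation $\mathbf{0}$ and direction $\upalpha$ degenerates (using $\Mor_{\Hmr, \mathbf{0}} = \Mor_{\Hmr}(-,-)_{+}$ and $S^{\mathbf{0}} = S^{0}$) to the cofibre sequence $\Mor_{\Hmr}(\Umr \oplus W, \Umr')_{+} \to \Mor_{\Hmr}(\Umr, \Umr')_{+} \to \Mor_{\Hmr, \upalpha}(\Umr, \Umr')$, which is \Cref{thm:SC2} in this case. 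This also indicates what $\Phi$ must be in general.

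\emph{The homeomorphism in general.} Work fibrewise over $\sff \in B$, set $N := {\sf img}(\sff)^{\perp}$, and use the inner product on $W$ to identify $W \otimes N \cong \Hom(W, N)$. In the bar-construction model, a point of the homotopy colimit is represented by a flag $\Vmr_{0} \supseteq \Vmr_{1} \supseteq \cdots \supseteq \Vmr_{n}$ of nonzero subspaces of $W$, an isometric embedding $\uptheta \colon \Vmr_{0} \hookrightarrow N$, and barycentric coordinates $(t_{0}, \dots, t_{n}) \in \Delta^{n}$. I would send such a representative to $\uptheta \circ p \circ \mr{pr}_{\Vmr_{0}} \in \Hom(W, N)$, rescaled to unit Hilbert--Schmidt norm, where $\mr{pr}_{\Vmr_{0}}$ is orthogonal projection onto $\Vmr_{0}$ and $p \in \mr{Pos}(\Vmr_{0})$ is the positive self-adjoint operator whose eigenvalue is constant on each layer $\Vmr_{i-1} \ominus \Vmr_{i}$ (and on $\Vmr_{n}$), the eigenvalues being the image of $(t_{0}, \dots, t_{n})$ under a fixed homeomorphism $\Delta^{n} \cong \{\, 0 \le \mu_{1} \le \cdots \le \mu_{n+1},\ \mu_{n+1} > 0 \,\}/\RR_{> 0}$ chosen so that the facet $t_{0} = 0$ corresponds to $\mu_{1} = 0$ and the facet $t_{i} = 0$ ($i \ge 1$) to $\mu_{i} = \mu_{i+1}$. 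The inverse sends a nonzero $\sfv \in \Hom(W, N)$ to $(\ker \sfv)^{\perp}$, the isometric part $\iota_{\sfv}$ of the polar decomposition $\sfv|_{(\ker \sfv)^{\perp}} = \iota_{\sfv} \circ p_{\sfv}$, and the eigenvalue flag of $p_{\sfv}$. The essential point is that $\Phi$ descends to the geometric realisation: the face deleting $\Vmr_{0}$ (and restricting $\uptheta$ to $\Vmr_{1}$) is precisely the locus $\mu_{1} = 0$, the face deleting $\Vmr_{i}$ is the locus $\mu_{i} = \mu_{i+1}$, and degenerate simplices correspond to inserting a repeated subspace with empty layer — so the identifications in the bar construction match exactly those imposed by the polar-decomposition stratification of $S(W \otimes N)$. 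Granting this, $\Phi$ is a continuous bijection with the stated inverse, hence a homeomorphism; as a consistency check, a top simplex contributes $(\lvert\upalpha\rvert - 1) + \dim(\text{complete flags in } W) + \dim \Mor(W, N)$, which is $\lvert\upalpha\rvert \cdot \dim_{\RR} N - 1 = \dim S(W \otimes N)$. Finally, every ingredient of $\Phi$ and its inverse — $(\ker)^{\perp}$, polar decomposition, the eigenvalue flag, the normalisation — is canonical, so $\Phi$ depends continuously on $\sff$, is equivariant for the $\Hmr$-actions (conjugation on $B$, the induced action on $\nu$, the given action on $W$), and commutes with the projections to $B$; thus $\Phi$ globalises to the homeomorphism of $\Hmr$-spaces over $B$ needed above.

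\emph{Where the difficulty lies.} The formal parts — reduction to $\Phi$, the Thom space / mapping cone identification, the identification of $\wh{\ofrak}_{\upalpha}$ with a bundle projection, and the passage to $\Hmr$-equivariance and naturality through canonicity — are routine. The real work, and the place where Weiss's nonequivariant argument must be transcribed with care, is the descent verification in the previous step: matching the face and degeneracy combinatorics of the bar-construction model of the homotopy colimit with the stratification of $S(W \otimes \nu)$ by collisions and vanishings of the eigenvalues of the polar factor, continuously over $B$ and compatibly with $\Hmr$. An alternative would be to replace that step by an induction on $\lvert\upalpha\rvert$ that feeds \Cref{thm:SC1} into a comparison of the homotopy colimits attached to $\upalpha$ and to $\upalpha \oplus \upepsilon$ for one-dimensional $\upepsilon$, but the bookkeeping is of comparable weight.
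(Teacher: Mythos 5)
Your proposal follows the same strategy as the paper's proof: transcribe Weiss's nonequivariant homeomorphism, built from the spectral/polar decomposition of the adjoint data $\sfw^*\sfw$ fibrewise over the Stiefel manifold and the bar model of the homotopy colimit, and then check $\Hmr$-equivariance. The only differences are cosmetic — you factor the cone identification explicitly through the sphere bundle $\Smr\upgamma_{\Hmr,\upalpha}(\Umr,\Umr')$ (which the paper records afterwards as \Cref{prop:sc3}) and argue equivariance by appealing to canonicity of polar decomposition, whereas the paper performs the conjugation check $\Phi(\sfh\cdot(\sff,\sfw)) = \sfh\cdot\Phi(\sff,\sfw)$ directly.
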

\begin{proof} We first recall the underlying nonequivariant homeomorphism (in the notations used  in this paper) as laid out in \cite{WeissCalc} and then argue that the given homeomorphism is indeed an $\Hmr$-equivariant map. 
 
 In the proof of  \cite[Theorem 4.1]{WeissCalc}, the underlying homoemorphism between $ \Mor_{\Hmr, \upalpha}(\Umr, \Umr')$  and the unreduced mapping cone of $\wh{\ofrak}_{\upalpha}$ is induced by a map on the respective complements of $\Mor_{\Hmr}(\Umr, \Umr')_+$
 \[ 
 \begin{tikzcd}
 \Phi:  \Mor_{\Hmr, \upalpha}(\Umr, \Umr') \setminus \Mor_{\Hmr}(\Umr, \Umr') \rar & (0, \infty) \times \underset {\Vmr \in \Vscr^{\circ}(\upiota_\Hmr (\upalpha))}{\hocolim} \hMor_{\Hmr}(\Umr \oplus \Vmr, \Umr'), 
 \end{tikzcd}
 \]
   which is constructed as follows.

 Suppose we are given $(\sff, \sfw) \in  \Mor_{\Hmr, \upalpha}(\Umr, \Umr') \setminus \Mor_{\Hmr}(\Umr, \Umr') $, where  ${\sf f}: \Umr \hookrightarrow \Umr'$ is an isometric embedding  and ${\sf w} \in \upiota_\Hmr( \upalpha) \otimes \mr{coker}({\sf f})$. Then we may veiw 
 $\sfw$ as a map 
\begin{equation} \label{wmap}
\begin{tikzcd}
 {\sf w}:  \upiota_\Hmr(\upalpha) \rar & \mr{coker}({\sf f}),
 \end{tikzcd}
 \end{equation}
 and consider ${\sf w}^* {\sf w} : \res_{\Hmr}^{\Gmr} \upalpha \longrightarrow \res_{\Hmr}^{\Gmr} \upalpha $, which is self-adjoint operator, and therefore has eigenvalues  \[0 < \lambda_0 < \lambda_1 < \dots < \lambda_k. \]  Thus, 
 we have a decomposition (of underlying nonequivariant real vector spaces)
 \[ 
 \upiota_\Hmr(\upalpha) = \mr{ker}({\sf w}^*{\sf w} ) \oplus \Emr_0 \oplus \dots \oplus \Emr_{k},
 \]
where $\Emr_i$ is the eigenspace corresponding to $\lambda_i$. This data can be used to produce a point in 
\[
(0, \infty) \times  \underset{ \Vmr \in \Vscr_{\Hmr}^{\circ}(\upiota_\Hmr(\upalpha))}\hocolim \hMor_{\Hmr}(\Umr \oplus \Vmr, \Umr' )
\]
where we use the usual  model the homotopy colimit above, i.e. as the quotient of
\[
\coprod_{k\geq 0} \coprod_{ \phi: {\bf k} \to \Ccal} \Mor_{\Hmr}(\Umr \oplus \phi(0), \Umr' ) \times \Delta^k
\]
where $\Ccal$ denotes opposite of the $\Hmr$-category of $ \Vscr_{\Hmr}^{\circ}(\upiota_\Hmr(\upalpha))$. Explicitly, we let
\begin{itemize}
\item[-] $\phi$ be the  functor ${\bf k} \rightarrow \Ccal$ sending $r\leq k$ to $\Emr_0 \oplus \dots \oplus \Emr_{k-r}$, 
\item[-] ${\sf g} \in \Mor_{\Hmr}(\Umr \oplus \phi(0), \Umr' )$ be such that its restriction to $\Wmr$ is ${\sf f}$, and that its restriction to $\Emr_i$ is $\frac{1}{\sqrt{\lambda_i}}{\sf w}$,
\item[-] $\sfp\in \Delta^k$ be the point with $\frac{1}{\lambda_k}(\lambda_0, \lambda_1 - \lambda_0, \cdots, \lambda_k - \lambda_{k-1})$ as its barycentric coordinates, 
\item[-]  $t\in (0, \infty)$ be the number $\lambda_k$, 
\end{itemize}
and define $\Phi({\sf f}, {\sf w})  = (t, \phi, {\sf g}, \sfp)$. The underlying nonequivariant map $\Phi$ is shown to be a homeomorphism in \cite[Theorem 4.1]{WeissCalc}. We will complete the proof by showing that $\Phi$ is also an $\Hmr$-equivariant map. 

The action of $\sfh \in \Hmr$ on the $\Mor_{\Hmr, \upalpha}(\Umr, \Umr') \setminus \Mor_{\Hmr}(\Umr, \Umr')$ is given by 
\[ {\sf h}^{-1} \cdot ({\sf f}, {\sf w}) := ({\sf h}^{-1}{\sf f}{\sf h}, {\sf h}^{-1}{\sf w}{\sf h} ), \]
where ${\sf w}$ is viewed as the map in  \eqref{wmap}. The eigenvalues of \[ (\sfh^{-1} {\sf w} \sfh)^* (\sfh^{-1} {\sf w} \sfh) =\sfh^{-1} ({\sf w}^*{\sf w}) \sfh \] are the same as that of $ {\sf w}^*{\sf w}$, and the eigenspace of $\lambda_i$ is the subspace $\sfh^{-1} \Emr_i \sfh$ of 
$ \upiota_\Hmr(\upalpha)$. Thus 
\begin{eqnarray*}
\Phi(\sfh \cdot (\sff, \sfw)) &=&  \Phi({\sf h}^{-1}{\sf f}{\sf h}, {\sf h}^{-1}{\sf w}{\sf h})  \\
&=& (t, \sfh^{-1}\phi , \sfh^{-1}\sfg \sfh, \sfp) \\
&=& \sfh \cdot \Phi(\sff, \sfw), 
\end{eqnarray*}
i.e., $\Phi$ is an $\Hmr$-equivariant map.
 \end{proof}
\subsection{Consequences of Stiefel combinatorics} \ 

We  now discuss some of the important consequences of the Stiefel combinatorics which will be important in subsequent  sections. 

\begin{notn} Let $\Smr \upgamma_{\Hmr, \upalpha}(\Umr, \Umr')$ denote the unit sphere bundle of $\upgamma_{\Hmr, \upalpha}(\Umr, \Umr')$ so that $\Mor_{\Hmr, \upalpha}(\Umr, \Umr') := \Th(\upgamma_{\Hmr, \upalpha})$ is the unreduced cofiber of the projection map 
\begin{equation} \label{map:proj}
\begin{tikzcd}
{\sf p}_{\upalpha}: \Smr \upgamma_{\Hmr, \upalpha}(\Umr, \Umr') \rar & \Mor_{\Hmr}(\Umr, \Umr')  
\end{tikzcd}
\end{equation}
for any pair of  $\Hmr$-representations $\Umr$ and $\Umr'$. 
\end{notn}
\begin{prop} \label{prop:sc3}
There is a natural $\Hmr$-equiva homeomorphism
\[
 \underset{\Vmr \in \Vscr^{\circ}(\upiota_\Hmr (\upalpha))}{\hocolim} \hMor_{\Hmr}(\Umr \oplus \Vmr, \Umr') \cong \Smr \upgamma_{\Hmr, \upalpha}(\Umr, \Umr')
 \]
 for any pair $\Umr, \Umr' \in \Rep_\Hmr$. 
\end{prop}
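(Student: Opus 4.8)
The plan is to combine the explicit homeomorphism produced inside the proof of \Cref{thm:SC2} with the standard radial trivialization of a Thom space away from its zero section and its point at infinity, and then to notice that the two resulting descriptions of the same space share a common $(0,\infty)$-factor which can be cancelled.

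First I would record two homeomorphisms with the same source. By the proof of \Cref{thm:SC2}, Weiss's map $\Phi$ is an $\Hmr$-equivariant homeomorphism, natural in $\Umr,\Umr'\in\Rep_\Hmr$,
\[
\Phi\colon\ \Mor_{\Hmr,\upalpha}(\Umr,\Umr')\setminus\Mor_\Hmr(\Umr,\Umr')_+\ \xrightarrow{\ \cong\ }\ (0,\infty)\times\underset{\Vmr\in\Vscr^{\circ}(\upiota_\Hmr(\upalpha))}{\hocolim}\hMor_\Hmr(\Umr\oplus\Vmr,\Umr'),
\]
where $\Mor_\Hmr(\Umr,\Umr')_+\subseteq\Mor_{\Hmr,\upalpha}(\Umr,\Umr')=\Th(\upgamma_{\Hmr,\upalpha}(\Umr,\Umr'))$ denotes the union of the image of the zero section with the base point of the Thom space. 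On the other hand, since the Stiefel manifold $\Mor_\Hmr(\Umr,\Umr')$ is compact, $\Th(\upgamma_{\Hmr,\upalpha}(\Umr,\Umr'))$ is the one-point compactification of the total space of $\upgamma_{\Hmr,\upalpha}(\Umr,\Umr')$; hence that same complement is $\{(\sff,\sfw):\sfw\neq 0\}$, and $(\sff,\sfw)\mapsto\bigl(|\sfw|,(\sff,\sfw/|\sfw|)\bigr)$ gives an $\Hmr$-equivariant homeomorphism, natural in $\Umr,\Umr'$,
\[
\Psi\colon\ \Mor_{\Hmr,\upalpha}(\Umr,\Umr')\setminus\Mor_\Hmr(\Umr,\Umr')_+\ \xrightarrow{\ \cong\ }\ (0,\infty)\times\Smr\upgamma_{\Hmr,\upalpha}(\Umr,\Umr').
\]

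Next I would unwind Weiss's formula for $\Phi^{-1}$ from \cite[Theorem 4.1]{WeissCalc}: it sends $(t,\phi,\sfg,\sfp)$ to the pair $(\sff,\sfw)$ with $\sff=\sfg|_{\Umr}$ and $\sfw|_{E_i}=\sqrt{\lambda_i}\,\sfg|_{E_i}$, where the mutually orthogonal eigenspaces $E_i$ are read off from $\phi$ and $\lambda_i=t\,(p_0+\cdots+p_i)$ in terms of the barycentric coordinates $(p_0,\dots,p_k)$ of $\sfp$. A short computation then gives $|\sfw|^2=t\cdot c(\phi,\sfp)$ with $c(\phi,\sfp):=\sum_i(p_0+\cdots+p_i)\dim E_i>0$, while $\sff$ and $\sfw/|\sfw|$ are manifestly independent of $t$. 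Consequently the composite $\Psi\circ\Phi^{-1}$ has the triangular form $(t,x)\mapsto\bigl(\rho(t,x),h(x)\bigr)$, where for each fixed $x$ the function $\rho(-,x)$ is a homeomorphism of $(0,\infty)$ and $h$ is a continuous map into $\Smr\upgamma_{\Hmr,\upalpha}(\Umr,\Umr')$; moreover $h$ and $\rho$ are $\Hmr$-equivariant and natural in $\Umr,\Umr'$, the $(0,\infty)$-factor carrying the trivial $\Hmr$-action since $\Hmr$ acts by isometries. A map of this triangular form is a homeomorphism if and only if its second component $h$ is one; an explicit continuous inverse of $h$ is extracted from $\Phi\circ\Psi^{-1}$ by restricting to the slice $\{t=1\}$ and solving for the homotopy-colimit coordinate. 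Thus $h$ is the desired $\Hmr$-equivariant homeomorphism
\[
\underset{\Vmr\in\Vscr^{\circ}(\upiota_\Hmr(\upalpha))}{\hocolim}\hMor_\Hmr(\Umr\oplus\Vmr,\Umr')\ \cong\ \Smr\upgamma_{\Hmr,\upalpha}(\Umr,\Umr'),
\]
natural in $\Umr,\Umr'\in\Rep_\Hmr$.

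I expect the middle step to be the main obstacle: one must look inside Weiss's construction of $\Phi$ and verify that its scaling parameter $t=\lambda_k$ enters the radial coordinate $|\sfw|$ of the Thom space only through an overall factor depending solely on the homotopy-colimit coordinate, so that the two $(0,\infty)$-directions genuinely match up to reparametrization and can be removed simultaneously. Once that is settled, the equivariance and naturality of $h$, as well as the triviality of the $\Hmr$-action on the $(0,\infty)$-factor, are formal consequences of the corresponding properties of $\Phi$ and $\Psi$.
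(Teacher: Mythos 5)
Your argument is correct and matches the approach the paper cites: the paper's proof simply defers to the proof of \Cref{thm:SC2} together with Weiss's Proposition~4.2, which is precisely the radial-trivialization-and-cancel-the-$(0,\infty)$-factor argument you spell out, including the key verification that $|\sfw|^2 = t\cdot c(\phi,\sfp)$ so that $(\sff,\sfw/|\sfw|)$ depends only on the hocolim coordinate. One minor imprecision worth flagging: the stated biconditional that ``a triangular map is a homeomorphism if and only if its second component $h$ is'' overstates what you need and what you actually prove — you use only the ``only if'' direction, which you correctly obtain by slicing $\Phi\circ\Psi^{-1}$ at $t=1$, whereas the ``if'' direction is not obviously true in general.
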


\begin{proof}
This follows from the proof of \Cref{thm:SC2}, and the argument is identical to that of \cite[Proposition 4.2]{WeissCalc}. 
\end{proof}

Given a $\Gmr$-representation $\Umr$, one can vary $\Vmr'$ in $\Mor_{\Hmr, \alpha}(\upiota_\Hmr(\Umr), \Vmr')$  to obtain a $1$-morphism  
 \[ 
 \begin{tikzcd}
 {\bf Mor}_{\upalpha}(\Umr, - ) : \Jfrak_{\Gmr} \rar & \Tfrak_{\Gmr}
\end{tikzcd}
\]
in $\Func(\Ocal^{\mr{op}}_{\Gmr}, \Cat)$. Likewise, by varying $\Wmr'$ in $\Smr \upgamma_{\Hmr, \upalpha}(\upiota_{\Hmr}(\Umr), \Wmr')_+$, we obtain the  $1$-morphism
\[ 
\begin{tikzcd}
{\bf S}  \upgamma_{\upalpha}(\Umr, - ) : \Jfrak_{\Gmr} \rar & \Tfrak_{\Gmr}
\end{tikzcd}
\]
such that there is a $2$-morphism  
\[ 
\begin{tikzcd}
{\sf p}_{\upalpha}: {\bf S}  \upgamma_{\upalpha}(\Umr, - ) \rar & {\bf Mor}_{{\bf 0}}(\Umr, - )
\end{tikzcd}
\]
 induced by the projection map of \eqref{map:proj}.
 \begin{notn}
 Let $\Ecal_{\Gmr, \upalpha}$ denote the category of $1$-morphisms from $\Jfrak^{\upalpha}_\Gmr \longrightarrow \Tfrak_\Gmr$
in $\Func(\Ocal^{\mr{op}}_\Gmr, \Cat )$, i.e., objects of  $\Ecal_{\Gmr, \upalpha}$ are natural transformations $\Jfrak^{\upalpha}_\Gmr \Rightarrow \Tfrak_\Gmr$,  and  morphisms are natural transformations between such natural transformations (i.e., 3-morphisms). Alternatively, one can regard $\Ecal_{\Gmr, \upalpha}$ as a $\Cat$-valued presheaf
\[ 
\begin{tikzcd}
\Ecal_{\Gmr, \upalpha}: \Ocal^{\mr{op}}_\Gmr \rar & \Cat 
\end{tikzcd}
\]
sending $\Gmr/\Hmr$ to  ${\bf Fun}(\Jfrak_{\Gmr/\Hmr}^{\upalpha}, \Tfrak_{\Gmr/\Hmr})$.
\end{notn}
\begin{notn} \label{notn:nat}
 Given two objects $\Amr$ and $\Amr'$ of $\Ecal_{\Gmr, \upalpha}$, we let 
 $
\Ncal_{\upalpha}(\Amr, \Amr')
$  denote the set of $2$-morphisms  from $\Emr$ to $\Fmr$. 
\end{notn}
 
Yoneda's lemma implies that
\begin{align*}
 \Ncal_{\upalpha}(\Mor_{\upalpha}(\Umr, -), \Amr)_{\Gmr/\Hmr} &\cong \Ncal_{\upalpha}\left(\Mor_{\upalpha}(\Umr, -)_{\Gmr/\Hmr}, \Amr_{\Gmr/\Hmr}\right) \\
  &\cong \Ncal_{\upalpha}\left(\Mor_{\Hmr, \upalpha}(\upiota_{\Hmr}(\Umr), -), \Amr_{\Gmr/\Hmr}\right) \\
  &\cong \Amr_{\Gmr/\Hmr}\left(\upiota_{\Hmr}(\Umr)\right)
\end{align*}

for all subgroups  $\Hmr$ of $ \Gmr$ and $\Amr \in \Ecal_{\Gmr, \alpha}$. We will use the equation 
\begin{equation} \label{eqn:Yoneda}
\Ncal_{\upalpha}(\Mor_{\upalpha}(\Umr, -), \Amr) \cong  \Amr(\Umr)
\end{equation}
to denote the above Yoneda phenomena. 

Using \eqref{eqn:zerosec},  we get a restriction functor 
\[ 
\begin{tikzcd}
{\bf R}^{\upalpha \oplus \upalpha'}_{\upalpha}: \Ecal_{\Gmr, \upalpha \oplus \upalpha'}  \rar & \Ecal_{\Gmr, \upalpha}
\end{tikzcd}
\]
whose right adjoint 
\[ 
\begin{tikzcd}
{\bf I}^{\upalpha \oplus \upalpha'}_{\upalpha}: \Ecal_{\Gmr, \upalpha }  \rar & \Ecal_{\Gmr, \upalpha \oplus \upalpha'}
\end{tikzcd}
\]
is constructed using right Kan extensions and  will be called the induction functor  following \cite[$\mathsection$2]{WeissCalc}. 
\begin{rmk} \label{rmk:IRproperties} By construction, ${\bf I}^{\upalpha}_{\bf 0} (\Emr) = \Emr^{(\upalpha)}$ for any $\Emr \in \Ecal_{\Gmr, {\bf 0}}$, and  using \eqref{compzerosec}, it is straightforward to check that 
\begin{enumerate}
\item  ${\bf R}^{\upalpha \oplus \upalpha'}_{\upalpha} \circ {\bf R}^{\upalpha \oplus \upalpha' \oplus \upalpha''}_{\upalpha \oplus \upalpha'} = {\bf R}^{\upalpha \oplus \upalpha' \oplus \upalpha''}_{\upalpha}$, 
\item $ {\bf I}^{\upalpha \oplus \upalpha' \oplus \upalpha''}_{\upalpha \oplus \upalpha'} \circ {\bf I}^{\upalpha \oplus \upalpha'}_{\upalpha}  = {\bf I}^{\upalpha \oplus \upalpha' \oplus \upalpha''}_{\upalpha}$.
\end{enumerate}

\end{rmk}

\begin{cor} \label{cor:sc1cor}
Given $\Emr \in \Ecal_{\Gmr, {\bf 0}}$, there is a natural fiber sequence 
\[ 
\begin{tikzcd}
\Emr^{( \upalpha \oplus \upepsilon)}_{\Gmr/\Hmr}(\Umr) \rar  & \Emr_{\Gmr/\Hmr}^{(\upalpha)}(\Umr) \rar & \Omega^{\upiota_{\Hmr}(\upalpha \otimes \upepsilon)}\Emr_{\Gmr/\Hmr}^{(\upalpha)}(\Umr \oplus \upepsilon)
\end{tikzcd}
\]
for every $\Hmr \leq \Gmr$ and every $\Umr \in \Rep_\Hmr$. 
\end{cor}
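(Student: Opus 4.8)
The plan is to obtain the fiber sequence by feeding the cofiber sequence of \Cref{thm:SC1} into the enriched Yoneda isomorphism \eqref{eqn:Yoneda}, working at the object $\Gmr/\Hmr$ throughout. Fix $\Hmr \leq \Gmr$, $\Umr \in \Rep_\Hmr$, and (as in \Cref{thm:SC1}) a $1$-dimensional $\upepsilon$. First I would let the target variable $\Umr''$ range over $\Rep_\Hmr$ in \Cref{thm:SC1}; by the naturality there, the three terms assemble into a cofiber sequence of $\Jfrak^{\upalpha}_{\Gmr/\Hmr}$-functors valued in $\Hmr\Top_*$,
\[
\Mor_{\Hmr, \upalpha}(\Umr \oplus \upiota_{\Hmr}\upepsilon, -)\sma \Smr^{\upiota_{\Hmr}(\upalpha \otimes \upepsilon)} \xrightarrow{\ \ofrak_{\iota}\ } \Mor_{\Hmr,\upalpha}(\Umr, -) \longrightarrow \Mor_{\Hmr,\upalpha\oplus\upepsilon}(\Umr, -),
\]
and I would observe that the third term, a priori a $\Jfrak^{\upalpha\oplus\upepsilon}_{\Gmr/\Hmr}$-functor, is here restricted along $\upzeta^{\upalpha, \upalpha\oplus\upepsilon}$ of \eqref{eqn:relzerosec}; that is, it is the value at $\Gmr/\Hmr$ of ${\bf R}^{\upalpha\oplus\upepsilon}_{\upalpha}\,\Mor_{\upalpha\oplus\upepsilon}(\Umr, -) \in \Ecal_{\Gmr,\upalpha}$.

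Next I would apply the contravariant mapping-space functor $\Ncal_{\upalpha}(-, \Emr^{(\upalpha)})_{\Gmr/\Hmr}$. Because \Cref{thm:SC1} produces, levelwise in $\Umr''$ and naturally, a genuine $\Hmr$-equivariant cofibration sequence of well-pointed spaces (the morphism objects being Thom spaces of the bundles $\upgamma_{\Hmr,\upalpha}$ over Stiefel manifolds), mapping out of it into $\Emr^{(\upalpha)}_{\Gmr/\Hmr}$ yields a fiber sequence. It then remains to identify the three values. The middle one is $\Emr^{(\upalpha)}_{\Gmr/\Hmr}(\Umr)$ by \eqref{eqn:Yoneda}. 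For the right one, \eqref{eqn:Yoneda} applied in $\Ecal_{\Gmr,\upalpha\oplus\upepsilon}$, together with the adjunction ${\bf R}^{\upalpha\oplus\upepsilon}_\upalpha \dashv {\bf I}^{\upalpha\oplus\upepsilon}_\upalpha$, \Cref{rmk:IRproperties}, and $\Emr^{(\upalpha)} = {\bf I}^\upalpha_{\bf 0}\Emr$, gives
\[
\Ncal_\upalpha\!\bigl({\bf R}^{\upalpha\oplus\upepsilon}_\upalpha\Mor_{\upalpha\oplus\upepsilon}(\Umr, -),\, \Emr^{(\upalpha)}\bigr)_{\Gmr/\Hmr} \cong \bigl({\bf I}^{\upalpha\oplus\upepsilon}_\upalpha\Emr^{(\upalpha)}\bigr)_{\Gmr/\Hmr}(\Umr) = \bigl({\bf I}^{\upalpha\oplus\upepsilon}_{\bf 0}\Emr\bigr)_{\Gmr/\Hmr}(\Umr) = \Emr^{(\upalpha\oplus\upepsilon)}_{\Gmr/\Hmr}(\Umr).
\]
For the left one, the $\Smr^{W}$-cotensoring adjunction in $\Ecal_{\Gmr,\upalpha}$ with $W = \upiota_\Hmr(\upalpha\otimes\upepsilon)$ turns $\Ncal_\upalpha(\,\cdot\,\sma\Smr^{W}, \Emr^{(\upalpha)})$ into $\Ncal_\upalpha(\,\cdot\,, \Omega^{W}\Emr^{(\upalpha)})$, so \eqref{eqn:Yoneda} and the commutation of $\Omega^{W}$ with evaluation identify it with $\Omega^{\upiota_\Hmr(\upalpha\otimes\upepsilon)}\Emr^{(\upalpha)}_{\Gmr/\Hmr}(\Umr\oplus\upepsilon)$. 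Splicing these three into the fiber sequence gives the claim; naturality in $\Umr$ (and in $\Emr$) is inherited from that of \Cref{thm:SC1} and of the Yoneda and adjunction isomorphisms. This reproduces, equivariantly, the way the fiber sequence for the $n$-th derivative is extracted from \cite[Proposition 1.2]{WeissCalc} in \cite[$\mathsection$2]{WeissCalc}.

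The step I expect to be the main obstacle is the passage from cofiber sequence to fiber sequence: one must confirm that \Cref{thm:SC1} outputs an honest $\Hmr$-equivariant Hurewicz-cofibration sequence, levelwise and naturally in $\Umr''$, and that the point-set right Kan extension defining $\Emr^{(\upalpha)}$ already computes the homotopically correct mapping objects out of the representables $\Mor_\upalpha(\Umr,-)$, so that no fibrant replacement of $\Emr^{(\upalpha)}$ is needed for $\Ncal_\upalpha(-,\Emr^{(\upalpha)})$ to be long-exact. The remaining point — that the middle-to-right map above is exactly $\upzeta^{\upalpha,\upalpha\oplus\upepsilon}$-restriction, so that \Cref{rmk:IRproperties} applies verbatim — is routine but should be made explicit.
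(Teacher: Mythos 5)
Your proposal matches the paper's proof essentially step for step: both apply $\Ncal_{\upalpha}(-,\Emr^{(\upalpha)})$ to the cofiber sequence of \Cref{thm:SC1}, use the Yoneda isomorphism \eqref{eqn:Yoneda} on the middle term, the $\mathbf{R}\dashv\mathbf{I}$ adjunction together with \Cref{rmk:IRproperties} on the third term, and the smash/loop adjunction on the first term. Your closing discussion of why the cofiber sequence is converted into a fiber sequence upon mapping out is a sound point-set remark that the paper leaves implicit, but the underlying argument is the same.
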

\begin{proof}  
Applying $\Ncal_{\upalpha}(- , \Emr^{(\upalpha)})$ to the cofiber sequence of Stiefel combinatorics I,  i.e.  \Cref{thm:SC1},  we get  
\[ 
\begin{tikzcd}
\Mor_{\Hmr, \upalpha}(\Umr \oplus \upiota_\Hmr(\upepsilon), - ) \sma \Smr^{\upiota_\Hmr(\upalpha \otimes \upepsilon)} \dar \\
\Mor_{\Hmr, \upalpha}(\Umr, -) \dar \\
{\bf R}^{\upalpha \oplus \upepsilon}_{\upalpha}\Mor_{\Hmr, \upalpha \oplus \upepsilon}(\Umr, -)
\end{tikzcd} \overset{\Ncal_{\upalpha}(-, \Emr_{\Gmr/\Hmr}^{(\upalpha)})}\rightsquigarrow
\begin{tikzcd}
\Omega^{\upiota_\Hmr(\upalpha \otimes \upepsilon)} \Emr^{(\upalpha)}_{\Gmr/\Hmr}(\Umr \oplus \upiota_{\Hmr}(\upepsilon)) \\
\Emr^{(\upalpha)}_{\Gmr/\Hmr}(\Umr) \uar \\
\Emr^{(\upalpha \oplus \upepsilon)}_{\Gmr/\Hmr}(\Umr), \uar 
\end{tikzcd}
\]
where we use the identification \eqref{eqn:Yoneda} and 
\begin{eqnarray*}
\Ncal_{\upalpha}({\bf R}^{\upalpha \oplus \upepsilon}_{\upalpha} \Mor_{\upalpha \oplus \upepsilon}(\Umr, -), \Emr^{(\upalpha)}) &\cong& \Ncal_{\upalpha \oplus \upepsilon}( \Mor_{\upalpha \oplus \upepsilon}(\Umr, -), {\bf I}^{\upalpha \oplus \upepsilon}_{\upalpha}\Emr^{(\upalpha)}) \\
&\cong& \Emr^{(\upalpha \oplus \upepsilon)}(\Umr)
\end{eqnarray*}
following \Cref{rmk:IRproperties}. 
\end{proof}

\begin{cor} \label{cor:sc2cor}
For $\Emr \in \Ecal_{\Gmr, {\bf 0}}$, there is a natural fiber sequence 
\[ 
\begin{tikzcd}
\Emr^{(\upalpha)}_{\Gmr/\Hmr} (\Umr) \rar & \Emr_{\Gmr/\Hmr}(\Umr) \rar & \uptau_{\upalpha}\wh{\Emr}_{\Gmr/\Hmr}(\Umr)
\end{tikzcd}
\]
for every $\Hmr \leq \Gmr$ and every $\Hmr$-representation $\Umr$. 
\end{cor}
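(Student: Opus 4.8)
The plan is to rerun the argument of \Cref{cor:sc1cor}, but feeding the cofiber sequence of \Cref{thm:SC2} into the contravariant functor $\Ncal_{\bf 0}(-,\Emr_{\Gmr/\Hmr})$ instead of $\Ncal_{\upalpha}(-,\Emr^{(\upalpha)}_{\Gmr/\Hmr})$. Fix $\Hmr \leq \Gmr$ and $\Umr \in \Rep_\Hmr$, and let the second variable of \Cref{thm:SC2} range over $\Rep_{\Hmr}$ with the morphism spaces of $\Jfrak_{\Gmr/\Hmr}$. Passing to pointed spaces (the unreduced mapping cone of a map $f$ being the reduced mapping cone of $f_+$), \Cref{thm:SC2} yields a cofiber sequence in $\Ecal_{\Gmr, {\bf 0}}$ --- cofiber sequences in a functor category being computed objectwise ---
\[
\underset{\Vmr \in \Vscr^{\circ}(\upiota_\Hmr(\upalpha))}{\hocolim}\ \hMor_{\Hmr}(\Umr \oplus \Vmr, -)_+ \;\longrightarrow\; \Mor_{\Hmr, {\bf 0}}(\Umr, -) \;\longrightarrow\; \Mor_{\Hmr, \upalpha}(\Umr, -),
\]
in which the middle term is the representable object of $\Ecal_{\Gmr,{\bf 0}}$ at $\Umr$, the last term --- as a functor on $\Rep_\Hmr$ --- is ${\bf R}^{\upalpha}_{\bf 0}\big({\bf Mor}_{\upalpha}(\Umr, -)\big)$, and the first map is $\wh{\ofrak}_\upalpha$ with disjoint basepoints adjoined.

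Now I would apply $\Ncal_{\bf 0}(-,\Emr_{\Gmr/\Hmr})$, which sends this (homotopy) cofiber sequence to a (homotopy) fiber sequence and carries the homotopy colimit in the first term to a homotopy limit, and then identify the three terms. For the right-hand term, the adjunction ${\bf R}^{\upalpha}_{\bf 0} \dashv {\bf I}^{\upalpha}_{\bf 0}$ of \Cref{rmk:IRproperties} together with ${\bf I}^{\upalpha}_{\bf 0}\Emr = \Emr^{(\upalpha)}$ and the Yoneda identity \eqref{eqn:Yoneda} give
\[
\Ncal_{\bf 0}\big({\bf R}^{\upalpha}_{\bf 0}{\bf Mor}_{\upalpha}(\Umr, -),\,\Emr\big) \cong \Ncal_{\upalpha}\big({\bf Mor}_{\upalpha}(\Umr, -),\,\Emr^{(\upalpha)}\big) \cong \Emr^{(\upalpha)}_{\Gmr/\Hmr}(\Umr).
\]
For the middle term, \eqref{eqn:Yoneda} with $\upalpha = {\bf 0}$ gives $\Ncal_{\bf 0}(\Mor_{\Hmr,{\bf 0}}(\Umr, -),\Emr) \cong \Emr_{\Gmr/\Hmr}(\Umr)$. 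For the left-hand term, the defining property of the right Kan extension $\wh{\Emr}_{\Gmr/\Hmr}$ from \eqref{Ehat} --- namely $\Ncal_{\bf 0}\big(\hMor_{\Hmr}(\Zmr, -)_+,\,\Emr_{\Gmr/\Hmr}\big) \cong \wh{\Emr}_{\Gmr/\Hmr}(\Zmr)$ for $\Zmr \in \Vscr_\Hmr$ --- combined with the interchange of $\Ncal_{\bf 0}(-,\Emr)$ with $\hocolim$, gives
\[
\Ncal_{\bf 0}\Big(\underset{\Vmr}{\hocolim}\ \hMor_{\Hmr}(\Umr \oplus \Vmr, -)_+,\ \Emr\Big) \cong \underset{\Vmr \in \Vscr^{\circ}(\upiota_\Hmr(\upalpha))}{\holim} \wh{\Emr}_{\Gmr/\Hmr}(\Umr \oplus \Vmr) = \big(\uptau_{\upalpha}\wh{\Emr}\big)_{\Gmr/\Hmr}(\Umr)
\]
by \Cref{defn:tau}.

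Splicing these three identifications together produces the asserted fiber sequence. Naturality in $\Umr$, and in $\Hmr$ via the presheaf structure of $\Ecal_{\Gmr,{\bf 0}}$, is inherited from the naturality clause of \Cref{thm:SC2}. One last bookkeeping point is to check that, under the identifications above, the two maps of the fiber sequence are the expected ones: the first is dual to the zero section $\upzeta^{\upalpha}$, hence the canonical comparison $\Emr^{(\upalpha)} \to \Emr$; the second is the map $\eta^{\Emr}_{\upalpha}$ of \eqref{eta}. Both follow by tracing through the maps in \Cref{thm:SC2}.

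The only step that is not purely formal is the homotopical bookkeeping in the second paragraph: one must know that $\Ncal_{\bf 0}(-,\Emr_{\Gmr/\Hmr})$ genuinely takes the homotopy cofiber sequence of \Cref{thm:SC2} to a homotopy fiber sequence and commutes with the homotopy colimit, and that the displayed Yoneda and Kan-extension identities hold at the homotopy level --- equivalently, that the homotopy right Kan extension computing $\wh{\Emr}$ agrees with the derived space of natural transformations out of the representable. This is handled exactly as in \cite{WeissCalc}: the Stiefel morphism spaces are suitably cofibrant and the indexing comma categories carry the cofinality/contractibility needed for the bar-resolution arguments to go through verbatim, now equivariantly.
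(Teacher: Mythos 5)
Your proof is correct and follows the same route the paper intends: apply the mapping-space functor to the cofiber sequence of \Cref{thm:SC2}, and unwind the three terms via Yoneda \eqref{eqn:Yoneda}, the ${\bf R}^{\upalpha}_{\bf 0} \dashv {\bf I}^{\upalpha}_{\bf 0}$ adjunction of \Cref{rmk:IRproperties}, and the Kan-extension formula \eqref{Ehat}. One small remark: the paper's one-line proof reads ``applying $\Ncal_{\bf 0}(-, \Emr^{(\upalpha)})$,'' which looks like a typo for $\Ncal_{\bf 0}(-, \Emr)$; your version is the one consistent with the Yoneda identification of the middle term as $\Emr_{\Gmr/\Hmr}(\Umr)$ and mirrors the structure of the proof of \Cref{cor:sc1cor}.
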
 

\begin{proof} The result follows from applying  $\Ncal_{{\bf 0}}(- , \Emr^{(\upalpha)})$ to the cofiber sequence of Stiefel combinatorics II,  i.e.  \Cref{thm:SC2}.
\end{proof}

\section{Equivariant polynomial functors} \label{sec:poly}

The concept of polynomial functors in classical Weiss calculus also admits an equivariant generalization. We first introduce the notion of strongly polynomial following \cite[Definition 4.1.7]{Y24}. 
\begin{defn} \label{defn:Spolynomial} Suppose $\upalpha \in \Rep_\Gmr$. 
We say  $\mr{E}: \Jfrak_\Gmr \longrightarrow \Tfrak_\Gmr$ is {\bf strongly $\upalpha$-polynomial} if the  natural map 
\[
\begin{tikzcd}
\Emr_{\Gmr/\Hmr}(\Umr)  \rar & (\uptau_{\upalpha }\Emr)_{\Gmr/\Hmr}(\Umr)
 \end{tikzcd} 
 \]
 is a weak equivalence   for every $\Umr \in \Rep_\Hmr$ and $\Hmr \subset \Gmr$. 
\end{defn}
Analogous to traditional calculus, the $(n+k)$-th derivative of the $n$-th Taylor approximation of a Weiss functor is trivial (see \cite[Proposition 5.3]{WeissCalc}). The analogous result in  equivariant Weiss calculus is as follows.
\begin{thm} \label{thm:derpoly} Suppose  $\upalpha, \upalpha' \in \Rep_\Gmr$. 
Then any $1$-morphism $\Emr: \Jfrak_{\Gmr} \longrightarrow \Tfrak_{\Gmr}$ which is  strongly $\upalpha$-polynomial then 
\begin{equation} \label{null}
\Emr^{(\upalpha \oplus \upalpha')}_{\Gmr/\Hmr}(\Umr) \simeq \ast
\end{equation}
for all $\Umr \in \Rep_\Hmr$ and  $\Hmr \subset \Gmr$.  
\end{thm}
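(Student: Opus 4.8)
The plan is to first treat the special case $\upalpha'={\bf 0}$, i.e.\ to show that $\Emr^{(\upalpha)}$ is pointwise contractible, and then to transport this conclusion along the induction functor to reach an arbitrary $\Emr^{(\upalpha\oplus\upalpha')}$. For the special case I would feed $\Emr\in\Ecal_{\Gmr,{\bf 0}}$ into \Cref{cor:sc2cor}, which supplies, for every subgroup $\Hmr\subset\Gmr$ and every $\Umr\in\Rep_\Hmr$, a natural fiber sequence
\[
\begin{tikzcd}
\Emr^{(\upalpha)}_{\Gmr/\Hmr}(\Umr)\rar & \Emr_{\Gmr/\Hmr}(\Umr)\rar & \uptau_{\upalpha}\wh{\Emr}_{\Gmr/\Hmr}(\Umr).
\end{tikzcd}
\]
The one thing to check here is that, evaluated on $\Rep_\Hmr$, the target is not genuinely larger than $(\uptau_\upalpha\Emr)_{\Gmr/\Hmr}(\Umr)$: since $\wh\Emr_{\Gmr/\Hmr}$ is the right Kan extension of $\Emr_{\Gmr/\Hmr}$ along the fully faithful inclusion ${\sf e}_\Hmr\colon\Rep_\Hmr\hookrightarrow\Vscr_\Hmr$ of \eqref{Ehat}, it restricts back to $\Emr_{\Gmr/\Hmr}$ and is idempotent under $\wh{(-)}$, so $(\uptau_\upalpha\wh\Emr)_{\Gmr/\Hmr}(\Umr)=(\uptau_\upalpha\Emr)_{\Gmr/\Hmr}(\Umr)$ for $\Umr\in\Rep_\Hmr$ by \Cref{defn:tau}. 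Hence the second map of the fiber sequence is exactly $\Emr_{\Gmr/\Hmr}(\Umr)\to(\uptau_\upalpha\Emr)_{\Gmr/\Hmr}(\Umr)$, which is a weak equivalence because $\Emr$ is strongly $\upalpha$-polynomial (\Cref{defn:Spolynomial}); therefore its fiber $\Emr^{(\upalpha)}_{\Gmr/\Hmr}(\Umr)$ is weakly contractible for all $\Hmr$ and all $\Umr$.

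For the general case I would invoke \Cref{rmk:IRproperties}: from ${\bf I}^{\upalpha}_{\bf 0}(\Emr)=\Emr^{(\upalpha)}$ together with the composition law ${\bf I}^{\upalpha\oplus\upalpha'}_{\upalpha}\circ{\bf I}^{\upalpha}_{\bf 0}={\bf I}^{\upalpha\oplus\upalpha'}_{\bf 0}$ there, one gets $\Emr^{(\upalpha\oplus\upalpha')}={\bf I}^{\upalpha\oplus\upalpha'}_{\upalpha}\bigl(\Emr^{(\upalpha)}\bigr)$. The induction functor ${\bf I}^{\upalpha\oplus\upalpha'}_{\upalpha}$ is right adjoint to the restriction ${\bf R}^{\upalpha\oplus\upalpha'}_{\upalpha}$ along the zero section $\upzeta^{\upalpha,\upalpha\oplus\upalpha'}\colon\Jfrak^{\upalpha}_\Gmr\hookrightarrow\Jfrak^{\upalpha\oplus\upalpha'}_\Gmr$, hence is a right Kan extension, and by \eqref{eqn:Yoneda} its value at $\Umr$ can also be written as the space of natural transformations $\Ncal_{\upalpha}\bigl({\bf R}^{\upalpha\oplus\upalpha'}_{\upalpha}\Mor_{\upalpha\oplus\upalpha'}(\Umr,-),\,\Emr^{(\upalpha)}\bigr)$. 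Plugging in the pointwise-contractible functor $\Emr^{(\upalpha)}$ from the first part, this is a homotopy limit of a diagram of contractible spaces (equivalently, a mapping space into a pointwise-contractible functor), hence contractible; so $\Emr^{(\upalpha\oplus\upalpha')}_{\Gmr/\Hmr}(\Umr)\simeq\ast$ for all $\Hmr\subset\Gmr$ and all $\Umr\in\Rep_\Hmr$, which is the assertion.

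The step I expect to be the real obstacle is the homotopical soundness of this last move: one must know that the point-set right Kan extension computing ${\bf I}^{\upalpha\oplus\upalpha'}_{\upalpha}$ (equivalently, the $\Ncal_\upalpha$-description) is homotopy invariant, so that it carries pointwise weak equivalences to pointwise weak equivalences and pointwise-contractible functors to pointwise-contractible ones. Equivariantly this rests on suitable cofibrancy of the representable functors $\Mor_{\upalpha\oplus\upalpha'}(\Umr,-)$, which is exactly what the Thom-space presentation of the morphism spaces in the equivariant Stiefel combinatorics of \Cref{sec:SC} is meant to provide, in analogy with the point-set care taken in \cite{WeissCalc}. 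As a sanity check one may note that when $\upalpha'$ is isomorphic to a finite sum of one-dimensional $\Gmr$-representations the general case can instead be deduced by iterating the fiber sequence of \Cref{cor:sc1cor}: contractibility of $\Emr^{(\upalpha)}$ forces that of $\Emr^{(\upalpha\oplus\upepsilon)}$ for each one-dimensional $\upepsilon$, and induction on the number of summands concludes; reaching an arbitrary $\upalpha'$, however, needs the Kan-extension argument above.
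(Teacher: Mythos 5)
Your proposal is correct and follows essentially the same route as the paper's own proof: the paper likewise factors $\Emr^{(\upalpha\oplus\upalpha')}={\bf I}^{\upalpha\oplus\upalpha'}_{\upalpha}\circ{\bf I}^{\upalpha}_{\bf 0}(\Emr)={\bf I}^{\upalpha\oplus\upalpha'}_{\upalpha}(\Emr^{(\upalpha)})$ using \Cref{rmk:IRproperties}, observes that $\Emr^{(\upalpha)}\simeq\ast$, and concludes that the induction functor carries this to $\ast$. The paper's version simply compresses the first half: it leaves the contractibility of $\Emr^{(\upalpha)}$ (which indeed comes from \Cref{cor:sc2cor} together with the strongly $\upalpha$-polynomial hypothesis, including your $\wh\Emr$-versus-$\Emr$ bookkeeping) implicit, and it does not flag the homotopy-invariance point about ${\bf I}^{\upalpha\oplus\upalpha'}_{\upalpha}$ that you raise at the end; your write-up spells out both, which is a useful refinement rather than a different argument.
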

\begin{proof} 

By \Cref{rmk:IRproperties}
\begin{eqnarray*}
\Emr^{(\upalpha \oplus \upalpha')}_{\Gmr/\Hmr}(\Umr) &\cong& {\bf I}^{\upalpha \oplus \upalpha'}_{\bf 0} \Emr_{\Gmr/\Hmr}(\Umr) \\
&\cong & {\bf I}^{\upalpha \oplus \upalpha'}_{\upalpha } {\bf I}^{\upalpha }_{\bf 0} \Emr_{\Gmr/\Hmr}(\Umr) \\
&\cong & {\bf I}^{\upalpha \oplus \upalpha'}_{\upalpha }  \Emr^{(\upalpha)}_{\Gmr/\Hmr}(\Umr) \\
&\simeq& {\bf I}^{\upalpha \oplus \upalpha'}_{\upalpha }( \ast ) \\
& \simeq& \ast 
\end{eqnarray*}
as desired. 
\end{proof}
In \cite{Y24}, the term `strongly' is used because restriction of a strongly  $\upalpha$-polynomial functor $\Emr$ to the trivial group 
is  a polynomial  of degree  (less than or equal to) $|\upalpha|-1$ in the sense of \cite[Definition 5.1]{WeissCalc}. This motivates the following definition. 
\begin{defn} \label{defn:polynomial} Suppose $\upalpha, \upepsilon \in \Rep_\Gmr$  such that  $|\upepsilon| = 1$. 
We say  $\mr{E}: \Jfrak_\Gmr \longrightarrow \Tfrak_\Gmr$ is {\bf $\upalpha$-polynomial in the direction of $\upepsilon$} if $\Emr$ is strongly $\upalpha \oplus \upepsilon$-polynomial according to \Cref{defn:Spolynomial}. 
\end{defn}

In standard calculus, the $(n+k)$-th Taylor approximation of a degree $n$ polynomial function is the function itself for  all $k \in \NN$.
The analogous result in equivariant Weiss calculus takes the following form. 
\begin{thm} \label{thm:taylorapprox}If $\Emr \in \Ecal_{\Gmr, {\bf 0}}$ is an $\upalpha$-polynomial in the direction of $\upepsilon$. Then it is $(\upalpha \oplus \upalpha')$-polynomial  in the direction of $\upepsilon$ when $\upalpha' \in \Rep_\Gmr$ is a direct sum of $1$-dimensional representations. 
\end{thm}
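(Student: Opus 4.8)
The plan is to translate ``$\upbeta$-polynomial in the direction of $\upepsilon$'' into a vanishing statement for derivatives and then adjoin the one-dimensional summands of $\upalpha'$ one at a time via the fibre sequence of \Cref{cor:sc1cor}. Recall that, by \Cref{defn:polynomial}, being $\upalpha$-polynomial in the direction of $\upepsilon$ means being strongly $(\upalpha\oplus\upepsilon)$-polynomial, and that what we must prove is that $\Emr$ is strongly $(\upalpha\oplus\upalpha'\oplus\upepsilon)$-polynomial. I would record the key translation up front: by the fibre sequence of \Cref{cor:sc2cor}, a $1$-morphism $\Emr$ is strongly $\upgamma$-polynomial if and only if $\Emr^{(\upgamma)}_{\Gmr/\Hmr}(\Umr)\simeq\ast$ for every $\Hmr\subset\Gmr$ and every $\Umr\in\Rep_\Hmr$. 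Indeed the map $\Emr_{\Gmr/\Hmr}(\Umr)\to\uptau_{\upgamma}\wh{\Emr}_{\Gmr/\Hmr}(\Umr)$ appearing there restricts on $\Rep_\Hmr$ to the comparison map of \Cref{defn:Spolynomial}, because $\uptau_{\upgamma}$ is already formed from $\wh{\Emr}$ and $\wh{\Emr}$ restricts to $\Emr$ on $\Rep_\Hmr$; so the fibre of this map is contractible exactly when the map is a weak equivalence.

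Granting this, I would write $\upalpha'\cong\updelta_1\oplus\cdots\oplus\updelta_k$ with each $|\updelta_i|=1$, set $\upgamma_j:=\upalpha\oplus\upepsilon\oplus\updelta_1\oplus\cdots\oplus\updelta_j$ for $0\le j\le k$, and prove by induction on $j$ that $\Emr^{(\upgamma_j)}_{\Gmr/\Hmr}(\Umr)\simeq\ast$ for all $\Hmr\subset\Gmr$ and $\Umr\in\Rep_\Hmr$. The case $j=0$ is the hypothesis, read through the translation of the first paragraph. For the step $j\to j+1$, apply \Cref{cor:sc1cor} with its representation ``$\upalpha$'' taken to be $\upgamma_j$ and its direction ``$\upepsilon$'' taken to be the one-dimensional representation $\updelta_{j+1}$; this is legitimate precisely because $|\updelta_{j+1}|=1$, and it yields, for each $\Hmr$ and each $\Umr\in\Rep_\Hmr$, a fibre sequence
\[ \Emr^{(\upgamma_{j+1})}_{\Gmr/\Hmr}(\Umr)\longrightarrow \Emr^{(\upgamma_j)}_{\Gmr/\Hmr}(\Umr)\longrightarrow \Omega^{\upiota_\Hmr(\upgamma_j\otimes\updelta_{j+1})}\Emr^{(\upgamma_j)}_{\Gmr/\Hmr}(\Umr\oplus\updelta_{j+1}). \]
The middle term is contractible by the inductive hypothesis applied at $\Umr$, and the right-hand term is contractible because $\Umr\oplus\updelta_{j+1}$ again lies in $\Rep_\Hmr$, so the inductive hypothesis makes $\Emr^{(\upgamma_j)}_{\Gmr/\Hmr}(\Umr\oplus\updelta_{j+1})$, and hence the indicated loop space, contractible. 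Thus the fibre $\Emr^{(\upgamma_{j+1})}_{\Gmr/\Hmr}(\Umr)$ is contractible, completing the induction.

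Taking $j=k$ gives $\Emr^{(\upalpha\oplus\upepsilon\oplus\upalpha')}_{\Gmr/\Hmr}(\Umr)\simeq\ast$ for all $\Hmr\subset\Gmr$ and $\Umr\in\Rep_\Hmr$, which by the translation of the first paragraph says precisely that $\Emr$ is strongly $(\upalpha\oplus\upalpha'\oplus\upepsilon)$-polynomial, i.e.\ $(\upalpha\oplus\upalpha')$-polynomial in the direction of $\upepsilon$ (\Cref{defn:polynomial}), as desired. One could alternatively obtain $\Emr^{(\upalpha\oplus\upepsilon\oplus\upalpha')}\simeq\ast$ in a single step from \Cref{thm:derpoly} applied to the strongly $(\upalpha\oplus\upepsilon)$-polynomial functor $\Emr$; I prefer the inductive route because it makes transparent that the hypothesis on $\upalpha'$ enters only through the constraint $|\upepsilon|=1$ in \Cref{thm:SC1}, which is inherited by \Cref{cor:sc1cor}. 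The substantive work is all contained in the equivariant Stiefel combinatorics of \Cref{sec:SC}, which I take as given; granting that, the argument is a short induction, and the only point needing a little care is the routine $\pi_0$-bookkeeping when trading ``weak equivalence'' for ``contractible fibre'' in the fibre sequences of \Cref{cor:sc1cor} and \Cref{cor:sc2cor}, handled exactly as in the non-equivariant theory of \cite{WeissCalc}.
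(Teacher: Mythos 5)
The route you propose has a genuine gap, and it is located in the very first paragraph: the claimed equivalence ``$\Emr$ is strongly $\upgamma$-polynomial if and only if $\Emr^{(\upgamma)}_{\Gmr/\Hmr}(\Umr)\simeq\ast$'' is false in the direction you need. From the fiber sequence of \Cref{cor:sc2cor}, $\Emr^{(\upgamma)}_{\Gmr/\Hmr}(\Umr)$ is the homotopy fiber of $\Emr_{\Gmr/\Hmr}(\Umr)\to(\uptau_{\upgamma}\Emr)_{\Gmr/\Hmr}(\Umr)$ \emph{over the basepoint}. Contractibility of that fiber gives injectivity on $\pi_0$ and isomorphisms on $\pi_i$ of basepoint components for $i\ge 1$, but says nothing about $\pi_0$-surjectivity: $\uptau_{\upgamma}\Emr$ may acquire path components that $\Emr$ does not see. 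So after your induction produces $\Emr^{(\upalpha\oplus\upalpha'\oplus\upepsilon)}\simeq\ast$ (which is fine — this is essentially \Cref{thm:derpoly}), you still cannot conclude that $\Emr\to\uptau_{\upalpha\oplus\upalpha'\oplus\upepsilon}\Emr$ is a weak equivalence. This is not ``routine $\pi_0$-bookkeeping handled as in Weiss'': on the contrary, Weiss proves the corresponding Proposition 5.4 by a different mechanism precisely because the derivative characterization does not reverse. The paper flags the same obstruction explicitly — \Cref{lem:Weiss5.10} states that ``contractible fiber $\Rightarrow$ equivalence'' requires the additional hypothesis of being connected at infinity, and its proof opens with ``This is primarily a basepoint issue.''

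The paper's proof of \Cref{thm:taylorapprox} therefore takes a different route: it reduces to \Cref{lem:polyplus1}, which shows directly that if $\Emr\to\uptau_{\upalpha}\Emr$ is a pointwise equivalence then so is $\Emr\to\uptau_{\upalpha\oplus\upepsilon}\Emr$, and then one adjoins the one-dimensional summands of $\upalpha'$ one at a time as you envisaged. The proof of \Cref{lem:polyplus1} works entirely at the level of equivalences, never passing through fibers: it identifies $(\uptau_{\upalpha}\Emr)_{\Gmr/\Hmr}(\Umr)$ with ${\bf nat}(\Smr\upgamma_{\Hmr,\upalpha}(\Umr,-),\Emr_{\Gmr/\Hmr})$ via \Cref{prop:sc3}, reformulates the polynomial hypothesis as the projection $\sfp_{\upalpha}\colon{\bf S}\upgamma_{\upalpha}(\Umr,-)\to{\bf Mor}_{\bf 0}(\Umr,-)$ being an $\Emr$-substitution, and then uses a homotopy-pushout of sphere bundles together with the decomposition of the pulled-back bundle to deduce that $\sfp_{\upalpha\oplus\upepsilon}$ is also an $\Emr$-substitution. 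That geometric argument is what supplies $\pi_0$-surjectivity and makes the conclusion a genuine pointwise equivalence. To repair your proof you would either have to impose a connectivity-at-infinity hypothesis on $\Emr$ (which the theorem does not assume), or switch to the $\Emr$-substitution argument of \Cref{lem:polyplus1}.
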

\begin{proof} The proof will follow immediately from \Cref{lem:polyplus1}. 
\end{proof}
Following \cite{WeissCalc}, we make the following definition. 
\begin{defn} Given a $1$-morphism $\Emr: \Jfrak_{\Gmr} \longrightarrow \Tfrak_{\Gmr}$, we say a natural transformation 
$\eta: \Smr \to \Tmr$ between two functors $\Smr, \Tmr: \Rep_\Hmr \longrightarrow \Hmr\Top$ is an {\bf $\Emr$-substitution} if the induced map 
\[ 
\begin{tikzcd}
\eta^*: {\bf nat}(\Tmr, \Emr_{\Gmr/\Hmr}) \rar & {\bf nat}(\Smr, \Emr_{\Gmr/\Hmr}) 
\end{tikzcd}
\]
is an equivalence in $\Hmr\Top$, where ${\bf nat}(-, -)$ denotes the space of  natural transformations. 
\end{defn} 

\begin{lem} \label{lem:polyplus1} Suppose $\Emr \in \Ecal_{\Gmr, {\bf 0}}$ such that  $\Emr \longrightarrow \uptau_{\upalpha} \Emr$ is a pointwise equivalence then 
\[
\begin{tikzcd}
 \Emr \ar[r] & \uptau_{\upalpha \oplus \upepsilon} \Emr
 \end{tikzcd}
  \]
is also a pointwise equivalence for all $1$-dimensional representations $\upepsilon \in \Rep_\Gmr$. 
\end{lem}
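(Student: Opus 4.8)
The plan is to convert the statement about $\uptau$ into the pointwise vanishing of Weiss derivatives, and then to feed in the ``derivative of a derivative'' fibre sequence coming from the equivariant Stiefel combinatorics. First I would record what the hypothesis means in terms of derivatives. By \Cref{cor:sc2cor}, for every subgroup $\Hmr \le \Gmr$ and every $\Umr \in \Rep_\Hmr$ there is a natural fibre sequence $\Emr^{(\upalpha)}_{\Gmr/\Hmr}(\Umr) \to \Emr_{\Gmr/\Hmr}(\Umr) \to (\uptau_\upalpha\Emr)_{\Gmr/\Hmr}(\Umr)$ whose second map is $\eta^\Emr_\upalpha$ (here one uses $(\uptau_\upalpha\wh\Emr)_{\Gmr/\Hmr}(\Umr) = (\uptau_\upalpha\Emr)_{\Gmr/\Hmr}(\Umr)$ for $\Umr \in \Rep_\Hmr$, since $\wh\Emr$ restricts to $\Emr$ on $\Rep_\Hmr$). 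Since $\eta^\Emr_\upalpha$ is a pointwise equivalence by assumption, its homotopy fibres are contractible, so
\[ \Emr^{(\upalpha)}_{\Gmr/\Hmr}(\Umr) \simeq \ast \qquad \text{for all } \Hmr \le \Gmr \text{ and } \Umr \in \Rep_\Hmr. \]
Conversely, applying \Cref{cor:sc2cor} with $\upalpha \oplus \upepsilon$ in place of $\upalpha$, it suffices to prove $\Emr^{(\upalpha\oplus\upepsilon)}_{\Gmr/\Hmr}(\Umr) \simeq \ast$ for all such $\Hmr, \Umr$ in order to deduce that $\Emr \to \uptau_{\upalpha\oplus\upepsilon}\Emr$ is a pointwise equivalence.

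The remaining step is then immediate from \Cref{cor:sc1cor}. Fixing $\Hmr \le \Gmr$ and $\Umr \in \Rep_\Hmr$, that corollary supplies a fibre sequence
\[ \Emr^{(\upalpha\oplus\upepsilon)}_{\Gmr/\Hmr}(\Umr) \longrightarrow \Emr^{(\upalpha)}_{\Gmr/\Hmr}(\Umr) \longrightarrow \Omega^{\upiota_\Hmr(\upalpha\otimes\upepsilon)}\Emr^{(\upalpha)}_{\Gmr/\Hmr}(\Umr\oplus\upepsilon). \]
By the displayed vanishing the middle term is contractible; and since $\Umr \oplus \upepsilon$ is again an object of $\Rep_\Hmr$, the same vanishing gives $\Emr^{(\upalpha)}_{\Gmr/\Hmr}(\Umr\oplus\upepsilon) \simeq \ast$, whence its loop space is contractible too. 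A fibre sequence whose last two terms are contractible has contractible first term, so $\Emr^{(\upalpha\oplus\upepsilon)}_{\Gmr/\Hmr}(\Umr) \simeq \ast$, which is exactly what the reduction in the first paragraph needs.

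I expect essentially no new difficulty in this argument: the substantive work is all upstream, in \Cref{thm:SC1} and its corollary \Cref{cor:sc1cor}, which is exactly where one-dimensionality of $\upepsilon$ is used and without which neither the derivative fibre sequence nor this inductive step would exist --- this is precisely why the lemma (and therefore \Cref{thm:taylorapprox}) is stated only for one-dimensional summands. The one point needing a touch of care is the very last implication, from ``the homotopy fibre of $\eta^\Emr_{\upalpha\oplus\upepsilon}$ is pointwise contractible'' to ``$\eta^\Emr_{\upalpha\oplus\upepsilon}$ is a pointwise equivalence''; for the based $\Hmr$-spaces at hand, and using that the identification of the fibre in \Cref{cor:sc2cor} is natural in $\Umr$, this is routine, and I do not anticipate an obstruction there.
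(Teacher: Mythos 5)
Your reduction to derivatives has a genuine gap at the final step, which is precisely the step you wave off as routine. It is true that a pointwise equivalence $\Emr \to \uptau_\upalpha\Emr$ forces $\Emr^{(\upalpha)}_{\Gmr/\Hmr}(\Umr) \simeq \ast$ via \Cref{cor:sc2cor}, and the fibre sequence in \Cref{cor:sc1cor} then kills $\Emr^{(\upalpha\oplus\upepsilon)}$ pointwise; that much is fine. But to finish you try to run \Cref{cor:sc2cor} backwards: from ``the homotopy fibre of $\Emr \to \uptau_{\upalpha\oplus\upepsilon}\Emr$ is pointwise contractible'' to ``the map is a pointwise equivalence.'' This is not automatic, and naturality in $\Umr$ does not fix it. A map of based spaces with contractible homotopy fibre over the basepoint need not be surjective on $\pi_0$, and it says nothing about the homotopy fibres over the other components of the target; so the long exact sequence only yields isomorphisms on higher homotopy at the basepoint and an injection on $\pi_0$. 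This is exactly the phenomenon that \Cref{lem:Weiss5.10} is there to handle, and it does so only under the additional hypotheses that both sides are polynomial and the target is connected at infinity --- hypotheses you do not have. In effect your argument, if it closed, would prove an unconditional converse to \Cref{cor:sc2cor} (pointwise contractible $\upalpha$-derivative implies strongly $\upalpha$-polynomial), which is false without such connectivity assumptions, and which the paper pointedly does not assert.

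The paper's proof of \Cref{lem:polyplus1} goes a different way to sidestep this $\pi_0$ issue entirely. It reformulates $\uptau_\upalpha\Emr(\Umr)$ as $\mathbf{nat}(\Smr\upgamma_{\Hmr,\upalpha}(\Umr,-),\Emr_{\Gmr/\Hmr})$ via \Cref{prop:sc3}, so that the hypothesis becomes ``$\sfp_\upalpha$ is an $\Emr$-substitution,'' and then proves directly that $\sfp_{\upalpha\oplus\upepsilon}$ is an $\Emr$-substitution by factoring it through the fibrewise-join homotopy pushout of the two sphere bundles and using the identification $\Smr\upgamma_{\Hmr,\upepsilon}(\Umr,\Umr') \cong \Mor_\Hmr(\Umr\oplus\upiota_\Hmr(\upepsilon),\Umr')$ together with a Whitney-sum splitting of $\sfr^*\upgamma_{\Hmr,\upalpha}$. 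This is where the one-dimensionality of $\upepsilon$ is genuinely used, and it never needs to invert a fibre-sequence argument. To rescue your approach you would have to either supply a connectivity hypothesis (and then you are essentially reproving a special case of \Cref{lem:Weiss5.10}, which does not apply here without knowing $\uptau_{\upalpha\oplus\upepsilon}\Emr$ is already polynomial and connected at infinity) or abandon the derivative formulation and argue at the level of $\Emr$-substitutions as the paper does.
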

\begin{proof}  Using \Cref{prop:sc3} and comparing it with \Cref{defn:tau}, we notice 
\[ (\uptau_{\upalpha}\Emr)_{\Gmr/\Hmr}(\Umr) = {\bf nat}(\Smr \upgamma_{\Hmr, \upalpha}(\Umr, -), \Emr_{\Gmr/\Hmr} ), \]
 and by assumption $\sfp_{\upalpha}$ in the diagram 
 \begin{equation} \label{diag:bun}
\begin{tikzcd}
\Smr \upgamma_{\Hmr, \upalpha}(\Umr, -) \rar["\sff"] \ar[dr, "\sfp_{\upalpha}"']  & \Smr \upgamma_{\Hmr, \upalpha \oplus \upepsilon}(\Umr, -) \dar["\sfp_{\upalpha \oplus \upepsilon}"] \\
& \Mor_{\Hmr}(\Umr, -)
\end{tikzcd}
\end{equation}
  is an $\Emr$-substitution.  Our goal is to show that $\sfp_{\upalpha \oplus \upalpha'}$  is an $\Emr$-substitute. This will follow if we show  ${\sf f}$ is an $\Emr$-substitution.  Let \[ \Smr \upgamma_{\Hmr, \upalpha }(\Umr, -) \boxtimes \Smr \upgamma_{\Hmr,  \upepsilon}(\Umr, -): \Rep_{\Hmr} \longrightarrow \Hmr\Top  \] denote the functor which sends an $\Hmr$-representation $\Wmr'$ to the fiberwise cartesian product of $\Smr \upgamma_{\Hmr, \upalpha }(\Umr, \Umr')$ and  $\Smr \upgamma_{\Hmr,  \upepsilon}(\Umr, \Umr') $.  Since  
\begin{equation} \label{suppdiag}
\begin{tikzcd}
\Smr \upgamma_{\Hmr, \upalpha }(\Umr, -) \boxtimes \Smr \upgamma_{\Hmr,  \upepsilon}(\Umr, -) \rar["p_2"] 
\dar["p_1"']  & \Smr \upgamma_{\Hmr,  \upepsilon}(\Umr, -) \dar  \\
\Smr \upgamma_{\Hmr, \upalpha }(\Umr, -) \rar["\sff"'] & \Smr \upgamma_{\Hmr, \upalpha \oplus \upepsilon}(\Umr, -)
\end{tikzcd}
\end{equation}
is a homotopy pushout,  it suffices to show that $p_2$ is an $\Emr$-substitution. 

Now observe,  there is an $\Hmr$-equivariant homeomorphism
\[ \Smr \upgamma_{\Hmr,  \upepsilon}(\Umr, \Umr') \cong \Mor_{\Hmr}(\Umr \oplus \upiota_\Hmr(\upepsilon), \Umr') \]
 sending  $(f: \Umr \to \Umr', v)$, where $v \in \upiota_\Hmr(\upepsilon) \otimes \im (f)^\perp  $, to the function 
 \[ 
 \begin{tikzcd}
 \tilde{f}: \Umr \oplus \upiota_\Hmr(\upepsilon) \rar & \Umr'
 \end{tikzcd}
  \] which restricts to $f$ on $\Umr$, and to the map $\tilde{v}$ on  $\upiota_\Hmr(\upepsilon)$ which corresponds to  $v$ under the isomorphism 
 $ \upiota_\Hmr(\upepsilon) \otimes \Umr' \cong  \Hom(\upiota_\Hmr(\upepsilon),\Umr' )$. Furthermore,  there is a natural $\Hmr$-equivariant homeomorphism
 \[ \Smr \upgamma_{\Hmr, \upalpha }(\Umr, \Umr') \boxtimes \Smr \upgamma_{\Hmr,  \upepsilon}(\Umr, \Umr') \cong {\sf r}^*\Smr\upgamma_{\Hmr, \upalpha}(\Umr, \Umr'), \] 
 where ${\sf r}: \Mor_{\Hmr}(\Umr \oplus \upiota_\Hmr(\upepsilon), \Wmr') \longrightarrow  \Mor_{\Hmr}(\Umr , \Umr') $ is the map induced the inclusion $\Umr \hookrightarrow \Umr \oplus \upiota_\Hmr(\upepsilon)$, and the map $p_2$ in \eqref{suppdiag} can be identified with  the projection of ${\sf r}^*\Smr\upgamma_{\Hmr, \upalpha}(\Umr, \Umr')$ to its base space.  
 
 Since \[ {\sf r}^*\upgamma_{\Hmr, \upepsilon}(\Umr, \Umr') \cong {\bf triv}_{\upiota_\Hmr(\upepsilon \otimes \upalpha)  }\oplus  \upgamma_{\Hmr, \upalpha}(\Umr \oplus \upiota_\Hmr(\upepsilon), \Umr'), \]
 where ${\bf triv}_{\upiota_\Hmr(\upepsilon \otimes \upalpha)  }$ is the trivial bundle with fiber $\upiota_\Hmr(\upepsilon \otimes \upalpha)$, and the unit sphere bundle of a Whitney sum is fiberwise join of the unit sphere bundles of the summands, we have a homotopy pushout diagram 
 \[ 
 \begin{tikzcd}
 \Smr(\upiota_\Hmr(\upepsilon \otimes \upalpha)) \times \Smr\upgamma_{\Hmr, \upalpha}(\Umr\oplus \upiota_\Hmr(\upepsilon), -)\ar[dd,"\Smr(\upiota_\Hmr(\upepsilon \otimes \upalpha)) \times \sfp_{\upalpha}"'] \rar & \Smr\upgamma_{\Hmr, \upalpha}(\Umr\oplus \upiota_\Hmr(\upepsilon), -) \ar[hook, dd, "i"]   \\ \\
  \Smr(\upiota_\Hmr(\upepsilon \otimes \upalpha)) \times \Mor_{\Hmr}(\Umr, -)  \rar &  {\sf r}^*\Smr\upgamma_{\Hmr, \upalpha}(\Umr,-)
 \end{tikzcd}
 \]
 in which the left vertical map is an $\Emr$-substitution by assumption. Consequently, the inclusion $i$ as well as the composition 
 \[ 
 \begin{tikzcd}
 \Smr\upgamma_{\Hmr, \upalpha}(\Umr \oplus \upiota_\Hmr(\upepsilon), -) \rar[hook, "i"] &  {\sf r}^*\upgamma_{\Hmr, \upepsilon}(\Umr, \Umr') \rar["p_2"] & \Mor_{\upalpha}(\Wmr \oplus \upiota_\Hmr(\upepsilon)) 
 \end{tikzcd}
 \]
are $\Emr$-substitutions. Therefore, $p_2$ is an  $\Emr$-substitution. 
\end{proof}

Our next goal is to show that ${\bf T}_{(\upalpha, \upepsilon)}\Emr$ is $\upalpha$-polynomial in the direction of $\upepsilon$ for all $\Emr \in \Ecal_{\Gmr, {\bf 0}}$ and prove \Cref{main:poly}. 
\subsection{Equivariant Taylor approximations are polynomial}   \

 In \Cref{thm:tdTproperties}, we show an equivariant analog of \cite[Proposition 6.3]{WeissCalc}\footnote{The original proof in \cite{WeissCalc} had a gap which was  completed in the erratum \cite{WeissErr}. }, which implies that the  $\upalpha$-th reduced  Taylor approximation, which we will introduced next, is  $\upalpha$-polynomial. 
 \begin{notn} \label{notn:Talpha} For any $\upalpha \in \Rep_\Gmr$, define the {\bf $\upalpha$-th reduced Taylor approximation} functor $\td{\bf T}_{\upalpha}: \Ecal_{\Gmr, {\bf 0}}\longrightarrow \Ecal_{\Gmr,{\bf 0}}$ by setting 
 \begin{equation} \label{eqdef:Ttd}
  \td{\bf T}_{\upalpha} \Emr := \hocolim \uptau_{\upalpha}^{(n)}\Emr 
  \end{equation}
 for $\Emr \in \Ecal_{\Gmr, {\bf 0}}$. By construction we have a natural  $1$-morphism $\upeta^{\Emr}_{\upalpha}:\Emr \longrightarrow \td{\bf T}_{\upalpha} \Emr$ in $ \Func(\Ocal^{\mr{op}}_\Gmr, \Cat)$ for every $\Emr \in \Ecal_{\Gmr, {\bf 0}}$, i.e.,  
 a natural transformation
\[ 
\begin{tikzcd}
\upeta_{\upalpha}: {\bf 1}_{\Ecal_{\Gmr, {\bf 0}}} \rar & \td{\bf T}_{\upalpha} 
\end{tikzcd}
\]
 for every $\upalpha \in \Rep_\Gmr$.  
\end{notn}  
\begin{rmk} Note that $\td{\bf T}_{\upalpha \oplus \upepsilon}\Emr$ is nothing but ${\bf T}_{(\upalpha, \upepsilon)}\Emr$, the $\upalpha$-th Taylor approximation in the direction of $\upepsilon$, as in \Cref{defn:Taylor}.
\end{rmk}
\begin{thm} \label{thm:tdTproperties}  Suppose $\upalpha \in \Rep_\Gmr$ and  $\Emr \in \Ecal_{0}$. 
\begin{enumerate}
\item If $\Emr \longrightarrow \uptau_{\upalpha} \Emr$ is a pointwise equivalence then so is 
 $\Emr \longrightarrow \td{\bf T}_{\upalpha}\Emr$.
\item $\eta^{\td{\bf T}_{\upalpha}\Emr}_{\upalpha}: \td{\bf T}_{\upalpha}\Emr \longrightarrow  \uptau_{\upalpha}\td{\bf T}_{\upalpha}\Emr$   is a pointwise equivalence.
\item $\upeta_{\upalpha}^{\td{\bf T}_{\upalpha}\Emr }:   \td{\bf T}_{\upalpha}  \Emr \longrightarrow \td{\bf T}_{\upalpha} \td{\bf T}_{\upalpha}\Emr $ is a poitwise equivalence. 
\end{enumerate} 
\end{thm}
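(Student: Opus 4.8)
The plan is to mimic Weiss's argument (\cite[Proposition 6.3]{WeissCalc}, with the correction of \cite{WeissErr}) in the equivariant setting, leaning on the fact that every ingredient we need is checked pointwise, i.e.\ at each $\Gmr/\Hmr$ and each $\Umr \in \Rep_\Hmr$, so that the only genuinely new content is bookkeeping of the $\Hmr$-actions and the compatibility with restrictions. Throughout I will use freely that $\uptau_\upalpha$, $\td{\bf T}_\upalpha = \hocolim_n \uptau_\upalpha^{(n)}$, and pointwise equivalences are all defined levelwise via \Cref{defn:tau}, \Cref{notn:Talpha}, and \Cref{defn:pwequiv}, and that by \Cref{rmk:actionholim} the $\Hmr$-action on $(\uptau_\upalpha\Emr)_{\Gmr/\Hmr}(\Wmr)$ is induced from the $\Hmr$-action on the indexing category $\Vscr^\circ_\Hmr(\upiota_\Hmr\upalpha)$ together with the action on the values of $\wh\Emr$; in particular homotopy colimits over $\NN$ (with trivial $\Hmr$-action) commute with everything in sight and preserve weak equivalences of $\Hmr$-spaces.

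\emph{Part (1).} Fix $\Hmr \subset \Gmr$ and $\Umr \in \Rep_\Hmr$. By hypothesis $\eta^\Emr_\upalpha$ is a pointwise equivalence, so $\Emr_{\Gmr/\Hmr}(\Wmr) \to (\uptau_\upalpha\Emr)_{\Gmr/\Hmr}(\Wmr)$ is a weak equivalence of $\Hmr$-spaces for all $\Wmr$. Since $\uptau_\upalpha$ is built from $\wh\Emr$ by a homotopy limit over the fixed small category $\Vscr^\circ_\Hmr(\upiota_\Hmr\upalpha)$, and homotopy limits over a fixed diagram preserve objectwise weak equivalences of $\Hmr$-spaces, an easy induction on $n$ gives that $\uptau_\upalpha^{(n-1)}\Emr \to \uptau_\upalpha^{(n)}\Emr$ is a pointwise equivalence for every $n \geq 1$. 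Passing to the sequential homotopy colimit (which commutes with $\upiota_\Hmr$ and preserves weak equivalences of $\Hmr$-spaces) shows $\Emr_{\Gmr/\Hmr}(\Umr) \to (\td{\bf T}_\upalpha\Emr)_{\Gmr/\Hmr}(\Umr)$ is a weak equivalence; compatibility with restrictions is automatic because each stage is. This is the soft part.

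\emph{Parts (2) and (3).} Part (3) follows from Part (2) by the same colimit argument used in Part (1): once we know $\td{\bf T}_\upalpha\Emr \to \uptau_\upalpha\td{\bf T}_\upalpha\Emr$ is a pointwise equivalence, iterating $\uptau_\upalpha$ and taking $\hocolim_n$ gives that $\td{\bf T}_\upalpha\Emr \to \td{\bf T}_\upalpha\td{\bf T}_\upalpha\Emr$ is a pointwise equivalence. So the crux is Part (2). The difficulty, exactly as in \cite{WeissCalc}, is that $\uptau_\upalpha$ does \emph{not} commute with the sequential homotopy colimit in general: the indexing category $\Vscr^\circ_\Hmr(\upiota_\Hmr\upalpha)$ is not finite (it is not even filtered), so $\holim$ over it need not commute with $\hocolim_\NN$. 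Following Weiss (and its erratum), the plan is to replace $\Vscr^\circ_\Hmr(\upiota_\Hmr\upalpha)$ by a suitable \emph{finite} full subcategory --- or rather to use the identification of \Cref{thm:SC2}/\Cref{prop:sc3} expressing $\uptau_\upalpha\wh\Emr$ via the cofiber sequence $\Emr^{(\upalpha)} \to \Emr \to \uptau_\upalpha\wh\Emr$ of \Cref{cor:sc2cor}, together with the fact (\Cref{thm:derpoly}, resp.\ its proof via \Cref{rmk:IRproperties}) that derivatives kill the relevant terms --- and to observe that the obstruction to commuting $\holim$ past $\hocolim_\NN$ is measured by a $\limone$ term over $\NN$ which vanishes because the maps in the tower $\{\uptau_\upalpha^{(n)}\Emr_{\Gmr/\Hmr}(\Wmr)\}_n$ are, after one application of $\uptau_\upalpha$, eventually equivalences on each homotopy group (a Mittag--Leffler argument). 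Concretely: write $\uptau_\upalpha(\td{\bf T}_\upalpha\Emr)_{\Gmr/\Hmr}(\Wmr)$ as $\holim_{\Umr' \in \Vscr^\circ_\Hmr(\upiota_\Hmr\upalpha)} \hocolim_n \wh\Emr_{\Gmr/\Hmr}(\Wmr \oplus \Umr' \oplus \cdots)$, filter $\Vscr^\circ_\Hmr(\upiota_\Hmr\upalpha)$ by dimension of $\Umr'$ to get a finite tower of $\holim$s, commute $\hocolim_n$ past each finite stage, and control the $\limone$. Everything is $\Hmr$-equivariant because the filtration of $\Vscr^\circ_\Hmr(\upiota_\Hmr\upalpha)$ by dimension is $\Hmr$-stable and the comparison maps are the restrictions of the nonequivariant ones, so one invokes the nonequivariant \cite{WeissCalc}+\cite{WeissErr} argument levelwise on fixed points.

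\emph{Main obstacle.} The genuinely delicate step is Part (2): establishing that $\uptau_\upalpha$ commutes with $\hocolim_\NN$ up to pointwise equivalence. This is precisely where the erratum \cite{WeissErr} was needed in the classical case, and the equivariant version requires one to carry out that $\limone$/finiteness argument $\Hmr$-equivariantly --- i.e.\ to check that the dimension filtration of $\Vscr^\circ_\Hmr(\upiota_\Hmr\upalpha)$ provides a finite $\Hmr$-cofiltration whose layers are governed by the Stiefel-combinatorial cofiber sequences of \Cref{thm:SC1} and \Cref{thm:SC2}, and that the resulting connectivity estimates (needed for Mittag--Leffler) hold uniformly. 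I expect the connectivity/convergence bookkeeping, rather than any conceptual equivariant subtlety, to be the place where real work is required; the $\Hmr$-equivariance itself should be formal given \Cref{rmk:actionholim} and the naturality statements in \Cref{thm:SC1}, \Cref{thm:SC2}, \Cref{cor:sc1cor}, and \Cref{cor:sc2cor}.
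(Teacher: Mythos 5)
Your Part (1) and Part (3) match the paper: (1) is a formal preservation-of-equivalences argument (homotopy limits over a fixed $\Hmr$-category preserve objectwise equivalences, then pass to $\hocolim_n$), and (3) is obtained from (2) by re-running (1) with $\td{\bf T}_\upalpha\Emr$ as the input. Your diagnosis of (2) as the crux --- that $\uptau_\upalpha$ does not a priori commute with $\hocolim_\NN$, and that this is exactly where \cite{WeissErr} is needed --- is also correct.

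But your concrete plan for (2) has real gaps. First, the appeal to \Cref{thm:derpoly} is circular: \Cref{thm:derpoly} assumes strong $\upalpha$-polynomiality as a hypothesis, and (2) is precisely the assertion that $\td{\bf T}_\upalpha\Emr$ is strongly $\upalpha$-polynomial. Relatedly, trying to kill $(\td{\bf T}_\upalpha\Emr)^{(\upalpha)}$ via the fiber sequence of \Cref{cor:sc2cor} faces the same commuting problem in disguise: $(-)^{(\upalpha)}$ is a right Kan extension and does not commute with the sequential $\hocolim$, so one cannot conclude $(\td{\bf T}_\upalpha\Emr)^{(\upalpha)}\simeq *$ from the fact that each $(\uptau^{(n)}_\upalpha\Emr)^{(\upalpha)}$ is nontrivial. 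Second, ``filter $\Vscr^\circ_\Hmr(\upiota_\Hmr\upalpha)$ by dimension to get a finite tower of $\holim$s'' does not deliver what you need: the dimension strata are uncountable (all $k$-planes), so the finite filtration by dimension does not reduce the problem to finite homotopy limits, and hence does not by itself let you commute $\hocolim_\NN$ past each stage.

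The ingredient you are missing is the one the paper isolates: \Cref{lem:tauconnect}, which says that $\uptau_\upalpha$ raises the connectivity of maps whose connectivity function has a specific form. The paper verifies that the projection $\mathsf{p}_{\upiota_\Hmr(\upalpha)}:{\bf S}\upgamma_{\upiota_\Hmr(\upalpha)}(\Umr,-)_+\to\Mor_{\bf 0}(\Umr,-)$ satisfies the hypothesis of \Cref{lem:tauconnect}, with the explicit connectivity computed via the $\Kmr$-fixed-point splitting of \Cref{rmk:Kfix}; that splitting is the genuine equivariant input, not the formal naturality of $\holim$ that you point to. Once the $\nu$-connectivity of $\mathsf{p}_{\upiota_\Hmr(\upalpha)}$ is in hand, the erratum argument runs as in the nonequivariant case. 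You gesture at ``connectivity estimates hold uniformly'' at the very end, but without \Cref{lem:tauconnect} and \Cref{rmk:Kfix} there is no mechanism producing those estimates, and the Mittag--Leffler/$\limone$ vanishing you want cannot be extracted.
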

Consequently:
\begin{cor} \label{cor:Taylorpoly}For any $\Emr \in \Ecal_{\Gmr, {\bf 0}}$, ${\bf T}_{(\upalpha, \upepsilon)}\Emr$ is an $\upalpha$-polynomial in the $\upepsilon$ direction.  
\end{cor}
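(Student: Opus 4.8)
The plan is to obtain \Cref{cor:Taylorpoly} as an immediate consequence of part (2) of \Cref{thm:tdTproperties}, applied with the representation $\upalpha\oplus\upepsilon$ in the role of $\upalpha$ (which is legitimate since $\upalpha,\upepsilon\in\Rep_\Gmr$ forces $\upalpha\oplus\upepsilon\in\Rep_\Gmr$). First I would recall, as in the remark following \Cref{notn:Talpha}, the identification ${\bf T}_{(\upalpha,\upepsilon)}\Emr=\td{\bf T}_{\upalpha\oplus\upepsilon}\Emr=\hocolim_n\uptau_{\upalpha\oplus\upepsilon}^{(n)}\Emr$. By \Cref{defn:polynomial}, saying that ${\bf T}_{(\upalpha,\upepsilon)}\Emr$ is $\upalpha$-polynomial in the direction of $\upepsilon$ is by definition the assertion that it is strongly $(\upalpha\oplus\upepsilon)$-polynomial, i.e., by \Cref{defn:Spolynomial}, that the natural map
\[
\left({\bf T}_{(\upalpha,\upepsilon)}\Emr\right)_{\Gmr/\Hmr}(\Umr)\longrightarrow\left(\uptau_{\upalpha\oplus\upepsilon}{\bf T}_{(\upalpha,\upepsilon)}\Emr\right)_{\Gmr/\Hmr}(\Umr)
\]
is a weak equivalence in $\Hmr\Top$ for every subgroup $\Hmr\subset\Gmr$ and every $\Umr\in\Rep_\Hmr$.

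Second, I would observe that this natural map is precisely $\eta^{\td{\bf T}_{\upalpha\oplus\upepsilon}\Emr}_{\upalpha\oplus\upepsilon}$, the canonical $2$-morphism $\Fmr\to\uptau_{\upalpha\oplus\upepsilon}\Fmr$ from \eqref{eta} evaluated at $\Fmr=\td{\bf T}_{\upalpha\oplus\upepsilon}\Emr$; this is a routine unwinding of \Cref{defn:tau} and the construction of $\eta$, with no real content. Then \Cref{thm:tdTproperties}(2), read with $\upalpha\oplus\upepsilon$ substituted for $\upalpha$, states exactly that $\eta^{\td{\bf T}_{\upalpha\oplus\upepsilon}\Emr}_{\upalpha\oplus\upepsilon}$ is a pointwise equivalence, which is what we need.

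There is essentially no obstacle internal to this corollary; all the weight sits in \Cref{thm:tdTproperties}, and in particular its part (2), which in turn rests on \Cref{lem:polyplus1} (the key ``$+\upepsilon$'' step, valid because $\upepsilon$ is one-dimensional) together with the fact that pointwise equivalences are preserved by the filtered homotopy colimit defining $\td{\bf T}_{\upalpha\oplus\upepsilon}$. The only bookkeeping point to keep straight, and the place where a careless reader could slip, is that the relevant representation in the cited statements is the combined $\upalpha\oplus\upepsilon$ rather than $\upalpha$ alone --- the chosen direction $\upepsilon$ is absorbed into the reduced Taylor construction.
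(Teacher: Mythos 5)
Your argument is correct and is exactly the deduction the paper intends by the word ``Consequently'' preceding \Cref{cor:Taylorpoly}: identify ${\bf T}_{(\upalpha,\upepsilon)}\Emr$ with $\td{\bf T}_{\upalpha\oplus\upepsilon}\Emr$, unwind \Cref{defn:polynomial} and \Cref{defn:Spolynomial}, and invoke \Cref{thm:tdTproperties}(2) with $\upalpha\oplus\upepsilon$ in the role of $\upalpha$. One small misattribution in your closing aside: the paper's proof of \Cref{thm:tdTproperties}(2) rests on the connectivity estimate of \Cref{lem:tauconnect} and a filtered colimit/limit interchange, not on \Cref{lem:polyplus1} (that lemma is what underlies \Cref{thm:taylorapprox} and \Cref{main:poly}); this does not affect the validity of your deduction.
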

\begin{rmk}
Following \cite{WeissErr}, we  note that 
\[ 
(\uptau_{\upalpha}^{(n)})_{\Gmr/\Hmr}(\Wmr) \cong \underset{(\Umr_1, \dots, \Umr_n) \in \Vscr^{\circ}_{\Hmr}(\upiota_{\Hmr}(\upalpha))^{\times n}} \holim \wh{\Emr}_{\Gmr/\Hmr}(\Wmr \oplus \Umr_1 \oplus \dots \oplus \Umr_n), 
\]
and the homotopy colimits of 
\[
\begin{tikzcd}
\Emr \ar[rr,"\eta_{\upalpha}^\Emr"] && \uptau_{\upalpha}\Emr \ar[rr,"\uptau_{\upalpha}(\eta_{\upalpha}^\Emr)"]   && \uptau_{\upalpha}^{(2)}\Emr
\ar[rr,"\uptau^{(2)}_{\upalpha}(\eta_{\upalpha}^\Emr)"]     && \uptau_{\upalpha}( \uptau_{\upalpha}^{(2)}\Emr)
\ar[rr,"\uptau_{\upalpha}^{(3)}(\eta_{\upalpha}^\Emr)"]   &&  \dots 
\end{tikzcd}
\]
and 
\[
\begin{tikzcd}
\Emr \ar[rr,"\eta_{\upalpha}^\Emr"] && \uptau_{\upalpha}\Emr \ar[rr,"\eta_{\upalpha}^{\uptau_{\upalpha}\Emr}"]   && \uptau_{\upalpha}(\uptau_{\upalpha}\Emr)
\ar[rr,"\eta_{\upalpha}^{\uptau_{\upalpha}^{(2)}\Emr}"]   &&  \uptau_{\upalpha}(\uptau^{(2)}_{\upalpha}\Emr)
\ar[rr,"\eta_{\upalpha}^{\uptau_{\upalpha}^{(3)}\Emr}"]  && \dots 
\end{tikzcd}
\]
are isomorphic pointwise, therefore, either sequence can be used in  \eqref{eqdef:Ttd} where we define  $\td{\bf T}_{\upalpha}\Emr$. 
\end{rmk}
\begin{notn}[Equivariant connectivity] For a $\Gmr$-space $\Xmr$, its equivariant connectivity is a function 
\[ 
\begin{tikzcd}
\nu : \{ \Hmr: \Hmr \text{ is a subgroup of } \Gmr  \} \rar & \NN
\end{tikzcd}
 \]
 which assigns a subgroup $\Hmr$  the connectivity of $\Xmr^{\Hmr}$,  the $\Hmr$-fixed points of $\Xmr$.  
\end{notn}

\begin{proof}[{\bf Proof of \Cref{thm:tdTproperties}}] The result {\it (1)} follows from the fact that homotopy limits and homotopy colimits preserve weak equivalences.  

Notice that {\it (2)} is equivalent to showing  that the vertical arrows in the diagram 
\[ 
\begin{tikzcd}
\Emr \ar[rr,"\eta_{\upalpha}^\Emr"] \dar["\eta_{\upalpha}^\Emr"'] && \uptau_{\upalpha}\Emr \ar[rr,"\eta_{\upalpha}^{\uptau_{\upalpha}\Emr}"]  \dar["\eta_{\upalpha}^{\uptau_{\upalpha}\Emr}"']   && \uptau_{\upalpha}(\uptau_{\upalpha}\Emr)
\ar[rr,"\eta_{\upalpha}^{\uptau_{\upalpha}^{(2)}\Emr}"]   \dar["\eta_{\upalpha}^{\uptau_{\upalpha}^{(2)}\Emr}"']  &&  \uptau_{\upalpha}(\uptau^{(2)}_{\upalpha}\Emr)
\ar[rr,"\eta_{\upalpha}^{\uptau_{\upalpha}^{(3)}\Emr}"]  \dar["\eta_{\upalpha}^{\uptau_{\upalpha}^{(3)}\Emr}"']  && \dots \\
 \uptau_{\upalpha}\Emr \ar[rr,"\uptau_{\upalpha}(\eta_{\upalpha}^\Emr)"']   && \uptau_{\upalpha}^{(2)}\Emr
\ar[rr,"\uptau^{(2)}_{\upalpha}(\eta_{\upalpha}^\Emr)"']     && \uptau_{\upalpha}( \uptau_{\upalpha}^{(2)}\Emr)
\ar[rr,"\uptau_{\upalpha}^{(3)}(\eta_{\upalpha}^\Emr)"']   && \uptau_{\upalpha}( \uptau_{\upalpha}^{(3)}\Emr)
\ar[rr,"\uptau_{\upalpha}^{(4)}(\eta_{\upalpha}^\Emr)"']   &&  \dots 
\end{tikzcd}
 \]
induces a weak equivalence on the homotopy colimits of horizontal lines, i.e.,  
\begin{eqnarray*}
(\td{\bf T}_{\upalpha}\Emr)_{\Gmr/\Hmr}(\Umr) &:=&  \underset{n \in \NN} {\bf hocolim} \  (\uptau^{(n+1)}_{\upalpha} \Emr)_{\Gmr/\Hmr}(\Wmr) \\ 
&=&  \underset{n \in \NN} {\bf hocolim} \  \underset{\Vmr \in \Vscr^{\circ}_{\Hmr}(\upiota_{\Hmr}(\upalpha))}{\bf holim}  (\uptau_{\upalpha}^{(n)}\Emr)_{\Gmr/\Hmr}(\Umr \oplus \Vmr)   \\
   &\overset{\simeq} \longrightarrow &    \underset{\Vmr \in \Vscr^{\circ}_{\Hmr}(\upiota_{\Hmr}(\upalpha))}{\bf holim} \ \underset{n \in \NN} {\bf hocolim} \  ( \uptau_{\upalpha}^{(n)}\Emr)_{\Gmr/\Hmr}(\Umr \oplus \Vmr) \\
   &=& (\uptau_{\upalpha}  \td{\bf T}_{\upalpha}\Emr)_{\Gmr/\Hmr}(\Vmr)
\end{eqnarray*}
for all $\Umr \in \Rep_\Hmr$.

The result follows from \Cref{lem:tauconnect} using an argument identical to that in \cite[Theorem 6.3 (1)]{WeissErr} (also see \cite[Theorem 4.2.9]{Y24}) once we establish that the morphism
\[ 
\begin{tikzcd}
{\sf p}_{\upiota_\Hmr(\upalpha)}:  {\bf S}\upgamma_{\upiota_\Hmr(\upalpha)}(\Umr, -)_+ \rar &  \Mor_{\bf 0}(\Umr, -)
 \end{tikzcd}
  \] 
  in $\Ecal_{\Hmr, {\bf 0}}$
satisfies the hypothesis of \Cref{lem:tauconnect}. To this end, we observe from \Cref{rmk:Kfix} that ${\sf p}_{\upiota_\Hmr(\upalpha)}$ at $\Vmr$ is 
$\nu$-connected, where 
\[ \nu(\Kmr) = \dim_\RR \upalpha^{\Kmr} (\dim_\RR \Vmr^{\Kmr} - \dim_\RR\Umr^\Kmr)  + \dim_\RR (\upalpha^{\perp})^{\Kmr}( \dim_\RR (\Vmr^{\perp})^{\Kmr}  -  \dim_\RR(\Umr^\Kmr)^{\perp})
\]
for  any $\Kmr \subset \Hmr$. 

The result {\it (3)}  follows immediately from {\it (1)} and {\it (2)}. 
\end{proof}
\begin{rmk} \label{rmk:Kfix} Note that for $\Umr, \Vmr \in \Rep_\Hmr$, there is a natural splitting
\[ 
\Mor_{\Hmr}(\Umr, \Vmr)^{\Kmr} = \Mor_{\Hmr}(\Umr^\Kmr, \Vmr^\Kmr) \times \Mor_\Hmr((\Umr^{\Kmr})^{\perp}, (\Vmr^{\Kmr})^\perp)
\]
for every subgroup $\Kmr \subset \Hmr$. This is because  $\Kmr$-fixed points of $\Mor_{\Hmr}(\Umr, \Vmr)$ consists of $\Kmr$-equivariant linear maps whichpreserve the subspace of  $\Kmr$-fixed points as well as its orthogonal complement.  Similarly, one can argue that 
\[ 
\upgamma_{\Hmr, \upiota_{\Hmr}(\upalpha)}(\Umr, \Vmr)^{\Kmr} = \upgamma_{\Hmr, \upiota_{\Hmr}(\upalpha)}(\Umr^\Kmr, \Vmr^\Kmr) \times \upgamma_{\Hmr, \upiota_{\Hmr}(\upalpha)}((\Umr^{\Kmr})^{\perp}, (\Vmr^{\Kmr})^\perp)
\]
for all $\Vmr, \Wmr \in \Rep_\Hmr$. 
\end{rmk}
\begin{lem} \label{lem:tauconnect} For a given  $\Gmr$-representation $\upalpha$,  if  $p: \Fmr \longrightarrow \Dmr$ in $\Ecal_{\Gmr, {\bf 0}}$ such that 
\[
\begin{tikzcd}
{\sf p}_{\Gmr/\Hmr}: \Fmr_{\Gmr/\Hmr}(\Umr) \rar & \Dmr_{\Gmr/\Hmr}(\Umr) 
\end{tikzcd}
 \]
 is $\nu$-connected, where 
 \[ 
\begin{tikzcd}
\nu : \{ \Kmr: \Kmr \text{ is a subgroup of } \Hmr  \} \rar & \NN
\end{tikzcd}
 \]
denote the function that sends 
\[ 
\Kmr \mapsto  \underset{\Qmr \subset \Kmr}{\sf min}\left\lbrace |\upalpha^{\Qmr}| |\Umr^{\Qmr}|  + |(\upalpha^{\Qmr})^{\perp} ||(\Umr^{\Qmr})^{\perp}|  - {\sf b}_{\Qmr}\right\rbrace
\]
for some constant ${\sf b}_\Qmr$, then 
 \[
\begin{tikzcd}
(\uptau_{\upalpha}{\sf p})_{\Gmr/\Hmr}: \uptau_{\upalpha}\Fmr_{\Gmr/\Hmr}(\Umr) \rar & \uptau_{\upalpha}\Dmr_{\Gmr/\Hmr}(\Umr) 
\end{tikzcd}
 \]
 is $(\nu + 1)$-connected. 
\end{lem}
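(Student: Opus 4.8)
The plan is to reduce the statement to a standard Blakers--Massey/connectivity-of-homotopy-limits estimate applied fixed-point-wise. Recall from \Cref{defn:tau} that
\[
(\uptau_{\upalpha}\Fmr)_{\Gmr/\Hmr}(\Umr) = \underset{\Vmr \in \Vscr^{\circ}_{\Hmr}(\upiota_{\Hmr}(\upalpha))}{\holim}\ \wh{\Fmr}_{\Gmr/\Hmr}(\Umr \oplus \Vmr),
\]
and similarly for $\Dmr$. First I would fix a subgroup $\Kmr \subset \Hmr$ and pass to $\Kmr$-fixed points. Since $(-)^{\Kmr}$ commutes with the homotopy limit over the (discrete-on-objects, fixed-by-$\Kmr$-after-passing-to-$\Kmr$-fixed-index) category, and using \Cref{rmk:Kfix} together with the fact that $\wh{\Fmr}$ is a right Kan extension, the map $((\uptau_\upalpha \sfp)_{\Gmr/\Hmr}(\Umr))^{\Kmr}$ becomes the map of homotopy limits induced by $\sfp$ on the diagram $\Vmr \mapsto \wh{\Fmr}_{\Gmr/\Hmr}(\Umr\oplus\Vmr)^{\Kmr} \to \wh{\Dmr}_{\Gmr/\Hmr}(\Umr\oplus\Vmr)^{\Kmr}$. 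By the connectivity hypothesis and the splitting of \Cref{rmk:Kfix} (applied to $\Umr \oplus \Vmr$ and each $\Qmr \subset \Kmr$), the value of $\sfp$ at $\Umr \oplus \Vmr$ is, on $\Qmr$-fixed points, at least as connected as the bound for $\Umr$ plus a nonnegative contribution coming from $\Vmr^{\Qmr}$ and $(\Vmr^{\Qmr})^{\perp}$; crucially, for the basepoint object $\Vmr$ spanned inside $\upiota_\Hmr(\upalpha)$, every such $\Vmr$ is nonzero, so this extra contribution is $\geq |\upalpha^{\Qmr}| + |(\upalpha^{\Qmr})^{\perp}| \geq 1$ whenever $\upalpha \neq 0$ (and if $\upalpha = 0$ the statement is vacuous since $\Vscr^{\circ}$ is empty and $\uptau_{\bf 0}\Fmr \simeq \ast$). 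Hence $\sfp$ evaluated on $\Umr \oplus \Vmr$ is $(\nu+1)$-connected for \emph{every} object $\Vmr$ of the indexing category, where I am reading $\nu$ as the function $\Kmr \mapsto \min_{\Qmr \subset \Kmr}\{\dots\}$ from the statement.

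Next I would invoke the standard fact that a natural transformation of $\Ccal$-shaped diagrams of spaces which is $m$-connected objectwise induces an $m$-connected map on homotopy limits, provided $\Ccal$ has finite (or appropriately bounded) cohomological dimension and one works with a fibrant model; this is exactly the input used in \cite[proof of Theorem 6.3]{WeissErr}. The category $\Vscr^{\circ}_{\Hmr}(\upiota_\Hmr(\upalpha))$ is a poset (filtered by dimension) of finite length $\leq |\upalpha|$, so the homotopy limit is computed by a finite tower and the connectivity is preserved exactly, with no loss. Therefore $(\uptau_\upalpha \sfp)_{\Gmr/\Hmr}(\Umr)^{\Kmr}$ is $(\nu+1)$-connected for each $\Kmr$, which is precisely the assertion that $(\uptau_\upalpha \sfp)_{\Gmr/\Hmr}(\Umr)$ is $(\nu+1)$-connected as a map of $\Hmr$-spaces. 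One subtlety: the hypothesis allows the constants $\sfb_\Qmr$, so the bound is not literally $|\upalpha^\Qmr||\Umr^\Qmr| + \dots$ but that quantity shifted down by $\sfb_\Qmr$; this is harmless because the objectwise estimate for $\Umr \oplus \Vmr$ uses the \emph{same} constants $\sfb_\Qmr$ (they do not depend on $\Vmr$), and the $\Vmr$-contribution is $\geq 1$, so after taking the minimum over $\Qmr \subset \Kmr$ we still gain exactly one.

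The main obstacle I anticipate is bookkeeping the equivariance and the interaction of fixed points with the homotopy limit. One must be careful that the $\Hmr$-action on $(\uptau_\upalpha \Fmr)_{\Gmr/\Hmr}(\Umr)$ described in \Cref{rmk:actionholim} — which permutes the indexing objects $\Vmr$ according to the $\Hmr$-action on $\Vscr^{\circ}_{\Hmr}(\upiota_\Hmr(\upalpha))$ — is compatible with passing to $\Kmr$-fixed points, so that $\left((\uptau_\upalpha \Fmr)_{\Gmr/\Hmr}(\Umr)\right)^{\Kmr}$ really is the homotopy limit over the $\Kmr$-fixed subcategory of the $\Kmr$-fixed-point diagram; this is routine but must be checked, and it is the reason the bound involves a minimum over all $\Qmr \subset \Kmr$ rather than $\Kmr$ alone (the fixed points of a homotopy limit see the connectivity at \emph{all} subgroups appearing in the orbit decomposition of the index). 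A clean way to organize this is to first reduce to the case $\Kmr = \Hmr$ trivially acting after replacing $\Hmr$ by $\Kmr$ and the diagram by its $\Kmr$-fixed points (using that $\Vscr^{\circ}_{\Hmr}(\upiota_\Hmr(\upalpha))^{\Kmr}$ is again a finite-length poset), at which point everything is nonequivariant and the argument of \cite[Theorem 6.3]{WeissErr} applies verbatim. Once that reduction is in place the connectivity count is immediate from \Cref{rmk:Kfix} and the elementary estimate $|\upalpha^\Qmr|\cdot|\Umr^\Qmr \oplus \Vmr^\Qmr| \geq |\upalpha^\Qmr|\cdot|\Umr^\Qmr| + |\upalpha^\Qmr|$ for $\Vmr^\Qmr \neq 0$, and symmetrically for the orthogonal complements.
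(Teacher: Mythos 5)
The main gap is in the penultimate paragraph, where you assert that $\Vscr^{\circ}_{\Hmr}(\upiota_\Hmr(\upalpha))$ ``is a poset (filtered by dimension) of finite length $\leq |\upalpha|$, so the homotopy limit is computed by a finite tower and the connectivity is preserved exactly, with no loss.'' This is not correct, and it elides the entire technical content of the lemma. Per \Cref{notn:V}, $\Vscr^{\circ}_{\Hmr}(\upiota_\Hmr(\upalpha))$ is a \emph{topological} category: its objects are nonzero subspaces of $\upiota_\Hmr(\upalpha)$, and the morphism space between two objects $\Umr, \Umr'$ is the Stiefel manifold of linear isometric embeddings $\Umr \hookrightarrow \Umr'$, not merely the presence or absence of an inclusion. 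Consequently, the cosimplicial object computing $\holim$ has $n$-th level a product over a positive-dimensional moduli space of chains, and the relevant ``cohomological dimension'' of the indexing category is not a fixed finite number; the homotopy limit \emph{does} eat connectivity proportionally to the dimension of these chain spaces. The whole point of \cite[e.3]{WeissErr}, which the paper follows, is a careful estimate showing that the connectivity gain from evaluating at $\Umr \oplus \Lmr({\bf k})$ with $\Lmr(i)$ nonzero strictly exceeds the loss $\dim_\RR \Cmr(\uplambda, \upiota_\Hmr(\upalpha)) + k$ coming from the dimension of the space $\Cmr(\uplambda, \upiota_\Hmr(\upalpha))$ of $k$-chains of type $\uplambda$. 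You correctly identify the intuition (the extra summand $\Vmr$ is nonzero, hence a $+1$ gain) and the correct reduction via \Cref{rmk:Kfix} to counting on fixed and anti-fixed pieces, but you cite \cite{WeissErr} while simultaneously mischaracterizing what it proves: the argument there is precisely the non-trivial dimension count you are skipping, not a generic ``objectwise connectivity implies holim connectivity'' fact. The stated ``standard fact'' (objectwise $m$-connected $\Rightarrow$ holim $m$-connected given bounded cohomological dimension) is also not true as stated; one loses connectivity in general, and the gain here has to be extracted by hand.

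A secondary caution: your reduction of $\bigl((\uptau_\upalpha\Fmr)_{\Gmr/\Hmr}(\Umr)\bigr)^{\Kmr}$ to a homotopy limit over the $\Kmr$-fixed subcategory of the $\Kmr$-fixed-point diagram is not automatic from \Cref{rmk:actionholim}. With the explicit model there, the $\Hmr$-action permutes the product factors by acting on chains, so a $\Kmr$-fixed point is a $\Kmr$-equivariant family of compatible choices over \emph{all} chains, not a family supported on $\Kmr$-fixed chains. This can be organized (and the appearance of the minimum over all $\Qmr \subset \Kmr$ in the statement is a symptom of exactly this), but it is an honest piece of work rather than a formal commutation, and the paper's proof handles it by splitting the morphism spaces and chain spaces via \Cref{rmk:Kfix} and then carrying out the dimension estimate on each piece. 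To repair your argument, you would need to replace the ``poset / no loss'' step with the explicit estimate
\[
\dim_\RR \Cmr(\uplambda, \upiota_\Hmr(\upalpha))^{\Qmr} < \dim_\RR \upalpha^{\Qmr}\, \dim_\RR \Lmr(k)^{\Qmr} + \dim_\RR(\upalpha^{\Qmr})^{\perp}\, \dim_\RR(\Lmr(k)^{\Qmr})^{\perp} - k,
\]
deduced from the decomposition $\Cmr(\uplambda, \upiota_\Hmr(\upalpha))^{\Qmr} \cong \Cmr(\uplambda, \upiota_\Hmr(\upalpha)^{\Qmr}) \times \Cmr(\uplambda, (\upiota_\Hmr(\upalpha)^{\Qmr})^{\perp})$ and the nonequivariant estimate of \cite{WeissErr}.
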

\begin{proof} This is essentially the equivariant extension of  \cite[e.3]{WeissErr}. Following the argument \cite[e.3]{WeissErr}, we observe that $\Hmr$-equivariant connectivity of \[ (\uptau_{\upalpha}{\sf p})_{\Gmr/\Hmr}: (\uptau_{\upalpha}\Fmr)_{\Gmr/\Hmr}(\Umr) \longrightarrow (\uptau_{\upalpha}\Dmr)_{\Gmr/\Hmr}(\Umr)\]
at $\Kmr \subset \Hmr$ is given by minimum of 
\begin{eqnarray*}
&& \dim_\RR( \upalpha^{\Qmr} )\dim_\RR \left( \Umr^{\Qmr} \oplus \Lmr({\bf k})^\Qmr \right)    \\
&+&  \dim_\RR (\upalpha^{\Qmr})^{\perp} \dim_\RR \left( (\Umr^{\Qmr})^{\perp} \oplus (\Lmr({\bf k})^{\Qmr})^{\perp} \right)  \\
&-& (\dim \Cmr(\uplambda, \upiota_\Hmr(\upalpha))^\Qmr + k),
 \end{eqnarray*}
 where 
 \begin{itemize}
 \item  $\Qmr$ varies over subgroups of $\Kmr$, 
 \item $k \in \NN$, 
 \item ${\bf k}$ is the ordinal  $0 < 1 < \dots < k$, 
 \item $\uplambda: {\bf k} \longrightarrow {\bf n}$ is a functor  where $n = \dim_\RR \upalpha$, 
 \item $\Cmr(\uplambda, \upiota_\Hmr(\upalpha)):= \{ \Lmr: {\bf k} \to \Vscr^{\circ}_{\upiota_\Hmr(\upalpha)}: \dim_\RR \Lmr(i) = \uplambda(i) \text{ for all $i$} \} $, 
 \item $\Lmr \in \Cmr(\uplambda)$, 
 \end{itemize}
 given our assumption. Using \Cref{rmk:Kfix}, we observe that 
 \[ 
 \Cmr(\uplambda, \upiota_\Hmr(\upalpha))^{\Qmr} =  \Cmr(\uplambda, \upiota_\Hmr(\upalpha)^{\Qmr}) \times  \Cmr(\uplambda, (\upiota_\Hmr(\upalpha)^\Qmr)^{\perp})
 \]
and consequently, an argument identical to the one in \cite{WeissErr} (estimating  $\dim_\RR\Cmr(\uplambda)$ in \cite[e.3]{WeissErr}) shows that 
\[ \dim_\RR \Cmr(\uplambda, \upiota_\Hmr(\upalpha))^{\Qmr} < \dim_\RR \upalpha^{\Qmr} \dim_\RR \Lmr(k)^{\Qmr} + \dim_\RR(\upalpha^\Qmr)^{\perp} \dim_\RR(\Lmr(k)^\Qmr)^{\perp} - k ,\]
and hence the result. 
\end{proof}

\bigskip
\begin{proof}[{\bf Proof of \Cref{main:poly}}] If $ \begin{tikzcd}
 \Emr: \Jfrak_{\Gmr} \rar & \Tfrak_{\Gmr}, 
 \end{tikzcd}
$ is $\alpha$-polynomial  in the direction of $\upepsilon$ then $\Emr \longrightarrow \uptau_{\upalpha \oplus \upepsilon} \Emr$ is a pointwise equivalence by \Cref{defn:polynomial}. Thus, by \Cref{thm:taylorapprox}, $\Emr$ is $\alpha \oplus \alpha'$-polynomial  in the direction of $\upepsilon$ , i.e., 
\[ \Emr \longrightarrow \uptau_{\upalpha \oplus \upalpha' \oplus \upepsilon} \Emr \]
is a pointwise equivalence, if $\upalpha'$  is isomorphic to a finite sum of one dimensional $\Gmr$-representation. Then the result follows from \Cref{thm:tdTproperties}.
\end{proof}

\bigskip
The following result is crucial in defining homogeneous layers in the next section. 
\begin{lem} \label{lem:commuteT} For any pair $\upalpha_1, \upalpha_2 \in \Rep_{\Gmr}$ 
\[ 
\td{\bf T}_{\upalpha_1} \td{\bf T}_{\upalpha_2} \Emr \cong \td{\bf T}_{\upalpha_2} \td{\bf T}_{\upalpha_1} \Emr
\]
for all  $\Emr \in \Ecal_{\Gmr, {\bf 0}}$. 
\end{lem}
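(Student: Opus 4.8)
The plan is to reduce everything to a statement about commuting homotopy limits, after unpacking the definition of $\td{\bf T}_{\upalpha}$ as an iterated homotopy colimit of the functors $\uptau_{\upalpha}^{(n)}$. Recall from \Cref{notn:Talpha} that $\td{\bf T}_{\upalpha}\Emr = \hocolim_n \uptau_{\upalpha}^{(n)}\Emr$, and from the remark following \Cref{cor:Taylorpoly} that
\[
(\uptau_{\upalpha}^{(n)}\Emr)_{\Gmr/\Hmr}(\Wmr) \cong \underset{(\Umr_1, \dots, \Umr_n) \in \Vscr^{\circ}_{\Hmr}(\upiota_{\Hmr}(\upalpha))^{\times n}}{\holim} \wh{\Emr}_{\Gmr/\Hmr}(\Wmr \oplus \Umr_1 \oplus \dots \oplus \Umr_n).
\]
First I would observe that since $\wh{(-)}$ is a right Kan extension from $\Rep_{\Hmr}$ to $\Vscr_{\Hmr}$ and the latter is insensitive to the order in which one adds orthogonal summands, the iterated construction $\uptau_{\upalpha_1}^{(m)}\uptau_{\upalpha_2}^{(n)}\Emr$ evaluated at $\Wmr$ is a homotopy limit of $\wh{\Emr}_{\Gmr/\Hmr}$ over the product category $\Vscr^{\circ}_{\Hmr}(\upiota_\Hmr\upalpha_1)^{\times m} \times \Vscr^{\circ}_{\Hmr}(\upiota_\Hmr\upalpha_2)^{\times n}$, with the spaces being $\wh{\Emr}_{\Gmr/\Hmr}(\Wmr \oplus \Umr_1 \oplus \dots \oplus \Umr_m \oplus \Umr'_1 \oplus \dots \oplus \Umr'_n)$; this uses the "independence of choice of ambient $\Wmr$" property recorded in the equation just after \Cref{notn:V}, together with the fact that a homotopy limit over a product category can be computed iteratively in either order (Fubini for homotopy limits). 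Thus at the level of the pre-colimit diagrams, $\uptau_{\upalpha_1}^{(m)}\uptau_{\upalpha_2}^{(n)}\Emr$ and $\uptau_{\upalpha_2}^{(n)}\uptau_{\upalpha_1}^{(m)}\Emr$ are already isomorphic, naturally in $\Wmr$ and compatibly with the structure maps that increase $m$ or $n$.

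Next I would take homotopy colimits. We have
\[
\td{\bf T}_{\upalpha_1}\td{\bf T}_{\upalpha_2}\Emr = \hocolim_m \uptau_{\upalpha_1}^{(m)}\big(\hocolim_n \uptau_{\upalpha_2}^{(n)}\Emr\big),
\]
and since each $\uptau_{\upalpha_1}$ is itself a homotopy limit over the category $\Vscr^{\circ}_{\Hmr}(\upiota_\Hmr\upalpha_1)$, which is a \emph{finite} category (the poset of nonzero subspaces of a finite-dimensional representation has finitely many objects up to the morphism spaces, and more to the point the relevant cofinality/finiteness used throughout \cite{WeissCalc} applies), this homotopy limit commutes with the sequential homotopy colimit over $n$ up to weak equivalence — exactly the interchange already invoked in the proof of \Cref{thm:tdTproperties}(2). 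Applying this interchange repeatedly to move all the $\hocolim_n$'s past all the $\uptau_{\upalpha_1}$'s, and using the pointwise diagram-level isomorphism from the previous paragraph to swap the order of the $\uptau$'s, one arrives at a natural pointwise weak equivalence
\[
\td{\bf T}_{\upalpha_1}\td{\bf T}_{\upalpha_2}\Emr \;\simeq\; \hocolim_m \hocolim_n \uptau_{\upalpha_1}^{(m)}\uptau_{\upalpha_2}^{(n)}\Emr \;\cong\; \hocolim_n \hocolim_m \uptau_{\upalpha_2}^{(n)}\uptau_{\upalpha_1}^{(m)}\Emr \;\simeq\; \td{\bf T}_{\upalpha_2}\td{\bf T}_{\upalpha_1}\Emr,
\]
where the middle isomorphism is just swapping two sequential homotopy colimits (both indexed by $\NN$, hence by $\NN \times \NN$, which has an obvious symmetry and cofinal diagonal). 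Throughout, naturality in the subgroup $\Hmr$ and compatibility with restriction maps is automatic because every construction involved ($\wh{(-)}$, $\holim$, $\hocolim$) is built functorially over $\Ocal_{\Gmr}^{\mr{op}}$, as already used in \Cref{lem:restower}.

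The main obstacle I anticipate is purely bookkeeping rather than conceptual: one must be careful that the "Fubini" swap of the two $\uptau$-directions is genuinely an isomorphism of diagrams (not merely an equivalence), which it is because adding orthogonal summands $\Umr \oplus \Umr'$ versus $\Umr' \oplus \Umr$ to an ambient universe element gives literally the same subspace, so the two product-indexed $\holim$ diagrams have identical vertices and identical structure maps after the evident reindexing. The only place a genuine equivalence (rather than equality) enters is the homotopy limit/colimit interchange, and there one should cite the same hypothesis as in \Cref{thm:tdTproperties}(2) — that the indexing category $\Vscr^{\circ}_{\Hmr}(\upiota_\Hmr\upalpha)$ is such that finite homotopy limits over it commute with filtered (sequential) homotopy colimits of spaces — so no new input is needed. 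Hence the lemma follows formally from the structure already established, and I would present it in about this level of detail, relegating the interchange to a reference.
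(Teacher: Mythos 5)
Your approach matches the paper's: the core observation is that $\uptau_{\upalpha_1}$ and $\uptau_{\upalpha_2}$ commute because the iterated homotopy limit over the product category $\Vscr^{\circ}_{\Hmr}(\upiota_\Hmr\upalpha_1)\times\Vscr^{\circ}_{\Hmr}(\upiota_\Hmr\upalpha_2)$ of $\wh{\Emr}_{\Gmr/\Hmr}(\Wmr\oplus\Umr_1\oplus\Umr_2)$ is manifestly symmetric in $\Umr_1,\Umr_2$ and can be computed in either order (Fubini for homotopy limits). The paper's own proof is exactly this observation together with the sentence "the result follows immediately"; you are more careful and explicitly flag that one also has to move a sequential homotopy colimit past a $\uptau$ (a homotopy limit applied after $\wh{(-)}$), which the paper silently elides.

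The place where your write-up goes wrong is the justification for that interchange. You assert that $\Vscr^{\circ}_{\Hmr}(\upiota_\Hmr\upalpha)$ is a finite category; it is not — it has a continuum of objects (all nonzero subspaces of $\upiota_\Hmr\upalpha$) and the morphism spaces are positive-dimensional Stiefel manifolds. Finiteness of the indexing category is not what makes the interchange work here. What actually makes it work in this paper — and in the corresponding step of \Cref{thm:tdTproperties}(2) that you gesture at — is the connectivity estimate of \Cref{lem:tauconnect}: the maps $\uptau_{\upalpha}^{(n)}\Emr \to \uptau_{\upalpha}^{(n+1)}\Emr$ become increasingly connected, so the sequential $\hocolim$ can be pulled through the totalization computing $\holim$. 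Your instinct to cite \Cref{thm:tdTproperties}(2) is the right one; drop the finiteness parenthetical and replace it with the connectivity argument. One further quibble: if the interchange genuinely uses the connectivity estimate, then the two outer arrows in your final display are weak equivalences rather than isomorphisms, as you correctly mark them — which makes the paper's stated $\cong$ arguably too strong, a point worth noticing.
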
  
\begin{proof} The result follows immmediately from the fact that, for all $\Wmr \in \Rep_{\Hmr}$ and $\Hmr \subset \Gmr$
\[  
 (\uptau_{\upalpha \oplus \upepsilon_1} \uptau_{\upalpha \oplus \upepsilon_2} \Emr)_{\Gmr/\Hmr}(\Wmr) := 
\underset{\Umr_1 \in \mathscr{V}^{\circ}_\Hmr(\upiota_\Hmr(\upalpha_1))   }\holim  \  \underset{\Umr_2 \in \mathscr{V}^{\circ}_\Hmr(\upiota_\Hmr(\upalpha_2))  }\holim \wh{\Emr}_{\Gmr/\Hmr}(\Wmr \oplus \Umr_1 \oplus \Umr_2)  \]
is homoemorphic to  $(\uptau_{\upalpha \oplus \upepsilon_2} \uptau_{\upalpha \oplus \upepsilon_1} \Emr)_{\Gmr/\Hmr}(\Wmr) $,  
 as homotopy limits commute with homotopy limits. 
\end{proof}
\begin{cor} \label{cor:commuteT} Given $\upalpha, \upepsilon_1, \upepsilon_2 \in \Rep_{\Gmr}$ such that $\dim_{\RR}(\upepsilon_1) = 1 =  \dim_{\RR}(\upepsilon_2)$, then  
\[ 
{\bf T}_{(\upalpha, \upepsilon_1)} {\bf T}_{(\upalpha, \upepsilon_2)} \Emr \cong {\bf T}_{(\upalpha, \upepsilon_2)} {\bf T}_{(\upalpha, \upepsilon_1)} \Emr
\]
for all  $\Emr \in \Ecal_{\Gmr, {\bf 0}}$. 
\end{cor}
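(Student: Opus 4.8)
The plan is to reduce \Cref{cor:commuteT} directly to \Cref{lem:commuteT}. Recall from \Cref{notn:Talpha} and the subsequent \Cref{rmk}[immediately following it] that the Taylor approximation in a direction is literally a reduced Taylor approximation: ${\bf T}_{(\upalpha, \upepsilon_i)}\Emr = \td{\bf T}_{\upalpha \oplus \upepsilon_i}\Emr$. So the claimed isomorphism
\[
{\bf T}_{(\upalpha, \upepsilon_1)} {\bf T}_{(\upalpha, \upepsilon_2)} \Emr \cong {\bf T}_{(\upalpha, \upepsilon_2)} {\bf T}_{(\upalpha, \upepsilon_1)} \Emr
\]
unwinds to
\[
\td{\bf T}_{\upalpha \oplus \upepsilon_1} \td{\bf T}_{\upalpha \oplus \upepsilon_2} \Emr \cong \td{\bf T}_{\upalpha \oplus \upepsilon_2} \td{\bf T}_{\upalpha \oplus \upepsilon_1} \Emr,
\]
which is exactly the statement of \Cref{lem:commuteT} applied with $\upalpha_1 := \upalpha \oplus \upepsilon_1$ and $\upalpha_2 := \upalpha \oplus \upepsilon_2$ (no hypothesis on dimensions is needed for \Cref{lem:commuteT}, so the constraint $\dim_\RR \upepsilon_i = 1$ is only there to make the left-hand expressions legitimate Taylor approximations in the sense of \Cref{defn:Taylor}). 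First I would state this identification of notation explicitly, then invoke \Cref{lem:commuteT}.

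For completeness I would also spell out why \Cref{lem:commuteT} gives what we want at the level of the $1$-morphisms (not just pointwise): the homeomorphism
\[
(\uptau_{\upalpha_1} \uptau_{\upalpha_2} \Emr)_{\Gmr/\Hmr}(\Wmr) \;\cong\; (\uptau_{\upalpha_2} \uptau_{\upalpha_1} \Emr)_{\Gmr/\Hmr}(\Wmr)
\]
arising from swapping the two nested homotopy limits over $\Vscr^{\circ}_\Hmr(\upiota_\Hmr(\upalpha_i))$ is natural in $\Wmr \in \Rep_\Hmr$, is $\Hmr$-equivariant (the two category actions are independent, so the diagonal action on the product of indexing categories is unaffected by the swap), and is compatible with the restriction functors $\upiota_\Kmr$ for $\Kmr \subset \Hmr$. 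Hence it assembles to an isomorphism of $1$-morphisms in ${\bf Fun}(\Ocal_\Gmr^{\mr{op}}, \Cat)$. Iterating this over the defining telescopes $\td{\bf T}_\upalpha\Emr = \hocolim_n \uptau_\upalpha^{(n)}\Emr$ and using that homotopy colimits commute with homotopy limits (the same interchange already used in the proof of \Cref{thm:tdTproperties}(2)) passes the isomorphism through the $\hocolim$'s, yielding $\td{\bf T}_{\upalpha_1}\td{\bf T}_{\upalpha_2}\Emr \cong \td{\bf T}_{\upalpha_2}\td{\bf T}_{\upalpha_1}\Emr$.

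I do not expect any genuine obstacle here: the corollary is purely a matter of matching notation and then quoting \Cref{lem:commuteT}. The only point requiring a word of care is the bookkeeping of the $\Hmr$-action on nested homotopy limits via \Cref{rmk:actionholim}, to confirm that transposing the two indexing categories is $\Hmr$-equivariant; since the actions on $\Vscr^{\circ}_\Hmr(\upiota_\Hmr(\upalpha_1))$ and $\Vscr^{\circ}_\Hmr(\upiota_\Hmr(\upalpha_2))$ are on separate factors, this is immediate. Thus the proof is essentially one line: apply \Cref{lem:commuteT} with $\upalpha_i$ replaced by $\upalpha \oplus \upepsilon_i$.

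\begin{proof}[\textbf{Proof of \Cref{cor:commuteT}}]
By \Cref{notn:Talpha} and the remark following it, ${\bf T}_{(\upalpha, \upepsilon_i)}\Emr = \td{\bf T}_{\upalpha \oplus \upepsilon_i}\Emr$ for $i = 1, 2$. Therefore
\[
{\bf T}_{(\upalpha, \upepsilon_1)} {\bf T}_{(\upalpha, \upepsilon_2)} \Emr = \td{\bf T}_{\upalpha \oplus \upepsilon_1} \td{\bf T}_{\upalpha \oplus \upepsilon_2} \Emr,
\qquad
{\bf T}_{(\upalpha, \upepsilon_2)} {\bf T}_{(\upalpha, \upepsilon_1)} \Emr = \td{\bf T}_{\upalpha \oplus \upepsilon_2} \td{\bf T}_{\upalpha \oplus \upepsilon_1} \Emr.
\]
Applying \Cref{lem:commuteT} with $\upalpha_1 := \upalpha \oplus \upepsilon_1$ and $\upalpha_2 := \upalpha \oplus \upepsilon_2$ gives
\[
\td{\bf T}_{\upalpha \oplus \upepsilon_1} \td{\bf T}_{\upalpha \oplus \upepsilon_2} \Emr \cong \td{\bf T}_{\upalpha \oplus \upepsilon_2} \td{\bf T}_{\upalpha \oplus \upepsilon_1} \Emr,
\]
which is the desired isomorphism. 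The underlying homeomorphisms, being induced by interchanging two nested homotopy limits over indexing categories carrying independent $\Hmr$-actions, are $\Hmr$-equivariant and natural in $\Wmr \in \Rep_\Hmr$, and they are compatible with the restriction functors; hence they assemble to an isomorphism of $1$-morphisms in ${\bf Fun}(\Ocal_\Gmr^{\mr{op}}, \Cat)$.
\end{proof}
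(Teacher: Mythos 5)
Your proof is correct and matches the paper's (implicit) reasoning exactly: the corollary is stated without proof immediately after \Cref{lem:commuteT}, and the intended argument is precisely the substitution $\upalpha_i \mapsto \upalpha \oplus \upepsilon_i$ together with the identification ${\bf T}_{(\upalpha,\upepsilon)}\Emr = \td{\bf T}_{\upalpha\oplus\upepsilon}\Emr$ from the remark following \Cref{notn:Talpha}. Your extra comments about $\Hmr$-equivariance of the Fubini swap and compatibility with restriction are a reasonable (and correct) expansion of the "as homotopy limits commute with homotopy limits" step already present in the proof of \Cref{lem:commuteT}.
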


\section{Equivariant layers and homogeneous 
functors} \label{sec:homo}

In order to describe the layers of equivariant Weiss functors, we first need to generalize the notion of direction beyond one dimensional representations.  

\begin{defn} \label{defn:dir} A {\bf direction vector} for $\Gmr$ is an unordered tuple
\[
\vec{v} = \{ \upepsilon_1, \upepsilon_2, \cdots, \upepsilon_n \}
\]
 of one-dimensional representation of $\Gmr$ which are pairwise non-isomorphic. 
We call that $\vec{v}$  the {\bf full direction vector} if every isomorphism class of $1$-dimensional $\Gmr$-representation appears in $\vec{v}$.
\end{defn}

\begin{notn}
Note that the collection of direction vectors admits a partial order under inclusion, hence form a category in which the full direction vector is the terminal object. It is also convenient to include an initial object -- the empty tuple, which we denote by $\vec{0}$.
\end{notn}
\begin{notn} For $\Emr\in \Ecal_{\Gmr,0}$ and a direction vector $\vec{v} = (\upepsilon_1, \upepsilon_2, \cdots, \upepsilon_n)$, let 
${\bf T}_{(\upalpha, \vec{v})}$ denote 
\[
{\bf T}_{(\upalpha, \upepsilon_1)}{\bf T}_{(\upalpha, \upepsilon_2)}\cdots {\bf T}_{(\upalpha, \upepsilon_n)}\Emr
\]
which is well defined upto an isomorphism because of \Cref{cor:commuteT}. When $\vec{v}$ is the  $\vec{0}$ vector, we declare ${\bf T}_{(\upalpha, \vec{v})} := \td{\bf T}_{\upalpha}\Emr$. 
\end{notn}
\begin{notn} \label{notn:universe} Let $\Ucal^{(n)}_{\Gmr}$ denote sub-universe of a complete $\Gmr$-universe spanned by $\Gmr$-representations of dimension less than or equal to $n$. 
\end{notn}
\begin{rmk}
When  $\alpha = {\bf 0}$ and let $\vec{v} = (\upepsilon_1, \upepsilon_2, \cdots, \upepsilon_n)$ be a direction vector for $\Gmr$. Then
\[
({\bf T}_{({\bf 0}, \vec{v})}\Emr)_{\Gmr/\Gmr}(\Wmr) = \Emr_{\Gmr/\Gmr}(\Wmr \oplus \upepsilon_1^{\oplus \infty} \oplus \cdots \oplus \upepsilon_n^{\oplus \infty})
\]
  is not a constant functor 
unless $\Ucal_{\Gmr}^{(1)}$ is the complete universe. 
\end{rmk}

\subsection{Layers of equivariant Weiss functors} \ 

\begin{defn} \label{defn:layer}For $\Gmr$-representation $\upalpha$, we define the {\bf $\upalpha$-th  layer} of  $\Emr \in \Ecal_{\Gmr, {\bf 0}}$ as the functor 
\[ 
\begin{tikzcd}
{\bf L}_{\upalpha}\Emr: \Jfrak_{\Gmr} \rar & \Tfrak_\Gmr
\end{tikzcd}
\]
given by the formula
\[ 
({\bf L}_{\upalpha}\Emr)_{\Gmr/\Hmr}(\Wmr) := {\bf hofiber}\left( ({\bf T}_{(\upalpha, \vec{v})}\Emr)_{\Gmr/\Hmr}(\Wmr) \to (\td{\bf T}_{\upalpha}\Emr)_{\Gmr/\Hmr}(\Wmr) \right),
\]
where $\vec{v}$ is the full direction vector. 
\end{defn}
Suppose $\upepsilon_1$ and $\upepsilon_2$  are nonisomorphic  $1$-dimensional $\Gmr$-representation such that their restriction to a subgroup  $\Hmr \subset \Gmr$ are isomorphic  $\upiota_\Hmr\upepsilon_1 \cong \upiota_\Hmr\upepsilon_2 \cong \upepsilon.$ Then, by \Cref{thm:tdTproperties}, there is a $2$-morphism
\[ 
{\bf T}_{\upiota_\Hmr (\upalpha) \oplus \upepsilon }\upiota_\Hmr\Emr \longrightarrow {\bf T}_{\upiota_\Hmr (\upalpha \oplus \upepsilon_2) }{\bf T}_{\upiota_\Hmr (\upalpha \oplus \upepsilon_1) }\upiota_\Hmr\Emr
\]
which is pointwise equivalence. However, not all $1$-dimensional $\Hmr$ representations may be a restriction of $\Gmr$-representations, and therefore  for any subgroup $\Hmr \subset \Gmr$,  there is a natural map in $\Ecal_{\Hmr, {\bf 0}}$
\begin{equation} \label{map:ell}
\begin{tikzcd}
\ell_{\Hmr, \upalpha}: \upiota_\Hmr{\bf L}_{\upalpha} \Emr \rar &  {\bf L}_{\upiota_\Hmr (\upalpha)} \upiota_\Hmr \Emr 
\end{tikzcd}
 \end{equation}
for any $\Emr \in \Ecal_{\Gmr, {\bf 0}}$ which may not be a pointwise  equivalence in general. However,  it is an equivalence when  $\upiota_\Hmr(\Ucal_\Gmr^{(1:\upalpha)}) = \Ucal_{\Hmr}^{(1:\upiota_\Hmr(\upalpha))}$ (compare \Cref{lem:resderinf}). 

\begin{defn} \label{defn:vpoly}
Let $\vec{v}$ be a direction vector for $\Gmr$. We set 
\[ (\vec{v}: \upalpha) =\{\upalpha \otimes \upepsilon_1 \otimes \dots \otimes \upepsilon_k: \upepsilon_k \in \vec{v} \} \]
and say that $\Emr\in \Ecal_0$ is {\bf $(\upalpha, \vec{v})$-polynomial} if 
\[
\Emr \longrightarrow {\bf T}_{(\upalpha, \vec{v})}\Emr
\]
is a pointwise equivalence restricted to the universe $\Ucal^{(\vec{v}: \upalpha)}_\Gmr$. We say that $\Emr$ is {\bf fully $\upalpha$-polynomial}, if it is $(\upalpha, \vec{v})$-polynomial for the full direction vector $\vec{v}$. \end{defn}
\begin{rmk} We notice  that $\Emr$ is  strongly $\upalpha$-polynomial, if it is $(\upalpha, \vec{v})$-polynomial for the empty direction vector $\vec{v}$.
\end{rmk}

\begin{defn} \label{defn:vhomo} We say a $1$-morphism 
\[ \Emr: \Jfrak_\Gmr \longrightarrow \Tfrak_{\Gmr} \] is  {\bf $(\upalpha, \vec{v})$-homogeneous} if 
\begin{enumerate}
\item $ {\td {\bf T}}_{\upalpha}\Emr$ is pointwise contractible, and 
\item   $\Emr$ is a fully $\upalpha$-polynomial.
\end{enumerate}
We simply say $\Emr$ is {\bf $\upalpha$-homogeneous} when  $\vec{v}$ is the full vector. 
\end{defn}
\begin{rmk}
Note that restriction of an $\upalpha$-homogeneous functor to $\Hmr$ may not be  an $\upiota_{\Hmr}(\upalpha)$-homogeneous  unless  $\upiota_\Hmr(\Ucal_\Gmr^{(1:\upalpha)}) = \Ucal_{\Hmr}^{(1:\upiota_\Hmr(\upalpha))}$. 
\end{rmk}
It is straightforward to check that ${\bf L}_{\upalpha}\Emr$ is  $\upalpha$-homogenous  for all $\Gmr$-representation $\upalpha$ and for all $\Emr \in \Ecal_{\Gmr, {\bf 0}}$. In \Cref{thm:keyex} we will show that  an $\upalpha$-system of $\Gmr$-spectra (as in \Cref{defn:alphasystem}) leads $\upalpha$-homogeneous functors which is crucial to \Cref{main3}.

The next set of results in this paper is sensitive to our choice of universe, and therefore we establish the following notations. 
\begin{notn} Given a set of $\Gmr$-representations $\sfR$, we may define a functor  
\[ 
\begin{tikzcd}
\Jfrak_{\sfR}: \Ocal^{\mr{op}}_{\Gmr} \rar & \Cat 
\end{tikzcd}
\]
which sends $\Gmr/\Hmr$ to the full sub category of $\Rep_\Hmr$ generated by sub-representations of $\upiota_{\Hmr}(\Ucal_{\Gmr}^{(\Rmr)})$. Given any $\Gmr$-representation $\upalpha$ within $\Ucal^{(\sfR)}_\Gmr$,   one may construct  analogous presheaf $\Jfrak_{\Rmr}^{\upalpha}$ by declaring the space of morphisms as Thom spaces of the $\Gmr$-equivariant bundles defined in  \eqref{bundlealpha}. We let $\Ecal_{\sfR, \upalpha}$ denote the category of $1$-morphisms from $\Jfrak_{\Rmr}^{\upalpha} \longrightarrow \Tfrak_\Gmr$, and define 
\begin{itemize}
\item $(- )^{(\upalpha)}$ -- the derivative functor,
\item ${\bf T}_{\upalpha}(-)$ -- the $\upalpha$-th Taylor approximation functor, 
\item ${\bf L}_{\upalpha}(-)$ -- the $\upalpha$-th layer  functor, 
\item ${\bf R}^{\upalpha}_{\bf 0}(-)$ -- the restriction functor 
\item ${\bf I}^{\upalpha}_{\bf 0}(-)$ -- the induction functor 
\end{itemize}
internal to the universe $\Ucal^{(\sfR)}_{\Gmr}$.  The above constructions behave well under restrictions of universes because the inclusion of  $\Jfrak_{\Rmr}^{\upalpha} \hookrightarrow \Jfrak_{\Rmr'}^{\upalpha}$ is full and faithful  for all $\sfR \subset \sfR'$.  Therefore, we use the same notations regardless of the underlying universe. 
\end{notn}

\subsection{Equivariant stable symmetric Weiss functors}

\begin{defn} Let $\upalpha, \upepsilon \in \Rep_\Gmr$ such that $|\upepsilon| = 1$. We call $\Fmr \in \Ecal_{\Gmr, \upalpha}$  a {\bf stable object} if the map 
\begin{equation} \label{mapadjoint}
\begin{tikzcd}
 \upnu_n:\Fmr_{\Gmr/\Hmr}(\Umr) \rar & \Omega^{\upiota_\Hmr(\upalpha \otimes \upepsilon)}\Fmr_{\Gmr/\Hmr}(\Umr \oplus \upiota_\Hmr(\upepsilon))
\end{tikzcd}
\end{equation}
adjoint to  \eqref{map:UV} is a weak equivalence for all $ \Umr  \underset{\mr{finite}}\subset \Ucal_{\Hmr}$. 
\end{defn}
\begin{defn}We  will call $\Fmr \in \Ecal_{\sfR, \upalpha}$ a {\bf symmetric object} if $\Omr(\upalpha)$ acts on $\Fmr$. 
\end{defn}
\begin{notn} \label{notn:symE} Let $\Ecal_{\Gmr, \upalpha}^{\Sigma}$ denote the full sub-category of $\Ecal_{\Gmr, \upalpha}$ consisting of stable symmetric objects. 
\end{notn}
In \Cref{rmk:DerStabSym}, we show that the $\upalpha$-th derivative $\Emr^{(\upalpha)}$ for any $\Emr \in \Ecal_{\Gmr, {\bf 0}}$ is an example of a symmetric object. By \Cref{cor:sc1cor}, $\Emr^{(\upalpha)}$ is stable precisely when  $(\upalpha \oplus \upepsilon)$-th derivative of $\Emr$ vanishes for all $1$-dimensional $\Gmr$-representation to $\upepsilon$. A straightforward consequence of these observations is the following lemma. 
\begin{lem} \label{lem:stable} For any $\Emr \in \Ecal_{\Gmr, {\bf 0}}$, the $\upalpha$-th derivative of its $\upalpha$-th Taylor approximation $({\bf T}_{\upalpha}\Emr)^{(\upalpha)}$ is stable symmetric in $\Ecal_{\Gmr, \upalpha}$. 
\end{lem}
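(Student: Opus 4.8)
\textbf{Proof proposal for \Cref{lem:stable}.}

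The plan is to deduce this directly from \Cref{rmk:DerStabSym} and \Cref{cor:sc1cor} together with the polynomiality of Taylor approximations established in \Cref{cor:Taylorpoly}. First, set $\Fmr := {\bf T}_{\upalpha}\Emr$; strictly speaking the relevant Taylor approximation in this section is ${\bf T}_{(\upalpha, \vec v)}\Emr$ (or $\td{\bf T}_{\upalpha}\Emr$ in the notation of \Cref{notn:Talpha}, depending on the direction under consideration), but the argument is insensitive to which one we take, so I will write ${\bf T}_\upalpha\Emr$ for brevity. The symmetric part is immediate: by \Cref{rmk:DerStabSym}, the $\upalpha$-th derivative of \emph{any} object of $\Ecal_{\Gmr, {\bf 0}}$ is a symmetric object, i.e.\ carries a na\"ive $\Omr(\upalpha)$-action compatible with the structure maps, because $\Jfrak_\Gmr$ has trivial $\Omr(\upalpha)$-action and $(-)^{(\upalpha)}$ is defined by right Kan extension along $\upzeta^\upalpha$. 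In particular $({\bf T}_\upalpha\Emr)^{(\upalpha)}$ is symmetric.

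For stability, the key input is \Cref{cor:sc1cor}: for each $\Hmr \leq \Gmr$, each $\Umr \in \Rep_\Hmr$, and each $1$-dimensional $\Gmr$-representation $\upepsilon$, there is a natural fiber sequence
\[
({\bf T}_\upalpha\Emr)^{(\upalpha \oplus \upepsilon)}_{\Gmr/\Hmr}(\Umr) \longrightarrow ({\bf T}_\upalpha\Emr)^{(\upalpha)}_{\Gmr/\Hmr}(\Umr) \longrightarrow \Omega^{\upiota_\Hmr(\upalpha \otimes \upepsilon)}({\bf T}_\upalpha\Emr)^{(\upalpha)}_{\Gmr/\Hmr}(\Umr \oplus \upepsilon),
\]
whose second map is precisely the map $\upnu$ adjoint to the structure map \eqref{map:UV} (tracing through the identification \eqref{eqn:Yoneda} used in the proof of \Cref{cor:sc1cor}). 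So $({\bf T}_\upalpha\Emr)^{(\upalpha)}$ is a stable object if and only if its fiber, $({\bf T}_\upalpha\Emr)^{(\upalpha \oplus \upepsilon)}$, is pointwise contractible for all such $\upepsilon$. Now ${\bf T}_\upalpha\Emr$ is $\upalpha$-polynomial in the direction of $\upepsilon$ by \Cref{cor:Taylorpoly} — equivalently strongly $(\upalpha \oplus \upepsilon)$-polynomial by \Cref{defn:polynomial} — so \Cref{thm:derpoly} (applied with the representation $\upalpha \oplus \upepsilon$ in the role of ``$\upalpha$'' there, and $\upalpha' = {\bf 0}$) gives $({\bf T}_\upalpha\Emr)^{(\upalpha \oplus \upepsilon)}_{\Gmr/\Hmr}(\Umr) \simeq \ast$ for all $\Hmr$ and $\Umr$. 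Hence $\upnu$ is a weak equivalence, i.e.\ $({\bf T}_\upalpha\Emr)^{(\upalpha)}$ is stable, and combined with the symmetry above it lies in $\Ecal^{\Sigma}_{\Gmr, \upalpha}$.

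The only delicate point — and the step I would be most careful about — is matching the abstract stability map $\upnu_n$ of \eqref{mapadjoint} with the second map in the \Cref{cor:sc1cor} fiber sequence, since the latter arises by applying $\Ncal_\upalpha(-, \Emr^{(\upalpha)})$ to the Stiefel cofiber sequence of \Cref{thm:SC1} and then invoking Yoneda; one must check the identification is the adjoint of the pairing \eqref{map:UV} and not some twist of it. This is a diagram-chase using naturality of \eqref{eqn:Yoneda} and the explicit description of the composition pairing \eqref{eqn:compJ}, entirely parallel to the non-equivariant verification in \cite[\S3--5]{WeissCalc}, so no genuinely new difficulty arises. One should also note that \Cref{thm:derpoly} applies verbatim with the direction representation absorbed into the polynomiality degree, which is exactly what \Cref{defn:polynomial} is set up to allow.
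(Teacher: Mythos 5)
Your proposal matches the paper's (implicit) argument exactly: the paper proves this lemma in the two sentences immediately preceding its statement, invoking \Cref{rmk:DerStabSym} for the symmetric structure and \Cref{cor:sc1cor} to reduce stability to the vanishing of the $(\upalpha\oplus\upepsilon)$-derivative, which is then supplied by the polynomiality of the Taylor approximation together with \Cref{thm:derpoly}, precisely as you lay out. Your caution about identifying the abstract stabilization map $\upnu$ with the connecting map in the \Cref{cor:sc1cor} fiber sequence is well placed (the paper silently assumes this), and you are also right that ${\bf T}_{\upalpha}$ here should be read as $\td{\bf T}_{\upalpha}$ or ${\bf T}_{(\upalpha,\vec v)}$ with $\vec v$ the full direction vector, so that the $(\upalpha\oplus\upepsilon)$-derivative vanishes for \emph{every} one-dimensional $\upepsilon$, not merely a single preferred direction.
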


\begin{notn} \label{notn:pound} Associated to $\Xmr \in \Sp^{\Gmr}_{(\sfR)}$, we construct a spectra $\Xmr^{\#}$ whose $\Vmr$-th space is 
\[ \Xmr^{\#}_\Vmr  := \Omega^{\infty}_\sfR(\Smr^{\Vmr} \sma \Xmr)  \]
for any $\Vmr \in \Ucal_{\Gmr}^{(\sfR)}.$ Note that there is a natural inclusion map 
\begin{equation} \label{incVsp}
\begin{tikzcd}
\Xmr_{\Vmr} \rar[hook] & \Omega^{\Vmr}(\Smr^{\Vmr} \sma \Xmr) \rar[hook] &  \Omega^{\infty}_\sfR(\Smr^{\Vmr} \sma \Xmr) = \Xmr^{\#}_\Vmr
\end{tikzcd}
\end{equation}
is a weak equivalence. Therefore, $\Xmr$ and $\Xmr^{\#} $ are equivalent spectrum.
\end{notn}
\begin{rmk} \label{rmk:Oequiv}
 When we apply the construction of \Cref{notn:pound} to ${\bf \Theta}(\Emr^{(\upalpha)})(\Umr)$ (c.f. \Cref{defn:der-at-infty}), the $\Wmr$-th term of the resultant spectrum 
\[
 \Omega^{\infty}_\sfR(\Smr^{\Wmr} \sma {\bf \Theta}(\Emr^{(\upalpha)})(\Umr)) := \underset{\Vmr' \in \Ucal_{\Gmr}^{(1)}}\hocolim \ \Omega^{\upalpha \otimes \Vmr'}_{\sfR}(\Smr^{\Wmr} \sma {\bf \Theta}(\Emr^{(\upalpha)})(\Umr))_{\Vmr'})
\]
also admits an action of ${\sf O}(\upalpha)$ (see  \Cref{rmk:DerStabSym}) and the inclusion map \eqref{incVsp} preserves the ${\sf O}(\upalpha)$-action. 
\end{rmk}

\begin{thm}[{\bf Classification of stable symmetric objects}] \label{thm:stablesymmetric} Let ${\bf Sys}^{\upalpha}$ denote the category of $\upalpha$-systems in $\Ucal_{\Gmr}$. Then there is  an equivalence of  categories between  $\Ecal_{\Gmr, \upalpha}^\Sigma$ and systems in ${\bf Sys}^{\upalpha}$   with  n\"aive  ${\sf O}(\upalpha)$-actions.
\end{thm}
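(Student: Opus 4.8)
The plan is to establish the equivalence of categories by constructing an adjoint pair of functors and showing that unit and counit are natural isomorphisms (up to equivalence), following the nonequivariant blueprint of \cite[Example 5.7 and Theorem 7.3]{WeissCalc} but keeping careful track of the subgroup-indexed structure and the universe $\Ucal_\Gmr^{\upiota_\Hmr(1:\upalpha)}$. In one direction, given a stable symmetric object $\Fmr \in \Ecal_{\Gmr, \upalpha}^\Sigma$, I would send it to the collection ${\bf \Theta}(\Fmr) = \bigcup_{\Hmr} \{ {\bf \Theta}\Fmr(\Umr) : \Umr \subset_{\mr{finite}} \Ucal_\Hmr \}$ constructed exactly as in \eqref{eqn:system}: the spectrum ${\bf \Theta}\Fmr(\Umr)$ is obtained by spectrifying the prespectrum whose $(\upalpha \otimes \Wmr)$-th space is $\Fmr_{\Gmr/\Hmr}(\Umr \oplus \Wmr)$ for finite $\Wmr \subset \Ucal_\Hmr^{\upiota_\Hmr(1:\upalpha)}$. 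The structure maps $f_{\Umr, \Vmr}$, the equivalence condition when $\Umr^\perp \subset \Ucal_\Gmr^{\upiota_\Hmr(1:\upalpha)}$, and the restriction compatibility all come for free: the equivalence condition is precisely stability of $\Fmr$ (using \Cref{thm:SC1} and the iterated form of the structure map in \eqref{map:UV}), and the $\Omr(\upalpha)$-action on $\Fmr$ induces the naive $\Omr(\upalpha)$-action on the system as in \Cref{rmk:DerStabSym}. So ${\bf \Theta}$ is a functor $\Ecal_{\Gmr, \upalpha}^\Sigma \to {\bf Sys}^\upalpha$ landing in objects with naive $\Omr(\upalpha)$-action.

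In the reverse direction, given an $\upalpha$-system $\upchi$ with naive $\Omr(\upalpha)$-action, I would build a stable symmetric Weiss functor $\Fmr_\upchi \in \Ecal_{\Gmr, \upalpha}$ via the ``$\#$''-construction of \Cref{notn:pound}: set $(\Fmr_\upchi)_{\Gmr/\Hmr}(\Umr) := \Omega^\infty_{\upiota_\Hmr(1:\upalpha)}\!\big(\Smr^{\upiota_\Hmr(\upalpha)\otimes\Umr} \sma \upchi(\Umr')\big)$ for an appropriate reference $\Umr'$ — more precisely, one takes a homotopy colimit over the finite subspaces of $\Ucal_\Hmr^{\upiota_\Hmr(1:\upalpha)}$ threading the structure maps $f_{\Umr',\Umr''}$, so that the value depends only on the ``stable'' data of the system. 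The pairing $\Mor_{\Hmr,\upalpha}(\Umr,\Umr\oplus\Vmr)\sma (\Fmr_\upchi)_{\Gmr/\Hmr}(\Umr) \to (\Fmr_\upchi)_{\Gmr/\Hmr}(\Umr\oplus\Vmr)$ needed to make $\Fmr_\upchi$ an object of $\Ecal_{\Gmr,\upalpha}$ is induced from the composition \eqref{eqn:compJ} together with the structure maps of $\upchi$; the naive $\Omr(\upalpha)$-action on $\upchi$ makes $\Fmr_\upchi$ symmetric, and stability is immediate because $\Omega^\infty$ of a suspension spectrum is already a stable object. The restriction-compatibility clause in \Cref{defn:alphasystem} guarantees that these assemble into a genuine $1$-morphism in ${\bf Fun}(\Ocal_\Gmr^{\mr{op}},\Cat)$.

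The two functors are then checked to be mutually inverse up to natural equivalence. For $\upchi \mapsto \Fmr_\upchi \mapsto {\bf \Theta}(\Fmr_\upchi)$: the $(\upalpha\otimes\Wmr)$-th space of ${\bf \Theta}\Fmr_\upchi(\Umr)$ is $(\Fmr_\upchi)_{\Gmr/\Hmr}(\Umr\oplus\Wmr) = \Omega^\infty(\Smr^{?}\sma\upchi(\dots))$, and spectrifying recovers $\upchi(\Umr)$ on the nose up to the weak equivalence \eqref{incVsp}; here I use that $\upchi(\Umr)$ is already an $\Omega$-spectrum in its universe. For $\Fmr \mapsto {\bf \Theta}(\Fmr) \mapsto \Fmr_{{\bf \Theta}(\Fmr)}$: this is where stability is used essentially — the natural map $\Fmr_{\Gmr/\Hmr}(\Umr) \to \Omega^\infty(\Smr^{\upiota_\Hmr(\upalpha)\otimes\Wmr}\sma{\bf \Theta}\Fmr(\Umr\oplus\Wmr))$ from iterating \eqref{map:UV}/\eqref{mapadjoint} is a weak equivalence precisely because $\upnu_n$ is, and passing to the homotopy colimit over $\Wmr \subset \Ucal_\Hmr^{\upiota_\Hmr(1:\upalpha)}$ preserves this equivalence. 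The main obstacle I anticipate is bookkeeping rather than conceptual: ensuring that all of the constructions are natural in the orbit variable $\Gmr/\Hmr$ simultaneously with naturality in $\Umr$, i.e.\ that the change-of-universe issues flagged in \eqref{map-to-genuine} and \Cref{lem:resderinf} do not obstruct the equivalence (they do not, because both sides of the desired equivalence are defined internally to the same family of universes $\Ucal_\Hmr^{\upiota_\Hmr(1:\upalpha)}$), and that the naive $\Omr(\upalpha)$-equivariance is preserved throughout — the $\Omr(\upalpha)$-action interacts with the Thom-space composition \eqref{eqn:compJ} exactly as in \cite[$\mathsection$7]{WeissCalc}, so the argument there transplants once the equivariant Stiefel combinatorics of \Cref{thm:SC1} are in place.
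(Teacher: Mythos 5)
Your forward functor matches the paper's: ${\bf \Theta}^\#$ assigns to a stable symmetric object $\Fmr$ the system $\{{\bf \Theta}\Fmr(\Umr)^\#\}$, and stability of $\Fmr$ is exactly what makes the system satisfy the equivalence clause in \Cref{defn:alphasystem}. The gap is in your reverse functor. Writing $(\Fmr_\upchi)_{\Gmr/\Hmr}(\Umr) := \Omega^\infty_{\upiota_\Hmr(1:\upalpha)}(\Smr^{\upiota_\Hmr(\upalpha)\otimes\Umr}\sma\upchi(\Umr'))$ ``for an appropriate reference $\Umr'$'' and then patching with a homotopy colimit over subspaces of $\Ucal_\Hmr^{\upiota_\Hmr(1:\upalpha)}$ does not produce the right object. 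The missing idea is the decomposition $\Umr = \Umr_\upalpha \oplus \Umr_\upalpha^\perp$ with $\Umr_\upalpha := \Umr \cap \Ucal_\Hmr^{\upiota_\Hmr(1:\upalpha)}$, which leads to the paper's formula
\[
{\bf \Phi}(\upchi)_{\Gmr/\Hmr}(\Umr) := \Omega^\infty_{\upiota_\Hmr(1:\upalpha)}\bigl(\Smr^{\upiota_\Hmr(\upalpha)\otimes\Umr_\upalpha} \sma \upchi(\Umr_\upalpha^\perp)^\#\bigr).
\]
This is not a cosmetic change. The part $\Umr_\upalpha^\perp$ lives outside the $(1:\upalpha)$-universe, so by the second clause of \Cref{defn:alphasystem} the structure map $f_{{\bf 0},\Umr_\upalpha^\perp}$ is \emph{not} an equivalence in general; thus $\upchi(\Umr_\upalpha^\perp)$ genuinely cannot be replaced by $\Smr^{\upiota_\Hmr(\upalpha)\otimes\Umr_\upalpha^\perp}\sma\upchi({\bf 0})$, and your formula with $\Smr^{\upiota_\Hmr(\upalpha)\otimes\Umr}$ conflates the two. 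Equivalently, your proposed hocolim over finite $\Umr' \subset \Ucal_\Hmr^{\upiota_\Hmr(1:\upalpha)}$ threads only the structure maps that are already equivalences, so it cannot ``reach'' $\Umr_\upalpha^\perp$ at all. The index variable of the system that survives is precisely $\Umr_\upalpha^\perp$, and the suspension variable is $\Umr_\upalpha$; your write-up never separates them.

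Two smaller points. First, in your round-trip check $\upchi \mapsto \Fmr_\upchi \mapsto {\bf \Theta}(\Fmr_\upchi)$ you assert that ``$\upchi(\Umr)$ is already an $\Omega$-spectrum in its universe,'' but this is not part of \Cref{defn:alphasystem}; this is exactly why the paper passes through the $\#$-replacement of \Cref{notn:pound} and compares with ${\bf \Theta}^\#$ rather than ${\bf \Theta}$, obtaining $({\bf \Theta}^\#\circ{\bf \Phi})(\upchi)(\Umr) = \upchi(\Umr)^{\#\#} \simeq \upchi(\Umr)$. Second, for the round-trip $\Fmr \mapsto {\bf \Theta}^\#\Fmr \mapsto {\bf \Phi}({\bf \Theta}^\#\Fmr)$ the paper does not get a direct map but a zigzag through an explicit intermediate object $\widetilde{\Fmr}$ defined by a double hocolim over $\Vmr' \in \Ucal_\Gmr^{(1:\upalpha)}$ and $\Vmr'' \in \Ucal_\Gmr^{(1)}$; your appeal to ``passing to the homotopy colimit preserves this equivalence'' is the right idea but leaves out this explicit mediating construction, which is where the care about the $(1:\upalpha)$-universe versus the $(1)$-universe is actually cashed out.
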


\begin{proof} Let $\wh{{\bf Sys}}^{\upalpha}$ denote the full sub-category of ${\bf Sys}^{\upalpha}$ generated by the ones  with n\"aive action of $\Omr(\upalpha)$. Let 
\begin{equation} \label{thetapound}
\begin{tikzcd}
{\bf \Theta}^\#: \Ecal^{\Sigma}_{\Gmr, \upalpha} \rar & \wh{{\bf Sys}}^{\upalpha}, 
\end{tikzcd}
\end{equation}
be the functor that assigns  $\Fmr  \in \Ecal_{\Gmr, \upalpha}^\Sigma$,  to its system $\{ {\bf \Theta} \Fmr(\Umr)
^\#: \Umr \in \Ucal_\Gmr \}$ (see \eqref{eqn:system}). Now we define the inverse 
 \begin{equation} \label{eqn:phi}
 \begin{tikzcd}
 {\bf \Phi} : \wh{{\bf Sys}}^{\upalpha} \rar &  \Ecal^{\Sigma}_{\Gmr, \upalpha}  
 \end{tikzcd}
 \end{equation}
by setting 
\[ 
{\bf \Phi}(\upchi)_{\Gmr/\Hmr}(\Umr) =  \Omega^{\infty}_{\upiota_\Hmr(1:\upalpha)} \left(\Smr^{\upiota_\Hmr( \upalpha) \otimes \Umr_{\upalpha}}  \sma \upchi(\Umr_{\upalpha}^{\perp})^\# \right), 
 \]
 where $\Umr_\upalpha = \Umr \cap \Ucal_{\Hmr}^{\upiota_{\Hmr}(1:\upalpha)}$, for all $ \Umr \subset \Ucal_{\Hmr}$. We let $\Omr(\upalpha)$ act diagonally utilizing its  action on both $\upalpha$ and ${\bf \Theta}$. 

Note that 
\[ ( {\bf \Theta}^\# \circ {\bf \Phi}) (\upchi)(\Umr)  = {\upchi}(\Umr)^{\#\#},  \] 
and therefore $\upchi \simeq ( {\bf \Theta}^\# \circ {\bf \Phi}) (\upchi)$ via an $\Omr(\upalpha)$-equivariant map (see \Cref{rmk:Oequiv}).  On the other hand 
\begin{eqnarray*}
( {\bf \Theta}^\# \circ {\bf \Phi})(\Fmr)(\Umr) &=& \Omega^{\infty}_{(1:\upalpha)} \left(\Smr^{ \upalpha \otimes \Umr_\upalpha} \sma {\bf \Theta} \Fmr(\Umr_{\upalpha}^{\perp}) \right)  \\
 &=&  \underset{\substack{ \Vmr' \in \Ucal_{\Gmr}^{(1:\upalpha)} \\ \Vmr'' \in \Ucal_\Gmr^{(1)} }}\hocolim \text{ } \Omega^{ \Vmr'}(\Smr^{\upalpha\otimes \Umr_\upalpha} \sma \Omega^{\upalpha\otimes \Vmr''}(\Smr^{\Vmr'} \sma \Fmr(\Vmr'' \oplus \Umr_{\upalpha}^\perp)))
 \end{eqnarray*}
 for all $\Vmr \in \Ucal_\Gmr^{(1:\upalpha)}$. 
Now define 
\[ \widetilde{\Fmr}(\Umr) =  \underset{\substack{ \Vmr' \in \Ucal_{\Gmr}^{(1:\upalpha)} \\ \Vmr'' \in \Ucal_\Gmr^{(1)} }}\hocolim \text{ } \Omega^{ \Vmr' \oplus \upalpha\otimes \Vmr''}(\Smr^{ \Vmr'} \sma  \Fmr( \Vmr'' \oplus \Umr))\]
and observe that we have a zigzag of maps 
\[ 
\begin{tikzcd}
\Fmr \rar[ "\simeq"] & \widetilde{\Fmr} & \lar["\simeq"']  ( {\bf \Theta}^\# \circ {\bf \Phi})(\Fmr)
\end{tikzcd}
 \]
 which are pointwise equivalences. 
\end{proof}

\subsection{Classification of homogeneous Weiss functors.} \  

We first establish a few supporting results (essentially equivariant versions of the some of the results needed in \cite{WeissCalc}) before indulging with \Cref{thm:classification},  which is the main result of this subsection, and required in the proof of \Cref{main3}. 

\begin{defn} \label{defn:conn}
We say that an object $\Emr \in \Ecal_{\Gmr, {\bf 0}}$ is {\bf connected at infinity} if 
\[
\underset{\Vmr \in \Ucal_{\Gmr}^{(1)}}{\bf hocolim} \ \Emr_{\Gmr/\Hmr}( \Umr \oplus \upiota_\Hmr(\Vmr))
\]
is connected as a $\Hmr$-space for all $\Umr \in \Ucal_{\Hmr}$. 
\end{defn}

\begin{lem} \label{lem:Weiss5.10}
Let $f:  \Emr\longrightarrow \Fmr$ in $\Ecal_{\Gmr,{\bf  0}}$ be such that the homotopy fiber of $f$ 
is pointwise contractible.  If $\Emr$ and $\Fmr$ are $(\upalpha, \vec{v})$-polynomial and  $\Fmr$ is connected at infinity then $f$ is an equivalence.
\end{lem}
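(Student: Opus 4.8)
\textbf{Proof proposal for \Cref{lem:Weiss5.10}.}

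The plan is to follow the strategy of \cite[Lemma 5.10]{WeissCalc}, upgraded to the equivariant setting. The key point is that $(\upalpha,\vec v)$-polynomiality gives us control only after passing to the universe $\Ucal^{(\vec v:\upalpha)}_\Gmr$, so we cannot argue pointwise on all of $\Jfrak_\Gmr$ directly; instead I would argue that the homotopy fiber $\Dmr$ of $f$ is itself $(\upalpha,\vec v)$-polynomial (this uses that $\uptau_{\upalpha\oplus\upepsilon}$ is built from homotopy limits, which preserve homotopy fibers, together with \Cref{cor:commuteT} to iterate over the entries of $\vec v$), so that the map $\Dmr \to {\bf T}_{(\upalpha,\vec v)}\Dmr$ is a pointwise equivalence on $\Ucal^{(\vec v:\upalpha)}_\Gmr$. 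Since $\Dmr$ is pointwise contractible by hypothesis, ${\bf T}_{(\upalpha,\vec v)}\Dmr$ is pointwise contractible on that universe as well. The crux is then to bootstrap this back to a statement about $\Emr$ and $\Fmr$ using the connectivity-at-infinity hypothesis on $\Fmr$.

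First I would set up the long exact sequence of homotopy groups associated to the fiber sequence $\Dmr_{\Gmr/\Hmr}(\Wmr) \to \Emr_{\Gmr/\Hmr}(\Wmr) \to \Fmr_{\Gmr/\Hmr}(\Wmr)$, for each subgroup $\Hmr \subset \Gmr$ and each $\Wmr \in \Rep_\Hmr$. Because $\Dmr$ is pointwise contractible, $f$ is already a pointwise weak equivalence in all positive degrees and on $\pi_0$ modulo the $\pi_1$-action; the only possible failure is at the level of $\pi_0$, i.e.\ surjectivity/injectivity of $\pi_0\Emr_{\Gmr/\Hmr}(\Wmr)\to\pi_0\Fmr_{\Gmr/\Hmr}(\Wmr)$ and triviality of the $\pi_1$-action. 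This is exactly the place where ``connected at infinity'' enters: by \Cref{defn:conn}, $\hocolim_{\Vmr}\Fmr_{\Gmr/\Hmr}(\Wmr\oplus\upiota_\Hmr\Vmr)$ is $\Hmr$-connected, and the analogous homotopy colimit for $\Emr$ is then forced to be connected too, since the homotopy fiber of the map between these homotopy colimits is $\hocolim_\Vmr \Dmr_{\Gmr/\Hmr}(\Wmr\oplus\upiota_\Hmr\Vmr)\simeq \ast$.

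Next I would exploit $(\upalpha,\vec v)$-polynomiality to identify $\Emr_{\Gmr/\Hmr}(\Wmr)$ and $\Fmr_{\Gmr/\Hmr}(\Wmr)$ with the values of ${\bf T}_{(\upalpha,\vec v)}\Emr$ and ${\bf T}_{(\upalpha,\vec v)}\Fmr$ at $\Wmr$, at least for $\Wmr$ inside the universe $\Ucal^{(\vec v:\upalpha)}_\Gmr$. For the full direction vector this universe is all of $\Ucal_\Gmr$ when $\Ucal^{(1)}_\Gmr$ is complete, but in general one must be a little careful; the key structural input is that ${\bf T}_{(\upalpha,\vec v)}(-)$ is a filtered homotopy colimit of iterated $\uptau$'s, so a value ${\bf T}_{(\upalpha,\vec v)}\Fmr_{\Gmr/\Hmr}(\Wmr)$ receives a map from $\Fmr_{\Gmr/\Hmr}(\Wmr\oplus \text{(large sum of the }\upepsilon_i\text{)})$, which after passing to the colimit is precisely the connected-at-infinity expression. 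Combining this with the previous paragraph, $\pi_0$ of ${\bf T}_{(\upalpha,\vec v)}\Fmr_{\Gmr/\Hmr}(\Wmr)$ is trivial and hence so is $\pi_0\Fmr_{\Gmr/\Hmr}(\Wmr)$; the same then follows for $\Emr$, and a diagram chase in the long exact sequence shows $f$ induces an isomorphism on $\pi_0$ and kills the relevant $\pi_1$-actions. Therefore $f$ is a pointwise equivalence, i.e.\ an equivalence in $\Ecal_{\Gmr,{\bf 0}}$.

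The main obstacle, I expect, is the bookkeeping around universes: one has to be sure that ``$(\upalpha,\vec v)$-polynomial'' really lets one replace $\Emr$ by ${\bf T}_{(\upalpha,\vec v)}\Emr$ at the specific representations $\Wmr$ one cares about, and that the hofiber $\Dmr$ inherits $(\upalpha,\vec v)$-polynomiality in a way compatible with \Cref{cor:commuteT}. The connectivity argument itself is formally the same as Weiss's once one notes, via \Cref{rmk:Kfix}, that $\Hmr$-equivariant connectivity can be checked on fixed points for all $\Kmr\subset\Hmr$, and that homotopy fibers and filtered homotopy colimits interact well with fixed points; so the genuinely new work is confined to verifying these universe-compatibility statements and to checking that ``connected at infinity'' for $\Fmr$ transfers to $\Emr$ through the contractible-fiber hypothesis.
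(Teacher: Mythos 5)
Your plan is in the same spirit as the paper's proof---the authors also say ``this is primarily a basepoint issue'' and resolve it with the connected-at-infinity hypothesis---but the way you execute the $\pi_0$ step departs from what they actually do, and that departure creates a gap.

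The paper's argument is short: let $\Fmr_{\sfb}$ be the basepoint-component subfunctor of $\Fmr$; connected-at-infinity is used to show that $\mathbf{T}_{(\upalpha,\upepsilon)}\Fmr_{\sfb}\longrightarrow \mathbf{T}_{(\upalpha,\upepsilon)}\Fmr$ is a pointwise equivalence, and then the result is the chain $\Emr\simeq\mathbf{T}\Emr\simeq\mathbf{T}\Emr_{\sfb}\simeq\mathbf{T}\Fmr_{\sfb}\simeq\mathbf{T}\Fmr\simeq\Fmr$ (the middle equivalence from the contractible-hofiber hypothesis applied to $\Emr_{\sfb}\to\Fmr_{\sfb}$, which are connected). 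Note that this argument never asserts that $\Fmr_{\Gmr/\Hmr}(\Wmr)$ itself is connected. Your version instead aims to show $\pi_0\Fmr_{\Gmr/\Hmr}(\Wmr)=*$ outright, from the assertion that $\mathbf{T}_{(\upalpha,\vec v)}\Fmr(\Wmr)$ ``receives a map from'' $\Fmr$ evaluated at large representations. That inference is not valid on its own: $\uptau_{\upalpha\oplus\upepsilon}$ is a homotopy limit over $\Vscr^{\circ}$, and homotopy limits of connected spaces can acquire new path components (a $\lim^1$ of $\pi_1$'s). Receiving a map from a connected space does not kill those components, and taking $\hocolim$ over $n$ in the definition of $\mathbf{T}$ computes $\colim\pi_0$, which need not be a point. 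So ``$\pi_0\Fmr(\Wmr)$ is trivial, hence likewise for $\Emr$'' is not justified as stated, and in fact the lemma does not require it.

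Separately, your opening paragraph---establishing that the hofiber $\Dmr$ is $(\upalpha,\vec v)$-polynomial so that $\mathbf{T}_{(\upalpha,\vec v)}\Dmr$ is contractible---is true but tangential: $\mathbf{T}$ preserves pointwise equivalences regardless of polynomiality, so $\mathbf{T}\Dmr\simeq *$ already follows from $\Dmr\simeq *$, and this observation is not used later. (Also, the citation of \Cref{cor:commuteT} for ``$\uptau$ preserves fibers'' is misdirected; that corollary is about commuting $\mathbf{T}_{(\upalpha,\upepsilon_1)}$ with $\mathbf{T}_{(\upalpha,\upepsilon_2)}$.) The observation you do make that carries over correctly is that $\Emr$ inherits connected-at-infinity from $\Fmr$ via the contractible-hofiber hypothesis, since $\hocolim$ preserves fiber sequences here and the $\pi_0$ long exact sequence at the colimit then forces $\pi_0(\hocolim_\Vmr\Emr)=*$; that step is needed and you have it right. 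The fix to your proof is to drop the claim $\pi_0\Fmr=*$, work with the subfunctor of basepoint components $\Fmr_\sfb$ as the paper does, show $\mathbf{T}\Fmr_\sfb\to\mathbf{T}\Fmr$ (and likewise for $\Emr$) is a pointwise equivalence, and then chain the equivalences.
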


\begin{proof} This is primarily a basepoint issue. Let $(\Fmr_{{\sf b}})_{\Gmr/\Hmr}(\Vmr)$ denote the base point component of  $\Fmr_{\Gmr/\Hmr}(\Vmr)$ for all finite  $\Hmr$-representation $\Vmr$ and $\Hmr$ subgroup of $\Gmr$.  If $\Fmr$ is connected at infinity then
\[ 
\begin{tikzcd}
{\bf T}_{(\upalpha, \upepsilon)}\Fmr_{\sf b}  = \underset{k} {\bf hocolim} \ (\uptau_{\upalpha\oplus \upepsilon}^{(k)}\Fmr_{\sfb}) \rar & {\bf T}_{(\upalpha , \upepsilon)}\Fmr  = \underset{k} {\bf hocolim} \ (\uptau_{\upalpha\oplus \upepsilon}^{(k)}\Fmr)
\end{tikzcd}
 \] 
 is a pointwise equivalence for all subspace  $\upepsilon \subset \Ucal_{\Gmr}$ such that $|\upepsilon| =1$. Then the result follows from our assumption.
\end{proof}
\begin{lem} \label{lem:fiberpoly} Suppose $f: \Emr \longrightarrow \Fmr \in \Ecal_{\Gmr,  {\bf 0}}$ such that 
\begin{itemize}
\item  $\Emr  \overset{\simeq}{\longrightarrow} \td{\bf T}_{\upalpha}\Emr $ is an equivalence, 
\item  $\Fmr^{(\upalpha)}  \simeq \ast$,
\end{itemize}
then for the homotopy fiber $\Dmr:= {\bf hofib}(f)$ is pointwise equivalent to ${\td {\bf T}}_{\upalpha}\Dmr$. 

Consequently, if $\Emr$ is $(\upalpha, \vec{v})$-polynomial and if $\Fmr^{(\upalpha \oplus \upepsilon)}\simeq *$ for every $\upepsilon$ in $\vec{v}$, then $\Dmr$ is $(\upalpha, \vec{v})$-polynomial.
\end{lem}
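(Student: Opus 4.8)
\textbf{Proof proposal for \Cref{lem:fiberpoly}.}

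The plan is to exploit the fact that $\td{\bf T}_{\upalpha}$ is built from $\uptau_{\upalpha}$ by a sequential homotopy colimit, combined with the fiber sequence produced by \Cref{cor:sc2cor}. First I would observe that $\Dmr := {\bf hofib}(f)$ sits in a fiber sequence $\Dmr \to \Emr \to \Fmr$, and applying the exact functor $\uptau_{\upalpha}$ (which preserves homotopy fiber sequences since it is a homotopy limit, and homotopy limits commute with homotopy fibers) gives a map of fiber sequences
\[
\begin{tikzcd}
\Dmr \rar \dar & \Emr \rar \dar["\simeq"] & \Fmr \dar \\
\uptau_{\upalpha}\Dmr \rar & \uptau_{\upalpha}\Emr \rar & \uptau_{\upalpha}\Fmr.
\end{tikzcd}
\]
Iterating and passing to the homotopy colimit in $n$ (which preserves fiber sequences of spaces with the relevant connectivity, or more precisely we use that $\td{\bf T}_{\upalpha}$ preserves fibration sequences by the same commuting-limits-and-colimits argument used in the proof of \Cref{thm:tdTproperties}(2)), we get a fiber sequence
\[
\td{\bf T}_{\upalpha}\Dmr \to \td{\bf T}_{\upalpha}\Emr \to \td{\bf T}_{\upalpha}\Fmr.
\]
Thus it suffices to identify the map $\Dmr \to \td{\bf T}_{\upalpha}\Dmr$ with the map of homotopy fibers of $\Emr \to \td{\bf T}_{\upalpha}\Emr$ and $\Fmr \to \td{\bf T}_{\upalpha}\Fmr$. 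The first of these is an equivalence by hypothesis, so the homotopy fiber $\Dmr \to \td{\bf T}_{\upalpha}\Dmr$ is equivalent to the shifted loop on the homotopy fiber of $\Fmr \to \td{\bf T}_{\upalpha}\Fmr$; hence I need to show that $\Fmr \to \td{\bf T}_{\upalpha}\Fmr$ is a pointwise equivalence.

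To see this, I would use \Cref{cor:sc2cor}, which gives a natural fiber sequence $\Fmr^{(\upalpha)}_{\Gmr/\Hmr}(\Umr) \to \Fmr_{\Gmr/\Hmr}(\Umr) \to \uptau_{\upalpha}\wh{\Fmr}_{\Gmr/\Hmr}(\Umr)$. Since $\Fmr^{(\upalpha)} \simeq \ast$ by hypothesis, this says $\Fmr \to \uptau_{\upalpha}\wh{\Fmr}$ is a pointwise equivalence on the category $\Rep_\Hmr$ (one has to be slightly careful about $\wh{\Fmr}$ versus $\Fmr$, but the right Kan extension agrees with $\Fmr$ on $\Rep_\Hmr$, which is where pointwise equivalence is tested). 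Therefore $\Fmr$ is strongly $\upalpha$-polynomial in the sense of \Cref{defn:Spolynomial}, and by \Cref{thm:tdTproperties}(1) the map $\Fmr \to \td{\bf T}_{\upalpha}\Fmr$ is a pointwise equivalence. Feeding this back into the fiber-sequence comparison above shows $\Dmr \to \td{\bf T}_{\upalpha}\Dmr$ is a pointwise equivalence, proving the first assertion.

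For the ``consequently'' clause: if $\Emr$ is $(\upalpha, \vec{v})$-polynomial and $\Fmr^{(\upalpha \oplus \upepsilon)} \simeq \ast$ for every $\upepsilon \in \vec{v}$, I want $\Dmr$ to be $(\upalpha,\vec v)$-polynomial, i.e.\ $\Dmr \to {\bf T}_{(\upalpha, \vec{v})}\Dmr$ a pointwise equivalence after restricting to $\Ucal^{(\vec{v}:\upalpha)}_\Gmr$. The strategy is induction on the length of $\vec{v}$: having just proved the base case for the empty vector (strong polynomiality of $\Dmr$ follows from the first part, since $\Fmr^{(\upalpha)} \simeq \ast$ is implied once $\Fmr^{(\upalpha\oplus\upepsilon)}\simeq\ast$ together with stability considerations, or is assumed directly), I add one direction $\upepsilon$ at a time. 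At each stage I apply the first part of the lemma with $\upalpha$ replaced by $\upalpha \oplus \upepsilon$: the hypothesis ``$\Emr \xrightarrow{\simeq} \td{\bf T}_{\upalpha\oplus\upepsilon}\Emr$'' holds because $\Emr$ is $(\upalpha,\vec v)$-polynomial and hence strongly $(\upalpha\oplus\upepsilon')$-polynomial for the relevant sums (using \Cref{lem:polyplus1} / \Cref{thm:taylorapprox}), and the hypothesis ``$\Fmr^{(\upalpha\oplus\upepsilon)} \simeq \ast$'' is exactly what we assumed. Running through all $\upepsilon \in \vec v$ and using \Cref{cor:commuteT} to commute the various ${\bf T}_{(\upalpha,\upepsilon_i)}$, one concludes $\Dmr$ is $(\upalpha,\vec v)$-polynomial. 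The main obstacle I anticipate is the bookkeeping around $\wh{\Fmr}$ versus $\Fmr$ and the precise universe restriction in \Cref{defn:vpoly}: one must check that applying \Cref{cor:sc2cor} and \Cref{lem:polyplus1} respects the restriction to $\Ucal^{(\vec v:\upalpha)}_\Gmr$, since the derivatives $\Fmr^{(\upalpha\oplus\upepsilon)}$ only control polynomiality in directions actually present in $\vec v$. This is where I'd spend the most care, but it is a matter of tracking which representations index which homotopy limits rather than a genuinely new idea.
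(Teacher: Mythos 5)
Your argument takes a genuinely different route from the paper's, and the key step in your version has a gap. The paper applies \Cref{cor:sc2cor} to all three of $\Dmr$, $\Emr$, $\Fmr$ simultaneously, obtaining a $3\times 3$ diagram whose columns are the fiber sequences $(-)^{(\upalpha)} \to (-) \to \uptau_\upalpha(-)$ and whose top row is $\Dmr^{(\upalpha)} \to \Emr^{(\upalpha)} \to \Fmr^{(\upalpha)} \simeq \ast$; since $\Fmr^{(\upalpha)}$ is trivial, $\Dmr^{(\upalpha)} \to \Emr^{(\upalpha)}$ is an equivalence, so the \emph{fibers} of $\Dmr \to \uptau^{(k)}_\upalpha\Dmr$ and of $\Emr \to \uptau^{(k)}_\upalpha\Emr$ agree for each $k$, and the conclusion follows by comparing connectivities as $k\to\infty$. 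Nothing is ever asserted about the map $\Fmr \to \uptau_\upalpha\Fmr$ itself.

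The gap in your proposal is the sentence claiming that $\Fmr^{(\upalpha)} \simeq \ast$ forces $\Fmr \to \uptau_\upalpha\wh\Fmr$ to be a pointwise equivalence, hence $\Fmr$ strongly $\upalpha$-polynomial. A contractible homotopy fiber only gives isomorphisms on $\pi_n$ for $n \geq 1$ and injectivity on $\pi_0$; it does \emph{not} give $\pi_0$-surjectivity. This is precisely the basepoint phenomenon that \Cref{lem:Weiss5.10} isolates, where the extra hypothesis ``connected at infinity'' is required to upgrade ``contractible fiber between polynomial functors'' to ``equivalence.'' Since \Cref{lem:fiberpoly} imposes no such hypothesis on $\Fmr$, your five-lemma strategy — which needs $\Fmr \to \td{\bf T}_\upalpha\Fmr$ to actually be an equivalence — does not go through as written. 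The paper's formulation carefully routes around this by comparing $\Dmr$ and $\Emr$ directly via their shared fiber $\Dmr^{(\upalpha)} \simeq \Emr^{(\upalpha)}$ rather than via a statement about $\Fmr$. Your sketch of the ``consequently'' clause (induction on the length of $\vec v$, applying the first part with $\upalpha$ replaced by $\upalpha\oplus\upepsilon$, and commuting the ${\bf T}_{(\upalpha,\upepsilon_i)}$ via \Cref{cor:commuteT}) is reasonable and the paper leaves this part to the reader, but it inherits the same issue from the first part and would need the same repair.
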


\begin{proof} By equivariant Steifel combinatorics II, i.e. \Cref{thm:SC2}, we have a homotopy commutative diagram
\[ 
\begin{tikzcd}
{\bf R}^{\upalpha}_{\bf 0}\Dmr^{(\upalpha)} \rar["\simeq"] \dar &{\bf R}^{\upalpha}_{\bf 0} \Emr^{(\upalpha)} \rar \dar & {\bf R}^{\upalpha}_{\bf 0}\Fmr^{(\upalpha)} \simeq \ast  \dar \\
\Dmr \dar \rar &  \Emr \dar \rar   & \Fmr \dar  \\
\uptau_{\upalpha}\Dmr \rar & \uptau_{\upalpha}\Emr \rar& \uptau_{\upalpha}\Fmr
\end{tikzcd}
\]
in which rows and columns are fiber sequences (pointwise). Our assumptions imply that the connectivity of 
the map $\Dmr \to \uptau_{\upalpha}\Dmr$ equals  the connectivity of  $\Emr \to \uptau_{\upalpha}\Emr$, and  more generally, the connectivity of 
 $\Dmr \to \uptau_{\upalpha}^{(k)}\Dmr$ equals   that of $\Emr \to \uptau_{\upalpha}^{(k)}\Emr$ for all $k \geq 1$. Hence,  the result.

\end{proof}

\begin{lem}
Suppose $\Fmr \in \Ecal_{\Gmr, {\bf 0}}$  such that $\Fmr^{(\upalpha \oplus \upepsilon)} \simeq *$ for all $\upepsilon \in \vec{v}$ then
\[
\begin{tikzcd}
\Umr \rar &  \Omega\Fmr_{\Gmr/\Hmr}(\Umr)
\end{tikzcd}
\]
is $(\upalpha, \vec{v})$-polynomial. 
\end{lem}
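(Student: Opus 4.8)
The plan is to reduce the statement to a direct application of \Cref{lem:fiberpoly}. Consider the fibration-type comparison in $\Ecal_{\Gmr,{\bf 0}}$ given by the path-loop construction applied pointwise to $\Fmr$: there is a natural map $f : \Emr_{\mathrm{PF}} \longrightarrow \Fmr$, where $\Emr_{\mathrm{PF}}$ denotes the functor whose value at $\Umr$ is the (based) path space $P\Fmr_{\Gmr/\Hmr}(\Umr)$ of $\Fmr_{\Gmr/\Hmr}(\Umr)$, with $f$ the endpoint evaluation. The path-space functor is pointwise contractible, and the homotopy fiber of $f$ is pointwise $\Omega\Fmr_{\Gmr/\Hmr}(\Umr)$. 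So the functor $\Umr \mapsto \Omega\Fmr_{\Gmr/\Hmr}(\Umr)$ is precisely $\Dmr := {\bf hofib}(f)$ in the notation of \Cref{lem:fiberpoly}.

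To invoke the consequence-clause of \Cref{lem:fiberpoly}, I need (a) that $\Emr_{\mathrm{PF}}$ is $(\upalpha, \vec v)$-polynomial, and (b) that $\Fmr^{(\upalpha\oplus\upepsilon)} \simeq \ast$ for every $\upepsilon$ in $\vec v$. Hypothesis (b) is exactly the assumption of the lemma being proved, so nothing to do there. For (a): $\Emr_{\mathrm{PF}}$ is pointwise contractible, hence pointwise equivalent to the terminal functor, which is trivially strongly $\upalpha$-polynomial and indeed $(\upalpha,\vec v)$-polynomial for any $\vec v$; equivalently $\Emr_{\mathrm{PF}} \longrightarrow {\bf T}_{(\upalpha,\vec v)}\Emr_{\mathrm{PF}}$ is a pointwise equivalence since both sides are contractible (the target by \Cref{thm:tdTproperties}(1) applied to a contractible input, or directly since homotopy limits and colimits of the constant diagram on a point are a point). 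Thus both hypotheses of the consequence clause hold, and \Cref{lem:fiberpoly} yields that $\Dmr$, i.e. $\Umr \mapsto \Omega\Fmr_{\Gmr/\Hmr}(\Umr)$, is $(\upalpha,\vec v)$-polynomial, as desired.

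One point that deserves care, and which I expect to be the main (mild) obstacle, is the basepoint/connectivity bookkeeping: $\Omega\Fmr_{\Gmr/\Hmr}(\Umr)$ is the loop space based at whatever basepoint $\Fmr_{\Gmr/\Hmr}(\Umr)$ carries, and \Cref{lem:fiberpoly} as stated only needs $\Emr \simeq \td{\bf T}_\upalpha \Emr$ for the source and $\Fmr^{(\upalpha)} \simeq \ast$ — the latter is automatic from $\Fmr^{(\upalpha\oplus\upepsilon)}\simeq\ast$ via the fiber sequence of \Cref{cor:sc1cor}, so I should spell out that step rather than assume it. Specifically, \Cref{cor:sc1cor} gives a fiber sequence $\Fmr^{(\upalpha\oplus\upepsilon)}_{\Gmr/\Hmr}(\Umr) \to \Fmr^{(\upalpha)}_{\Gmr/\Hmr}(\Umr) \to \Omega^{\upiota_\Hmr(\upalpha\otimes\upepsilon)}\Fmr^{(\upalpha)}_{\Gmr/\Hmr}(\Umr\oplus\upepsilon)$, and since $\vec v$ is nonempty (if it is empty the statement reads "strongly $\upalpha$-polynomial", for which one instead uses \Cref{thm:derpoly}) we may pick $\upepsilon\in\vec v$; the vanishing of $\Fmr^{(\upalpha\oplus\upepsilon)}$ forces $\Fmr^{(\upalpha)}_{\Gmr/\Hmr}(\Umr)$ to be equivalent to $\Omega^{\upiota_\Hmr(\upalpha\otimes\upepsilon)}$ of its own value on $\Umr\oplus\upepsilon$, i.e. $\Fmr^{(\upalpha)}$ is stable in the sense used above, but more importantly iterating shows $\Fmr^{(\upalpha)}$ is infinitely deloopable along $\upepsilon$ — I will only need the cheap fact that $\Fmr^{(\upalpha\oplus\upepsilon)}\simeq\ast$ already suffices to run the diagram in the proof of \Cref{lem:fiberpoly}, so in fact the cleanest route is to apply the consequence clause of \Cref{lem:fiberpoly} verbatim with $\Emr$ there taken to be $\Emr_{\mathrm{PF}}$. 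I will present the argument in that streamlined form, remarking that the case $\vec v = \vec 0$ is covered separately (and trivially) by \Cref{thm:derpoly}.
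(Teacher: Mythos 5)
Your proof is correct and is essentially the paper's own, which disposes of the lemma in a single line by setting $\Emr \simeq \ast$ in \Cref{lem:fiberpoly} and invoking its consequence clause; replacing the constant point functor by the pointwise path space $P\Fmr$ is cosmetic since $P\Fmr$ is pointwise contractible. One small caution about your digression: the parenthetical claim that ``$\Fmr^{(\upalpha)}\simeq\ast$ is automatic from $\Fmr^{(\upalpha\oplus\upepsilon)}\simeq\ast$ via \Cref{cor:sc1cor}'' is false — the fiber sequence there only forces $\Fmr^{(\upalpha)}_{\Gmr/\Hmr}(\Umr)\to\Omega^{\upiota_\Hmr(\upalpha\otimes\upepsilon)}\Fmr^{(\upalpha)}_{\Gmr/\Hmr}(\Umr\oplus\upiota_\Hmr(\upepsilon))$ to be an equivalence (i.e., stability of $\Fmr^{(\upalpha)}$), not contractibility — but, as you then observe yourself, the consequence clause of \Cref{lem:fiberpoly} is stated without requiring $\Fmr^{(\upalpha)}\simeq\ast$, so the streamlined argument you actually run is unaffected.
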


\begin{proof}
Set $\Emr\simeq *$ in \Cref{lem:fiberpoly}. 
\end{proof}

\begin{lem} \label{lem:Weiss5.12}
If $\Ecal \in \Ecal_{\Gmr, {\bf 0}}$ is $(\upalpha, \vec{v})$-polynomial then 
\[
\Emr^{(\alpha)}_{\Gmr/\Hmr}(\Umr) \longrightarrow \Omega^{\upiota_\Hmr(\upalpha \otimes \upepsilon)}\Emr_{\Gmr/\Hmr}(\Umr \oplus \upiota_{\Hmr}(\upepsilon))
\]
is an equivalence for every $\upepsilon \in \vec{v}$ and $\Umr \underset{\mr{finite}}\subset \Ucal_{\Hmr}$.
\end{lem}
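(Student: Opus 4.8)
The plan is to use the fiber sequence from \Cref{cor:sc1cor} together with the hypothesis that $\Emr$ is $(\upalpha,\vec v)$-polynomial. Recall that \Cref{cor:sc1cor} gives, for each $\upepsilon \in \vec v$ and each $\Umr \in \Rep_\Hmr$, a natural fiber sequence
\[
\begin{tikzcd}
\Emr^{(\upalpha \oplus \upepsilon)}_{\Gmr/\Hmr}(\Umr) \rar & \Emr_{\Gmr/\Hmr}^{(\upalpha)}(\Umr) \rar & \Omega^{\upiota_{\Hmr}(\upalpha \otimes \upepsilon)}\Emr_{\Gmr/\Hmr}^{(\upalpha)}(\Umr \oplus \upiota_\Hmr(\upepsilon)).
\end{tikzcd}
\]
So the claim reduces to showing $\Emr^{(\upalpha\oplus\upepsilon)}_{\Gmr/\Hmr}(\Umr) \simeq \ast$ for all $\upepsilon \in \vec v$ and all $\Umr$. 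The hypothesis says $\Emr \to {\bf T}_{(\upalpha,\vec v)}\Emr$ is a pointwise equivalence after restricting to the universe $\Ucal^{(\vec v:\upalpha)}_\Gmr$; in particular, writing $\vec v = (\upepsilon, \vec w)$ and using \Cref{cor:commuteT} to commute the Taylor approximations, $\Emr$ is $(\upalpha\oplus\upepsilon)$-polynomial in the direction of each entry of $\vec w$, but more to the point, $\Emr$ is $(\upalpha,\vec v)$-polynomial forces $\Emr$ to be $(\upalpha\oplus\upepsilon)$-polynomial in the direction of $\upepsilon'$ for $\upepsilon'$ ranging over $\vec v$ — in particular strongly $(\upalpha\oplus\upepsilon\oplus\upepsilon)$-polynomial is too strong, so instead I should argue as follows.

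First I would observe that being $(\upalpha,\vec v)$-polynomial, by \Cref{defn:vpoly} and \Cref{defn:vhomo}-style reasoning, implies in particular that $\Emr \to {\bf T}_{(\upalpha,\upepsilon)}\Emr$ is a pointwise equivalence (restricted to the relevant subuniverse) for each single $\upepsilon \in \vec v$, i.e.\ $\Emr$ is $\upalpha$-polynomial in the direction of $\upepsilon$, which by \Cref{defn:polynomial} means $\Emr$ is strongly $(\upalpha \oplus \upepsilon)$-polynomial. Then \Cref{thm:derpoly}, applied with the representation $\upalpha\oplus\upepsilon$ in the role of ``$\upalpha$'' there and $\upalpha' = {\bf 0}$, yields $\Emr^{(\upalpha \oplus \upepsilon)}_{\Gmr/\Hmr}(\Umr) \simeq \ast$ for all $\Umr \in \Rep_\Hmr$ and all $\Hmr \subset \Gmr$. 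Feeding this vanishing back into the fiber sequence above shows that the map $\Emr^{(\upalpha)}_{\Gmr/\Hmr}(\Umr) \to \Omega^{\upiota_\Hmr(\upalpha\otimes\upepsilon)}\Emr_{\Gmr/\Hmr}(\Umr\oplus\upiota_\Hmr(\upepsilon))$ is an equivalence, as desired.

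One subtlety to address carefully is the universe bookkeeping: $\Emr$ is only assumed $(\upalpha,\vec v)$-polynomial \emph{relative to the universe} $\Ucal^{(\vec v:\upalpha)}_\Gmr$, so the equivalence $\Emr \to {\bf T}_{(\upalpha,\upepsilon)}\Emr$ only holds on that subuniverse, and likewise the conclusion is only claimed for $\Umr \underset{\mr{finite}}\subset \Ucal_\Hmr$ where the relevant pieces live. I expect the main (minor) obstacle to be checking that \Cref{thm:derpoly} and \Cref{cor:sc1cor} apply in the ``relative to a subuniverse $\sfR$'' setting — but this is exactly the content of the remark near \Cref{notn:universe} that all these constructions are compatible with restriction of universes because $\Jfrak_\sfR^\upalpha \hookrightarrow \Jfrak_{\sfR'}^\upalpha$ is full and faithful, so the same proofs go through verbatim internal to $\Ucal^{(\vec v:\upalpha)}_\Gmr$. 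Assembling these pieces: restrict everything to the subuniverse, deduce strong $(\upalpha\oplus\upepsilon)$-polynomiality from $(\upalpha,\vec v)$-polynomiality, apply \Cref{thm:derpoly} to kill $\Emr^{(\upalpha\oplus\upepsilon)}$, and conclude via the fiber sequence of \Cref{cor:sc1cor}.
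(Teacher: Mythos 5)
Your proof follows the same approach the paper intends: reduce via the fiber sequence of \Cref{cor:sc1cor} to showing $\Emr^{(\upalpha\oplus\upepsilon)}_{\Gmr/\Hmr}(\Umr)\simeq\ast$, and obtain that vanishing from the polynomiality hypothesis via \Cref{thm:derpoly}. The paper's own proof is just a one-line pointer to \Cref{cor:sc1cor}, so your write-up is filling in exactly the intended argument; the one step you gloss over is the implication ``$(\upalpha,\vec v)$-polynomial $\Rightarrow$ strongly $(\upalpha\oplus\upepsilon)$-polynomial for each $\upepsilon\in\vec v$,'' which really rests on \Cref{thm:tdTproperties}(2) applied to ${\bf T}_{(\upalpha,\vec v)}\Emr$ (after commuting $\upepsilon$ to the front via \Cref{cor:commuteT}) rather than on the definitions alone, but the conclusion is correct.
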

\begin{proof} This is a consequence of \Cref{thm:SC1} (equivariant Steifel combinatorics I), in particular \Cref{cor:sc1cor}.
\end{proof}

\begin{lem} \label{lem:Weiss5.13} Let $\upalpha$ be a finite sum of one dimensional $\Gmr$-representations. 
Suppose $f: \Emr \longrightarrow \Fmr$ in $\Ecal_{\Gmr, {\bf 0}}$ be a map between fully $\upalpha$-polynomial functors such that 
\begin{enumerate}
\item the induced map 
\[
\begin{tikzcd}
\underset{\Vmr \in \Ucal_{\Gmr}^{(1)}}{\bf hocolim} \ \Emr_{\Gmr/\Hmr}(\Umr \oplus \Vmr) \rar & \underset{\Vmr \in \Ucal_{\Gmr}^{(1)}}{\bf hocolim} \ \Fmr_{\Gmr/\Hmr}(\Umr \oplus \Vmr)
\end{tikzcd}
 \]
is an equivalence for all $\Umr \underset{\mr{finite}}\subset \Ucal_\Hmr$, 
\item the induced map on the systems
\[
\begin{tikzcd}
{\bf \Theta}(f): {\bf \Theta}\Emr^{(\upalpha')}(\Umr) \rar &  {\bf \Theta}\Fmr^{(\upalpha')}(\Umr)
\end{tikzcd}
\]
is an equivalence for every sub $\Gmr$-representation $\upalpha'$, and 
\item $\Fmr$ is connected at infinity. 
\end{enumerate}
 Then $f$ is an equivalence.
\end{lem}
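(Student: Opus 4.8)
The plan is to induct on the dimension of $\upalpha$, following the strategy of the classical argument in \cite[Proposition 8.2]{WeissCalc} but keeping careful track of the equivariant connectivity data. The base case $\upalpha = {\bf 0}$ is exactly hypothesis (1), which (using that a fully ${\bf 0}$-polynomial functor agrees with its homotopy colimit over $\Ucal_\Gmr^{(1)}$) says $f$ is a pointwise equivalence after passing to the colimit, and by hypothesis (3) together with \Cref{lem:Weiss5.10} the basepoint components match, so $f$ itself is an equivalence. For the inductive step, pick a $1$-dimensional subrepresentation $\upepsilon \subset \upalpha$ (possible since $\upalpha$ is a finite sum of $1$-dimensional representations) and write $\upalpha = \upalpha_0 \oplus \upepsilon$. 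First I would form, for each subgroup $\Hmr$ and each $\Umr \underset{\mr{finite}}\subset \Ucal_\Hmr$, the homotopy fiber sequence furnished by \Cref{cor:sc1cor}:
\[
\Emr^{(\upalpha_0 \oplus \upepsilon)}_{\Gmr/\Hmr}(\Umr) \longrightarrow \Emr^{(\upalpha_0)}_{\Gmr/\Hmr}(\Umr) \longrightarrow \Omega^{\upiota_\Hmr(\upalpha_0 \otimes \upepsilon)}\Emr^{(\upalpha_0)}_{\Gmr/\Hmr}(\Umr \oplus \upiota_\Hmr(\upepsilon)),
\]
and the analogous sequence for $\Fmr$. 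The map $f$ induces a map of such fiber sequences. By hypothesis (2) applied to $\upalpha' = \upalpha$ and $\upalpha' = \upalpha_0$, together with \Cref{lem:Weiss5.12} (which identifies the third term of each sequence, using that $\Emr$ and $\Fmr$ are fully $\upalpha$-polynomial and hence also $\upalpha_0$-polynomial), the map of spectra ${\bf \Theta}(f)$ on the $\upalpha_0$-derivatives is built from the map on the $\upalpha$-derivatives and the stabilized values; I would leverage this to reduce the comparison on $\upalpha_0$-derivatives to hypotheses (1) and (2).

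The heart of the argument is then to run the induction on $\Emr^{(\upalpha_0)}$ and $\Fmr^{(\upalpha_0)}$, viewed as objects of $\Ecal_{\Gmr, \upalpha_0}$ rather than $\Ecal_{\Gmr, {\bf 0}}$. Here I would invoke the classification of stable symmetric objects, \Cref{thm:stablesymmetric}: since $\Emr$ and $\Fmr$ are fully $\upalpha$-polynomial, \Cref{lem:stable} (applied with $\upalpha$ and with each intermediate subrepresentation) shows $\Emr^{(\upalpha')}$ and $\Fmr^{(\upalpha')}$ are stable symmetric, hence determined by their $\upalpha'$-systems of $\Gmr$-spectra, and hypothesis (2) says precisely that $f$ induces an equivalence on all these systems. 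Feeding this through $\Theta^\#$ and $\Phi$ from \Cref{thm:stablesymmetric}, the map $f$ becomes an equivalence on each $\td{\bf T}_{\upalpha'}$-reduced layer. One then reassembles the tower: by \Cref{thm:tdTproperties} and the definition of the layers in \Cref{defn:layer}, $f$ is an equivalence on ${\bf L}_{\upalpha'}\Emr \to {\bf L}_{\upalpha'}\Fmr$ for every $\upalpha' \subseteq \upalpha$, and since $\Emr \simeq {\bf T}_{(\upalpha, \vec{v})}\Emr$ and $\Fmr \simeq {\bf T}_{(\upalpha, \vec{v})}\Fmr$ for the full direction vector $\vec{v}$ (full $\upalpha$-polynomiality), a finite induction up the layers of the tower — using \Cref{lem:Weiss5.10} to handle the basepoint components at each stage, where hypothesis (3) and the connected-at-infinity condition for the homotopy fibers (controlled via \Cref{lem:fiberpoly}) are used — shows $f$ is a pointwise equivalence.

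The main obstacle I anticipate is the bookkeeping of equivariant connectivity and basepoint components across the induction: the hypotheses are stated with the full direction vector, but restrictions to subgroups $\Hmr$ can enlarge the relevant universe $\Ucal_\Hmr^{(1:\upalpha)}$ beyond the image of $\Ucal_\Gmr^{(1:\upalpha)}$, so the maps $\ell_{\Hmr, \upalpha}$ of \eqref{map:ell} and the universe-comparison maps of \Cref{lem:resderinf} need not be equivalences. One must therefore check that hypothesis (2) — phrased over all of $\Ucal_\Gmr$ — is exactly strong enough to feed the classification theorem at each subgroup without circularity; concretely, the subtlety is that ${\bf \Theta}\Emr^{(\upalpha')}(\Umr)$ lives in $\Sp^\Hmr_{\upiota_\Hmr(1:\upalpha')}$, and one needs the system-level equivalence to be detected correctly in that (possibly proper) sub-universe. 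I expect this to be resolved by working throughout with the systems rather than the cocompleted genuine spectra, exactly as \Cref{defn:alphasystem} and \Cref{thm:stablesymmetric} are set up to allow, but it is the step that requires the most care.
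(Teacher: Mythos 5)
Your proposal is built around a claim with the wrong direction: you assert that $\Emr, \Fmr$ being fully $\upalpha$-polynomial implies they are fully $\upalpha_0$-polynomial for $\upalpha_0 \subset \upalpha$, which you then use to invoke \Cref{lem:Weiss5.12} at the level of $\upalpha_0$ and identify the third term in the fiber sequences from \Cref{cor:sc1cor}. But by \Cref{thm:taylorapprox} (see also \Cref{main:poly}) the implication goes the other way: being strongly $\upalpha_0$-polynomial is a \emph{stronger} hypothesis than being strongly $\upalpha$-polynomial whenever $\upalpha_0 \subset \upalpha$, since enlarging $\upalpha$ only enlarges the poset over which $\uptau_{\upalpha}$ takes homotopy limits, so $\uptau_{\upalpha}\Emr$ converges to $\Emr$ more readily, not less. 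So your inductive comparison of the $\upalpha_0$-derivatives of $\Emr$ and $\Fmr$ cannot get started from the stated hypotheses, and the "heart of the argument" step collapses.

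The paper sidesteps the descent through $\upalpha_0$-derivatives of $\Emr$ and $\Fmr$ entirely, working instead with $\Dmr := {\bf hofib}(f)$. It first reduces to showing $\Dmr$ is pointwise contractible via \Cref{lem:Weiss5.10} (as your base case does), then notes $\Dmr$ is itself fully $\upalpha$-polynomial, so that $\Dmr^{(\upalpha')}$ is a stable symmetric object (\Cref{lem:stable}) and hence, by the classification theorem \Cref{thm:stablesymmetric}, determined by its system ${\bf \Theta}(\Dmr^{(\upalpha')})$. Hypothesis (2), together with the fact that ${\bf \Theta}$ and taking derivatives commute with homotopy fibers, forces ${\bf \Theta}(\Dmr^{(\upalpha')}) \simeq {\bf hofib}({\bf \Theta}(f)) \simeq *$ for every $\upalpha' \subset \upalpha$, which kills all the layers of $\Dmr$ at once, reducing to the colimit-at-infinity comparison (1). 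Your high-level idea of matching layers and reassembling is sound in spirit, but the correct place to run it is on the homotopy fiber, where the polynomial degree is inherited for free and the derivative-vanishing is immediate from (2), rather than on $\Emr$ and $\Fmr$ separately where the needed $\upalpha_0$-polynomiality is simply unavailable.
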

\begin{proof} By \Cref{lem:Weiss5.10}, it suffices to show  that ${\bf hofib}(f)$ is pointwise contractible. Since $\Emr$ and $\Fmr$ are fully $\upalpha$-polynomial, so is  its homotopy fiber. Now assume that ${\bf hofib}(f)$ is strongly $\upalpha'$-polynomial for some \[{\bf 0} \neq \upalpha' \subset \upalpha,\] then its $\upalpha'$-th derivative must be stable (by \Cref{lem:stable}) and equivalent to 
\[
{\bf hofib}(f)_{\Gmr/\Hmr}(\Umr) \simeq  \Omega^{\infty}_{\upiota_\Hmr(1:\upalpha)} \left(\Smr^{\upiota_\Hmr( \upalpha) \otimes \Umr_{\upalpha}}  \sma \upchi(\Umr_{\upalpha}^{\perp})^\# \right)
 \] 
for all $\Umr \subset \Ucal_\Hmr$ where $\upchi$ is a $\upalpha'$-system equivalent to $\upalpha'$-th derivative  of ${\bf hofib}(f)$ at $\infty$ (by \Cref{thm:stablesymmetric}). Since sequential homotopy limit commutes with homotopy fibers 
\[ {\bf \Theta}({\bf hofib}(f))^{(\upalpha')} \simeq {\bf hofib} \left( {\bf \Theta}(f) \right)  \simeq \ast ,\]
which is a contradiction. 
\end{proof}

\begin{thm} \label{thm:htpyorbit}
Let $\upalpha$ be a finite sum of $1$-dimensional representations and  $\uptheta$ be a $\Gmr$-spectrum in $\Sp^{\Gmr}_{(\sfR)}$ with a n\"aive $\Omr(\upalpha)$-action.
Then $\Emr$ and $\Fmr$ in $\Ecal_{\sfR, {\bf 0}}$ given by 
\[
\Emr_{\Gmr/\Hmr}(\Umr):= [\Omega_{\upiota_\Hmr(\sfR)}^{\infty}(\Smr^{\upiota_\Hmr(\upalpha) \otimes \Umr} \sma \upiota_\Hmr{\uptheta})]_{\mr{h} {\sf O}(\upiota_{\Hmr}(\upalpha))}
\]
\[
\Fmr_{\Gmr/\Hmr}(\Umr):= [\Omega_{\upiota_\Hmr(\sfR)}^{\infty}(\Smr^{\upiota_\Hmr(\upalpha) \otimes \Umr} \sma \upiota_\Hmr{\uptheta})_{\mr{h} {\sf O}(\upiota_{\Hmr}(\upalpha))}]
\]
are ${\bf T}_{(\upalpha, \vec{v})}$-equivalent for every direction vector $\vec{v}$ of $\Gmr$.
\end{thm}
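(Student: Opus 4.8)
The goal is to compare the ``naive homotopy orbit'' functor $\Fmr$, where orbits are taken levelwise before applying $\Omega^{\infty}$, with the functor $\Emr$, where orbits are taken after $\Omega^{\infty}$. Since both $\Emr$ and $\Fmr$ are built from an $\Omr(\upalpha)$-spectrum $\uptheta$ and the difference amounts to the (non-)commutation of $\Omega^{\infty}$ with the homotopy orbit construction $(-)_{\mathrm{h}\Omr(\upalpha)}$, the plan is to show that the map $\Fmr \to \Emr$ (induced by $(\Omega^{\infty} X)_{\mathrm{h}\Omr(\upalpha)} \to \Omega^{\infty}(X_{\mathrm{h}\Omr(\upalpha)})$ levelwise, or rather the comparison in the reverse direction built from the universal property) becomes a pointwise equivalence after applying ${\bf T}_{(\upalpha, \vec v)}$.

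First I would reduce to checking the hypotheses of \Cref{lem:Weiss5.13}: namely that the relevant colimits at infinity agree, that the induced maps on the $\upalpha'$-systems ${\bf \Theta}(-)^{(\upalpha')}$ agree for every sub-representation $\upalpha' \subset \upalpha$, and that the target is connected at infinity. However, \Cref{lem:Weiss5.13} concludes an honest equivalence only under a full polynomiality hypothesis, so the correct route is: apply ${\bf T}_{(\upalpha, \vec v)}$ to both $\Emr$ and $\Fmr$ first (making them fully $\upalpha$-polynomial by \Cref{cor:Taylorpoly} and \Cref{thm:tdTproperties}), and then invoke \Cref{lem:Weiss5.13} applied to the map ${\bf T}_{(\upalpha, \vec v)}\Fmr \to {\bf T}_{(\upalpha, \vec v)}\Emr$. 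The content then splits into three verifications. \emph{(a)} The colimit-at-infinity condition: $\hocolim_{\Vmr \in \Ucal^{(1)}}$ of $\Omega^{\infty}(\Smr^{\upiota_\Hmr(\upalpha)\otimes(\Umr\oplus\Vmr)}\sma\uptheta)$ with orbits inside versus outside; here one uses that a filtered homotopy colimit over $\Vmr$ (in which the relevant suspensions become highly connected on fixed points, because $\upalpha$ is a sum of $1$-dimensional representations so $\upalpha\otimes\Vmr$ has large fixed-point dimension for every subgroup) turns $\Omega^{\infty}$ into an honest stabilization, and stabilization commutes with the finite homotopy orbit $(-)_{\mathrm{h}\Omr(\upalpha)}$. \emph{(b)} The systems agree: ${\bf \Theta}\Emr^{(\upalpha')}$ and ${\bf \Theta}\Fmr^{(\upalpha')}$ are each computed by the derivative-at-infinity machinery, and since derivatives are right Kan extensions which commute with the (levelwise, for $\Fmr$) homotopy-orbit construction and with the stabilization appearing in $\Emr$, both yield the $\upalpha'$-system associated to $\uptheta_{\mathrm{h}\Omr(\upalpha)}$ --- in fact for $\upalpha' \subsetneq \upalpha$ both sides are contractible because the extra suspension directions in $\upalpha\setminus\upalpha'$ are already stabilized (this is the observation behind \Cref{thm:derpoly}/\Cref{lem:stable}), and for $\upalpha' = \upalpha$ both sides recover $\uptheta_{\mathrm{h}\Omr(\upalpha)}$. \emph{(c)} Connectivity at infinity: after applying ${\bf T}_{(\upalpha,\vec v)}$, the target is an infinite loop space built from a spectrum, hence its homotopy colimit at infinity is grouplike, in particular connected (one may need $\uptheta$ to be bounded below, or simply note that $\Omega^\infty$ of a spectrum is always connected after one further loop --- but in our setting the $\upalpha\otimes\Umr$-suspension guarantees connectivity directly since $|\upalpha\otimes\Umr|^{\Kmr}$ grows).

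Concretely, the order of steps would be: (1) Write down the natural transformation $\Fmr \to \Emr$ in $\Ecal_{\sfR,{\bf 0}}$ coming from the lax comparison $(\Omega^{\infty}_{\sfR}(-))_{\mathrm{h}\Omr(\upalpha)} \Rightarrow \Omega^{\infty}_{\sfR}((-)_{\mathrm{h}\Omr(\upalpha)})$ and check $\Omr(\upalpha)$-naturality. (2) Apply ${\bf T}_{(\upalpha,\vec v)}$ to obtain $f\colon {\bf T}_{(\upalpha,\vec v)}\Fmr \to {\bf T}_{(\upalpha,\vec v)}\Emr$, which by \Cref{cor:Taylorpoly} and \Cref{thm:taylorapprox} (using that $\upalpha$ is a sum of $1$-dimensional representations and $\vec v$ is a direction vector) is a map of fully $\upalpha$-polynomial objects. (3) Verify hypothesis (3) of \Cref{lem:Weiss5.13}: connectivity at infinity of ${\bf T}_{(\upalpha,\vec v)}\Emr$, using that it is pointwise an infinite loop space and the growth of $\dim_\RR(\upalpha\otimes\Umr)^{\Kmr}$. (4) Verify hypothesis (1): the induced map on $\hocolim_{\Vmr\in\Ucal^{(1)}}$ is an equivalence, by commuting the filtered homotopy colimit past $\Omega^{\infty}$ (legitimate once the connectivity of the suspensions tends to infinity) and then commuting the resulting stabilization past the finite homotopy orbit $(-)_{\mathrm{h}\Omr(\upalpha)}$. (5) Verify hypothesis (2): for each $\upalpha'\subset\upalpha$, identify ${\bf \Theta}\bigl({\bf T}_{(\upalpha,\vec v)}\Fmr\bigr)^{(\upalpha')}$ and ${\bf \Theta}\bigl({\bf T}_{(\upalpha,\vec v)}\Emr\bigr)^{(\upalpha')}$; both are $\ast$ unless $\upalpha'$ contains $\upalpha$ (by \Cref{thm:derpoly} applied with the stabilized directions, or directly via \Cref{cor:sc1cor}), and when $\upalpha'=\upalpha$ both compute the $\upalpha$-system of $\upiota_\Hmr(\uptheta)_{\mathrm{h}\Omr(\upiota_\Hmr\upalpha)}$ because the derivative functor (a right Kan extension) commutes with the levelwise homotopy orbit and with the stabilization. (6) Conclude via \Cref{lem:Weiss5.13} that $f$ is a pointwise equivalence, i.e. $\Emr$ and $\Fmr$ are ${\bf T}_{(\upalpha,\vec v)}$-equivalent.

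The main obstacle is step (4)--(5), i.e.\ controlling the interaction of $\Omega^{\infty}$ with the homotopy orbit construction. The key technical input is that $\Omega^\infty_{\sfR}$ only commutes with a filtered homotopy colimit when the connectivities in the system go to infinity, and here that is exactly what the hypothesis ``$\upalpha$ is a finite sum of $1$-dimensional representations'' buys us: for every subgroup $\Kmr$, $\upalpha^{\Kmr}\neq 0$, so $(\upalpha\otimes\Vmr)^{\Kmr}$ grows without bound as $\Vmr$ exhausts $\Ucal^{(1)}$, making $\Smr^{\upalpha\otimes\Vmr}$ arbitrarily highly $\Kmr$-connected. The subtle point to get right is the precise order in which one commutes (filtered colimit) past ($\Omega^\infty$) past ($\mathrm{h}\Omr(\upalpha)$), and ensuring the $\Omr(\upalpha)$-action is respected throughout so that the final identification of systems in step (5) is genuinely $\Omr(\upalpha)$-equivariant --- this is precisely the scenario already handled in \Cref{rmk:Oequiv} and in the proof of \Cref{thm:stablesymmetric}, which should be cited rather than re-proved. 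If $\uptheta$ is not assumed bounded below, a small extra argument (or an explicit boundedness hypothesis as in \Cref{main3}) may be needed to push through the connectivity estimate in step (3); I would flag this and, if necessary, invoke uniform bounded-belowness.
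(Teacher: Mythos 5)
Your proposal takes a genuinely different route from the paper, and it has a gap. The paper proceeds by a direct connectivity estimate: for a $\sfc$-connected $\Hmr$-spectrum ${\bf X}$ with $\Omr(\upiota_\Hmr(\upalpha))$-action, the canonical map $(\Omega^{\infty}_{\upiota_\Hmr(\sfR)}{\bf X})_{\mathrm{h}\Omr(\upiota_\Hmr(\upalpha))} \to \Omega^{\infty}_{\upiota_\Hmr(\sfR)}({\bf X}_{\mathrm{h}\Omr(\upiota_\Hmr(\upalpha))})$ is $\nu$-connected (with $\nu$ controlled by $\sfc$, via reduction to suspension spectra and the equivariant Freudenthal theorem). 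Setting ${\bf X}=\Smr^{\upiota_\Hmr(\upalpha)\otimes\Umr}\sma\upiota_\Hmr\uptheta$, whose connectivity grows in $|\Umr^{\Kmr}|$ because $\upalpha$ is a sum of one-dimensional representations, gives a $\nu$-connected comparison $\Emr\to\Fmr$. Then \Cref{lem:tauconnect} shows each application of $\uptau_{\upalpha\oplus\upepsilon}$ raises connectivity, so ${\bf T}_{(\upalpha,\upepsilon)}\Emr\to{\bf T}_{(\upalpha,\upepsilon)}\Fmr$ is a pointwise equivalence, which gives the statement for every $\vec v$. This sidesteps \Cref{lem:Weiss5.13} entirely.

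The gap in your proposal is in step~(5). You claim that for $\upalpha'\subsetneq\upalpha$ both ${\bf\Theta}\bigl({\bf T}_{(\upalpha,\vec v)}\Emr\bigr)^{(\upalpha')}$ and ${\bf\Theta}\bigl({\bf T}_{(\upalpha,\vec v)}\Fmr\bigr)^{(\upalpha')}$ are contractible, citing \Cref{thm:derpoly}. But \Cref{thm:derpoly} only kills derivatives of degree \emph{larger than} $\upalpha$: if $\Gmr$ is strongly $\upalpha$-polynomial then $\Gmr^{(\upalpha\oplus\upalpha')}\simeq\ast$. For proper sub-representations $\upalpha'\subsetneq\upalpha$ (in particular $\upalpha'={\bf 0}$, where ${\bf\Theta}\Fmr^{({\bf 0})}$ essentially records the value of $\Fmr$ at infinity) the systems are generally nontrivial, so you would need to verify directly that ${\bf\Theta}(f)$ is an equivalence at every $\upalpha'\subset\upalpha$. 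That verification amounts to the very connectivity estimate the paper runs, so the detour through \Cref{lem:Weiss5.13} does not buy you anything and, as written, is not correct. (A minor additional issue: the lax comparison runs $\Emr\to\Fmr$, i.e.\ $(\Omega^\infty X)_{\mathrm hO}\to\Omega^\infty(X_{\mathrm hO})$, opposite to the direction you wrote at the outset, though you flag the ambiguity.) Your closing paragraph correctly identifies the Freudenthal/connectivity estimate as the real content and that the one-dimensionality of $\upalpha$ is what makes the fixed-point connectivities grow; the efficient move is to use that estimate directly with \Cref{lem:tauconnect} rather than repackaging it through \Cref{lem:Weiss5.13}.
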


\begin{proof} 
Fix  $\Hmr \subset \Gmr$. We essentially follow the strategy in \cite[Example 6.4]{WeissCalc} to show that for a $\sfc$-connected $\Hmr$-spectrum  ${\bf X}$ in $\Sp^\Gmr_{\upiota_\Hmr(\sfR)}$ with 
${\sf O}(\upiota_\Hmr(\upalpha))$-action, the canonical map
\[
(\Omega^{\infty}_{\upiota_\Hmr(\sfR)} {\bf X})_{\mr{h} {\sf O}(\upiota_\Hmr(\upalpha))} \longrightarrow \Omega^{\infty}_{\upiota_\Hmr(\sfR)}({\bf X}_{\mr{h} {\sf O}(\upiota_\Hmr(\upalpha))})
\]
is $\nu$-connected for every $\nu$ so that
\begin{itemize}
 \item[-] $\nu(\Kmr)\leq 2\sfc(\Kmr)+1$ for all subgroup $\Kmr\leq \Hmr$, and 
 \item[-] $\nu(\Kmr)\leq 2\sfc(\Kmr')$ for all subgroups $\Kmr' < \Kmr \leq \Hmr$.
\end{itemize}
A similar idea as in \cite[Example 6.4]{WeissCalc} allows us to reduce it to the case where ${\bf X}$ is a suspension spectrum $\Sigma^{\infty}_{\upiota_\Hmr(\sfR)} \Ymr$. Let ${\bf Q}_{\upiota_\Hmr(\sfR)}$ denote the composite functor $\Omega^{\infty}_{\upiota_\Hmr(\sfR)} \Sigma^{\infty}_{\upiota_\Hmr(\sfR)}$. By equivariant Freudenthal theorem  \cite[Theorem 9.1]{Alaska}   the composite
\[
\begin{tikzcd}
\Ymr_{\mr{h} {\sf O}(\upiota_\Hmr(\upalpha))} \rar["f"] &  ({\bf Q}_{\upiota_\Hmr(\sfR)}\Ymr)_{\mr{h} {\sf O}(\upiota_\Hmr(\upalpha))} \rar["\upsilon"] &  {\bf Q}_{\upiota_\Hmr(\sfR)}(\Ymr_{\mr{h} {\sf O}(\upiota_\Hmr(\upalpha))})
\end{tikzcd}
\]
and the map $f$  are $\nu$-connected. Therefore, $\upsilon$ is $\nu$-connected. 

Now consider ${\bf X} = \Smr^{\upiota_\Hmr(\upalpha)\otimes \Umr} \sma \upiota_\Hmr\uptheta$ which is $\sfc$-connected,  where
\[
\sfc(\Kmr) = | \upalpha^{\Kmr}| | \Umr^{\Kmr}|  + |(\upalpha^{\Kmr})^{\perp}|  |(\Umr^{\Kmr})^{\perp}|  + {\sf b}_{\Kmr}
\]
and $\sfb_{\Kmr}$ is the connectivity of ${\uptheta}^{\Kmr}$ (independent of  $\Umr$) for any $\Kmr \subset \Hmr$.
Our previous discussion now implies that 
\[
\begin{tikzcd}
\Emr_{\Gmr/\Hmr}(\Umr) \rar & \Fmr_{\Gmr/\Hmr}(\Umr)
\end{tikzcd}
\] 
is $\nu$-connected, where $\nu(\Hmr)$ is the minimum of 
\begin{itemize}
 \item[-] $2|\upalpha^{\Hmr} ||\Umr^{\Hmr}| + 2|(\upalpha^{\Hmr})^{\perp}| | (\Umr^{\Hmr})^{\perp}|  + 2{\sf b}_{\Hmr} + 1$
 \item[-] $\underset{\Kmr \subset \Hmr}{\sf min}\left\lbrace |\upalpha^{\Kmr}| |\Umr^{\Kmr}| + |(\upalpha^{\perp})^{\Kmr}| |(\Umr^{\perp})^{\Kmr} | + {\sf b}_{\Kmr}\right\rbrace$.
\end{itemize}
Thus, by \Cref{lem:tauconnect} the map 
$ 
\uptau_{(\alpha, \upepsilon)}\Emr_{\Gmr/\Hmr}(\Umr) \longrightarrow \uptau_{(\alpha, \upepsilon)}\Fmr_{\Gmr/\Hmr}(\Umr)
$
is at least $(\nu + 1)$-connected for any  $\Gmr$-representation $\upepsilon$ such that $|\upepsilon|= 1$. Hence
\[
\begin{tikzcd}
{\bf T}_{(\alpha, \upepsilon)}\Emr \rar &  {\bf T}_{(\alpha, \epsilon)}\Fmr
\end{tikzcd}
\]
is a pointwise equivalence, and therefore \[
\begin{tikzcd}
{\bf T}_{(\alpha, \vec{v})}\Emr \rar &  {\bf T}_{(\alpha, \vec{v})}\Fmr
\end{tikzcd}
\]
is an equivalence for any direction vector $\vec{v}$.
\end{proof}

\bigskip
The following result   restricts  $\upalpha$ to a finite sum of $1$-dimensional $\Gmr$-representations because the proof is an inductive argument  on dimensions which breaks down equivariantly in the  presence  $\Gmr$-representations of a dimension two or higher  in the summand. 
\begin{defn} \label{defn:bbelow}We call an  $\upalpha$-system $\upchi$  {\bf  uniformly bounded below} by the function 
\[ 
\begin{tikzcd}
\nu: \{ \Hmr: \Hmr \text{ subgroup of } \Gmr \} \rar & \ZZ
\end{tikzcd}
\]
such that  $\upchi(\Umr)^\Hmr$ is at least $\nu(\Hmr)$-connected for all $\Umr \in \Ucal_{\Hmr}$ and $\Hmr \subset \Gmr$. 
\end{defn}
   
\begin{thm} \label{thm:keyex}
Let $\upalpha$ be a finite  sum of $1$-dimensional $\Gmr$-representations, i.e., $\upalpha \subset \Ucal_{\Gmr}^{(1)}$,
Suppose  $\upchi$ is an uniformly bounded below $\upalpha$-system  with n\"aive $\Omr(\upalpha)$-action. Then $\Fmr \in\Ecal_{\Gmr, {\bf 0}}$ given by 
\[
\Fmr_{\Gmr/\Hmr}(\Umr):= 
\Omega^{\infty}_{\upiota_\Hmr(1)}\left[ (\Smr^{ \upiota_\Hmr(\upalpha) \otimes \Umr_{\upalpha}}  \sma \upchi(\Umr_{\upalpha}^{\perp}))_{\mr{h} {\sf O}(\upiota_\Hmr(\upalpha))}\right]
\]
where  $\Umr_{\upalpha}  = \Umr \cap \Ucal_\Gmr^{\upiota_\Hmr(1)}$, is $\upalpha$-homogeneous in the sense of \Cref{defn:vhomo}. 
\end{thm}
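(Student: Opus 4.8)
The plan is to verify the two defining conditions of $\upalpha$-homogeneity from \Cref{defn:vhomo} for the functor $\Fmr$: that $\td{\bf T}_{\upalpha}\Fmr$ is pointwise contractible, and that $\Fmr$ is fully $\upalpha$-polynomial.

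\textbf{Step 1: Reduce to the non-orbit version.} By \Cref{thm:htpyorbit}, with $\uptheta$ playing the role of (the building blocks of) $\upchi$, the functor $\Fmr$ built using the homotopy orbit \emph{inside} $\Omega^{\infty}$ and the functor $\Fmr'$ built using the homotopy orbit \emph{outside} $\Omega^{\infty}$ are ${\bf T}_{(\upalpha, \vec{v})}$-equivalent for every direction vector $\vec v$; in particular $\td{\bf T}_{\upalpha}\Fmr \simeq \td{\bf T}_{\upalpha}\Fmr'$ pointwise and $\Fmr$ is $(\upalpha,\vec v)$-polynomial if and only if $\Fmr'$ is. Since $\upchi$ need not be a single spectrum with $\Omr(\upalpha)$-action, the actual reduction proceeds levelwise over the system, or equivalently by first observing that ${\bf \Phi}(\upchi)$ of \Cref{thm:stablesymmetric} is a stable object, hence its layers are controlled; I will phrase the argument so that $\Fmr' = {\bf R}^{\upalpha}_{\bf 0}\,{\bf\Phi}(\upchi)_{\mathrm{hO}(\upalpha)}$, the homotopy-orbit of the stable symmetric object classified by $\upchi$.

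\textbf{Step 2: Vanishing of $\td{\bf T}_{\upalpha}\Fmr$.} It suffices to show $\td{\bf T}_{\upalpha}\Fmr' \simeq \ast$ pointwise. By \Cref{cor:sc2cor}, $\td{\bf T}_{\upalpha}\Fmr' \simeq \ast$ is equivalent to $(\Fmr')^{(\upalpha')}$ being $\upalpha$-polynomial data vanishing appropriately; more directly, $\td{\bf T}_{\upalpha}\Emr$ is pointwise contractible iff $\Emr$ is strongly $\upalpha$-polynomial with contractible value, which by \Cref{thm:derpoly} and \Cref{cor:sc2cor} is detected by $\Emr^{(\upalpha')} \simeq \ast$ for the relevant $\upalpha' \subseteq \upalpha$. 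The key input is that $\Fmr'$ is, up to the free $\Omr(\upalpha)$-quotient, an $\Omega^{\infty}$ of a $\Smr^{\upiota_\Hmr(\upalpha)\otimes\Umr_{\upalpha}}$-twisted spectrum; the $\upalpha$-th Taylor approximation $\td{\bf T}_{\upalpha}$ kills exactly such ``$\upalpha$-dimensional'' stable contributions because the diagram category $\Vscr^{\circ}(\upiota_\Hmr(\upalpha))$ has the right (co)homological dimension — this is the content of the estimate in \Cref{lem:tauconnect}, iterated. Concretely, one shows $\uptau_{\upalpha}^{(n)}\Fmr'_{\Gmr/\Hmr}(\Umr)$ becomes arbitrarily highly connected as $n \to \infty$ (using uniform boundedness below of $\upchi$ to get a uniform starting connectivity, and \Cref{lem:tauconnect} to gain connectivity at each stage), so the homotopy colimit defining $\td{\bf T}_{\upalpha}\Fmr'$ is contractible.

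\textbf{Step 3: Full $\upalpha$-polynomiality.} Here I use induction on $\dim_\RR\upalpha$ (this is exactly why $\upalpha$ must be a finite sum of $1$-dimensional representations — the inductive step peels off one $1$-dimensional summand $\upepsilon$, writing $\upalpha = \upalpha_0 \oplus \upepsilon$). For the base case $\upalpha = {\bf 0}$ the statement is trivial. For the inductive step, one must show $\Fmr \to {\bf T}_{(\upalpha,\vec v)}\Fmr$ is a pointwise equivalence on the universe $\Ucal^{(\vec v:\upalpha)}_\Gmr$ for the full $\vec v$. Using \Cref{lem:Weiss5.13} (whose hypotheses are: equivalence on the ``connected at infinity'' hocolims, equivalence on all the systems ${\bf\Theta}(-)^{(\upalpha')}$, and connectivity at infinity of the target), I compare $\Fmr$ with ${\bf T}_{(\upalpha,\vec v)}\Fmr$: the map on systems $\upalpha'$-th derivatives is an isomorphism by construction of ${\bf\Phi}(\upchi)$ together with \Cref{cor:sc1cor} and \Cref{lem:stable} for each $\upalpha' \subseteq \upalpha$, because the $\upalpha'$-th derivative of $\Fmr$ recovers (a shift of) $\upchi$ when $\upalpha' = \upalpha$ and vanishes for $\upalpha' \subsetneq \upalpha$ (stability), matching ${\bf T}_{(\upalpha,\vec v)}\Fmr$; the hocolim condition and connectivity at infinity follow from the explicit $\Omega^{\infty}$-description and uniform boundedness below. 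The inductive hypothesis enters to handle the $(\upalpha_0,\vec v)$-polynomiality feeding into the $\upepsilon$-direction Taylor approximation via \Cref{lem:fiberpoly} and \Cref{lem:Weiss5.12}.

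\textbf{Main obstacle.} The delicate point is Step 3: ensuring that the connectivity estimates assemble uniformly across all subgroups $\Hmr \subseteq \Gmr$ and all $\Umr$, so that the $\Hmr$-equivariant connectivity function $\nu$ produced at each stage genuinely tends to $\infty$ (as a function to $\ZZ$ bounded below, evaluated at each $\Hmr$) under iterated $\uptau_{\upalpha\oplus\upepsilon}$. This is where uniform boundedness below of $\upchi$ (\Cref{defn:bbelow}) is essential — without it, the fixed-point connectivities at different subgroups could fail to improve in lockstep. The bookkeeping mirrors \cite{WeissErr} and \cite[\S4.2]{Y24} but must be carried out with the two-term ``$\upalpha^{\Kmr}$ vs.\ $(\upalpha^{\Kmr})^{\perp}$'' splitting of \Cref{rmk:Kfix} throughout; verifying that \Cref{lem:tauconnect}'s hypothesis (the specific form of $\nu$ with a constant $\sfb_\Qmr$) is preserved under iteration is the technical heart of the argument.
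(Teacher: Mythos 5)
Your Step 2 (contractibility of $\td{\bf T}_{\upalpha}\Fmr$ via iterated application of \Cref{lem:tauconnect}, using uniform boundedness below of $\upchi$ as the seed connectivity) is exactly the paper's argument, and the preliminary invocations of \Cref{cor:sc2cor} and \Cref{thm:derpoly} there are superfluous noise — you can drop them. The problems are in Steps 1 and 3.

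Step 1 is unnecessary: look again at the statement — the homotopy orbit in $\Fmr$ is already taken at the spectrum level, \emph{before} applying $\Omega^{\infty}_{\upiota_\Hmr(1)}$. So the $\Fmr$ of the theorem \emph{is} what you call $\Fmr'$, and no reduction via \Cref{thm:htpyorbit} is needed here (the paper uses \Cref{thm:htpyorbit} later, in \Cref{thm:classification}, not here). This is a harmless detour but a sign the target wasn't read carefully.

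Step 3 has two genuine gaps. First, the appeal to \Cref{lem:Weiss5.13} is circular: that lemma takes as hypothesis that \emph{both} source and target are already fully $\upalpha$-polynomial, whereas full $\upalpha$-polynomiality of $\Fmr$ is precisely what you are trying to establish. Second, you assert that $\Fmr^{(\upalpha')}$ vanishes for $\upalpha' \subsetneq \upalpha$; this is false — the derivative $\Fmr^{(\upalpha')}$ for a proper subrepresentation $\upalpha' \subset \upalpha$ is not trivial but is the functor $\Fmr[\upalpha'] = {\bf\Phi}(\upchi)_{\mr{h}\Omr(\upalpha\!\ominus\!\upalpha')}$, with the homotopy orbit taken only over the complementary orthogonal group. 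The derivatives that vanish are $\Fmr^{(\upalpha\oplus\upepsilon)}$ for $1$-dimensional $\upepsilon$, by stability of $\Fmr^{(\upalpha)}$, once you know $\Fmr^{(\upalpha)}\simeq{\bf\Phi}(\upchi)$. What you are missing — and what is the technical heart of the paper's proof — is the inductive identification $\Fmr^{(\upbeta)} \simeq \Fmr[\upbeta]$ for each $\upbeta\subset\upalpha$, peeling off one $1$-dimensional summand $\upepsilon$ at a time and showing that the $\upepsilon$-th derivative of $\Fmr[\upbeta]$ trades $\mr{O}(\upbeta')$-orbits for $\mr{O}(\upbeta'\ominus\upepsilon)$-orbits (via the sphere-fibration $\Omr(\upbeta')/\Omr(\upbeta'\ominus\upepsilon) \cong \Smr(\upbeta'\otimes\upepsilon)$ and the cofiber sequence identifying ${\bf hofib}\bigl(\Sigma^{\infty}_{(1)}\Smr^0 \to \Sigma^{\infty}_{(1)}\Smr^{\Vmr}\bigr)$ with $\Sigma^{\infty}_{(1)}\Smr(\Vmr)_+$). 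Once this is done, \Cref{lem:fiberpoly} (with $\Emr=\ast$, using that $\Fmr$ deloops) immediately gives full $\upalpha$-polynomiality. Your induction on $\dim_\RR\upalpha$ is pointed in the right direction, but it is aimed at the wrong lemma; you should aim it at computing $\Fmr^{(\upbeta)}$, not at \Cref{lem:Weiss5.13}.
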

\begin{proof} Since $\upalpha$ is a finite sum of $1$-dimensional representations, we may write \[ \upalpha \cong \upbeta \oplus \upepsilon\] for some $1$-dimensional representation.
\end{proof}

\begin{proof}
We first show that $\Fmr$ is fully $\upalpha$-polynomial. 
Since $\Fmr$ admits a delooping, by \Cref{lem:fiberpoly}, it suffices to show that $\Fmr^{(\upalpha \oplus \upepsilon)}\simeq *$ for any one-dimensional $\Gmr$-representation $\upepsilon$.

For any sub $\Gmr$-representation $\upbeta \subset \upalpha$,  we write $\upalpha = \upbeta \oplus \upbeta'$ and define a new functor
\[
\Fmr[\upbeta]_{\Gmr/\Hmr}(\Umr) := [{\bf \Phi}(\upchi)_{\mr{h} \Omr(\upbeta')}]_{\Gmr/\Hmr}(\Umr) = \Omega^{\infty}_{\upiota_\Hmr(1)}\left[\Smr^{\upiota_\Hmr(\upalpha) \otimes \Umr_{\upalpha}} \sma \upchi(\Umr_{\upalpha}^{\perp})_{\mr{h} {\sf O}(\upiota_{\Hmr}(\upbeta'))}\right]
\]
for any finite  $\Umr \subset \Ucal_{\Hmr}$, where ${\bf \Phi}$ is the functor in \eqref{eqn:phi}. It is routine to check that $\Fmr[\upbeta]\in \Ecal_{\upbeta}$. 
We then follow the proof of \cite[Theorem 7.3]{WeissCalc} equivariantly to identify the $\upbeta$-th derivative $\Fmr^{(\upbeta)}$  with $\Fmr[\upbeta]$ using an inductive argument. Thus, when  $\upbeta = \upalpha$, we will get 
\begin{equation} \label{eqn:step1}
\Fmr^{(\upalpha)} \simeq \Fmr[\alpha] = {\bf \Phi}(\upchi)
\end{equation}
is a stable symmetric object in $\Ecal_{\Gmr, \upalpha}$ by \Cref{thm:stablesymmetric}, and consequently (by \Cref{cor:sc1cor})
\[
\Fmr^{(\upalpha \oplus \upepsilon)} = (\Fmr^{(\upalpha)})^{(\upepsilon)} \simeq \Fmr[\alpha]^{(\upepsilon)} \simeq *
,
\] 
as desired.

We now execute the inductive argument claimed above. Suppose $\upepsilon \subset \upbeta'$ is a  one-dimensional sub $\Gmr$-representation. Our goal is show that the adjoint of
\[
{\bf R}_{\upbeta}^{\upbeta \oplus \upepsilon} \Fmr[\upbeta \oplus \upepsilon] \longrightarrow \Fmr[\upbeta]
,
\]
namely
\[
\Fmr[\upbeta \oplus \upepsilon] \longrightarrow {\bf I}_{\upbeta}^{\upbeta \oplus \upepsilon}\Fmr[\upbeta]  = \Fmr[\upbeta]^{(\upepsilon)}
,
\]
is an equivalence. By \Cref{cor:sc1cor}, one may identify $\Fmr[\upbeta]_{\Gmr/\Hmr}^{(\upepsilon)}(\Umr)$ as the homotopy fiber of   
\[ 
\begin{tikzcd}
\Fmr[\upbeta]_{\Gmr/\Hmr}(\Umr)  \rar &\Omega^{\upiota_{\Hmr}(\upbeta \otimes \upepsilon)}\Fmr[\upbeta]_{\Gmr/\Hmr}(\Umr \oplus \upepsilon).
\end{tikzcd}
\] 
Explicitly, $\Fmr[\upbeta]^{(\upepsilon)}_{\Gmr/\Hmr}(\Umr)$ equals $\Omega^{\infty}_{\upiota_\Hmr(1)}(-) $ applied to to the homotopy fiber of the map 
\[ 
(\Smr^{\upiota_\Hmr(\upalpha) \otimes \Umr_{\upalpha}} \sma \upchi(\Umr_{\upalpha}^{\perp}))_{\mr{h} {\sf O}(\upiota_\Hmr(\upbeta'))} \to
\Omega^{\upiota_\Hmr(\upbeta \otimes \upepsilon)}(\Smr^{\upiota_\Hmr(\upalpha) \otimes (\Umr_{\upalpha} \oplus\upiota_\Hmr( \epsilon))} \sma \upchi(\Umr_{\upalpha}^{\perp}))_{\mr{h} {\sf O}(\upiota_\Hmr(\upbeta'))}
\]
as $\upepsilon \in \Ucal_{\Gmr}^{(1)}$. Since 
\begin{eqnarray*}
\upiota_\Hmr(\upalpha) \otimes (\Umr_{\upalpha} \oplus \upiota_\Hmr(\upepsilon)) &= &  \upiota_\Hmr(\upalpha)\otimes \Umr_{\upalpha} \oplus \upiota_\Hmr(\upalpha \otimes \upepsilon) \\
& = & \upiota_\Hmr(\upalpha)\otimes \Umr_{\upalpha} \oplus \upiota_\Hmr((\upbeta \oplus \upbeta') \otimes \upepsilon) \\
&=& \upiota_\Hmr(\upalpha)\otimes \Umr_{\upalpha} \oplus \upiota_\Hmr(\upbeta\otimes\upepsilon) \oplus \upiota_\Hmr(\upbeta' \otimes\upepsilon),
\end{eqnarray*} 
 $\Fmr[\upbeta]^{(\upepsilon)}_{\Gmr/\Hmr}(\Umr)$ is equivalent to $\Omega^{\infty}_{\upiota_\Hmr(1)}$ to the homotopy fiber of 
\[ 
(\Smr^{\upiota_\Hmr(\upalpha )\otimes (\Umr_{\upalpha} \oplus \epsilon)} \sma \upchi(\Umr_{\upalpha}^{\perp}))_{\mr{h} {\sf O}(\upiota_\Hmr(\upbeta'))} \to
(\Smr^{\upiota_\Hmr(\upalpha) \otimes \Umr_{\upalpha} \oplus \upiota_\Hmr(\upbeta' \otimes \upepsilon))} \sma \upchi(\Umr_{\upalpha}^\perp))_{\mr{h} {\sf O}(\upiota_\Hmr(\upbeta'))}. \]
Since the homotopy fiber of
$
\Sigma^{\infty}_{(1)} \Smr^0 \longrightarrow  \Sigma^{\infty}_{(1)} \Smr^\Vmr 
$
 is equivalent to the sphere spectrum $\Sigma^{\infty}_{(1)} \Smr(\Vmr)_+$ in $\Sp^{\Gmr}_{(1)}$  (assuming $\Vmr$ in $\Ucal_{\Gmr}^{(1)}$ and finite)\footnote{Here $\Smr(\Vmr)$ denotes the unbased unit sphere in $\Vmr$. }, it follows that
\[ 
\Fmr[\upbeta]^{(\upepsilon)}_{\Gmr/\Hmr}(\Umr) \simeq \Omega_{\upiota_\Hmr(1)}^{\infty} 
[
(\Smr(\upiota_{\Hmr}(\upbeta' \otimes \upepsilon))_+ \sma \Smr^{\upiota_\Hmr(\upalpha) \otimes \Umr_{\upalpha}} \sma \upchi(\Umr_{\upalpha}^\perp))_{\mr{h} {\sf O}(\upiota_\Hmr(\upbeta'))}.
]
\]
Now we  set ${\bf X} = \Smr^{\upiota_\Hmr(\upalpha) \otimes \Umr} \sma \upchi(\Umr_{\upalpha}^{\perp})$ and observe that  
\begin{eqnarray*}
 (\Smr(\upiota_\Hmr(\upbeta' \otimes \upepsilon))_{+} \sma {\bf X})_{\mr{h} \mr{O}(\upiota_\Hmr(\upbeta'))} &\simeq& \mr{ EO}(\upiota_\Hmr(\upbeta'))_{+}\sma_{\mr{h} \mr{O}(\upiota_\Hmr(\upbeta'))} (\Smr(\upiota_\Hmr(\upbeta' \otimes \upepsilon))_{+} \sma {\bf X}) \\
 &\simeq& \mr{EO}(\upiota_\Hmr(\upbeta'))_{+}\sma_{\mr{h} \mr{O}(\upiota_\Hmr(\upbeta'))} (\mr{O}(\upiota_\Hmr(\upbeta'))/\mr{O}(\upiota_\Hmr(\upbeta' - \upepsilon))_{+} \sma {\bf X}) \\
 &\simeq& \mr{EO}(\upiota_\Hmr(\upbeta'))_{+}\sma_{\mr{O}(\upiota_\Hmr(\upbeta' - \upepsilon))} {\bf X} \\
 &\simeq& \mr{EO}(\upiota_\Hmr(\upbeta' - \upepsilon))_{+}\sma_{\mr{O}(\upiota_\Hmr(\upbeta' - \upepsilon))} {\bf X} \\
 &\simeq& {\bf X}_{\mr{h} \mr{O}(\upiota_\Hmr(\upbeta' - \upepsilon))}
\end{eqnarray*}
which completes the inductive step, and therefore, the proof of \eqref{eqn:step1}.

To complete the argument we must show that  $\td{\bf T}_{\upalpha} \Fmr$,  the $\upalpha$-th reduced tower of $\Fmr$,  is trivial. Next, we observe that 
$\Fmr_{\Gmr/\Hmr}(\Umr)$ is  at least $\sfc$-connected, where for $\Kmr \subset \Hmr$
\[
\sfc(\Kmr) \geq | \upalpha^{\Kmr} | | \Umr_{\upalpha}^{\Kmr}|  + |(\upalpha^{\Kmr})^{\perp}| | (\Umr_{\upalpha}^{\Kmr})^{\perp} | + \nu(\Kmr),
\]
and $\nu(\Kmr)$ is the uniform lower bound on the connectivity of $\upchi(\Umr^\perp_\upalpha)$. Then, by setting $\Dmr$ as the trivial functor in \Cref{lem:tauconnect}, we conclude that $\uptau_{\upalpha}\Fmr_{\Gmr/\Hmr}(\Umr)$ is $(\sfc +1)$-connected. Consequently, $\td{\bf T}_{\upalpha} \Fmr$ is pointwise contractible as desired. 
\end{proof}

\begin{notn} Let $\Jfrak_{\Gmr}^{(1)}: \Ocal_{\Gmr}^{\mr{op}} \longrightarrow \Cat$ denote the presheaf that sends $\Gmr/\Hmr$ to the full-subcategory of $\Rep_\Hmr$ generated by restrictions of $1$-dimensional $\Gmr$-representations, and let $\Ecal_{\Gmr, {\bf 0}}^{(1)}$ denote the category of $1$-morphisms $\Emr: \Jfrak_{\Gmr}^{(1)} \longrightarrow \Tfrak_{\Gmr}$.  
\end{notn}
Observe that our definition of $\upalpha$-homogeneous, i.e. \Cref{defn:vhomo} (also see \Cref{defn:vpoly}), makes sense for objects in $\Ecal_{\Gmr, {\bf 0}}^{(1)}$. We utilize this to establish a classification theorem which can be thought of as an equivariant analog of the classification theorem of homogeneous functors (see \cite[Theorem 7.3]{WeissCalc}). 
\begin{thm}[{\bf Classification of homogeneous functors}] \label{thm:classification}
Let $\upalpha$ be a direct sum of one-dimensional $\Gmr$-representations.
Suppose $\Fmr\in \Ecal_{\Gmr, {\bf 0}}$ is $\upalpha$-homogeneous and ${\bf \Theta} \Fmr^{(\upalpha)}$,  its $\upalpha$-th derivative at $\infty$,  is uniformly bounded below. Then 
\[
\Fmr_{\Gmr/\Hmr}(\Umr) = 
  \Omega^{\infty}_{\upiota_\Hmr(1)}\left[ (\Smr^{ \upiota_\Hmr(\upalpha) \otimes \Umr_{\upalpha}}  \sma \upchi(\Umr_{\upalpha}^{\perp}))_{\mr{h} {\sf O}(\upiota_\Hmr(\upalpha))}\right],
\]
where $\upchi = {\bf \Theta}^\# \Fmr^{(\upalpha)} \simeq {\bf \Theta} \Fmr^{(\upalpha)}$. Conversely, any functor of the above form is $\upalpha$-homogeneous.
\end{thm}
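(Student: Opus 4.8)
The plan is to mirror the strategy of \cite[Theorem 7.3]{WeissCalc}, using the two classification pillars already established in this section: the classification of stable symmetric objects (\Cref{thm:stablesymmetric}) and the homotopy-orbit comparison (\Cref{thm:htpyorbit}), together with the polynomiality machinery of \Cref{sec:poly} and the key example \Cref{thm:keyex}. The converse direction is essentially free: \Cref{thm:keyex} already asserts that any functor of the stated form is $\upalpha$-homogeneous (using that $\upalpha \subset \Ucal^{(1)}_\Gmr$ and that $\upchi := {\bf \Theta}\Fmr^{(\upalpha)}$ is uniformly bounded below by hypothesis, hence any $\upalpha$-system one plugs in satisfies its hypotheses). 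So the content is the forward direction.

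For the forward direction, first set $\upchi := {\bf \Theta}^\#\Fmr^{(\upalpha)}$. By \Cref{rmk:DerStabSym}, $\Fmr^{(\upalpha)}$ is a symmetric object, and since $\Fmr$ is fully $\upalpha$-polynomial (part of $\upalpha$-homogeneity, \Cref{defn:vhomo}), \Cref{lem:Weiss5.12} together with \Cref{cor:sc1cor} shows $\Fmr^{(\upalpha\oplus\upepsilon)}\simeq *$ for every $1$-dimensional $\upepsilon$; hence $\Fmr^{(\upalpha)}$ is a \emph{stable} symmetric object, i.e., lies in $\Ecal^{\Sigma}_{\Gmr,\upalpha}$. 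Now apply the equivalence ${\bf \Theta}^\#$ of \Cref{thm:stablesymmetric}: the system $\upchi$ carries a naïve $\Omr(\upalpha)$-action, and running the inverse functor ${\bf \Phi}$ of \eqref{eqn:phi} produces a candidate functor
\[
\Gmr/\Hmr \mapsto \bigl(\Umr \mapsto \Omega^{\infty}_{\upiota_\Hmr(1:\upalpha)}\bigl(\Smr^{\upiota_\Hmr(\upalpha)\otimes\Umr_\upalpha}\sma\upchi(\Umr_\upalpha^{\perp})^{\#}\bigr)\bigr).
\]
Then \Cref{thm:htpyorbit}, applied with $\uptheta$ replaced by (a spectrum modeling) $\Fmr^{(\upalpha)}$ and its naïve $\Omr(\upalpha)$-action, identifies the ``$\mathrm{h}\Omr(\upalpha)$ inside $\Omega^\infty$'' version with the ``$\mathrm{h}\Omr(\upalpha)$ outside'' version after applying ${\bf T}_{(\upalpha,\vec v)}$ for every direction vector $\vec v$; taking $\vec v$ full, these two functors become $\upalpha$-homogeneous and ${\bf T}_{(\upalpha,\vec v)}$-equivalent, so it suffices to compare $\Fmr$ with the ``outside'' version, which is the desired right-hand side (with $\Umr_\upalpha = \Umr\cap\Ucal^{\upiota_\Hmr(1)}_\Hmr$ as in \Cref{thm:keyex}).

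The remaining step is to produce a natural zig-zag $\Fmr \leftrightarrow {\bf \Phi}(\upchi)_{\mathrm{h}\Omr(\upalpha)}$ and show it is a pointwise equivalence. The comparison map arises from the counit of the ${\bf \Theta}^\#$–${\bf \Phi}$ adjunction-like equivalence together with the natural transformation $\Fmr \to \Fmr^{(\upalpha)}$-driven evaluation (as in \cite[\S7]{WeissCalc}); one checks it induces an equivalence on $\upalpha$-th derivatives at $\infty$ by construction ($\upchi = {\bf \Theta}\Fmr^{(\upalpha)}$ on the nose), and an equivalence on $\hocolim_{\Vmr\in\Ucal^{(1)}_\Gmr}(-)_{\Gmr/\Hmr}(\Umr\oplus\Vmr)$ because both sides have the same connectivity tending to $\infty$ (both $\upalpha$-th reduced towers vanish, so both are ``concentrated in layer $\upalpha$''). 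Since $\upalpha\subset\Ucal^{(1)}_\Gmr$ is a finite sum of $1$-dimensional representations, both $\Fmr$ and ${\bf \Phi}(\upchi)_{\mathrm{h}\Omr(\upalpha)}$ are fully $\upalpha$-polynomial (the former by hypothesis, the latter by \Cref{thm:keyex}), and ${\bf \Phi}(\upchi)_{\mathrm{h}\Omr(\upalpha)}$ is connected at infinity (its $\hocolim$ over $\Ucal^{(1)}_\Gmr$ is a genuine infinite loop space, hence connected). Therefore \Cref{lem:Weiss5.13} applies and forces the comparison map to be a pointwise equivalence. The main obstacle I anticipate is the bookkeeping around \emph{universes}: $\upchi(\Umr^\perp_\upalpha)$ lives in $\Sp^\Gmr_{\upiota_\Hmr(1:\upalpha)}$ and one must be careful that all of $\Omega^\infty_{\upiota_\Hmr(1)}$, the various $\Omr(\upalpha)$-actions, and the identifications $\Fmr^{(\upalpha)}\simeq\Fmr[\upalpha]={\bf \Phi}(\upchi)$ from \eqref{eqn:step1} are compatible with the restriction $\upiota_\Hmr$ across all subgroups $\Hmr\subset\Gmr$ simultaneously, so that the comparison is genuinely a $2$-morphism in ${\bf Fun}(\Ocal^{\mr{op}}_\Gmr,\Cat)$ and not merely a pointwise map; this is exactly the subtlety flagged in \Cref{map:ell} and the remark following \Cref{defn:vhomo}, and it is why the statement is phrased with $\Umr_\upalpha = \Umr\cap\Ucal^{\upiota_\Hmr(1)}_\Hmr$ rather than with a naïve restriction.
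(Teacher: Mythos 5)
Your outline uses the same pillars the paper uses — the converse via \Cref{thm:keyex}, the identification of $\Fmr^{(\upalpha)}$ as a stable symmetric object through \Cref{rmk:DerStabSym}, \Cref{lem:Weiss5.12}, and \Cref{cor:sc1cor}, the classification \Cref{thm:stablesymmetric} producing $\upchi$, and \Cref{thm:htpyorbit} to pass the homotopy orbits through $\Omega^\infty$ — so the overall strategy is the same. Where you differ, and where the gaps are, is the finishing step. Most importantly, your comparison map is not actually constructed: there is no natural transformation $\Fmr\to\Fmr^{(\upalpha)}$, and ``the counit plus evaluation'' does not pin down a $2$-morphism in ${\bf Fun}(\Ocal^{\mr{op}}_\Gmr,\Cat)$. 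What is actually available is the fiber inclusion ${\bf R}^{\upalpha}_{\bf 0}\Fmr^{(\upalpha)}\to\Fmr$ from \Cref{cor:sc2cor}, and the essential observation the paper makes — which your proposal needs — is that this map is $\Omr(\upalpha)$-equivariant for the trivial $\Omr(\upalpha)$-action on the codomain, so it descends to $u\colon\Emr:=(\Fmr^{(\upalpha)})_{\mathrm{h}\,\Omr(\upalpha)}\to\Fmr$. Without this you have nothing to analyze.

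Second, your appeal to \Cref{lem:Weiss5.13} is under-verified: hypothesis (2) there requires ${\bf\Theta}(f)$ to be an equivalence on the $\upalpha'$-derivative system for every sub-$\Gmr$-representation $\upalpha'\subset\upalpha$, not just $\upalpha'=\upalpha$, and for hypothesis (1) the claim that vanishing of $\td{\bf T}_\upalpha$ forces the induced map on $\hocolim_{\Vmr\in\Ucal^{(1)}_\Gmr}(-)(\Umr\oplus\Vmr)$ to be an equivalence is not an argument (the former is a holim over $\Vscr^{\circ}$, the latter a hocolim over the universe, and they are not interchangeable). The paper's finishing move is cleaner: apply ${\bf T}_{(\upalpha,\vec v)}$ to $u$ and compare $\upalpha$-derivatives directly via the adjoint zig-zag $(\Amr)$, $(\Bmr)$ from diagram \eqref{keyad}, where $(\Bmr)$ is an equivalence because $\Fmr$ is $\upalpha$-homogeneous and $(\Amr)$ is an equivalence because \eqref{TEtoD} together with $\Dmr[\upalpha]={\bf\Phi}(\upchi)=\Fmr^{(\upalpha)}$ from \eqref{eqn:step1} in \Cref{thm:keyex}; this never needs to control the lower derivatives or the behavior at infinity. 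Your closing remark about the universe bookkeeping and $\Umr_\upalpha=\Umr\cap\Ucal^{\upiota_\Hmr(1)}_\Hmr$ is the right concern, and the paper handles it precisely by keeping $\upchi$ valued in $\Sp^\Gmr_{\upiota_\Hmr(1:\upalpha)}$ and applying $\Omega^\infty_{\upiota_\Hmr(1)}$ rather than a genuine $\Omega^\infty$.
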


\begin{proof}
The converse is already established as \Cref{thm:keyex}. 

 Now consider  an $\upalpha$-homogeneous  Weiss functor $\Fmr\in \Ecal_{\Gmr, {\bf 0}}$. Define $\Emr \in \Ecal_{\Gmr, {\bf 0}}$ 
\[
\Emr_{\Gmr/\Hmr}(\Umr) := \Fmr_{\Gmr/\Hmr}^{(\alpha)}(\Umr)_{\mr{h} {\sf O}(\upiota_{\Hmr}(\upalpha))}
\]
as the homotopy orbits of the $\upalpha$-th derivative of $\Fmr$. Since $\Fmr^{(\alpha)}$ is a stable symmetric object in $\Ecal_{\Gmr, \upalpha}$, \Cref{thm:stablesymmetric} implies that  it is of the form
\[
\Fmr^{(\alpha)}_{\Gmr/\Hmr}(\Umr) = (\Smr^{\upiota_\Hmr( \upalpha) \otimes \Umr_{\upalpha}}  \sma \upchi(\Umr_{\upalpha}^{\perp}))
\]
where $\upchi = {\bf \Theta}^\# \Fmr^{(\upalpha)} \simeq {\bf \Theta} \Fmr^{(\upalpha)}$ (see \Cref{notn:pound} and \eqref{thetapound}). Therefore, by \Cref{thm:htpyorbit}, 
\begin{equation} \label{TEtoD}
 {\bf T}_{(\alpha, \vec{v})} \Emr \simeq \Dmr,
 \end{equation}
where $\Dmr$ is the functor  given by  
\[
\Dmr_{\Gmr/\Hmr}(\Umr) :=  \Omega^{\infty}_{\upiota_\Hmr(1)} \left[ (\Smr^{ \upiota_\Hmr(\upalpha) \otimes \Umr_{\upalpha}}  \sma \upchi(\Umr_{\upalpha}^{\perp}))_{\mr{h} {\sf O}(\upiota_\Hmr(\upalpha))} \right],
\]
 where $\vec{v}$ is the full direction vector. Since $\Fmr$ is $\upalpha$-homogeneous, it is fully $\upalpha$-polynomial, and therefore, if we show that 
\[ 
{\bf T}_{(\upalpha, \vec{v})}\Fmr^{(\upalpha)} \simeq {\bf T}_{(\upalpha, \vec{v})}\Emr^{(\upalpha)}
\] 
then $\Fmr$ will be equivalent to $\Dmr$ as desired.

Note that ${\bf R}_{\bf 0}^{\alpha}\Fmr^{\upalpha} \longrightarrow \Fmr$ can be viewed as a $\Omr(\upalpha)$-equivariant map where the codomain is given the trivial action, it follows that  $u$  factors through a map $u:\Emr \longrightarrow \Fmr$. Consequently,  we have a commutative diagram (also see the proof of \cite[Theorem 7.3]{WeissCalc})
\begin{equation} \label{keyad}
\begin{tikzcd}
  {\bf R}_{\bf 0}^{\alpha}\Fmr^{\upalpha} \ar[r] \dar["\mr{id}"'] & \Emr \ar[r] \ar[d, "u"'] & {\bf T}_{(\alpha, \vec{v})}\Emr \ar[d, "{\bf T}_{(\alpha, \vec{v})}(u)"] \\
 {\bf R}_{\bf 0}^{\alpha}\Fmr^{\upalpha} \ar[r] & \Fmr \ar[r] & {\bf T}_{(\upalpha, \vec{v})}\Fmr
\end{tikzcd}
\end{equation}
in which we will show that the $\upalpha$-th derivative of ${\bf T}_{(\alpha, \vec{v})}(u)$ is an equivalence. To this end, we consider the zig-zag 
\[
\begin{tikzcd}
{\bf T}_{(\upalpha, \vec{v})}\Emr^{(\upalpha)} & \lar["(\Amr)"'] \Fmr^{(\upalpha)} \rar["(\Bmr)"] & {\bf T}_{(\upalpha, \vec{v})}\Fmr^{(\upalpha)},
\end{tikzcd}
 \]
where $(\Amr)$  and $(\Bmr)$ are  the adjoint of the top and bottom horizontal compositions in \eqref{keyad}. The map $(\Amr)$ is clearly an equivalence because  $\Fmr$ is $\upalpha$-homogeneous, and $(\Bmr)$ is an equivalence because 
\[ 
{\bf T}_{(\upalpha, \vec{v})}\Emr^{(\upalpha)} \simeq {\bf T}_{(\upalpha, \vec{v})}\Dmr^{(\upalpha)} \simeq \Dmr[\upalpha] = \Fmr^{(\upalpha)}
\]
due to \eqref{TEtoD} and \Cref{thm:keyex}. Hence, the result.
\end{proof}

\bigskip
\begin{proof}[\bf Proof of \Cref{main3}. ] Since ${\bf L}_{\upalpha}\Emr$ is $\upalpha$-homogeneous,  the result follows immediately from \Cref{thm:classification}. 
\end{proof}
\begin{notn} Let $\Ecal_{\Gmr, {\bf 0}}^{\Sigma, {\sf conn}}$ denote the full subcategory of $\Ecal_{\Gmr, {\bf 0}}^{\Sigma, {\sf conn}}$ (as in \Cref{notn:symE})  generated by objects that are connected at infinity (see \Cref{defn:conn}). Let $\wh{{\bf Sys}}^{\upalpha}_{\sf unif}$ denote the category of uniformly bounded below $\upalpha$-systems with a n\"aive action of $\Omr(\upalpha)$. 
\end{notn}
In the classical Weiss calculus, the $n$-th derivative of the $n$-layer $\Lmr_n\Emr$ of a Weiss functor
$
 \Emr: \Jcal^\RR \longrightarrow \Top_*$
at $\infty$, is equivalent to the $n$-th derivative of $\Emr$ at $\infty$, i.e., 
\[ 
\uptheta \Lmr_n\Emr \simeq \uptheta \Emr^{(n)}
\] 
as spectra. The above equivalence was made precise through model theoretic approach by Barnes and Oman in \cite{BO}. Following their ideas, one may consider a model structure on $\Ecal_{\Gmr, \upalpha}^{\Sigma, {\sf conn}}$  in which a map $\Emr \longrightarrow \Fmr$ is a weak equivalence if the induced map \[ ({\bf T}_{\upalpha, \vec{v}} \Emr)^{(\upalpha)} \longrightarrow ({\bf T}_{\upalpha, \vec{v}} \Emr)^{(\upalpha)}  \]
is a pointwise equivalence. Then it will follow  that 
\[ \hat{\bf \Theta}(\Emr^{(\upalpha)}):= {\bf \Theta}({\bf L}_{\upalpha}\Emr^{(\upalpha)}) \simeq {\bf \Theta}(\Emr^{(\upalpha)})  \]  
provided the following conjecture is true:
\begin{conj} \label{conj:symsys} The homotopy category of $\Ecal_{\Gmr, {\bf 0}}^{\Sigma, {\sf conn}}$ and $\wh{{\bf Sys}}^{\upalpha}_{\sf unif}$ are isomorphic:
\[ {\bf Ho}(\Ecal_{\Gmr, {\bf 0}}^{\Sigma, {\sf conn}}) \cong {\bf Ho}(\wh{{\bf Sys}}^{\upalpha}_{\sf unif}). \]
\end{conj}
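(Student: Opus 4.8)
I sketch a possible route to \Cref{conj:symsys}, assuming as in \Cref{thm:keyex} and \Cref{thm:classification} that $\upalpha$ is a finite sum of $1$-dimensional $\Gmr$-representations, and reading ``isomorphic'' in the statement as ``equivalent''; the idea is to upgrade the object-level classification of \Cref{thm:classification} to a Quillen-style equivalence, following the model-theoretic strategy of Barnes--Oman \cite{BO}. \textbf{Homotopy theories on the two sides.} First I would endow $\Ecal_{\Gmr, {\bf 0}}^{\Sigma, {\sf conn}}$ with the relative (and, with care, model) structure alluded to just before the conjecture: transport the objectwise structure from the fixed-point model structure on $\Hmr\Top_\ast$, then localize so that $\Emr \to \Fmr$ is a weak equivalence exactly when $({\bf T}_{(\upalpha, \vec v)}\Emr)^{(\upalpha)} \to ({\bf T}_{(\upalpha, \vec v)}\Fmr)^{(\upalpha)}$ is a pointwise equivalence, $\vec v$ the full direction vector; by \Cref{thm:tdTproperties} the fibrant objects are then the fully $\upalpha$-polynomial ones. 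One must check this localization exists (the underlying structure is cofibrantly generated and the localizing class is generated by a set, each $\Rep_\Hmr$ being essentially small) and that stability, symmetry, and connectedness at infinity are inherited by the relevant (co)fibrant replacements. On the other side, $\wh{{\bf Sys}}^{\upalpha}_{\sf unif}$ carries the relative structure whose weak equivalences are the objectwise stable equivalences, localized so that the structure maps $f_{\Umr,\Vmr}$ of \Cref{defn:alphasystem} become equivalences, with the ``uniformly bounded below'' condition of \Cref{defn:bbelow} playing the role of connectivity; since this subcategory need not be closed under infinite coproducts, its homotopy category should be taken to be the Dwyer--Kan localization at these weak equivalences, which is all \Cref{conj:symsys} needs.

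\textbf{The comparison functors.} Define $\Psi(\Emr) := {\bf \Theta}(\Emr^{(\upalpha)})$, the $\upalpha$-th derivative at $\infty$; by \Cref{rmk:DerStabSym} it has a naive $\Omr(\upalpha)$-action, and when $\Emr$ is connected at infinity the connectivity estimates from the proof of \Cref{thm:keyex} show $\Psi(\Emr)$ is uniformly bounded below, so $\Psi$ lands in $\wh{{\bf Sys}}^{\upalpha}_{\sf unif}$. In the other direction set $\Xi(\upchi) := ({\bf \Phi}(\upchi))_{\mr{h}\Omr(\upalpha)}$, with ${\bf \Phi}$ as in \eqref{eqn:phi}; by \Cref{thm:keyex} this is $\upalpha$-homogeneous, hence stable symmetric (its $\upalpha$-th derivative is ${\bf \Phi}(\upchi)$ by \Cref{lem:stable} and \Cref{cor:sc1cor}) and connected at infinity. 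I would then show that $(\Xi,\Psi)$, after the appropriate fibrant/cofibrant correction, is a Quillen adjunction: $\Psi$ preserves fibrations and trivial fibrations, which one checks by unwinding the right Kan extension defining $(-)^{(\upalpha)}$ and the prespectrum defining ${\bf \Theta}$, using \Cref{cor:sc1cor} to control how $(-)^{(\upalpha)}$ interacts with the localizing maps.

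\textbf{The equivalence.} That the derived unit and counit are weak equivalences should follow by assembling the three classification inputs already available. For $\Psi \circ \Xi$: \Cref{thm:stablesymmetric} gives ${\bf \Theta}^{\#}{\bf \Phi}(\upchi) \simeq \upchi$ as $\Omr(\upalpha)$-systems, and \Cref{thm:htpyorbit} is precisely what allows passing through the homotopy orbits without changing the underlying system. For $\Xi \circ \Psi$: on a fully $\upalpha$-polynomial object connected at infinity, \Cref{thm:classification} literally identifies $\Emr$ with $({\bf \Phi}({\bf \Theta}^{\#}\Emr^{(\upalpha)}))_{\mr{h}\Omr(\upalpha)} = \Xi(\Psi(\Emr))$, and a general object is reduced to this case by the fibrant replacement ${\bf T}_{(\upalpha, \vec v)}(-)$, which by \Cref{thm:tdTproperties} leaves the $\upalpha$-th derivative at $\infty$ unchanged; in particular the equivalence $\hat{\bf \Theta}\Emr^{(\upalpha)} \simeq {\bf \Theta}\Emr^{(\upalpha)}$ flagged above the conjecture drops out as a corollary.

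\textbf{Main obstacle.} The real work is making the comparison ${\bf \Theta}(({\bf T}_{(\upalpha, \vec v)}\Emr)^{(\upalpha)}) \simeq {\bf \Theta}(\Emr^{(\upalpha)})$ \emph{functorial and compatible with the localized structures}, not merely valid pointwise for a fixed $\Emr$: the connectivity bookkeeping behind \Cref{thm:htpyorbit} (which rests on the equivariant Freudenthal theorem of \cite{Alaska}) has to be controlled uniformly over all subgroups $\Hmr \subseteq \Gmr$ simultaneously, and the non-finiteness of the compact Lie group $\Omr(\upalpha)$ together with the possible incompleteness of $\Ucal^{(1)}_\Gmr$ obstruct the naive comparison of $\Omr(\upalpha)$-homotopy orbits with $\Gmr$-fixed points. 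Establishing that these estimates descend to an honest Quillen (or Dwyer--Kan) equivalence, rather than an object-by-object statement, is presumably why the authors record this only as a conjecture.
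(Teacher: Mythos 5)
There is nothing in the paper to compare your argument against: the statement you are addressing is recorded by the authors as an open conjecture (\Cref{conj:symsys}), with only an informal pointer to the Barnes--Oman model-theoretic strategy \cite{BO} preceding it. Your sketch follows exactly that suggested strategy, so it is aligned with the authors' intent, but as written it is a proposal rather than a proof, and it has one genuine circularity at its core. The step where you claim that for a general object the fibrant replacement ${\bf T}_{(\upalpha,\vec v)}(-)$ ``leaves the $\upalpha$-th derivative at $\infty$ unchanged,'' i.e.\ ${\bf \Theta}\bigl(({\bf T}_{(\upalpha,\vec v)}\Emr)^{(\upalpha)}\bigr) \simeq {\bf \Theta}(\Emr^{(\upalpha)})$, is not supplied by \Cref{thm:tdTproperties} (which only gives idempotence and polynomiality of the reduced Taylor construction); it is essentially the comparison $\hat{\bf \Theta}\Emr^{(\upalpha)} \simeq {\bf \Theta}\Emr^{(\upalpha)}$ that the authors say would \emph{follow from} the conjecture. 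So your derived unit/counit argument assumes, at the decisive point, the very content the conjecture is meant to settle, and then re-derives it ``as a corollary.''

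A second concrete gap: you invoke \Cref{thm:classification} to identify $\Emr$ with $\Xi(\Psi(\Emr))$ ``on a fully $\upalpha$-polynomial object connected at infinity,'' but that theorem requires $\upalpha$-homogeneity, which in \Cref{defn:vhomo} also demands that $\td{\bf T}_{\upalpha}\Emr$ be pointwise contractible; full $\upalpha$-polynomiality plus connectivity at infinity does not give this, so the counit argument does not apply to a general object of the source category. Beyond that, the foundational steps are asserted rather than established: the existence of the localized model (or Dwyer--Kan) structures on both sides, the verification that stability, symmetry and connectivity at infinity survive (co)fibrant replacement, the Quillen adjunction claim for $(\Xi,\Psi)$, and the uniform (over all $\Hmr \subseteq \Gmr$) connectivity control needed to make \Cref{thm:htpyorbit} functorial in the localized setting --- you correctly flag these in your ``main obstacle'' paragraph, which is an honest acknowledgement that the argument is incomplete. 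In short: the approach is the natural one and matches the authors' suggested route, but the conjecture remains open after your sketch, because the two load-bearing steps (the functorial $\hat{\bf \Theta}$ versus ${\bf \Theta}$ comparison and the misapplied classification theorem) are exactly where the difficulty lives.
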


\bibliographystyle{amsalpha}
\bibliography{EWeiSS.bib}
\end{document}